\documentclass{amsart}

\usepackage[hmargin=3.5cm,top=3cm,bottom=3cm,includehead]{geometry}

\usepackage{babel}
\usepackage{pifont}

\usepackage{amsmath, amssymb, amsthm, mathrsfs, bbm}
\usepackage{graphicx,color,enumitem} 
\usepackage{multicol}
\usepackage{cancel}
\usepackage{abraces}
\usepackage{upgreek}
\usepackage{stmaryrd}

\usepackage[pdfborder={0 0 0}]{hyperref}

\usepackage[numbers,sort&compress]{natbib}

\usepackage{xparse}
\let\realItem\item 
\makeatletter
\NewDocumentCommand\myItem{ o }{%
   \IfNoValueTF{#1}%
      {\realItem}
      {\realItem[#1]\def\@currentlabel{#1}}
}
\makeatother

\usepackage{enumitem}    
\setlist[enumerate]{
    before=\let\item\myItem,       
    label=\textnormal{(\arabic*)}, 
    widest=(2')                    
}

\usepackage[dvipsnames]{xcolor}

\newcommand\N{{\mathbb N}}

\newcommand\R{{\mathbb R}}

\newcommand\Z{{\mathbb Z}}

\renewcommand{\d}{\mathrm{d}}
\newcommand{\dv}{\mathrm{d} v}
\newcommand{\dt}{\mathrm{d} t}
\newcommand{\ds}{\mathrm{d} s}

\newcommand{\dx}{\mathrm{d} x}

\def\BB{{\mathcal B}}
\def\CC{{\mathcal C}}
\def\DD{{\mathcal D}}
\def\EE{{\mathcal E}}
\def\FF{{\mathcal F}}

\def\HH{{\mathcal H}}
\def\II{{\mathcal I}}

\def\LL{{\mathcal L}}
\def\MM{{\mathcal M}}

\def\OO{{\mathcal O}}

\def\QQ{{\mathcal Q}}

\def\TT{{\mathcal T}}
\def\UU{{\mathcal U}}
\def\VV{{\mathcal V}}

\def\VV{{\mathcal V}}

\def\ZZ{{\mathcal Z}}

\def\CCC{{\mathscr C}}
\def\DDD{{\mathscr D}}

\def\FFF{{\mathscr F}}

\def\LLL{{\mathscr L}}
\def\MMM{{\mathscr M}}

\def\RRR{{\mathscr R}}
\def\SSS{{\mathscr S}}
\def\TTT{{\mathscr T}}
\def\UUU{{\mathscr U}}

\def\ZZZ{{\mathscr Z}}

\def\FFFF{{\mathfrak F}}

\def\RRRR{{\mathfrak R}}
\def\SSSS{{\mathfrak S}}

\def\ffff{{\mathfrak f}}
\def\gggg{{\mathfrak g}}

\def\qqqq{{\mathfrak q}}

\def\restrict#1{\raise-.5ex\hbox{\ensuremath|}_{#1}}

\newcommand{\wto}{\rightharpoonup}

\def\eps{{\varepsilon}}

\newcommand{\la}{\langle}
\newcommand{\ra}{\rangle}
\newcommand{\rra}{\rangle\!\rangle}
\newcommand{\lla}{\langle\!\langle}
\newcommand{\lv}{\lvert}
\newcommand{\rv}{\lvert}
\newcommand{\lvv}{\lVert}
\newcommand{\rvv}{\rVert }
\newcommand{\lvvv}{\lvert\hspace{-0.04cm} \lvert \hspace{-0.04cm} \lvert}
\newcommand{\rvvv}{\rvert\hspace{-0.04cm} \rvert\hspace{-0.04cm} \rvert}

\newcommand{\grad}{\nabla}

\newcommand{\Nt}{|\hskip-0.04cm|\hskip-0.04cm|}

\DeclareMathOperator{\Div}{div}

\newcommand\Ind{{\mathbf 1}}

\newtheorem{theo}{Theorem}[section]
\newtheorem{prop}[theo]{Proposition}
\newtheorem{lem}[theo]{Lemma}

\newtheorem*{thm*}{Theorem}
\theoremstyle{remark}
\newtheorem{rem}[theo]{Remark}

\newtheorem*{ex*}{Example}
\theoremstyle{definition}

\numberwithin{equation}{section}
\newcommand{\be}{\begin{equation}}
\newcommand{\ee}{\end{equation}}
\newcommand{\ba}{\begin{aligned}}
\newcommand{\ea}{\end{aligned}}
\newcommand{\beqn}{\begin{equation*}}
\newcommand{\eeqn}{\end{equation*}}
\newcommand{\bear}{\begin{eqnarray}}
\newcommand{\eear}{\end{eqnarray}}
\newcommand{\bean}{\begin{eqnarray*}}
\newcommand{\eean}{\end{eqnarray*}}
\newcommand{\beal}{\begin{aligned}}
\newcommand{\eeal}{\end{aligned}}

\title[The Boltzmann equation with non-isothermal Maxwell boundary conditions.]{The Boltzmann equation with non-isothermal Maxwell boundary conditions.}
\author[R. Medina]{R. MEDINA}

\address[R.~Medina]{Centre de Recherche en Math\'ematiques de
  la D\'ecision (CEREMADE, CNRS UMR 7534),
  Universit\'es PSL \& Paris-Dauphine, Place de Lattre de
  Tassigny, 75775 Paris 16, France}
\email{richard.medina-rodriguez@dauphine.psl.eu}
\date{\today}

\subjclass[2020]{35Q82, 35Q20, 35G31, 35B40, 82C05}

\keywords{Boltzmann equation, Maxwell boundary conditions, non-equilibrium steady state, long-time asymptotic behavior}

\begin{document}

\begin{abstract}
In this paper, we study the Boltzmann equation in a close to the hydrodynamic limit regime, set in bounded spatial domains with non-isothermal Maxwell boundary conditions.
We establish the existence, uniqueness, and asymptotic stability of a non-equilibrium steady state under suitable smallness assumption on the temperature fluctuations at the boundary.
\end{abstract}

\maketitle

\tableofcontents

\section{Introduction}\label{sec:Intro_noniso}

In this paper, we study a particle system governed by the Boltzmann equation in a regime close to the \emph{hydrodynamic limit}. We assume that the system is confined to a bounded spatial domain and subject to \emph{non-isothermal} Maxwell boundary conditions; that is, thermostats are imposed at the boundary and maintained at prescribed (possibly space--dependent) temperatures throughout the evolution.

We consider two geometric settings: smooth $C^2$ domains with a space--dependent accommodation coefficient taking values in $[\iota_0, 1]$, for some fixed $\iota_0\in (0,1]$; and a right circular cylinder with diffusive boundary conditions on the bases and specular reflection on the lateral surface, which includes the case of different constant temperatures at the bases.

Because of boundary interactions with thermostats, this problem lies in the framework of non-equilibrium statistical mechanics, see \cite{MR3223169}. As a result, the steady solution of the problem is called a \emph{non-equilibrium steady state} (NESS), which cannot be of Maxwellian (Gaussian) form.

Our main goal is to prove the existence and uniqueness of the NESS, and to establish its stability in a perturbative regime by showing convergence in time of solutions to the steady state. To our knowledge, this is the first result of this type for the Boltzmann equation in bounded domains with non-isothermal Maxwell boundary conditions.

\subsection{Framework}\label{sec:Framework}
We consider a small $\eps\in (0,1)$ and we study the following Boltzmann equation
\be
\varepsilon\partial_t F = -v\cdot\grad_x F + \varepsilon^{-1} \QQ(F,F) \quad\text{ in }\UU:= (0,+\infty)\times\Omega\times\mathbb{R}^3,\label{eq:BE}
\ee
where $F = F(t, x, v)$ is a density function representing particles which at time $t\in (0,\infty)$, are located at position $x\in \Omega\subset \R^3$ and move with velocity $v\in \R^3$. 

The presence of the small parameter $\eps > 0$ in the equation reflects the fact that the system is close to the \emph{hydrodynamic limit}. For a detailed discussion of the physical interpretation and the main mathematical results concerning this type of limit, we refer the reader to \cite{MR2683475} and the references therein.

The \emph{Boltzmann collision operator} $\QQ$ represents the collisions between particles inside $\Omega$, and---for any two sufficiently regular functions $G, H:\R^3\to \R$---is given by the bilinear form
\beqn
\QQ(G,H):={1\over 2}\int_{\R^3} \int_{\mathbb{S}^{2}} \BB\, \left[ G(v_*')H(v') +H(v_*')G(v') -G(v_*)H(v) - G(v) H(v_*) \right] \d\sigma \d v_* ,
\eeqn
where we have defined the post-collisional velocities
\begin{equation*}
    v':= v -((v-v_*)\cdot \sigma)\sigma \quad \text{ and } \quad  v_*':= v_* +((v-v_*)\cdot \sigma)\sigma,
\end{equation*}
with $\sigma \in \mathbb{S}^2$. The \emph{collision kernel} $\BB=\BB(\lvert v-v_*\rvert,\sigma)$ describes the \emph{type of interaction} particles exhibit and, throughout this paper, we choose the so-called \emph{hard spheres} model by taking
\begin{equation*}
\BB(\lvert v-v_*\rvert,\sigma):= \lvert (v-v_*)\cdot\sigma\rvert.
\end{equation*}

\medskip
We assume $\Omega$ to be at least a bounded Lipschitz domain satisfying 
$$
\lv \Omega \rv = \int_\Omega \dx = 1 \quad \text{ and } \quad \int_\Omega x\, \dx =0,
$$
without loss of generality. Moreover, we assume that there exists $\delta \in W^{1,\infty}(\R^3, \R)$ such that $\Omega = \{x\in \R^3,\, \delta (x)>0 \}$, and $\lvert \delta (x) \rvert = \mathrm{dist}(x,\partial \Omega)$ on a neighborhood of the boundary. We can then define the outward normal vector 
$$
n_x = n(x):=-{\grad \delta (x)\over \lv \grad \delta (x)\rv } \quad \text{for almost every } x\in \bar \Omega.
$$
We now introduce the boundary set $\Sigma = \partial \Omega\times \R^3$ and we distinguish between the sets of \emph{outgoing} ($\Sigma_+$), \emph{incoming} ($\Sigma_-$), and \emph{grazing} ($\Sigma_0$) velocities at the boundary defined as
\beqn
\Sigma_\pm:= \{(x,v)\in \Sigma,\, \pm n_x\cdot v>0 \}, \quad \text{ and } \quad  \Sigma_0:= \{ (x,v)\in \Sigma, \, n_x\cdot v =0\}.
\eeqn
Moreover, we denote $\Gamma:= (0,\infty)\times\Sigma$ and, accordingly, $\Gamma_{\pm}:= (0,\infty)\times \Sigma_\pm$. We define $\gamma F$ as the trace function associated with $F$ over $\Gamma$ and $\gamma_\pm F:= \Ind_{\Gamma_{\pm}} \gamma F$. 
 
We then complement the Boltzmann equation~\eqref{eq:BE} with the \emph{non-isothermal Maxwell boundary condition}
\be\label{eq:BEBC}
\gamma_-F(t, x, v)=\RRR_\Theta \gamma_+F (t, x, v):= (1-\iota(x)) \SSS\gamma_+F(t, x, v) + \iota(x) \DDD_\Theta \gamma_+F(t, x, v) \quad  \text{ on }\Gamma_-,
\ee
for an \emph{accommodation coefficient} $\iota:\partial \Omega\to [0,1]$, where we have defined the \emph{specular reflection} operator
\begin{equation*}
\SSS \gamma_+F( t, x,v):= \gamma_+F(t , x,\VV_x v) \quad \text{ with } \quad \VV_x v = v - 2(n_x\cdot v)n_x,
\end{equation*}
and the \emph{diffusive reflection} operator
\begin{equation*}
\DDD_\Theta \gamma_+F(t , x,  v):=  \MMM_\Theta(v) \widetilde{\gamma_+F}  \quad \text{where}   \quad \widetilde{\gamma_+F}=\int_{\R^3}\gamma_+F(t, x,u) (n(x)\cdot u)_+du .
\end{equation*}
We have also defined the \emph{Maxwellian} distributions
\be\label{def:MaxwellianDist}
\MMM_\Theta:= \sqrt{2\pi \over \Theta} \MM_\Theta  \quad \text{ with } \quad \MM_\Theta = \MM_\Theta(x, v):= (2\pi \Theta)^{-3/2} \exp\left\{-{\lv v\rv^2 \over 2\Theta}\right\},
\ee
associated with a prescribed wall temperature $\Theta = \Theta(x) := 1 +  \eps^{\qqqq} \, \vartheta(x)$, with $\qqqq=12$ motivated by Remark~\ref{rem:Def_qqqq}, and a function $\vartheta \in L^{\infty} (\partial \Omega)$ describing the \emph{temperature fluctuation}, satisfying 
$$
\lvv \vartheta \rvv_{L^\infty(\partial \Omega)} \leq  \vartheta_0 \leq 1/8   .
$$
In particular, this implies that for every $x\in \partial \Omega$ there holds
\be\label{eq:ConditionsTheta}
\Theta_* \leq \Theta(x) \leq \Theta^*, \quad \text{ for some } \quad \frac 78 < 1-\vartheta_0  \leq \Theta_*\leq \Theta^*\leq 1+\vartheta_0< \frac 98 .
\ee
Furthermore, it is worth noting that the constant $\sqrt{2\pi / \Theta}$ in the definition of $\MMM_\Theta$ ensures the normalization condition $\widetilde {\MMM_\Theta}=1$, for every $x\in \partial \Omega$. \\

We now present the two types of \emph{geometric assumptions} for our domain $\Omega$, and the respective choice for the accommodation coefficient in each case. 
\begin{enumerate}[leftmargin=*]
\item[(H1)]\label{item:H1} Assume $\Omega\subset\R^3$ is an open $C^2$ domain, and $\delta\in C^{2}(\R^3, \R)\cap W^{3,\infty} (\R^3, \R)$. Moreover, take $\iota\in C(\partial\Omega)$ and assume that there is $\iota_0\in (0,1]$ such that for every $x\in \partial \Omega$ there holds $\iota(x) \in [\iota_0, 1]$.
\item[(H2)]\label{item:H2} Assume $\Omega =  (-L, L) \times \Omega_0$, for some $L>0$ and where $\Omega_0\subset \R^2$ is the $2$-dimensional disk of radius $\RRRR>0$ centered at the origin. In this case we also define 
\begin{equation*}
    \Lambda_1:= \{-L\}\times\Omega_0,\quad
    \Lambda_2:= \{L\}\times\Omega_0, \quad
    \Lambda_3:= (-L,L)\times \partial \Omega_0 ,\label{eq:cylinderDefinition}
\end{equation*}
and $\Lambda:= \Lambda_1\cup \Lambda_2\cup \Lambda_3$. Furthermore, we impose  \emph{mixed} boundary conditions by taking $\iota= \Ind_{\Lambda_1 \cup \Lambda_2}$, i.e. purely diffusive boundary condition on the bases of the cylinder ($\Lambda_1\cup\Lambda_2$), and specularity on the lateral surface ($\Lambda_3$).
\end{enumerate}

Finally, we complement Equation \eqref{eq:BE}-\eqref{eq:BEBC} with the initial condition
\be\label{eq:BEIC}
F (t = 0, \cdot ) = F_0 \quad \text{ in } \OO:= \Omega \times \R^3,
\ee
for some function $F_0$ satisfying $\lla F_0\rra_\OO := \displaystyle \int_{\OO} F_0   \, \dv\dx =1$.


\subsection{Main results and discussion}\label{sec:MainResult} 

In order to express our main results we need to introduce the set of the so-called \emph{admissible weight functions} $\omega:\R^3 \to [1,\infty)$ defined by 
$$
\omega (v)=e^{\zeta\lvert v\rvert^2}, \quad \text{ with }  \zeta \in \left( \frac 1{4(1-\vartheta_0 )}, \frac 1{2(1+\vartheta_0)} \right).
$$
In particular, we note that this choice of weight functions is such that 
$$
\zeta-\frac 1{2\Theta} <0 \quad \text{ and } \quad -2\zeta  + \frac 1{2\Theta} <0,
$$
thus there holds respectively
$$
\omega\MMM_\Theta \in L^1(\OO)\cap L^\infty(\OO) \quad \text{ and } \quad \omega^{-2} \MMM_\Theta^{-1}\in L^1(\OO)\cap L^\infty(\OO).
$$

Moreover, for a given measure space $(Z,\ZZZ,\mu)$, a weight function $\rho: Z \to (0,\infty)$, and an exponent $p \in [1,\infty]$, we define the weighted Lebesgue spaces $L^p_\rho(Z)$ 
associated to the norm 
\be\label{def:Lebesgue_weighted_spaces}
\| g \|_{L^p_\rho(Z)} = \| \rho g \|_{L^p(Z)}. 
\ee

\medskip
Finally, we define $\MM := \MM_1$, where we recall that $\MM_1$ is defined in \eqref{def:MaxwellianDist}. In this framework, we have the following results for the Boltzmann equation with non-isothermal Maxwell boundary conditions.

\begin{theo}\label{theo:NESS}
Assume that either Assumption \ref{item:H1} or Assumption \ref{item:H2} holds, and let $\omega$ be an admissible weight function.

There is a constant $\eps_\star > 0$ such that for every $\eps \in (0, \eps_\star)$, there is $\vartheta_\star = \vartheta_\star(\eps)>0$, satisfying $\vartheta_\star (\eps)\to 0$ as $\eps\to 0$, such that for every $\vartheta_0 \in (0, \vartheta_\star)$, there exists a unique normalized non-equilibrium steady state \hbox{$\FFFF \in L^\infty_\omega(\UU)$}, to the steady problem associated with the Boltzmann equation~\eqref{eq:BE}--\eqref{eq:BEBC}. 
Furthermore, there holds
\be\label{eq:BE_NESS_eps}
\lvv \FFFF - \MM \rvv_{L^\infty_\omega(\OO)} \leq  \lambda(\eps),
\ee
for some $\lambda(\eps)>0$, such that $\lambda(\eps)\to 0$ as $\eps\to 0$.
\end{theo}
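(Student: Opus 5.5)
The plan is a perturbative fixed-point argument around the global Maxwellian $\MM$. We write the steady problem for $F = \MM + f$:
\[
0 = -v\cdot\grad_x f + \eps^{-1}\LL f + \eps^{-1}\QQ(f,f), \qquad \gamma_- f = \RRR_\Theta \gamma_+ f + (\RRR_\Theta - \RRR_1)\gamma_+\MM \quad\text{on }\Sigma_-,
\]
where $\LL f = \QQ(\MM,f)+\QQ(f,\MM)$. Since $\RRR_1\gamma_+\MM = \gamma_-\MM$, the boundary source is
\[
\iota(x)\bigl(\MMM_\Theta - \MMM_1\bigr)\widetilde{\gamma_+\MM}.
\]
Because $\Theta = 1+\eps^{\qqqq}\vartheta$, this source is bounded in $L^\infty_\omega(\Sigma_-)$ by $C\eps^{\qqqq}\vartheta_0$, using the admissibility of $\omega$ to control $\omega(\MMM_\Theta-\MMM_1)$ via a mean value argument in $\Theta$. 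We impose the normalization $\lla f\rra_\OO = 0$. Mass is conserved because $\widetilde{\MMM_\Theta}=1$ means the reflection operator preserves the boundary flux.

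The core step is a linear theory in $L^\infty_\omega$ with $\eps$-dependent constants. For the steady linear problem
\[
v\cdot\grad_x g - \eps^{-1}\LL g = S, \qquad \gamma_- g = \RRR_\Theta\gamma_+ g + r, \qquad \lla g\rra_\OO = 0,
\]
I aim to prove existence, uniqueness and the bound
\[
\|g\|_{L^\infty_\omega} \le C\eps^{-k}\bigl(\|\nu^{-1}S\|_{L^\infty_\omega} + \|r\|_{L^\infty_\omega(\Sigma_-)}\bigr)
\]
for some fixed power $k$. The choice $\qqqq=12$ presumably absorbs such losses (Remark~\ref{rem:Def_qqqq}).

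To get this bound I would proceed as follows.
\begin{itemize}
\item First, derive an $L^2$ estimate for the isothermal reflection $\RRR_1$: microscopic coercivity of $-\LL$, boundary dissipation from Darrozès--Guiraud (using $\iota\ge\iota_0$ under (H1), or diffusion on the bases under (H2)), and macroscopic control via a duality/test-function argument (elliptic problems with Neumann or mixed conditions adapted to the cylinder). This gives an $L^2$ constant of size roughly $\eps^{-1}$.
\item Second, upgrade to $L^\infty_\omega$ by the standard $L^2$–$L^\infty$ method: iterate the Duhamel formula along characteristics with stochastic cycles, treat $K$ in the splitting $\LL=-\nu+K$ by double iteration, and lose further powers of $\eps$ from the time scaling.
\item Third, pass from $\RRR_1$ to $\RRR_\Theta$ perturbatively. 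The difference $(\RRR_\Theta-\RRR_1)\gamma_+g$ is bounded by $C\eps^{\qqqq}\vartheta_0\|\gamma_+ g\|_{L^\infty_\omega}$ (trace controlled by the $L^\infty$ bound along characteristics), so it is absorbed once $\eps^{\qqqq-k}\vartheta_0$ is small. This is where $\vartheta_\star(\eps)$ comes from.
\end{itemize}
The $L^2$ step also needs care for the mass constraint, since the kernel of the isothermal problem is spanned by $\MM$; the normalization removes it.

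With the linear theory in hand, define the map $\Phi(f)=g$ solving the linear problem with $S=\eps^{-1}\QQ(f,f)$ and $r$ the boundary source above. The standard estimate $\|\nu^{-1}\QQ(f,h)\|_{L^\infty_\omega}\lesssim\|f\|_{L^\infty_\omega}\|h\|_{L^\infty_\omega}$ gives
\[
\|\Phi(f)\|_{L^\infty_\omega} \le C\eps^{-k}\bigl(\eps^{-1}\|f\|^2_{L^\infty_\omega} + \eps^{\qqqq}\vartheta_0\bigr).
\]
Take the ball of radius $\lambda = 2C\eps^{\qqqq-k}\vartheta_0$. For $\vartheta_0<\vartheta_\star(\eps)$ small enough that $C\eps^{-k-1}\lambda<1/4$, the map $\Phi$ is a contraction on that ball. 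This yields a unique $\FFFF=\MM+f$ in the ball, and $\lambda(\eps)\to0$ since $\vartheta_\star(\eps)\to0$. Nonnegativity of $\FFFF$ would follow either from the stability/convergence of the evolution problem or from a positivity-preserving iteration scheme.

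I expect the main obstacle to be the linear $L^\infty_\omega$ estimate with explicit polynomial dependence on $\eps$ and uniform treatment of the boundary. The hard parts are the macroscopic $L^2$ estimate in the cylinder (H2), with specular lateral walls and edges, and the trace control needed to absorb the non-isothermal perturbation at the boundary.
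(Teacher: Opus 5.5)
Your route is sound, but it differs from the paper in where the wall-temperature defect is handled.

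\emph{The paper's route.} It keeps $\RRR_\Theta$ inside the linear operator. It proves a hypocoercive $L^2$ inequality for the \emph{evolution} problem (Theorem~\ref{theo:Pert_hypo}), in which the defect appears as $\vartheta_0'\|\iota^{1/2}\widetilde{\gamma_+f}\|^2_{L^2(\partial\Omega)}$ plus a constant coming from $\psi$. Only in the $L^2$--$L^\infty$ step is this flux term absorbed, by bounding it with the $L^\infty_\omega$ trace (see \eqref{eq:ControlBoundaryInfty}), with a threshold $\vartheta_1$ uniform in $\eps$. Steady bounds come from viewing a steady solution as a stationary evolution, and existence from Ces\`aro means of the evolution. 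The final contraction is on a ball of radius $\lambda(\eps)\sim\varpi_\eps^{-1}$ with $\vartheta_\star\le\lambda^2$.

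\emph{Your route.} You push $(\RRR_\Theta-\RRR_1)\gamma_+g=\iota(\MMM_\Theta-\MMM_1)\widetilde{\gamma_+g}$ into the inflow datum and absorb it at the $L^\infty_\omega(\bar\OO)$ level, so only an isothermal steady theory with a boundary source is needed. This is more modular and bypasses the non-isothermal hypocoercivity entirely. The paper's route instead yields, as a by-product, a linear hypocoercivity theory for $\RRR_\Theta$ and an $\eps$-uniform linear threshold on $\vartheta_0$; yours becomes $\eps$-dependent when $k>12$.

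Two points you should make explicit. First, the isothermal $L^2$ step with a boundary source closes cleanly, with no trace bound, because the full datum $r_g:=\iota(\MMM_\Theta-\MMM_1)\widetilde{\gamma_+g}+\iota\psi$ has zero flux at \emph{every} boundary point, not just on average. This keeps $\mu\cdot n_x=0$ for the macroscopic estimates. Moreover, since $\RRR_1\gamma_+g=\DDD_1\gamma_+g+(1-\iota)\SSS\DDD_1^\perp\gamma_+g$ and $\MMM_1\MM^{-1}\equiv\sqrt{2\pi}$, the dangerous cross term
$\int_{\Sigma_-}\DDD_1\gamma_+g\,r_g\,\MM^{-1}(n_x\cdot v)_-\,\dv\,\d\sigma_x=\sqrt{2\pi}\int_{\partial\Omega}\widetilde{\gamma_+g}\,\bigl(\int_{\R^3}r_g\,(n_x\cdot v)_-\,\dv\bigr)\d\sigma_x$
vanishes. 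What remains involves only $\iota(1-\iota)\DDD_1^\perp\gamma_+g$, which is controlled by the boundary dissipation under (H1) and is identically zero under (H2).

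Second, you only describe the a priori bound for the steady linear problem; existence needs a mechanism. Options include the paper's Ces\`aro means $t_k^{-1}\int_0^{t_k}f_s\,\ds$ of the evolution, combined with mass conservation and the uniform $L^\infty_\omega$ bound, or a penalization $\alpha g$ with $\alpha\downarrow0$ under your uniform estimates.
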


\begin{rem}
The steady state in Theorem~\ref{theo:NESS} is given by $\FFFF=\MM+\FFF$, where $\FFF$ is the unique stationary solution with zero mass of the perturbed nonlinear equation arising from this decomposition, and constructed in Theorem~\ref{theo:NESS_L}.
\end{rem}

\medskip
Furthermore, we have the following stability result.

\begin{theo}\label{theo:Main}
Under the same conditions as in Theorem~\ref{theo:NESS}, for every $\eps\in (0,\eps_\star)$ and every $\vartheta_0 \in (0,\vartheta_{\star})$, there is $\eta(\eps)>0$, satisfying $\eta(\eps)\to 0$ as $\eps\to 0$, such that for every $F_0\in L^\infty_{\omega}(\OO)$ satisfying
\beqn
\lvv F_0 - \FFFF \rvv_{L^\infty_\omega(\OO)} \leq  (\eta(\eps))^2,
\eeqn
there exists $F\in  L^\infty_\omega(\UU)$ unique global weak solution to the Boltzmann equation~\eqref{eq:BE}--\eqref{eq:BEBC}--\eqref{eq:BEIC}.
Furthermore, there is a constructive constant $\theta>0$ such that 
\beqn
\lvv F_t  -  \FFFF\rvv_{L^\infty_\omega(\OO)}  \lesssim  \eta(\eps) \, e^{-\theta t}\label{eq:BEdecayFinal} \qquad  \forall t\geq 0.
\eeqn
\end{theo}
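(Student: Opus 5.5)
We write $F = \FFFF + f$ and study the perturbation $f$. Since $\FFFF = \MM + \FFF$ is the steady state of Theorem~\ref{theo:NESS}, $f$ solves
\[
\eps\partial_t f = -v\cdot\grad_x f + \eps^{-1}\bigl(\QQ(\MM,f)+\QQ(f,\MM)\bigr) + \eps^{-1}\bigl(\QQ(\FFF,f)+\QQ(f,\FFF)\bigr) + \eps^{-1}\QQ(f,f).
\]
The boundary condition is $\gamma_- f = \RRR_\Theta\gamma_+ f$, and the initial datum is $f_0 = F_0-\FFFF$. Mass conservation holds for the Maxwell condition because $\widetilde{\MMM_\Theta}=1$. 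Hence $\lla f_t\rra_\OO = 0$ for all $t\ge0$, since both $F_0$ and $\FFFF$ are normalized.

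\textbf{Step 1 (linear theory).} Write $\LL_\eps$ for the operator $-\eps^{-1}v\cdot\grad_x + \eps^{-2}\bigl(\QQ(\MM,\cdot)+\QQ(\cdot,\MM)\bigr)$ with boundary operator $\RRR_\Theta$. I would take from the earlier linear analysis (the same one behind Theorem~\ref{theo:NESS_L}) that this operator generates a semigroup $S_{\LL}(t)$ with the following property: on zero-mass data it decays exponentially in $L^\infty_\omega$ at a rate $\theta>0$ that is uniform in $\eps$, with constants at most polynomial in $\eps^{-1}$. The route is as follows.
\begin{itemize}
\item An $L^2$ hypocoercivity estimate with the microscopic/macroscopic splitting; the boundary temperature mismatch $\MMM_\Theta-\MMM_1 = O(\eps^{\qqqq})$ is treated as a small perturbation.
\item An $L^2\to L^\infty_\omega$ upgrade via a Duhamel/iterated-characteristics argument under \ref{item:H1} or \ref{item:H2}.
\end{itemize}
The choice of $\zeta$ in the admissible weights ensures that $\omega\MMM_\Theta$ and $\omega^{-2}\MMM_\Theta^{-1}$ are integrable. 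This makes the diffusive part of $\RRR_\Theta$ bounded in the weighted spaces.

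\textbf{Step 2 (linear correction around the NESS).} The term $\eps^{-2}\bigl(\QQ(\FFF,\cdot)+\QQ(\cdot,\FFF)\bigr)$ is a perturbation of size $\eps^{-2}\lambda(\eps)$ in $L^\infty_\omega$. To be precise, it maps $L^\infty_\omega$ to $L^\infty_{\omega\nu^{-1}}$, where $\nu$ is the collision frequency, and this loss is absorbed through the standard gain of $\nu$ along characteristics. Provided $\lambda(\eps)\ll \eps^{2}\times$(the polynomial constants of Step 1), which is where $\qqqq=12$ and $\vartheta_\star(\eps)$ enter, a perturbation argument keeps the decay at rate $\theta/2$ for the full linearized operator around $\FFFF$ on zero-mass functions.

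\textbf{Step 3 (nonlinear fixed point).} I would then build $f$ by a Duhamel fixed point in the space
\[
X=\Bigl\{f:\ \sup_t e^{\theta t}\lvv f_t\rvv_{L^\infty_\omega}\le \eta(\eps)\Bigr\},
\]
with $\theta$ now denoting the reduced rate. The required ingredients are:
\begin{itemize}
\item the bilinear estimate $\lvv \nu^{-1}\omega\QQ(g,h)\rvv_{L^\infty}\lesssim \lvv g\rvv_{L^\infty_\omega}\lvv h\rvv_{L^\infty_\omega}$;
\item a gain estimate for the semigroup applied to $\nu$-weighted sources;
\item an initial datum of size $\eta^2$.
\end{itemize}
With these, the map sends $X$ into itself and contracts when $\eta(\eps)$ is chosen small compared with the $\eps$-dependent constants, for instance $C\eps^{-k}\eta\le 1/2$. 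The contraction gives both existence and uniqueness in $X$. Uniqueness among all $L^\infty_\omega$ weak solutions would follow from the same estimate on the difference of two solutions over a short time, iterated in time. The decay $\lvv F_t-\FFFF\rvv_{L^\infty_\omega}\lesssim\eta e^{-\theta t}$ is then immediate.

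\textbf{Main obstacle.} I expect Step 1–2 to be the crux, specifically getting uniform-in-$\eps$ exponential decay in $L^\infty_\omega$ with explicit $\eps$-powers. This is hardest under \ref{item:H2}, where the specular lateral boundary makes the characteristic analysis and the macroscopic (Korn/Poincaré-type) estimates delicate. My first attempt would reuse the isothermal theory of the earlier sections, treat $\Theta\neq1$ purely perturbatively, and track every power of $\eps$ so that $\eps^{\qqqq}$ dominates.
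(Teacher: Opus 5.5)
Your proposal is correct and follows essentially the paper's route. You decompose around $\FFFF$ so that the boundary condition becomes homogeneous, and you treat $\eps^{-2}\bigl(\QQ(\FFF,\cdot)+\QQ(\cdot,\FFF)\bigr)$ as a source absorbed into the $L^\infty_\omega$ decay estimate (Proposition~\ref{prop:LinftyDecay_stabilitity_h}), using the smallness $\lambda(\eps)\le (4\varpi_\eps C_\QQ)^{-1}$ that is already built into the NESS construction. You then close a contraction with initial data of size $\eta^2$ in a ball of size $\eta$. One small correction: the $\eps$-dependence of $\vartheta_\star(\eps)$ comes from this nonlinear smallness of $\FFF$ relative to $1/\varpi_\eps$, not from $\qqqq=12$, which only makes the $\vartheta_0$-terms in the linear estimates uniform in $\eps$ (Remark~\ref{rem:Def_qqqq}).
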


\begin{rem}
The solution of the Boltzmann equation given by Theorem~\ref{theo:Main} is constructed in Theorem~\ref{theo:Main_h} after the analysis from Remark~\ref{rem:Eq_h_0}.
\end{rem}

\subsection{Strategy for the proof of the main results}\label{ssec:Strategy}
In this subsection we briefly explain the main ideas leading to the proof of our main theorems. For clarity, we separate each of the fundamental steps leading to our results in different sub-subsections.

\subsubsection{Transformation of the problem}
We write $F=\MM + f$; then $f$ solves the following evolution equation
\begin{equation}
	\left\{\begin{array}{rlll}
		 \partial_t f &=& \displaystyle -  \eps^{-1} \,   v\cdot \grad_x  f + \eps^{-2} \, \CCC f + \eps^{-2} \,  \QQ(f,f)   & \text{ in } \UU \\
		\gamma_-f&=&\RRR_\Theta \gamma_+f  + \iota \psi&\text{ on }\Gamma_-\\
		f_{t=0}&=&f_0 &\text{ in }\OO,
	\end{array}\right.\label{eq:PBE}
\end{equation}
where we have defined the \emph{linearized Boltzmann collision operator} $\CCC f:=\QQ(\MM,f)+\QQ(f,\MM)$, and the inflow-type function
$$
\psi = \psi(x,v) := (\Theta)^{-1/2} \MM_{\Theta}  -\MM.
$$

\subsubsection{Hypocoercivity}\label{sssec:Hypo_decay}
Let us now explain the key estimate that allows us to prove our results. To do this, we introduce the auxiliary evolution equation 
\be
	\left\{\begin{array}{rlll}
		\partial_{t} g &=& -v\cdot \grad_x g + \CCC g &\text{ in }\UU\\
		\gamma_- g&=&\RRR_\Theta \gamma_+g  &\text{ on }\Gamma_{-}\\
		 g_{t=0}&=& g _0  &\text{ in }\OO,
	\end{array}\right.\label{eq:Hypo_decay_intro}
\ee
where, for the sake of simplicity, we have disregarded the rescaling parameter $\eps$, the inflow-type term $\psi$, and the non-linear term $\QQ$ from Equation~\eqref{eq:PBE}.

We consider the weighted Hilbert space $\HH :=  L^2_{\MM^{-1/2}}(\OO)$, as defined in \eqref{def:Lebesgue_weighted_spaces}, equipped with the scalar product
$$
\la g, h\ra_\HH := \int_\OO g(x,v)\, h (x,v) \MM^{-1} (v) \, \dv\dx,
$$
and its associated norm $\lvv\cdot \rvv_\HH$. 

The main novelty of this paper, established during Section~\ref{sec:Hypo_decay} for a general equation as the one above, consists in proving that, given that the temperature fluctuation ($\vartheta_0>0$) is small enough then: given $g_0\in \HH$, satisfying $\lla g_0\rra_\OO =0$, there is a unique global weak solution $g\in C(\R_+, \HH)$ to Equation~\eqref{eq:Hypo_decay_intro}, and there holds
\be\label{eq:Hypo_decay_HH_intro}
\| g_t \|_{\HH} \lesssim e^{-\kappa t} \| g_{0} \|_{\HH}  \qquad \forall \, t \ge 0.
\ee


Let us briefly explain the ideas and main challenges in the obtention of this estimate:
Exploiting the presence of the hypocoercive operator $\CCC$, we employ the techniques developed in \cite{MR4581432} and we obtain that there is $\kappa >0$ such that 
\be
    \lVert g_t \rVert_{ \HH}^2  \lesssim e^{-\kappa t}\lVert g_0\rVert_{ \HH} ^2
    +    \vartheta_0   \int_0^t e^{- \kappa (t-s)} \left\lVert  (\iota)^{1/2} (\widetilde{\gamma_+ g_s} ) \right\rVert^2_{L^2(\partial \Omega)} \ds  \qquad \forall t\geq 0. \label{eq:L2Decay_pert_intro_Hypo_decay}
\ee
Physically, this estimate is in accordance with the presence of non-equilibrium effects coming from the boundary. Mathematically, our computations align with the physical intuition and show that this is the reason why a direct application of hypocoercivity methods does not allow us to immediately recover a decay estimate as in \cite{MR4581432}.

Nevertheless, a key observation is that the additional boundary term in \eqref{eq:L2Decay_pert_intro_Hypo_decay} is controlled by the temperature fluctuation $\vartheta_0$. The idea is then to prove that there is $\kappa_1>0$ such that the following trace bound holds
\be\label{eq:A_priori_bound_Bdy_Hypo_decay_Intro}
\int_0^t \int_{\{ x\in \partial \Omega; \, \iota(x) \neq 0\} }\int_{\R^3}  (\gamma g_s)^2 \, (n_x\cdot v)^2 \, \la v\ra^{-2} \, \MM^{-1} \,  \dv\d\sigma_x\ds  \lesssim e^{\kappa_1  t} \lvv g_0\rvv_{\HH}^2 \qquad \forall t\geq0,
\ee
where $\d\sigma_x$ denotes the boundary measure. 
Using the Cauchy-Schwarz inequality we can then combine \eqref{eq:L2Decay_pert_intro_Hypo_decay} and \eqref{eq:A_priori_bound_Bdy_Hypo_decay_Intro} and we obtain that 
$$
    \lVert g_t \rVert_{ \HH}^2  \lesssim \left( e^{-\kappa t} +  \vartheta_0 e^{\kappa_1 t} \right) \lVert g_0\rVert_{ \HH} ^2.
$$
Choosing $T>0$ large enough and $\vartheta_0$ small enough we further deduce that 
$$
    \lVert g_T \rVert_{ \HH}^2  \leq \frac 12 \lVert g_0\rVert_{ \HH} ^2,
$$
which in turn yields the exponential decay for the solutions of Equation~\eqref{eq:Hypo_decay_intro}.

\subsubsection{Weighted $L^2$ estimates}
We now want to apply the ideas developed above to study Equation~\eqref{eq:PBE}.  To do this we introduce the \emph{linearized Boltzmann equation} 
\begin{equation}
	\left\{\begin{array}{rlll}
		 \partial_t f &=&  - \eps^{-1} v\cdot \grad_x f +  \eps^{-2} \CCC f  =: \LLL^\eps f &\text{ in }\UU\\
		\gamma_-f&=&\RRR_\Theta\gamma_+f  + \iota \,  \psi&\text{ on }\Gamma_-\\
		f_{t=0}&=&f_0 &\text{ in }\OO.
	\end{array}\right.\label{eq:LPBE}
\end{equation}
We also note that, 
employing the fundamental theorem of calculus, there holds
\bean
\psi &=&
 \int_1^{1+ \eps^{\qqqq}\vartheta} \left(\frac \d{\d z} {\MM_z  (v) \over \sqrt z} \right) \d z
 = {1 \over (2\pi)^{d/2}} \int_1^{1+\eps^{\qqqq}\vartheta}   \left[ -\frac 12 z^{-1}  -\frac 32 z^{-1-3/2} + \frac {\lv v\rv^2}{2z^2} \right]  z^{-1/2}  \exp\left\{ -{\lv v\rv^2 \over 2z} \right\} \,  \d z\\
&\leq & \vartheta_0 \, \eps^{\qqqq} \,  {1 \over (2\pi)^{3/2}}  \left[ \frac 12 \Theta_*^{-1} + \frac 32 \Theta_*^{-1-3/2} + \frac 1 {2\Theta_*^2} \right] \Theta_*^{-1/2}  \la v\ra^2 \exp\left\{ -{\lv v\rv^2 \over 2\Theta^*} \right\} ,
\eean
hence, taking the $L^\infty_x(\partial\Omega)$ norm in the above inequality, we obtain
\be\label{eq:Controlpsi}
\lvv \psi \rvv_{L^\infty_x(\partial \Omega)}  \lesssim \vartheta_0 \, \eps^{\qqqq}\,  \la v \ra^2 \exp\left\{ -{\lv v\rv^2 \over 2\Theta^*} \right\}.
\ee
Hence, due to the very definition of $\psi$, there also holds
\be\label{eq:PertTempBdy}
\lvv \MMM_\Theta - \MMM_1  \rvv_{L^\infty_x(\partial \Omega)}  \lesssim \vartheta_0 \, \eps^{\qqqq}\,  \la v \ra^2 \exp\left\{ -{\lv v\rv^2 \over 2\Theta^*} \right\}.
\ee

Using this and applying the ideas developed before, we prove during Section~\ref{sec:Hypo_Pert} that any solution of Equation~\eqref{eq:LPBE} satisfies the bound
\begin{multline}
    \lVert f_t \rVert_{ \HH}  \lesssim e^{-\kappa t}\lVert f_0\rVert_{ \HH} 
    +    \vartheta_0^{1/2} \eps^{(\qqqq-1)/2} \left( \int_0^t e^{- 2\kappa (t-s)} \left\lVert  (\iota)^{1/2} (\widetilde{\gamma_+ f_s} ) \right\rVert^2_{L^2(\partial \Omega)} \ds\right)^{1/2} \\
    +  \vartheta_0^{1/2} \eps^{(\qqqq-1)/2}  , \label{eq:L2Decay_pert_intro}
\end{multline}
for every $t\geq0$.

In particular, we note the presence of the last term in the right hand side of \eqref{eq:L2Decay_pert_intro}, which encodes the presence of the non-homogeneous term $\psi$ at the boundary. 

Let us also note that the above result is uniform in $\eps$ for every $\qqqq\geq 1$, and it is in accordance with previous results from \cite{MR4581432, GuoZhou2024}. 

We also note that, though possible, we do not go further than this estimate in the framework of the hydrodynamic limit as explained in Sub-subsection~\ref{sssec:Hypo_decay}. This would cause higher (exponential) dependencies of $\vartheta_0$ on $\eps$, whereas our objective is to consider the lowest possible dependencies of the parameters on $\eps$ such that we observe a long-time behavior of the solutions. 

We thus argue in the spirit of \cite{MR3740632, MR4265982}, and we use \eqref{eq:L2Decay_pert_intro} together with the $L^2-L^\infty$ method from \cite{BE_iso} to close an a priori bound for the solutions of Equation~\eqref{eq:LPBE}.

\subsubsection{Weighted $L^\infty$ estimates}
We now consider a function $G: \UU^\eps \to \R$ satisfying $\lla G_t\rra_\OO=0$ for every $t\geq0$, and we study the following perturbed evolution equation
\be
	\left\{\begin{array}{rlll}
		\partial_{t} f &=& \LLL^\eps f + \eps^{-2} \, G&\text{ in }\UU^\eps\\
		\gamma_- f&=&\RRR_\Theta \gamma_+f  + \iota \psi&\text{ on }\Gamma_{-}^\eps\\
		 f_{t=0}&=& f _0  &\text{ in }\OO^\eps.
	\end{array}\right.\label{eq:PLPBE}
\ee
Using the above $L^2$ estimate, and adapting the $L^2-L^\infty$ method from \cite{BE_iso} to this framework, we obtain in Section~\ref{sec:AprioriLinfty} that for $\eps, \vartheta_0 >0$ small enough there holds 
\be
\lVert  f_t \rVert_{L^{\infty}_{\omega}(\bar \OO)} \leq  \varpi_\eps e^{-\theta t} \left(   \lVert f_0\rVert_{L^{\infty}_{\omega}(\OO)} +  \underset{s\in[0,t]}{\sup}\left[ e^{\theta s} \lVert G_s \rVert_{L^{\infty}_{\omega\nu^{-1}}(\OO)}\right]\right)  + C_0 \eps^{(\qqqq-12)/2}\vartheta_0 \quad \forall t\geq 0, \label{eq:LinftyEstPert_Intro}
\ee
for some universal constants $C_0, \theta>0$, and for some $\varpi_\eps>0$ satisfying that $\varpi_\eps  \to \infty$ as $\eps \to 0$. 
In particular, we note that the choice of $\qqqq=12$ was taken to make the term related to $\vartheta_0$ in \eqref{eq:LinftyEstPert_Intro} uniform in $\eps$. 
In this way, we can identify the minimal requirements imposed by the linear analysis to derive uniform estimates on $\eps$ regarding the temperature fluctuation term $\vartheta_0$. This will be explored and exploited in future works studying the hydrodynamic limit of non-isothermal problems, which remains an open question. 

\begin{rem}\label{rem:Def_qqqq}
The role of the exponent $\qqqq$ in the definition of $\Theta$ is to understand and make explicit the minimal power of $\eps$ needed so that all terms involving $\vartheta_0$ remain uniform in $\eps$ in the linear estimates, both in $L^2$ and $L^\infty$, namely \eqref{eq:L2Decay_pert_intro} and \eqref{eq:LinftyEstPert_Intro} respectively.

By contrast, the additional $\eps$-dependence appearing in the bound for $\vartheta_\star$ in Theorem~\ref{theo:NESS} is not related to this linear mechanism. It instead arises from the nonlinear analysis, and it is reminiscent of the behavior of $\varpi_\eps$ in \eqref{eq:LinftyEstPert_Intro}.

It is also worth noting that, if one is not concerned with the long-time asymptotic behavior of $F$, a simpler choice $\qqqq = 1$ would suffice, as indicates \eqref{eq:L2Decay_pert_intro} and arguing as in \cite{GuoZhou2024}. In that case, one would moreover expect $\vartheta_\star$ to be independent of $\eps$. 
\end{rem}

\subsubsection{Existence and uniqueness of a NESS}
After establishing the above a priori estimates, we consider the steady problem 
 \be
	\left\{\begin{array}{rlll}
		- \LLL^\eps  \FF  &=& \eps^{-2} G  &\text{ in }\OO\\
		\gamma_-  \FF&=&\RRR_\Theta \gamma_+ \FF + \iota\psi &\text{ on }\Sigma_{-},
	\end{array}\right.\label{eq:PLBE_SS}
\ee
and arguing similarly to the evolution problem we obtain in Section~\ref{sec:AprioriLinfty_SS} that, if $\lla \FF\rra_\OO=0$, then for $\eps, \vartheta_0 >0$ small enough there holds
\be
\lvv \FF \rvv_{L^\infty_\omega(\bar \OO)}  \leq  \varpi_\eps  \lVert G \rVert_{L^{\infty}_{\omega\nu^{-1}}(\OO)}  + C_0 \vartheta_0 .
\label{eq:LinftyPerturbedFiniteTimeDecay_Summary_SS_Intro}
\ee
Using this estimate and the well-posedness result from Section~\ref{sec:WellPosedness}, we apply a fixed point theorem to establish the existence of a unique $\FFF$ solving the stationary equation
 \be
	\left\{\begin{array}{rlll}
		- \LLL^\eps  \FF  &=& \eps^{-2} \QQ(\FF,\FF)  &\text{ in }\OO\\
		\gamma_-  \FF&=&\RRR_\Theta \gamma_+ \FF + \iota\psi &\text{ on }\Sigma_{-},
	\end{array}\right.\label{eq:PLBE_SS_QQ}
\ee
proving an equivalent version of Theorem~\ref{theo:NESS}.

\subsubsection{Stability}
Finally, taking $f = h+ \FFF$ in Equation~\eqref{eq:PBE}, we study the resulting evolution equation for $h$ writing as
 \be
	\left\{\begin{array}{llll}
		\partial_{t} h &=& \LLL^\eps h  + \eps^{-2} \QQ(h, \FFF) + \eps^{-2} \QQ(\FFF, h) + \eps^{-2} \QQ(h,h)  &\text{ in }\UU\\
		\gamma_- h&=&\RRR_\Theta \gamma_+h  &\text{ on }\Gamma_{-}\\
		 h_{t=0}&=& h_0  &\text{ in }\OO,
	\end{array}\right.\label{eq:SLPBE_Intro}
\ee
Repeating the computations leading to \eqref{eq:LinftyEstPert_Intro}, we obtain that
\be
\lVert  h_t \rVert_{L^{\infty}_{\omega}(\bar \OO)} \leq  2\varpi_\eps \, e^{-\theta t} \left(   \lVert h_0\rVert_{L^{\infty}_{\omega}(\OO)} +  \underset{s\in[0,t]}{\sup}\left[ e^{\theta s} \lVert \QQ(h_s,h_s) \rVert_{L^{\infty}_{\omega\nu^{-1}}(\OO)}\right]\right)   ,
\label{eq:LinftyEstPert_h_Intro}
\ee
for every $t\geq0$, and $\varpi_\eps$ has been defined above. Using the estimates from Section~\ref{sec:AprioriLinfty} and the well-posedness results from Section~\ref{sec:WellPosedness}, we use again a fixed point argument to prove Theorem~\ref{theo:Main}.

\subsection{State of the art and discussion}
The Boltzmann equation aims to describe the evolution of interacting particles from a mesoscopic (or statistical) viewpoint. For a presentation of current trends of research and results regarding this problem we refer the reader to the surveys \cite{MR1942465, MR1359296}, as well as to the discussion of \cite{BE_iso}. We focus then our discussion on the novelty, significance, and limitations of our main results, as well as some possible future directions of research.

Regarding the problem of the Boltzmann equation in non-isothermal domains, previous results \cite{MR3085665, MR3740632, MR4265982} only address diffusive reflections at the boundary. This paper thus presents a novel result by considering more general boundary conditions.

\smallskip
Let us now discuss the limitations of our findings. 
To this end, we consider the first-order Hilbert expansion of $F$, given by
\be\label{eq:HilbertExpansion_Intro}
F = \MM + \eps \sqrt\MM f_1 .
\ee
It is expected (see \cite{MR2182829, MR4265982, MR2683475, GuoZhou2024}) that
$f_1$ should converge, as $\eps \to 0$, in a suitable way\footnote{More precisely, for well-prepared initial data, $f_1$ should converge towards $\displaystyle u_0 + v\cdot u_1 + u_2 {(\lv v\rv^2-3 )/ 2}$, where $u_0, u_1, u_2$ satisfy the INSF equations. We refer to \cite[Theorem~1.1]{GuoZhou18} for further details.}, to a solution of an incompressible Navier--Stokes--Fourier (INSF) system.

However, in the current literature, when the boundary is endowed with Maxwellian reflections (even in the isothermal case), existing methods fail to characterize the long-time behavior of $f_1$, beyond uniform-in-time bounds, see \cite{GuoZhou2024}. Our interest, however, lies in describing the asymptotic behavior of $F$ as time tends to infinity, which has been addressed in \cite{BE_iso} for the isothermal case. Motivated by this previous work, instead of using the above Hilbert expansion, we perturb the Boltzmann equation by writing
$$
F=\MM + \text{non-equilibrium correction } + f, 
$$
and, as a consequence, we observe that the smallness parameter ($\eta$ in Theorem~\ref{theo:Main}) depends on $\eps$. 
Making this dependence explicit, we obtain $\eta \approx \eps^{11/2}$, which means that as $\eps\to 0$, $f_1$ eventually fails to be close enough to the steady state, and thus the long-time behavior no longer holds. 

We do not claim that this result is sharp, and it naturally raises the question of whether the difficulty in simultaneously obtaining the INSF limit and long-time convergence for $f_1$ in regime of the hydrodynamic limit is due to limitations of current techniques, or whether there is a genuine mechanism preventing both from holding simultaneously.

\medskip
The novelty of our results lies in the extension of the hypocoercivity techniques from \cite{MR4581432}, originally developed for isothermal (equilibrium) problem, to the non-isothermal framework. 
It is worth noting that hypocoercivity-type results in non-isothermal domains are far from straightforward, and may even appear counterintuitive at first glance, since temperature variations destroy the usual coercive structure and introduce boundary contributions that must be handled carefully. In a broad sense, however, our techniques show that, if the temperature fluctuations at the boundary are small, then the regularizing effect of the linearized operator in the interior remains strong enough to generate a decay. 

It is also worth noting that, to our knowledge, Section~\ref{sec:Hypo_decay} presents the first hypocoercivity result for linear kinetic equations in domains equipped with non-isothermal Maxwell boundary conditions. Previous results \cite{MR3085665, MR3740632, MR4265982} do not fully derive such decay results at the linear level, and instead use complementary ultracontractivity estimates to close the a priori bounds. Furthermore, we note that our techniques are flexible enough to be adapted to problems involving, for instance, weakly coercive collision operators, as is the case for the linearized Landau equation.


\subsection{Organization of the paper} This paper is structured as follows:

In Section~\ref{sec:Hypo_decay} we use, adapt, and extend the methods from \cite{MR4581432} to treat non-isothermal Maxwell boundary conditions. We study a problem in the spirit of Equation~\eqref{eq:Hypo_decay_intro} and we establish the well-posedness of this equation in $\HH$ and the decay estimate \eqref{eq:Hypo_decay_HH_intro}

We devote Section~\ref{sec:Hypo_Pert} to employ the techniques developed in Section~\ref{sec:Hypo_decay} to deal now with a non-homogeneous equation including, besides the Maxwell boundary conditions, the inflow-type term $\psi$. We establish the well-posedness of Equation~\eqref{eq:LPBE} and the $L^2$ estimate \eqref{eq:L2Decay_pert_intro}. 

In Section~\ref{sec:AprioriLinfty}, we use the $L^2-L^\infty$ results from \cite{BE_iso}, together with the $L^2$ estimate from Section~\ref{sec:Hypo_Pert}, to obtain \eqref{eq:LinftyEstPert_Intro} for solutions of Equation~\eqref{eq:PLPBE}. Later in Section~\ref{sec:AprioriLinfty_SS}, we argue in the spirit of Section~\ref{sec:AprioriLinfty} to further establish \eqref{eq:LinftyPerturbedFiniteTimeDecay_Summary_SS_Intro} for the solutions of the associated steady problem Equation~\eqref{eq:PLBE_SS}. 

In Section~\ref{sec:WellPosedness}, we use the estimates obtained in the previous sections to establish the well-posedness of linear kinetic equations in non-isothermal domains in a weighted $L^\infty$ framework. These results rely on the theory developed in \cite[Section~6]{BE_iso}, and extend them to problems with non-isothermal boundary conditions. 

We devote then Section~\ref{sec:NESS} to use the well-posedness results from Section~\ref{sec:WellPosedness} together with the a priori estimate \eqref{eq:LinftyPerturbedFiniteTimeDecay_Summary_SS_Intro}, to construct the solution of Equation~\eqref{eq:PLBE_SS_QQ}, proving Theorem~\ref{theo:NESS}. 

Finally, in Section~\ref{sec:Stability} we prove Theorem~\ref{theo:Main} using the well-posedness results from Section~\ref{sec:WellPosedness}, the decay estimate \eqref{eq:LinftyEstPert_h_Intro} for the solutions of Equation~\eqref{eq:SLPBE_Intro}, and a fixed point argument in the spirit of \cite[Theorem 6.1]{BE_iso}.

\section{Hypocoercivity under small fluctuations of the boundary temperature}\label{sec:Hypo_decay}

In this section we present a general framework to establish hypocoercivity estimates for linearized kinetic equations in non-isothermal bounded domains, under smallness assumptions on the temperature fluctuation. 


To be more precise, we consider a prescribed boundary temperature $\TTT:\partial \Omega \to \R_+^*$, and we work on the (simplified) linear kinetic equation
\be
	\left\{\begin{array}{rlll}
		\partial_{t} g &=& -v\cdot \grad_x g + \CCC g =: \LL g &\text{ in }\UU\\
		\gamma_- g&=&\RRR_\TTT \gamma_+g  &\text{ on }\Gamma_{-}\\
		 g_{t=0}&=& g _0  &\text{ in }\OO.
	\end{array}\right.\label{eq:Hypo_decay}
\ee
We suppose that $\TTT(x) = 1+ \theta(x)$ for every\footnote{In general, we only demand this to hold for any $x\in \partial \Omega$ such that the diffusive boundary condition holds, i.e. $\iota(x)>0$.} $x\in \partial \Omega$, and we assume that $\theta:\partial \Omega \to \R$ satisfies
\be\label{eq:Condition_T_theta}
\lvv \theta\rvv_{L^\infty(\partial \Omega)} \leq \theta_0,
\ee
for some $\theta_0 \in (0, 1/8)$, so that, in particular, $\TTT\in (7/8,9/8)$. 

We then have the following hypocoercivity result,  and we devote the rest of this section to its proof.

\begin{theo}\label{theo:Hypo_decay}
Assume that either Assumption \ref{item:H1} or Assumption \ref{item:H2} holds, and assume that $g_0\in \HH$ satisfying $\lla g_0\rra_\OO =0$. 
There exists a unique global weak solution $g\in C(\R_+, \HH)$ to Equation~\eqref{eq:Hypo_decay}. 
Furthermore, there exist positive constants $\theta_\star, \kappa , C >0$ such that, for any $\theta_0 \in (0,\theta_\star)$, there holds 
\be\label{eq:HypoEquiv_Hypo_decay}
\| g_t \|_{\HH} \le C e^{-\kappa t} \| g_{0} \|_{\HH}  \qquad \forall \, t \ge 0.
\ee
\end{theo}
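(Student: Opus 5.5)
The proof follows the strategy sketched in Sub-subsection~\ref{sssec:Hypo_decay}, applied to $\LL$ with boundary operator $\RRR_\TTT$.

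\textbf{Step 1: well-posedness.} We first show that $\LL$ with boundary condition $\RRR_\TTT$ generates a strongly continuous semigroup on $\HH$. The approach is to compare $\RRR_\TTT$ with the isothermal operator $\RRR_1$. The bound \eqref{eq:PertTempBdy} (with $\theta_0$ in place of $\vartheta_0\eps^{\qqqq}$) gives
\[
\RRR_\TTT = \RRR_1 + \iota(\MMM_\TTT-\MMM_1)\,\widetilde{\cdot}.
\]
Using the Darrozès--Guiraud inequality for $\RRR_1$ in the $\MM^{-1}$-weighted trace norm, we get
\[
\|\gamma_-g\|_{L^2(\Sigma_-;|n\cdot v|\MM^{-1})}\le \|\gamma_+g\|_{L^2(\Sigma_+;|n\cdot v|\MM^{-1})} + C\theta_0\|\iota^{1/2}\widetilde{\gamma_+g}\|_{L^2(\partial\Omega)}.
\]
This yields the quasi-dissipativity estimate $\frac{d}{dt}\|g\|_\HH^2\le C\|g\|_\HH^2$. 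Here we use Green's formula, the bound $\|\widetilde{\gamma_+g}\|^2\lesssim$ boundary dissipation, and Young's inequality to absorb the $\theta_0$ terms. Existence and uniqueness of weak solutions then follow from standard trace theory for kinetic transport with a boundary operator of norm at most $1+C\theta_0$, via the usual penalized-reflection approximation, as in \cite{MR4581432}. Mass conservation $\lla g_t\rra_\OO=0$ holds because $\widetilde{\MMM_\TTT}=1$ and specular reflection preserves the flux.

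\textbf{Step 2: perturbed hypocoercive estimate \eqref{eq:L2Decay_pert_intro_Hypo_decay}.} We take the modified energy $\|g\|_\HH^2+\eta\,\langle\nabla_x u[g], j[g]\rangle$ from \cite{MR4581432}. Here $u$ solves elliptic problems (Neumann or mixed, according to (H1) or (H2)) for the macroscopic mass, momentum and energy, and $j$ are the corresponding fluxes. The isothermal computation gives coercivity of the microscopic part via $\CCC$, coercivity of the macroscopic part via the elliptic correctors, and control of the boundary terms by the boundary dissipation
\[
\|(1-P_{\gamma})\gamma_+ g\|^2,
\]
where $P_\gamma$ projects onto multiples of $\MMM_1$. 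The only new contributions are the boundary terms in which $\MMM_\TTT-\MMM_1$ appears. Each of these is bounded, via \eqref{eq:PertTempBdy} and Cauchy--Schwarz, by $\theta_0\|\iota^{1/2}\widetilde{\gamma_+g}\|^2_{L^2(\partial\Omega)}$ plus a small multiple of dissipated quantities. Grönwall's inequality then gives \eqref{eq:L2Decay_pert_intro_Hypo_decay}.

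\textbf{Step 3: trace bound \eqref{eq:A_priori_bound_Bdy_Hypo_decay_Intro}.} This is the main obstacle, since $\widetilde{\gamma_+g}$ is not controlled by the dissipation alone. The plan is to use a trace lemma with the multiplier $(n_x\cdot v)\langle v\rangle^{-2}$, extended into $\Omega$ through $\nabla\delta$. Concretely, we integrate
\[
\mathrm{div}_x\big(v\,g^2\,\MM^{-1}\,\langle v\rangle^{-2}(-\nabla\delta\cdot v)\big)
\]
over $(0,t)\times\OO$. The volume terms are bounded by $\int_0^t\|g_s\|_\HH^2\,\mathrm{d}s$ and $\|g_t\|^2_\HH+\|g_0\|^2_\HH$, using $\delta\in W^{2,\infty}$ near $\partial\Omega$ and the boundedness of $\CCC$ in the $\langle v\rangle^{-1}$ weighted sense. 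Combined with the Step 1 growth bound $\|g_s\|_\HH\le e^{Cs}\|g_0\|_\HH$, this gives $e^{\kappa_1 t}\|g_0\|^2_\HH$.

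For (H2), the multiplier is only needed on the bases $\Lambda_1\cup\Lambda_2$, where $\iota\neq0$. There we use $x_1$ cut off away from the lateral boundary, or simply the linear function $x_1$, which works since the bases are flat. Finally,
\[
|\widetilde{\gamma_+g}|^2\lesssim\int(\gamma g)^2 (n\cdot v)^2\langle v\rangle^{-2}\MM^{-1}\,\mathrm{d}v
\]
by Cauchy--Schwarz, because $\int \MM\,\langle v\rangle^2\,\mathrm{d}v<\infty$.

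\textbf{Step 4: conclusion.} Combining Steps 2 and 3 gives
\[
\|g_t\|^2_\HH\le C\big(e^{-\kappa t}+\theta_0e^{\kappa_1t}\big)\|g_0\|^2_\HH.
\]
We choose $T$ with $Ce^{-\kappa T}\le 1/4$, and then $\theta_\star$ with $C\theta_\star e^{\kappa_1T}\le1/4$, so that $\|g_T\|_\HH^2\le\frac12\|g_0\|_\HH^2$. Since mass stays zero and the equation is autonomous, we iterate over the intervals $[kT,(k+1)T]$. We use the Step 1 growth bound on the intermediate times. This yields \eqref{eq:HypoEquiv_Hypo_decay} with $\kappa=\ln 2/(2T)$.
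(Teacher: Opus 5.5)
\textbf{Gap in Step~1.} Your Steps~2--4 follow the paper's route: the hypocoercive norm with the extra $\theta_0\|\iota^{1/2}\widetilde{\gamma_+g}\|^2_{L^2(\partial\Omega)}$ term and Gr\"onwall; trace bounds via the multipliers $(n_x\cdot v)\la v\ra^{-2}\MM^{-1}$ and $(x_1v_1/L)\la v\ra^{-2}\MM^{-1}$; then fixing $T$ before $\theta_\star$ and iterating. But Step~1, whose growth bound feeds Steps~3 and~4, fails as written. In the plain $\HH$ norm, Green's formula and the computation of Lemma~\ref{lem:MicroCoercivity_Hypo_decay} give only
\[
\tfrac12\tfrac{\d}{\d t}\|g\|_{\HH}^2\le -\kappa_0\|g^\perp\|_{\HH}^2-\tfrac14\bigl\|\sqrt{\iota(2-\iota)}\,\DDD_1^\perp\gamma_+g\bigr\|_{\partial\HH_+}^2+C\theta_0^2\bigl\|\iota^{1/2}\,\widetilde{\gamma_+g}\bigr\|_{L^2(\partial\Omega)}^2 .
\]
The last term is really there. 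If $\gamma_+g=c\,\MMM_1$, the boundary term equals $\frac12\int_{\partial\Omega}\iota^2P(\TTT)\,c^2\,\d\sigma_x>0$, with $P$ as in \eqref{eq:PolyHypo_decay}. The $\MM^{-1}$-weighted boundary dissipation sees only $\DDD_1^\perp\gamma_+g$, never the flux $\widetilde{\gamma_+g}$. So your claim ``$\|\widetilde{\gamma_+g}\|^2\lesssim$ boundary dissipation'' is false, as you yourself say at the start of Step~3. Since a trace quantity cannot be bounded by $\|g\|_{\HH}^2$, the inequality $\frac{\d}{\d t}\|g\|_{\HH}^2\le C\|g\|_{\HH}^2$ does not follow. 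For the same reason, a reflection of norm $1+C\theta_0$ does not let you run the penalized scheme $\alpha_k\RRR_\TTT$, $\alpha_k\nearrow 1$. That scheme needs bounds uniform in $\alpha_k$, i.e.\ a trace norm in which the reflection, helped by extra boundary dissipation, is nonexpansive. As written, Step~3 takes as input the very bound that its own boundary dissipation is needed to prove.

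\textbf{How the paper closes it.} You need to merge Steps~1 and~3 by building your multiplier into the energy weight from the start. The paper uses $\mu_0^2=\bigl(1+\frac12\frac{n_x\cdot v}{\la v\ra^2}\bigr)\mu_A^2$ under (H1), and $\mu_1^2$ with $n_1(x)=(x_1,0,0)/L$ in place of $n_x$ under (H2), where $\mu_A^2=\MM_\TTT^{-1}\chi_A+\MM^{-1}(1-\chi_A)$. The correction contributes a term $\le -c\int_{\partial\Omega}\iota\,(\widetilde{\gamma_+g})^2\,\II_{A,2}$, with $c>0$ and $\II_{A,2}$ bounded below. Meanwhile the defect $\II_{A,1}$ of the reflection in the $\mu_A^2$-norm tends to $0$ as $A\to\infty$, because $\mu_A^2\approx\MM_\TTT^{-1}$ is matched to the wall Maxwellian. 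The total boundary contribution is therefore nonpositive for every $\theta_0<1/8$. This yields at once the growth bound, the trace bounds, and the $\alpha$-uniform estimates for existence.

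If you accept smallness of $\theta_0$ already for existence, the weight $\MM^{-1}\bigl(1+c\frac{n_x\cdot v}{\la v\ra^2}\bigr)$ also works. Once $\theta_0^2\ll c$, the term $c\,(\widetilde{\gamma_+g})^2$ absorbs $C\theta_0^2(\widetilde{\gamma_+g})^2$, and no extension of $\TTT$ into $\Omega$ is needed. However, the statement asserts existence without that restriction.

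Under (H2), the linear weight $x_1$ works for two reasons, not just flatness. Flatness gives $n_1=n$ on the bases, and the $\Lambda_3$ contribution cancels under specular reflection, since $n_1\cdot\VV_xv=n_1\cdot v$ there. Your alternative of a cut-off away from $\Lambda_3$ would lose control of the bases near $\SSSS$, where $\iota=1$ still holds.
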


\begin{rem}
We note that, although in smooth domains we assumed $\iota \geq \iota_0>0$, for some $\iota_0>0$ (see Assumption~\ref{item:H1}), Theorem~\ref{theo:Hypo_decay} holds in general for $\iota :\partial \Omega \to [0,1]$, by following the same ideas developed here, complemented with the corresponding computations from \cite{MR4581432}.
\end{rem}

\begin{rem}
This result generalizes \cite[Theorem 1.2]{MR4581432} and \cite[Theorem 2.1]{BE_iso} by extending the validity of their hypocoercivity theory to non-isothermal domains. 
\end{rem}

\begin{rem}
When working in cylindrical domains (i.e. under Assumption~\ref{item:H2}), it is worth noting that a further (more delicate) control on the trace has to be given in order to conclude Theorem~\ref{theo:Hypo_decay} from Theorem~\ref{theo:Pert_hypo_Hypo_decay}. 

Indeed, it is known (see \cite[Remark 6.6]{BE_iso} as well as Proposition~\ref{prop:L2_AprioriBdy_Hypo_decay} and Remark~\ref{rem:L2_AprioriBdy_Hypo_decay}) that in the case of cylindrical domains, trace estimates are more degenerate due to the presence of an extra term on the boundary measure (defined in \eqref{def:zetaSSSS_Hypo_decay}) which vanishes at the irregular points of the domain (i.e. the set $\SSSS$ defined in \eqref{def:SSSS_Hypo_decay}). 

This makes standard boundary estimates more degenerate than usual in the space variable, resulting in the fact that we cannot control the boundary term coming from Theorem~\ref{theo:Pert_hypo_Hypo_decay} the same way as we would do in the case of smooth domains.

Nevertheless, exploiting the geometry of the domain, and the boundary conditions of our problem, we are able to obtain stronger, sharper estimates of the trace at the bases of the cylinder (see Proposition~\ref{prop:L2_AprioriBdyLambda12_Hypo_decay}), which is where the diffusion operator acts ($\iota>0$). Thus establishing boundary estimates strong enough to close a decay result in $L^2$. 
\end{rem}

The rest of this section is structured as follows: 
We devote Subsection~\ref{ssec:Hypo_Preliminaries} to present the properties of the linearized Boltzmann collision operator $\CCC$ and the conservation laws satisfied by the solutions of Equation~\eqref{eq:Hypo_decay}. 
In Subsection~\ref{ssec:Hypo_Elliptic_Hypo_decay} we recall some known results on the well-posedness and gain of regularity of solutions to some classes of elliptic equations.  
In Subsection \ref{ssec:Hypo_Perturbed_Hypo_decay}, we use and extend the techniques from \cite{MR4581432, BE_iso} to problems in non-isothermal domains. 
We devote then Subsections \ref{ssec:Hypo_Apriori_Hypo_decay} and \ref{ssec:Hypo_AprioriBdy_Hypo_decay} to establish some a priori estimates for the solutions of Equation~\eqref{eq:Hypo_decay}. Later, in Subsection~\ref{ssec:Hypo_L2WellPosedness_Hypo_decay} we use the a priori estimates obtained during the previous subsections to establish the well-posedness of Equation~\eqref{eq:LPBE} in $\HH$. 
Finally, in Subsection~\ref{ssec:Hypo_ProofMain_Hypo_decay} we prove Theorem~\ref{theo:Hypo_decay} by using the above results.

\subsection{Preliminaries}\label{ssec:Hypo_Preliminaries}
We recall that we have defined $\CCC g := \QQ (\MM, g) + \QQ(g,\MM)$, and we decompose it as $\CCC g = Kg + \nu g$, where, on the one hand, we define the non-local operator
\beqn
Kf = Kf(\cdot , v) := \int_{\R^3}\int_{\mathbb{S}^{2}} \BB \left[\MM(v'_*)f(v') + \MM(v') f(v'_*) -\MM(v) f(v_*)\right]d\sigma dv_* 
= \int_{\R^3} k(v,v_*) f (v_*)dv_*,
\eeqn
with 
\beqn
k = k(v,v_*):=  \sqrt{2\over \pi} \lvert v-v_* \rvert^{-1} e^{ -{1\over 8} {(\lvert v_*\rvert^2 - \lvert v\rvert^2)^2\over \lvert v-v_*\rvert^2} - {1\over 8} \lvert v-v_*\rvert^2  -{\lvert v\rvert^2 \over 4} + {\lvert v_*\rvert^2 \over 4} }  
 -{1\over 2} \lvert v-v_*\rvert e^{ -{\lvert v\rvert^2 \over 2} }, \label{def:Kkernel} 
\eeqn
see, for instance, \cite[Theorem 7.2.1]{MR1307620} for a derivation of $k$, up to a conjugate change of scale. On the other hand, we have 
$$
\nu = \nu(v):= \int_{\R^3}\int_{\mathbb{S}^{2}} \BB \,  \MM(v_*)d\sigma dv_* ,
$$ 
and there are constants $\nu_0, \nu_1 >0$ such that 
\be\label{eq:Controlnu}
\nu_0\leq \nu_0\la v\ra \leq\nu(v)\leq \nu_1\la v\ra,
\ee
where we define $\la v\ra:= \sqrt{1+\lvert v\rvert^2}$, and we refer to \cite[Section 4]{MR3779780} for a derivation of this.

We now observe that, for any regular enough functions $G,H, \varphi: \R^3 \to \R$, the Boltzmann collision operator classically satisfies 
\be\label{eq:pre_ConservationLaws}
\int_{\R^3} \QQ(G,H) \,  \varphi 
= \frac 18 \int_{\R^3}\int_{\R^3} \int_{\mathbb{S}^2} \BB  \left( G_*' H' +H_*'G' -G_*H - G H_* \right) \left( \varphi+\varphi_* - \varphi'-\varphi'_*\right) , 
\ee
where we have used the shorthands
$$
\phi=\phi(v), \quad \phi_* = \phi(v_*), \quad \phi' = \phi(v'), \quad \phi'_* = \phi(v'_*),
$$
and we recall that $v'$ and $v'_*$ are given in Subsection \ref{sec:Framework}. The interested reader can consult the derivation of \eqref{eq:pre_ConservationLaws} in \cite[Section 3.1]{MR1307620}.

In particular, if we set $\R^3 \ni v = (v_1, v_2, v_3)$, then \eqref{eq:pre_ConservationLaws} implies that choosing $\varphi = \varphi(v)$ to be either $1, v_1, v_2, v_3$ or $\lv v\rv^2$ 
\be\label{eq:ConservationLaws}
\int_{\R^3} \QQ(G,H) (v)\,  \varphi (v) \, \dv = 0.
\ee

Therefore, \eqref{eq:ConservationLaws} implies that for the previous choices of $\varphi$ it follows that
\be\label{eq:ConservationLawsCCC}
\int_{\R^3} (\CCC g)(v)\,  \varphi(v) \, \dv = 0. 
\ee

We now take as a momentary framework the Hilbert space $L^2_{\MM^{-1/2}}(\R^3)$ endowed with the scalar product 
$$
\la g, h \ra_{L^2_{\MM^{-1/2}}(\R^3)}:= \int_{\R^3} g(v)\, h(v)\, \MM^{-1}(v) \, \dv,
$$
and its associated norm as defined in \eqref{def:Lebesgue_weighted_spaces}. We observe that \cite[Theorem 7.2.4]{MR1307620} implies that we can set ${\rm Dom}(\CCC):= L^2_{\MM^{-1/2}}(\R^3)$, that $\CCC$ is a closed operator on its domain, and \eqref{eq:ConservationLawsCCC} further gives that
$$
{\rm ker} \, (\CCC) = {\rm span}\, \{ \MM, \, v_1 \MM, \, v_2 \MM, \, v_3 \MM, \, \lv v\rv^2 \MM\}.
$$
This motivates the definition of $\Pi g$ as the projection of $g\in {\rm Dom}(\CCC) $ onto $\mathrm{ker} (\CCC)$ given by 
\be\label{def:pi_Hypo_decay}
\Pi g = \left( \int_{\R^3} g (w) \, \d w \right) \MM
+ \left( \int_{\R^3} w g (w) \, \d w   \right) \cdot v \MM 
+ \left( \int_{\R^3} \frac{|w|^2-3}{\sqrt{6}} \, g (w) \, \d w   \right) \frac{|v|^2-3}{\sqrt{6}} \, \MM.
\ee
We also note that $\CCC$ is self-adjoint on its domain and negative, i.e. 
\beqn
\la \CCC g , g \ra_{L^2_{\MM^{-1/2}} (\R^3)} \le 0,
\eeqn
so that its spectrum is included in $\R_{-}$, and \eqref{eq:ConservationLawsCCC} holds true for any { $g \in \mathrm{Dom} (\CCC)$}. 
Furthermore, $\CCC$ satisfies a \emph{microscopic coercivity} estimate, more precisely  \cite[Theorem 1.1]{MR2231011} gives that there is $\kappa_0 >0$ such that for any $g \in \mathrm{Dom} (\CCC)$ one has
\be\label{eq:MicroCoercivity}
\la-\CCC g , g \ra_{L^2_{\MM^{-1/2}}(\R^3)} \ge \kappa_0 \|  g^\perp \|_{L^2_{\MM^{-1/2}} (\R^3)}^2,
\ee
where $g^\perp:= g - \Pi g$. Finally, we note that for any polynomial function $\phi=\phi(v): \R^3 \to \R$ of degree less or equal to $4$, we have  $\MM \phi \in \hbox{Dom}(\CCC)$, and using again \cite[Theorem 7.2.4]{MR1307620} we deduce that there exists a constant $C_\phi \in (0,\infty)$ such that 
\be\label{eq:ConditionBddCCCphi}
\bigl\| \CCC (\phi \MM) \bigr\|_{L^2_{\MM^{-1/2}}(\R^3)}  \le \bigl\| \phi \MM \bigr\|_{L^2_{\MM^{-1/2}}(\R^3)} <C_\phi.
\ee

Furthermore, we note that there is mass conservation along the boundary. Indeed, we observe that
\be\label{eq:ConservationMassBdy}
\begin{aligned}
\int_{\R^3} \gamma g (n_x\cdot v) \dv &= \int_{\R^3} \gamma_+ g (n_x\cdot v)_+ \dv  - \int_{\R^3} \gamma_- g (n_x\cdot v)_- \dv \\
&= \int_{\R^3} \gamma_+ g (n_x\cdot v)_+ \dv  - \int_{\R^3}  \RRR_\TTT\gamma_+ g (n_x\cdot v)_- \dv    \\
&= \int_{\R^3} \gamma_+ g (n_x\cdot v)_+ \dv  - (1-\iota) \int_{\R^3}  \gamma_+ g (n_x\cdot v)_+ \dv    \\
&= \left(1-(1- \iota) - \iota\right) \int_{\R^3} \gamma_+ g (n_x\cdot v)_+ \dv  =0
\end{aligned}
\ee
where we have used the change of variables $v\mapsto \VV_x v$ on the integral involving the specular reflection, together with the fact that $\lv n_x \cdot \VV_x v\rv = \lv n_x \cdot v\rv$, and $\widetilde \MMM_\TTT = 1$ for every $x\in \partial\Omega$, to obtain the last line.
Another way of stating this is that
$$
\int_{\R^3} \gamma_+ g  (n_x\cdot v)_+ \dv  = \int_{\R^3} \gamma_- g (n_x\cdot v)_- \dv ,
$$
which, in words, means that the amount of particles leaving the domain ($\Sigma_+$) is equal to the amount entering ($\Sigma_-$) for any boundary point $x\in \partial \Omega$, at any moment of time $t\in (0,\infty)$. In practice, this represents a system where no particle leaves the domain, nor any particle is introduced. 

In consequence, we note that the conservation laws \eqref{eq:ConservationLawsCCC} together with \eqref{eq:ConservationMassBdy}
imply that, at least formally, Equation~\eqref{eq:Hypo_decay} conserves mass, i.e. for any solution $g$ of Equation~\eqref{eq:Hypo_decay} we have $\lla g_t\rra_{\OO} = \lla g_0\rra_{\OO} = 0$ for every $t\geq 0$.

\subsection{Elliptic equations}\label{ssec:Hypo_Elliptic_Hypo_decay}
We now recall some results on the well-posedness and regularity gain for some classes of elliptic equations in both smooth and cylindrical domains. 

\subsubsection{Poisson problem} Let $\xi\in L^2(\Omega)$ and consider the Poisson equation
\begin{equation}\label{eq:PoissonEquation}
\left\{\begin{array}{rcll}
-\Delta u  &= &\xi & \text{ in }\Omega  \\
(2-\alpha(x)) \partial_n u + \alpha(x) u &=& 0& \text{ on } \partial\Omega,
\end{array}\right.
\end{equation}
where $\alpha$ is chosen satisfying one of the following conditions:
\begin{enumerate}
    \item[(P1)]\label{item:P1} either $\alpha = \iota$,
    \item[(P2)]\label{item:P2} or we assume $ \lla \xi \rra_{\Omega} =0$, and we take $\alpha\equiv 0$. 
\end{enumerate}
We then consider the functional spaces
\begin{equation*}
    V_1:= H^1(\Omega) \qquad\text{ and }\qquad V_2:=\left\{ u\in H^1(\Omega), \, \lla u \rra_{\Omega} =0\right\} ,
\end{equation*}
from where we define
\begin{equation*}
        V_k:=\left\{\begin{array}{cc}
        V_1 & \text{if \ref{item:P1} holds,} \\
        V_2 & \text{if \ref{item:P2} holds.}
    \end{array}\right.
\end{equation*}
and we have then the following well-posedness and regularity result from \cite[Theorem 2.1]{MR4581432} for the case of smooth domains, and  \cite[Theorem 2.3]{BE_iso} for the case of cylindrical domains. 

\begin{theo}\label{theo:PoissonRegularity}
Assume that either Assumption \ref{item:H1} or Assumption \ref{item:H2} holds, and assume that either \ref{item:P1} or \ref{item:P2} holds. For any $\xi\in L^2(\Omega)$ there exists a unique $w \in H^2(\Omega)\cap V_k$ variational solution to the Poisson Equation~\eqref{eq:PoissonEquation}, i.e. there holds
\be\label{eq:PoissonVariationalSln}
\int_{\Omega} \grad w(x) \cdot \grad v(x) \, \dx + \int_{\partial\Omega} {\alpha(x) \over 2-\alpha(x) } w(x)v(x) \, \d\sigma_x  = \int_{\Omega} \xi(x) \, v(x) \dx \qquad \forall v\in V_k.
\ee
Furthermore, there holds
\bean
	\lVert w\rVert_{H^2(\Omega)} &\leq& C  \lVert \xi\rVert_{L^2(\Omega)},\label{H2Schr} \\
	\lvv  w\rvv_{H^1(\Omega)} &\leq & C\left( \int_{\Omega} \grad w(x) \cdot \grad w(x) \, \dx + \int_{\partial\Omega} {\alpha(x) \over 2-\alpha(x) } w(x)w(x) \, \d\sigma_x \right),
\eean
for some constructive constant $C>0$.
\end{theo}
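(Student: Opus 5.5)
The statement is taken from \cite[Theorem 2.1]{MR4581432} (smooth domains) and \cite[Theorem 2.3]{BE_iso} (cylinder), so the plan is to rebuild it along the standard Lax--Milgram plus elliptic regularity route and check that nothing changes here.

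\emph{Variational solution.} On $V_k$ consider the bilinear form
$$a(w,v):=\int_\Omega \grad w\cdot\grad v\,\dx+\int_{\partial\Omega}\frac{\alpha}{2-\alpha}\,w\,v\,\d\sigma_x .$$
It is continuous on $H^1(\Omega)$, because $\alpha/(2-\alpha)\in[0,1]$ and the trace map $H^1(\Omega)\to L^2(\partial\Omega)$ is bounded on Lipschitz domains.

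Coercivity is the only point that needs thought. Under \ref{item:P2} it is the Poincar\'e--Wirtinger inequality on the mean-zero space $V_2$. Under \ref{item:P1} I use a Friedrichs-type inequality,
$$\lvv u\rvv_{L^2(\Omega)}^2\lesssim \lvv\grad u\rvv_{L^2}^2+\int_{\partial\Omega}\alpha u^2 ,$$
valid as soon as $\alpha\ge0$ and $\alpha$ does not vanish a.e.\ on $\partial\Omega$.
\begin{itemize}
\item Under \ref{item:H1} this holds since $\iota\ge\iota_0>0$.
\item Under \ref{item:H2} it holds since $\iota=\Ind_{\Lambda_1\cup\Lambda_2}$ is positive on the bases, which have positive measure.
\end{itemize}
I prove the inequality by the usual compactness/contradiction argument. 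Suppose $u_n$ are normalized in $L^2$ while the right-hand side tends to $0$. By Rellich, $u_n\to u$ in $L^2$ with $\grad u=0$, so $u$ is a constant $c$. The trace term then forces $c^2\int\alpha=0$, hence $c=0$, contradicting the normalization.

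For $\xi\in L^2$ the map $v\mapsto\int\xi v$ is continuous on $V_k$, so Lax--Milgram gives a unique $w\in V_k$ satisfying \eqref{eq:PoissonVariationalSln}. Under \ref{item:P2} the compatibility condition $\lla\xi\rra_\Omega=0$ lets the identity extend to constant test functions, so it holds for all $v\in H^1(\Omega)$. The second displayed bound I read as $\lvv w\rvv_{H^1}^2\lesssim a(w,w)$, which is exactly the coercivity estimate; testing with $v=w$ then gives $\lvv w\rvv_{H^1}\lesssim\lvv\xi\rvv_{L^2}$.

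\emph{$H^2$ regularity.} This is where the two geometries differ.

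Under \ref{item:H1} the domain is $C^2$. The boundary condition is Robin, $\partial_n w=-\frac{\alpha}{2-\alpha}w$, and for \ref{item:P2} it is Neumann. I flatten the boundary locally and use Nirenberg tangential difference quotients: tangential second derivatives are bounded by $\lvv\xi\rvv_{L^2}+\lvv w\rvv_{H^1}$, and the equation then recovers the normal second derivative. One technical issue is that the coefficient $\alpha=\iota$ is only continuous. I would avoid differentiating it by treating the Robin term as Neumann data $g=-\frac{\alpha}{2-\alpha}w|_{\partial\Omega}$ and invoking Neumann $H^2$ regularity, which needs $g\in H^{1/2}(\partial\Omega)$. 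If $\iota$ fails to be regular enough for that, I would defer to the argument in \cite{MR4581432}.

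Under \ref{item:H2} the main obstacle is the non-smooth edges $\partial\Lambda_3\cap\bar\Lambda_{1,2}$, where the boundary condition switches from Dirichlet-like Robin ($\alpha=1$, so $\partial_nw+w=0$) on the bases to Neumann on the lateral surface, meeting at a right angle. I would exploit the product structure by reflecting evenly across the lateral surface (Neumann, angle $\pi/2$), which in the disk geometry amounts to using the separation of variables $(x_1)\times\Omega_0$. Expanding $w$ in Neumann eigenfunctions of $\Omega_0$ and solving one-dimensional Robin problems in $x_1$ mode by mode yields $H^2$ bounds uniform in the eigenvalue. Summing the modes gives $\lvv w\rvv_{H^2}\lesssim\lvv\xi\rvv_{L^2}$, as in \cite[Theorem 2.3]{BE_iso}.
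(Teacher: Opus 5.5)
\paragraph{Gap: the $H^2$ bound under (H1) with (P1).} This is the only case where the Robin coefficient $\beta:=\iota/(2-\iota)$ actually varies along the boundary. Your route treats $g=-\beta w|_{\partial\Omega}$ as Neumann data, which needs $\beta w|_{\partial\Omega}\in H^{1/2}(\partial\Omega)$. The difference-quotient route would instead need $\beta$ Lipschitz. Assumption (H1) only gives $\iota\in C(\partial\Omega)$, and ``deferring to \cite{MR4581432}'' is not a proof step; the paper itself only cites \cite{MR4581432} and \cite{BE_iso} here.

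No cleverer argument can close this step under continuity alone, because $H^{1/2}$-regularity of $\beta w$ is \emph{necessary}. Suppose $w\in H^2(\Omega)$.
\begin{itemize}
\item Green's formula combined with the variational identity gives $\partial_n w=-\beta w$ a.e.\ on $\partial\Omega$, with $\partial_n w\in H^{1/2}(\partial\Omega)$ and $w|_{\partial\Omega}\in H^{3/2}(\partial\Omega)\subset C(\partial\Omega)$.
\item Take $\xi\equiv 1$ and test with $v\equiv 1$. This yields $\int_{\partial\Omega}\beta w\,\mathrm{d}\sigma_x=|\Omega|=1$, so $|w|\ge c>0$ on some open patch $U\subset\partial\Omega$.
\item On $U$ you can write $\beta=-\partial_n w\cdot(1/w)$, where $1/w\in H^{3/2}_{\mathrm{loc}}(U)$. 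Since $H^{3/2}$ multiplies $H^{1/2}$ on a two-dimensional surface, $\beta$, and hence $\iota$, lies in $H^{1/2}_{\mathrm{loc}}(U)$.
\end{itemize}
Consequently, any continuous $[\iota_0,1]$-valued $\iota$ that is not $H^{1/2}$ on any open subset of $\partial\Omega$ violates the claimed $H^2$ estimate. Such functions exist, e.g.\ by a Baire category argument.

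\paragraph{What is missing, and what is fine.} You need an extra regularity hypothesis on $\iota$, for instance $\iota\in W^{1,\infty}(\partial\Omega)$; H\"older continuity of order $>1/2$ would also suffice. You should check which hypothesis \cite{MR4581432} actually uses. Under such a hypothesis your route closes in one step: $\beta$ is a multiplier on $H^{1/2}(\partial\Omega)$, so
\[
\|\beta w\|_{H^{1/2}(\partial\Omega)}\lesssim\|w\|_{H^1(\Omega)}\lesssim\|\xi\|_{L^2(\Omega)},
\]
and Neumann $H^2$-regularity on the $C^2$ domain gives the bound.

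The rest of your proposal is sound. The Lax--Milgram step with Friedrichs/Poincar\'e--Wirtinger coercivity is correct, and you rightly read the second estimate as $\|w\|_{H^1}^2\lesssim a(w,w)$.

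In the cylinder the coefficient is constant on each face, so the obstruction above does not arise, and your Neumann-eigenfunction expansion on the disk works.
\begin{itemize}
\item The energy identity for $-w_j''+\lambda_j w_j=\xi_j$ with $\pm w_j'(\pm L)+w_j(\pm L)=0$ has nonnegative boundary terms. It gives $\|w_j''\|+\lambda_j^{1/2}\|w_j'\|+\lambda_j\|w_j\|\lesssim\|\xi_j\|$ uniformly in $j$.
\item $H^2$-regularity of the Neumann Laplacian on the disk converts $\sum_j\lambda_j^2\|w_j\|^2$ into control of the second $x'$-derivatives of $w$.
\item Under (P2), the zero mode is handled by the zero-mean condition on $\xi$.
\end{itemize}
The ``even reflection across the lateral surface'' is neither needed nor literally available for a curved surface.
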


\subsubsection{Lamé system of equations} We now consider $\Xi\in L^2(\Omega)$ and study the Lamé system of elliptic equations
\begin{equation}
	\left\{\begin{array}{rlll}
		-\Div (\nabla^s U) &=& \Xi &\text{ in }\Omega\\
		U\cdot n(x)&=&0 &\text{ on }\partial\Omega\\
		(2-\iota(x))[\nabla^sU \cdot n(x) - (\nabla^sU: n(x)\otimes n(x))n(x)]+\iota(x) U&=&0 &\text{ on }\partial\Omega.
	\end{array}\right. \label{eq:LameSystem}
\end{equation}
We consider the functional space
\be
\UUU(\Omega):=\{ U\in H^1(\Omega), \, U(x) \cdot n(x) = 0 \,\text{ on } \partial\Omega \}, 
\ee 
and we have the following well-posedness and regularity result from \cite[Theorem 2.11]{MR4581432} for the case of smooth domains, and \cite[Theorem 2.4]{BE_iso} for the case of cylindrical domains. 
\begin{theo}\label{thm:LameRegularity}
Assume that either Assumption \ref{item:H1} or Assumption \ref{item:H2} holds. For any function $\Xi\in L^2(\Omega)$ there is a unique variational solution $W \in \UUU(\Omega) \cap  H^2(\Omega)$ to the Lamé system \eqref{eq:LameSystem}, i.e. for any $V\in \UUU(\Omega)$ there holds
\be\label{eq:WeakFormulationLame}
\int_{\Omega}\nabla^sW (x):\nabla^sV(x) \dx + \int_{\partial\Omega} {\iota(x) \over 2-\iota(x) }\, W(x)\cdot V(x)   \d\sigma_x = \int_{\Omega} \Xi (x) \cdot V(x) \dx.
\ee
Furthermore, there holds
\bean
    \lVert W\rVert_{H^2(\Omega)} &\leq& C\lVert \Xi\rVert_{L^2(\Omega)},\\
    \lVert W\rVert_{H^1(\Omega)} &\leq & C\left( \int_{\Omega}\nabla^sW (x):\nabla^sW(x) \dx + \int_{\partial\Omega} {\iota(x) \over 2-\iota(x) }\, W(x)\cdot W(x)   \d\sigma_x \right)
\eean
for some constructive constant $C>0$.
\end{theo}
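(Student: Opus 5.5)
The plan is the classical Lax--Milgram plus elliptic regularity scheme, written in the variational setting of $\UUU(\Omega)$. I would introduce the symmetric bilinear form
$$
a(W,V):=\int_\Omega \nabla^s W:\nabla^s V\,\dx+\int_{\partial\Omega}\frac{\iota}{2-\iota}\,W\cdot V\,\d\sigma_x
$$
on $\UUU(\Omega)$, a closed subspace of $H^1(\Omega)$ because the trace is continuous. Since $\iota\in[0,1]$, the weight $\iota/(2-\iota)$ lies in $[0,1]$, and the trace theorem gives continuity of $a$. The functional $V\mapsto\int_\Omega\Xi\cdot V$ is continuous on $\UUU(\Omega)$. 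By Lax--Milgram, existence and uniqueness of $W$ satisfying \eqref{eq:WeakFormulationLame}, as well as the $H^1$ bound, both follow from a coercivity inequality $\|U\|_{H^1}^2\le C\,a(U,U)$ on $\UUU(\Omega)$. I read the right-hand side of the second displayed estimate as $a(W,W)$, which controls $\|W\|^2_{H^1}$, i.e.\ this is coercivity.

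\textbf{Coercivity.} I would argue by contradiction (Peetre--Tartar/compactness), using Korn's second inequality $\|U\|_{H^1}\le C(\|\nabla^sU\|_{L^2}+\|U\|_{L^2})$, valid on Lipschitz domains. Take a sequence with $\|U_k\|_{H^1}=1$ and $a(U_k,U_k)\to0$. Rellich compactness and Korn then give a strong $H^1$ limit $U$ with $\nabla^sU=0$. So $U$ is a rigid motion $U=b+Ax$ with $A$ antisymmetric, and it satisfies $U\cdot n=0$ on $\partial\Omega$ and $\int_{\partial\Omega}\frac{\iota}{2-\iota}|U|^2=0$.
\begin{itemize}
\item Under \ref{item:H1}, $\iota\ge\iota_0>0$ forces $U=0$ on $\partial\Omega$. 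A rigid motion vanishing on a $2$-dimensional surface vanishes identically, so $U=0$.
\item Under \ref{item:H2}, the rotations about the axis $x_1$ are tangent to the whole boundary. They are killed here because $\iota=1$ on $\Lambda_1\cup\Lambda_2$, so $U=0$ on the bases and hence $U\equiv0$.
\end{itemize}
In both cases this contradicts $\|U\|_{H^1}=1$, which proves coercivity.

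\textbf{$H^2$ regularity, smooth case.} Under \ref{item:H1} I would localize with a partition of unity and flatten the boundary with $C^2$ charts; here $\delta\in W^{3,\infty}$ gives $n\in W^{2,\infty}$. Tangential difference quotients are admissible test functions in $\UUU$ after correcting the normal component with the chart. Together with the Korn/coercivity estimate, they give $L^2$ control of tangential derivatives of $\nabla W$. The remaining normal second derivatives are recovered from the equation $-\Div(\nabla^sW)=\Xi$, using the strong ellipticity of the Lamé operator in the normal direction. The boundary conditions (impermeability $U\cdot n=0$ and the Navier/Robin slip condition) satisfy the Lopatinskii--Shapiro complementing condition, so one can alternatively cite Agmon--Douglis--Nirenberg. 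A lower-order term $\|W\|_{H^1}\lesssim\|\Xi\|_{L^2}$ closes the bound.

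\textbf{$H^2$ regularity, cylinder.} This is the main obstacle, because of the edges $\partial\Omega_0\times\{\pm L\}$ where the slip lateral surface meets the bases carrying Robin condition $\iota=1$. Away from the edges the smooth argument applies. Near an edge I would take difference quotients in the angular variable, the direction along the edge, which preserve the structure. This reduces the question to a two-dimensional right-angle wedge in the variables $(r,x_1)$. On the lateral side the specular/slip condition allows an even/odd reflection of the components across $r=\RRRR$, after locally straightening the circle, with lower-order curvature errors. That transforms the edge into a flat piece of boundary carrying only the base condition, where half-space regularity applies. Checking that the reflected field stays in the right variational class, and that the curvature commutators are lower order, is the delicate point. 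As a fallback I would invoke Grisvard-type corner regularity for right angles, exactly as done in \cite[Theorem 2.4]{BE_iso}.
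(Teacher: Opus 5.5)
The paper gives no argument for this result; it simply cites \cite[Theorem 2.11]{MR4581432} under Assumption~\ref{item:H1} and \cite[Theorem 2.4]{BE_iso} under Assumption~\ref{item:H2}.

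\textbf{What is correct.} Your Lax--Milgram/coercivity part is correct and is the standard route. Korn's second inequality together with Rellich turns a normalized sequence with $a(U_k,U_k)\to0$ into a strong $H^1$ limit. That limit is a rigid motion tangent to $\partial\Omega$ and vanishing where $\iota>0$, hence zero in both geometries. Your squared reading $\|W\|_{H^1}^2\le C\,a(W,W)$ is also the one actually used in \eqref{eq:UmLf1_Hypo_decay}.

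\textbf{The gap under (H1).} The genuine gap is the $H^2$ step under Assumption~\ref{item:H1}.
\begin{itemize}
\item Testing with $D_{-h}(\chi^2 D_h W)$ puts a difference quotient on the boundary coefficient. It produces the term $\int_{\partial\Omega} D_h\bigl(\tfrac{\iota}{2-\iota}\bigr)\, W\cdot D_h W\,\d\sigma_x$, which is bounded uniformly in $h$ only if $\iota$ is Lipschitz along $\partial\Omega$.
\item The Agmon--Douglis--Nirenberg alternative likewise needs differentiable coefficients in the boundary operator.
\item Assumption~\ref{item:H1} only gives $\iota\in C(\partial\Omega)$, and this is not a removable technicality. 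If $W\in H^2(\Omega)$, the boundary condition gives $\tfrac{\iota}{2-\iota}W=-\bigl[\nabla^sW\,n-(\nabla^sW:n\otimes n)n\bigr]\in H^{1/2}(\partial\Omega)$.
\item Dividing by $W|_{\partial\Omega}$, which lies in $H^{3/2}(\partial\Omega)$ and is therefore continuous and a multiplier of $H^{1/2}(\partial\Omega)$, forces $\iota$ to be in $H^{1/2}$ near every boundary point where $W\neq 0$.
\item For a continuous $\iota$ that is nowhere locally in $H^{1/2}(\partial\Omega)$, this forces $W|_{\partial\Omega}\equiv 0$, which fails for general $\Xi$.
\end{itemize}
So your argument, and in fact the $H^2$ claim itself under Assumption~\ref{item:H1}, needs an extra hypothesis such as $\iota\in W^{1,\infty}(\partial\Omega)$. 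You should state this explicitly rather than let it enter silently through the difference quotients.

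\textbf{The cylinder case (H2).} This issue disappears here, since $\iota$ is constant on each face. Your angular difference quotients are legitimate, because rotations about the $x_1$-axis preserve $\Omega$, $\iota$, $\UUU(\Omega)$ and $a$. However, the reflection step is not as harmless as you claim.
\begin{itemize}
\item In cylindrical components $(W_r,W_\theta,W_1)$, with $W_r=0$ on $r=\RRRR$, the slip condition reads $\partial_r W_1=0$ and $\partial_r W_\theta = W_\theta/\RRRR$.
\item An even reflection of $W_\theta$ therefore leaves a jump $2W_\theta/\RRRR$ in the normal derivative across $r=\RRRR$. This is a single-layer source, not a lower-order term, and the reflected field is not $H^2$ across the interface.
\item The fix is to reflect $W_\theta/r$ instead, or to subtract a lifting of that Robin-type datum.
\end{itemize}
With this correction, the reduction to a flat face carrying only the base condition is plausible. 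As written, though, this part remains a sketch that ultimately rests on the same reference \cite{BE_iso} the paper cites.
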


\subsection{Hypocoercivity norm}\label{ssec:Hypo_Perturbed_Hypo_decay}
During the rest of this subsection we consider $g\in \HH$ solution of Equation~\eqref{eq:Hypo_decay} and we note that the computations are done in the sense of a priori estimates. 

\medskip
We define the associated mass, momentum and energy operators respectively by 
\be\label{def:MassMomentumEnergy_Hypo_decay}
\varrho  (x) := \int_{\R^3} g(v) \d v, \quad \mu(x):= \int_{\R^3} v g(v) \d v, \quad  \text{and}\quad  \EE (x) := \int_{\R^3} {\lv v\rv^2 -3 \over \sqrt {6} } g(v) \d v.
\ee
Furthermore, we recall the definition of the projection operator $\Pi$ given by \eqref{def:pi_Hypo_decay}, as well as  $g^\perp = g-\Pi g$. Altogether, this implies that we have the decomposition  
\be
g = \Pi g + g^\perp =  \varrho\,  \MM+ \mu \cdot v \, \MM + \EE\,  {(\lvert v\rvert^2-3)\over \sqrt{6}}\, \MM + f^\perp ,\label{eq:DecompositionMicroMacro_Hypo_decay}
\ee 
which implies that
\be\label{eq:NormDecompositionMicroMacro_Hypo_decay}
\lVert g\rVert^2_{ \HH} = \lVert \varrho \rVert^2_{ L^2(\Omega)} + \lVert  \mu \rVert^2_{ L^2(\Omega)}  + \lVert \EE \rVert^2_{ L^2(\Omega)}    + \lVert g^\perp\rVert^2_{ \HH} .
\ee

We further note that the mass conservation at the boundary \eqref{eq:ConservationMassBdy} is nothing but saying that  
\be\label{eq:mcdotn=0}
\mu (x) \cdot n(x) = 0 \quad \text{for every }  x \in \partial \Omega.
\ee
Moreover, we note that the mass conservation of Equation~\eqref{eq:Hypo_decay} translates into $\lla \varrho \rra_\Omega = 0$.

\smallskip
During this subsection we construct the norm, equivalent to that of $\HH$, in which the operator $\LL$ exhibits a coercive behavior in isothermal settings, see \cite{MR4581432, BE_iso}. 

\begin{theo}\label{theo:Pert_hypo_Hypo_decay}
Assume that either Assumption \ref{item:H1} or Assumption \ref{item:H2} holds. There exists a scalar product $\la\!\la \cdot , \cdot \ra \! \ra$ on the Hilbert space $\HH$ so that the associated norm $\Nt \cdot \Nt$ is equivalent to the usual norm $\| \cdot \|_{\HH}$, and for which the linear operator $\LL$ satisfies: there are positive constants $\kappa, \kappa'  >0$ such that 
\begin{equation}\label{eq:pert_coercivity-LLL} 
\la \! \la - \LL g , g \ra\!\ra \ge \kappa \,   \Nt g\Nt^2 -  \kappa'  \theta_0 \left\lVert  (\iota)^{1/2} (\widetilde{\gamma_+ g} ) \right\rVert^2_{L^2(\partial \Omega)} ,
\end{equation}
for any $g \in \mathrm{Dom}(\LL)$ satisfying the non-isothermal boundary condition of Equation~\eqref{eq:Hypo_decay}.
\end{theo}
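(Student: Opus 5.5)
We follow the construction of \cite{MR4581432, BE_iso} and keep the same modified scalar product. Set
\[
\la\!\la g,h\ra\!\ra := \la g,h\ra_\HH + \eta_1 \la \nabla_x u_{\varrho[g]}, \mu[h]\ra + \eta_2 \la \nabla^s_x U_{\mu[g]}, M[h]\ra + \eta_3 \la \nabla_x w_{\EE[g]}, \Phi[h]\ra,
\]
with $0<\eta_3\ll\eta_2\ll\eta_1\ll 1$. Here $u_{\varrho}$ solves the Poisson problem \eqref{eq:PoissonEquation} under \ref{item:P2} with $\xi=\varrho$, which is admissible since $\lla\varrho\rra_\Omega=0$. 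The field $U_\mu$ solves the Lamé system \eqref{eq:LameSystem} with $\Xi=\mu$, and $w_\EE$ solves the Poisson problem under \ref{item:P1} with $\xi=\EE$. The quantities $M[h]$ and $\Phi[h]$ are the usual moments of $h$ against $(v\otimes v-\frac{|v|^2}{3}I)\MM$-type and $v(|v|^2-5)\MM$-type polynomials. Equivalence $\Nt\cdot\Nt\simeq\|\cdot\|_\HH$ for small $\eta_i$ follows from the $H^1$ bounds of Theorems~\ref{theo:PoissonRegularity} and~\ref{thm:LameRegularity}. The boundary conditions do not enter this step, since the functional itself involves only interior quantities.

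\textbf{Step 1: the $\HH$ part.} We compute $\la -\LL g,g\ra_\HH$. The collision part gives $\kappa_0\|g^\perp\|_\HH^2$ by \eqref{eq:MicroCoercivity}. The transport part gives the boundary term $\frac12\int_\Sigma (\gamma g)^2 (n_x\cdot v)\MM^{-1}$. Split $\gamma_- g=\RRR_\TTT\gamma_+g = \RRR_1\gamma_+ g + \iota(\MMM_\TTT-\MMM_1)\widetilde{\gamma_+g}$. The $\RRR_1$ part is handled exactly as in the isothermal case. Writing $\gamma_+ g = \MMM_1\widetilde{\gamma_+g} + (\gamma_+g)^{\perp}$ and applying Darroch–Jensen type identities, it yields a nonnegative dissipation term $\iota(2-\iota)\|(\gamma_+ g)^\perp\|^2$ in the appropriate measure. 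The error is a cross term plus a square: it is bounded by
\[
C\int_{\partial\Omega}\iota\,|\widetilde{\gamma_+g}|^2\|(\MMM_\TTT-\MMM_1)\MMM_1\|\ldots,
\]
together with a cross term $\int \iota\,\widetilde{\gamma_+g}\,\MMM_1(\MMM_\TTT-\MMM_1)\MM^{-1}|n\cdot v|$. The analogue of \eqref{eq:PertTempBdy} with $\theta_0$ gives $|\MMM_\TTT-\MMM_1|\lesssim \theta_0\la v\ra^2e^{-|v|^2/(2\TTT^*)}$. Since $\TTT^*<9/8$, the weight $e^{-|v|^2/\TTT^*}\MM^{-1}$ is integrable, so both pieces are $\lesssim\theta_0\|\iota^{1/2}\widetilde{\gamma_+g}\|^2_{L^2(\partial\Omega)}$.

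\textbf{Step 2: the macroscopic correctors.} As in \cite{MR4581432}, we use the equations for $\varrho,\mu,\EE$ obtained by testing \eqref{eq:Hypo_decay} against $1,v,(|v|^2-5)$, combined with the elliptic problems. This recovers $\|\varrho\|^2,\|\mu\|^2,\|\EE\|^2$ up to terms in $\|g^\perp\|$ and boundary integrals. The $\varrho$ corrector only involves $\mu\cdot n$, which vanishes by \eqref{eq:mcdotn=0}, so it is unaffected. For the $\mu$ and $\EE$ correctors, boundary terms of the form $\int_\Sigma \gamma g\,\phi(v)\,(n\cdot v)\,\MM^{-1}\cdot(\nabla U \text{ or } w)$ appear. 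We again split $\gamma_- g$ into the isothermal reflection plus the correction. The isothermal part is controlled as before by the boundary dissipation of Step 1 and by the Robin/Lamé boundary terms, which were designed for this purpose. The correction contributes
\[
\theta_0\|\iota^{1/2}\widetilde{\gamma_+g}\|_{L^2(\partial\Omega)}\,\|\text{trace of }(U,w,\nabla U,\nabla w)\|_{L^2(\partial\Omega)}.
\]
By the trace theorem and the $H^2$ bounds, this is at most $\theta_0\|\iota^{1/2}\widetilde{\gamma_+g}\|(\|\mu\|+\|\EE\|)$. Young's inequality then bounds it by $\frac{\eta}{4}(\|\mu\|^2+\|\EE\|^2)+C\theta_0^2\|\iota^{1/2}\widetilde{\gamma_+g}\|^2$, and $\theta_0^2\le\theta_0$.

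\textbf{Step 3: summation and the main obstacle.} We sum with the hierarchy $\eta_3\ll\eta_2\ll\eta_1\ll1$ and use \eqref{eq:NormDecompositionMicroMacro_Hypo_decay}. The result is $\kappa\Nt g\Nt^2-\kappa'\theta_0\|\iota^{1/2}\widetilde{\gamma_+g}\|^2$. The main obstacle is Step 2 for the energy corrector $w_\EE$, whose Robin condition uses $\alpha=\iota$. There, the boundary flux of the $v(|v|^2-5)$ moment no longer vanishes exactly for the diffusive part, because $\MMM_\TTT$ is not $\MMM_1$: the term $\int \MMM_\TTT |v|^2 (n\cdot v)_-$ differs from its isothermal value by $O(\theta_0)$. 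We would compute this defect explicitly, check that it is proportional to $\widetilde{\gamma_+g}$ times $O(\theta_0)$ and contains no uncontrolled interior term, and absorb it as above. Under \ref{item:H2}, the traces on $\Lambda_3$ are purely specular and generate no defect. On the bases, the trace of $w_\EE$ is controlled by the $H^2$ estimate of Theorem~\ref{theo:PoissonRegularity}, so the same argument applies.
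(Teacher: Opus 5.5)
Your architecture matches the paper's. You use the three elliptic correctors of \cite{MR4581432}, you split $\RRR_\TTT=\RRR_1+\iota(\MMM_\TTT-\MMM_1)\widetilde{\gamma_+\,\cdot\,}$, and you use Young's inequality to push the $O(\theta_0)$ defect onto $\|\iota^{1/2}\widetilde{\gamma_+g}\|^2_{L^2(\partial\Omega)}$. However, Step~3 does not close as you set it up, because your weights are ordered the wrong way. You put the largest weight $\eta_1$ on the mass corrector and the smallest, $\eta_3$, on the energy corrector. You justify this by the claim in Step~2 that each corrector returns its macroscopic norm ``up to terms in $\|g^\perp\|$ and boundary integrals''. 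That claim is false.

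In the mass corrector, the slot containing $\varrho[\LL g]=-\Div_x\mu$ gives
\[
|\la\grad_x u_{\rm N}[\Div_x\mu],\mu\ra_{L^2(\Omega)}|\lesssim\|\mu\|^2_{L^2(\Omega)}
\]
with an $O(1)$ constant. The slot containing $\mu[\LL g]=-\grad_x\varrho-\Div_x M_q[g]$, where $M_q[g]=\sqrt{2/3}\,\EE I_3+M_q[g^\perp]$, produces a cross term $\|\varrho\|\,\|\EE\|$. Likewise, the momentum corrector produces $\|\EE\|^2$ and $\|\EE\|\,\|\varrho\|$; compare Lemmas~\ref{lem:MomentumCoercivity_Hypo_decay} and~\ref{lem:MassCoercivity_Hypo_decay}.

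With your ordering, $-\eta_1C\|\mu\|^2$ would have to be absorbed by $\eta_2\kappa_2\|\mu\|^2$, and $-\eta_1C\|\EE\|^2$ by $\eta_3\kappa_3\|\EE\|^2$. Both are impossible when $\eta_3\ll\eta_2\ll\eta_1$. Only the energy corrector has errors purely of the form $\|\mu\|\,\|g^\perp\|$, $\|g^\perp\|^2$ and boundary dissipation. So the order must be energy $\gg$ momentum $\gg$ mass. The paper uses weights $\eta$, $\eta^{3/2}$, $\eta^{7/4}$ respectively. These are compatible with the Young splittings: the momentum weight must dominate the square of the energy weight, and the mass weight must dominate the square of the momentum weight divided by the energy weight.

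\medskip

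There are also some lesser issues.

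\emph{Symmetry.} As written, your bilinear form is not symmetric, so it is not a scalar product. It must be symmetrized as in \eqref{eq:hypocNorm_Hypo_decay}. Only then does $\la\!\la-\LL g,g\ra\!\ra$ contain both slots of each corrector: the coercive slot and the error slot above.

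\emph{Step~1 cross term.} The cross term between $\RRR_1\gamma_+g$ and the defect is not only the $\MMM_1\widetilde{\gamma_+g}$ piece. That piece in fact vanishes, since $\int\MMM_1\MMM_\TTT\MM^{-1}(n_x\cdot v)_+\dv=\int\MMM_1^2\MM^{-1}(n_x\cdot v)_+\dv$. There is also a piece
\[
2\iota(1-\iota)\,\widetilde{\gamma_+g}\int\DDD_1^\perp\gamma_+g\,(\MMM_\TTT-\MMM_1)\MM^{-1}(n_x\cdot v)_+\dv ,
\]
which you must absorb into the dissipation $\iota(2-\iota)\|\DDD_1^\perp\gamma_+g\|^2$ via Young. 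The paper sidesteps this with the convexity bound $((1-\iota)a+\iota b)^2\le(1-\iota)a^2+\iota b^2$, which leaves only the $O(\theta_0^2)$ defect $P(\TTT)$.

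\emph{Mass corrector boundary term.} The mass corrector sees more of the trace than $\mu\cdot n$. The slot $\la\grad_xu_{\rm N}[\varrho],\mu[\LL g]\ra$ requires an integration by parts, which produces $\int_\Sigma(\grad_xu_{\rm N}\cdot v)\,\gamma g\,(n_x\cdot v)$. Its $\iota\DDD_1^\perp\gamma_+g$ part must be absorbed. Its temperature-defect part does vanish, but for a different reason than the one you give: $\partial_n u_{\rm N}=0$ makes $\grad_xu_{\rm N}$ tangential, so the integrand is odd in the tangential velocity.
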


The rest of this subsection is devoted to constructing the scalar product $\lla \cdot, \cdot\rra$ generating the norm $\lvvv \cdot \rvvv$, and we note that we follow the structure of the proof of \cite[Theorem 1.1]{MR4581432}. 

We proceed then as follow: In Sub-subsection~\ref{ssec:Microscopic_part_Hypo_decay} we extend the microscopic coercivity of the collision operator $\CC$ to the full operator $\LL$, generating a perturbed microscopic coercivity estimate. During Sub-subsection~\ref{ssec:BoundaryTerms_Hypo_decay} we obtain a general expression to handle boundary terms that often appear in the sequel. 
Following this, during Sub-subsections \ref{ssec:EnergyFunctional_Hypo_decay}, \ref{ssec:MomentumFunctional_Hypo_decay}, and \ref{ssec:MassFunctional_Hypo_decay}, we construct the necessary components to add to the usual scalar product in $\HH$ in order to control the macroscopic components of $g$. 
Finally, with all the necessary estimates, we conclude the proof of Theorem~\ref{theo:Pert_hypo_Hypo_decay} in Sub-subsection~\ref{ssec:conclusion_Hypo_decay}.

\subsubsection{Microscopic estimate on $\LL$} \label{ssec:Microscopic_part_Hypo_decay}

The objective of this subsection is to extend the microscopic coercivity property \eqref{eq:MicroCoercivity} exhibited by the collision operator $\CCC$ to \hbox{$\LL = -v\cdot \grad_x + \CCC$}, up to a perturbation coming from the non-isothermal conditions at the boundary.

\begin{lem}\label{lem:MicroCoercivity_Hypo_decay}
There is a constant $C>0$ such that
$$
\la -\LL g, g\ra_{\HH} \geq \kappa_0  \lVert g^\perp \rVert^2_{\HH} + \frac {1}4 \left \lVert \sqrt{\iota (2-\iota)}\DDD_1^\perp \gamma_+ g \right\rVert^2_{\partial\HH_+}  -  \theta_0\, C \left\lVert   (\iota)^{1/2}   \left( \widetilde {\gamma_+ g}\right) \right\rVert^2_{L^2(\partial\Omega)}  ,
$$
where we have defined $\DDD_1^\perp := Id - \DDD_1$, and $\partial \HH_+ = L^2(\Sigma_+,\, \MM^{-1} (v) \, \lvert n(x)\cdot v\rvert\,  \, \dv \d\sigma_x)$. 
\end{lem}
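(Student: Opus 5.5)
We split $\la -\LL g, g\ra_\HH$ into the collision part and the transport part. The collision part gives $\la -\CCC g, g\ra_\HH \ge \kappa_0 \|g^\perp\|^2_\HH$, by integrating \eqref{eq:MicroCoercivity} in $x$. For the transport part, Green's formula gives
\[
\la v\cdot\grad_x g, g\ra_\HH = \frac12\int_\Sigma (\gamma g)^2 (n_x\cdot v)\MM^{-1} = \frac12\Bigl(\|\gamma_+ g\|^2_{\partial\HH_+} - \|\gamma_- g\|^2_{\partial\HH_-}\Bigr),
\]
where $\partial\HH_-$ is defined analogously on $\Sigma_-$. It therefore remains to bound below the boundary quantity $\frac12\int_{\partial\Omega}\bigl(\|\gamma_+g\|^2 - \|\RRR_\TTT\gamma_+g\|^2\bigr)$, where the norms are taken in $L^2(\R^3, \MM^{-1}|n_x\cdot v|\,\dv)$ at each fixed $x$.

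The key step is to write $\RRR_\TTT = \RRR_1 + \iota(\DDD_\TTT - \DDD_1)$. Here $\DDD_\TTT\gamma_+ g - \DDD_1\gamma_+ g = (\MMM_\TTT - \MMM_1)\,\widetilde{\gamma_+ g}$. Set $A := \RRR_1\gamma_+ g$ and $B := \iota(\MMM_\TTT - \MMM_1)\widetilde{\gamma_+ g}$. Then
\[
\|A+B\|^2 \le \|A\|^2 + 2\la A,B\ra + \|B\|^2.
\]
For the isothermal part, the computation of \cite{MR4581432} applies pointwise in $x$. One decomposes $\gamma_+ g = \DDD_1\gamma_+g + \DDD_1^\perp\gamma_+ g$, which is orthogonal in the $\MM^{-1}|n\cdot v|$ weight because $\widetilde{\MMM_1}=1$. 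Using that $\SSS$ is an isometry that preserves this splitting, one gets
\[
\|\gamma_+ g\|^2 - \|A\|^2 = \iota(2-\iota)\,\|\DDD_1^\perp\gamma_+ g\|^2.
\]
This yields $\frac12\|\sqrt{\iota(2-\iota)}\DDD_1^\perp\gamma_+ g\|^2_{\partial\HH_+}$.

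The perturbation terms come next. Using \eqref{eq:PertTempBdy} with $\theta$ in place of $\eps^\qqqq\vartheta$ (so $|\MMM_\TTT - \MMM_1| \lesssim \theta_0 \la v\ra^2 e^{-|v|^2/(2\cdot 9/8)}$), we note that $\MM^{-1}\la v\ra^4 e^{-|v|^2/(9/8)}|n\cdot v|$ is integrable, since $1/(9/8) > 1/2$. This gives $\|B\|^2 \lesssim \theta_0^2\,\iota^2\,(\widetilde{\gamma_+ g})^2$.

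For the cross term, $\SSS$ maps the $\DDD_1^\perp$ part to a function with zero flux. Hence $\la \SSS\DDD_1^\perp\gamma_+g, (\MMM_\TTT-\MMM_1)\ra$ is not zero a priori, because $\MMM_\TTT - \MMM_1$ is orthogonal to constants but not necessarily to $\SSS\DDD_1^\perp\gamma_+ g$. The careful route is to bound $|\la A,B\ra|$ by Cauchy--Schwarz. The part of $A$ proportional to $\MMM_1\widetilde{\gamma_+ g}$ gives
\[
\la \MMM_1, \MMM_\TTT - \MMM_1\ra_{\MM^{-1}|n\cdot v|}\,\iota(\widetilde{\gamma_+ g})^2 .
\]
Since $\MM^{-1}\MMM_1|n\cdot v| \propto 1$ after normalization, this pairing vanishes: it equals $\widetilde{\MMM_\TTT} - \widetilde{\MMM_1} = 0$. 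So this part contributes nothing. The remaining part $(1-\iota)\SSS\DDD_1^\perp\gamma_+ g$ pairs to give at most
\[
C\theta_0\,\iota(1-\iota)\,\|\DDD_1^\perp\gamma_+g\|\,|\widetilde{\gamma_+ g}| .
\]
By Young's inequality this is at most
\[
\tfrac14\,\iota(2-\iota)\|\DDD_1^\perp\gamma_+g\|^2 + C\theta_0^2\,\iota\,(\widetilde{\gamma_+g})^2,
\]
using $1-\iota \le 2-\iota$.

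Collecting the terms, the $\frac12$ coefficient on the $\DDD_1^\perp$ term drops to $\frac14$. The error terms are $\lesssim (\theta_0 + \theta_0^2)\,\iota\,(\widetilde{\gamma_+ g})^2 \lesssim \theta_0\,\iota\,(\widetilde{\gamma_+g})^2$, since $\theta_0 < 1/8$. Integrating over $\partial\Omega$ gives the claim. The main obstacle is the cross term, specifically verifying that the $\DDD_1$-component pairing cancels exactly through the normalization $\widetilde{\MMM_\TTT} = 1$, and that the specular component can be absorbed into the dissipative boundary term. Where $\iota = 1$ the specular part is absent, so the absorption is consistent.
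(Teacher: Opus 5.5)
Your proof is correct, but it treats the boundary term differently from the paper.

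\textbf{The paper's route.} The paper never isolates the isothermal reflection operator. After the change of variables $v\mapsto\VV_x v$, it applies the convexity inequality $((1-\iota)a+\iota b)^2\le(1-\iota)a^2+\iota b^2$ pointwise in $v$. This decouples the specular and diffuse parts completely, and gives directly, at each $x\in\partial\Omega$,
\[
\tfrac12\bigl(\|\gamma_+g\|^2-\|\RRR_\TTT\gamma_+g\|^2\bigr)\ \ge\ \tfrac12\,\iota\,\|\DDD_1^\perp\gamma_+g\|^2-\tfrac12\,\iota\,P(\TTT)\,(\widetilde{\gamma_+g})^2 .
\]
Here $P(\TTT)=\int(\MMM_\TTT^2-\MMM_1^2)\MM^{-1}(n_x\cdot v)_+\,\dv=\sqrt{2\pi}\bigl((\TTT(2-\TTT))^{-2}-1\bigr)=O(\theta_0^2)$ is computed in closed form. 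The constant $\frac14$ then comes from $\iota\ge\frac12\iota(2-\iota)$. No cross term ever appears, so neither a cancellation nor a Young absorption is needed.

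\textbf{Your route.} You keep the exact isothermal identity $\|\gamma_+g\|^2-\|\RRR_1\gamma_+g\|^2=\iota(2-\iota)\|\DDD_1^\perp\gamma_+g\|^2$. You then treat $\iota(\MMM_\TTT-\MMM_1)\widetilde{\gamma_+g}$ as an additive perturbation, which produces a genuine cross term. Your handling of it is sound:
\begin{itemize}
\item the $\MMM_1$-component pairing vanishes exactly, since $\MMM_1\MM^{-1}\equiv\sqrt{2\pi}$ and $\widetilde{\MMM_\TTT}=\widetilde{\MMM_1}=1$;
\item the specular $\DDD_1^\perp$ part carries the weight $\iota(1-\iota)$ and is absorbed by Young's inequality.
\end{itemize}
All your error terms are of order $\theta_0^2$, as in the paper. (Incidentally, $\|\MMM_\TTT-\MMM_1\|^2=P(\TTT)$ exactly, which links your $\|B\|^2$ to the paper's quantity.)

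\textbf{What each buys.} In your version the only loss is in the Young step, so $\frac14$ could be replaced by any constant below $\frac12$, at the price of a larger $C$. Your argument is also genuinely perturbative around the isothermal operator. The paper's convexity step instead loses the exact factor $\iota(2-\iota)$ at the outset, but it is shorter.

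\textbf{A remark on the cancellation.} The cancellation you single out as the main obstacle is not actually needed for the lemma as stated. Bounding that pairing crudely by Cauchy--Schwarz gives $C\theta_0\,\iota\,(\widetilde{\gamma_+g})^2$, which is exactly the size of error the statement allows. The cancellation only upgrades the error to $\theta_0^2$.
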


\begin{proof}
Using the Stokes theorem we compute
\beqn
 \la \LL g, g \ra_{\HH}  = -\frac {1}{2}  \int_{\OO} v\cdot \grad _x (g^2)  \MM^{-1} +     \la \CC g, g\ra_{\HH}  \leq  -\frac {1}{2}  \int_{\Sigma} (\gamma g)^2 \MM^{-1} (n_x\cdot v)  - \kappa_0   \|  g^\perp \|_{\HH}^2,
\eeqn
where we have used the microscopic coercivity property of $\CC$ \eqref{eq:MicroCoercivity} to obtain the last inequality. 
It is left then to control the first term of the right hand side above estimate.

We observe that the boundary condition of Equation~\eqref{eq:Hypo_decay} implies that 
\bean
\TT := -\int_{{\Sigma}} (\gamma g)^2 \MM^{-1} (n_x\cdot v) &=&- \int_{{\Sigma_+}} (\gamma_+ g)^2 \MM^{-1} (n_x\cdot v)_+ + \int_{{\Sigma_-}} (\gamma_- g)^2 \MM^{-1} (n_x\cdot v) _- \\
&=& - \int_{{\Sigma_+}} (\gamma_+ g)^2 \MM^{-1} (n_x\cdot v)_+ \\
&&+ \int_{{\Sigma_-}}  \left( (1-\iota) \SSS\gamma_+ g +\iota \DDD_\TTT \gamma _+ g  \right)^2 \MM^{-1} (n_x\cdot v) _-. 
\eean
Applying the change of variables $v\mapsto \VV_x v $ and using that $\DDD_\TTT \gamma_+ f (x,\VV_x v) = \DDD_\TTT \gamma_+ f(x,v)$, and $|n(x) \cdot \VV_x v| = |n(x) \cdot v|$,  it follows that
\bean
\TT &=&-\int_{{\Sigma_+}} (\gamma_+ g)^2 \MM^{-1} (n_x\cdot v)_+ + \int_{{\Sigma_+}}  \left( (1-\iota) \gamma_+ g +\iota \DDD_\TTT \gamma _+ g  \right)^2 \MM^{-1} (n_x\cdot v) _+ \\
&\leq & \int_{{\Sigma_+}} (\gamma_+ g)^2 \MM^{-1} (n_x\cdot v)_+ + \int_{{\Sigma_+}}  \left( (1-\iota) (\gamma_+ g)^2 +\iota (\DDD_\TTT \gamma _+ g)^2  \right) \MM^{-1} (n_x\cdot v) _+ \\
&\leq & \int_{{\Sigma_+}} -\iota (\gamma_+ g)^2 \MM^{-1} (n_x\cdot v)_+ + \int_{{\Sigma_+}} \iota (\DDD_\TTT \gamma _+ g)^2  \MM^{-1} (n_x\cdot v) _+ =: \TT_1 + \TT_2,
\eean
where we have used a convexity inequality for the function $x\mapsto x^2$ to obtain the second inequality.  
We now introduce the decomposition 
\be\label{eq:HypoBoundDecomposition_Hypo_decay}
\gamma_+ g = \DDD_1\gamma_+ g + \DDD_1^\perp \gamma_+ g,
\ee
where we recall that $\DDD_1^\perp = Id - \DDD_1$, and we note that $\DDD_1 \gamma_+ g \perp \DDD_1^\perp \gamma_+ g$ in $\partial\HH_+$. On the one hand, we observe that this implies that
\be\label{eq:Boundary_Estimate_TT1_Hypo_decay}
\TT_1= -\int_{{\Sigma_+} } \iota \left( \DDD_1 \gamma_+ g \right)^2 \MM^{-1} (n_x\cdot v)_+ -\int_{{\Sigma_+} } \iota \left( \DDD_1^\perp \gamma_+ g \right)^2 \MM^{-1} (n_x\cdot v)_+ .
\ee
On the other hand, expanding the term $\TT_2$ and using the very definition of the diffusive boundary condition, we also observe that
$$
\begin{aligned}
\TT_2 &=  \int_{\Sigma_+} \iota (\DDD_1 \gamma _+ g)^2  \MM^{-1} (n_x\cdot v) _+  + \int_{\Sigma_+} \iota \left( \MMM_{\TTT}^2 - \MMM_1^2 \right) (\widetilde{\gamma _+ g})^2  \MM^{-1} (n_x\cdot v) _+ \\
& =:\TT_{2,0} +  \TT_{2,1}.
\end{aligned} 
$$
We define
\be\label{eq:PolyHypo_decay}
P = P(T)  := \int_{\R^3}  \left( \MMM_{\TTT}^2 - \MMM_1^2 \right) \MM^{-1} (n_x \cdot v)_+ \dv =  \sqrt{2\pi}  \left( {1\over {\TTT}^2 (2-{\TTT})^2  - 1} \right) ,
\ee
where the last equality has been obtained by a direct computation of the integral, and we note that, since
$$
({\TTT}(2-{\TTT}))^2 = \left( 1-({\TTT}-1)^2\right)^2 \leq 1,
$$
we have thatit follows that $P (z)\geq 0$ for every $z\in (0,2)$, and $P(z) = 0$ if and only if $z=1$. 
Moreover, using the decomposition ${\TTT} = 1+ \theta$ we have that
\beqn\label{eq:BdyEstPTheta_Hypo_decay}
P({\TTT}) = \sqrt{2\pi}  \left( {1\over (1+\theta)^2  (1-\theta)^2}  - 1\right)  \lesssim    1-  (1-\theta^2)^2  \lesssim \theta^2 \lesssim \theta_0^2,
\eeqn
where we have used that $\lvv \theta\rvv_{L^\infty(\partial {\Omega})}\leq \theta_0$ with $\theta_0 \in (0,1/8)$.
Therefore, we deduce that 
$$
\TT_{2,1} \lesssim  \int_{\partial\Omega} \iota \,  \theta_0^2 \,  (\widetilde{\gamma _+ g})^2.
$$
Therefore, we have obtained that there is a constant $C>0$such that 
\be\label{eq:Boundary_Estimate_TT2_Hypo_decay}
\begin{aligned}
\TT_2 &\leq   \int_{{\Sigma_+}} \iota (\DDD_1 \gamma _+ g)^2  \MM^{-1} (n_x\cdot v) _+  + C \, \theta_0 \left\lVert  (\iota)^{1/2}   \left( \widetilde {\gamma_+ g}\right) \right\rVert^2_{L^2(\partial{\Omega})} .
\end{aligned}
\ee
Finally, recalling that $\TT \leq \TT_1 + \TT_2$, and using \eqref{eq:Boundary_Estimate_TT1_Hypo_decay} and \eqref{eq:Boundary_Estimate_TT2_Hypo_decay}, we have that
\bean
\TT &\leq&  -\int_{{\Sigma_+} } \iota \left( \DDD_1^\perp \gamma_+ g \right)^2 \MM^{-1} (n_x\cdot v)_+ + C \, \theta_0 \left\lVert  (\iota)^{1/2}   \left( \widetilde {\gamma_+ g}\right) \right\rVert^2_{L^2(\partial{\Omega})}  \\
&\leq &  - \frac 12 \int_{{\Sigma_+} } \iota (2-\iota) \left( \DDD_1^\perp \gamma_+ g \right)^2 \MM^{-1} (n_x\cdot v)_+ + C \, \theta_0 \left\lVert  (\iota)^{1/2}   \left( \widetilde {\gamma_+ g}\right) \right\rVert^2_{L^2(\partial{\Omega})} ,
\eean
where we have employed the inequality $\iota \geq \iota(2-\iota)/2$ to obtain the second line. This concludes the proof.
\end{proof}

\subsubsection{Boundary terms}\label{ssec:BoundaryTerms_Hypo_decay}
We now control the boundary terms which will be useful in the coming sections.

\begin{lem} \label{lem:BoundaryTerms_Hypo_decay}
Consider a function $\phi:\R^3\to \R$. For any $x\in \partial {\Omega}$ it follows that
$$
\begin{aligned}
\int_{\R^3} \phi (v) \, \gamma g \, (n_x\cdot v)\dv &= \int_{\R^3} \phi(v) \, \iota(x) \, \DDD_1^\perp \gamma_+ g\,  (n_x\cdot v)_+\dv  \\
&+ \int_{\R^3} \left[ \phi(v) - \phi(\VV_x v) \right] (1-\iota (x) ) \, \DDD_1^\perp \gamma_+ g \,  (n_x\cdot v)_+\dv \\
& + \int_{\R^3}  \left[ \phi(v) - \phi(\VV_x v) \right] \DDD_1 \gamma_+ g \,  (n_x\cdot v)_+\dv \\
& - \left( \widetilde{\gamma_+ g} \right)  \, \iota \left( \int_{\R^3}  \phi( v)   \, \left[\MMM_\TTT(x,v)-\MMM_1(v)\right] \, (n_x\cdot v)_- \dv \right)  
\end{aligned}
$$
\end{lem}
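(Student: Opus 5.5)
The identity is pointwise in $x\in\partial\Omega$, so I will prove it by splitting the velocity integral into outgoing and incoming parts and inserting the boundary condition of Equation~\eqref{eq:Hypo_decay}. I start from
$$
\int_{\R^3}\phi\,\gamma g\,(n_x\cdot v)\,\dv=\int_{\R^3}\phi\,\gamma_+g\,(n_x\cdot v)_+\,\dv-\int_{\R^3}\phi\,\gamma_-g\,(n_x\cdot v)_-\,\dv .
$$
In the second integral I replace $\gamma_-g=(1-\iota)\SSS\gamma_+g+\iota\DDD_\TTT\gamma_+g$. I then split the diffusive term as $\DDD_\TTT\gamma_+g=\DDD_1\gamma_+g+(\MMM_\TTT-\MMM_1)\widetilde{\gamma_+g}$. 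The piece $-\iota\,\widetilde{\gamma_+g}\int\phi\,[\MMM_\TTT-\MMM_1](n_x\cdot v)_-\,\dv$ is exactly the last line of the claimed formula. It is the only new non-isothermal term and is simply carried along.

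For the specular part I use the change of variables $v\mapsto\VV_x v$. It is an involution preserving Lebesgue measure and satisfies $|n_x\cdot\VV_xv|=|n_x\cdot v|$, so it maps $\Sigma_-$ onto $\Sigma_+$. This gives
$$
\int\phi\,\SSS\gamma_+g\,(n_x\cdot v)_-\,\dv=\int\phi(\VV_xv)\,\gamma_+g\,(n_x\cdot v)_+\,\dv .
$$
For the isothermal diffusive part I use that $\DDD_1\gamma_+g$ depends on $v$ only through the even-in-$n_x\cdot v$ quantity $\MMM_1(v)$, since $|\VV_xv|=|v|$. The same change of variables therefore gives
$$
\int\phi\,\DDD_1\gamma_+g\,(n_x\cdot v)_-\,\dv=\int\phi(\VV_xv)\,\DDD_1\gamma_+g\,(n_x\cdot v)_+\,\dv .
$$

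Next I insert the decomposition \eqref{eq:HypoBoundDecomposition_Hypo_decay}, $\gamma_+g=\DDD_1\gamma_+g+\DDD_1^\perp\gamma_+g$, in the outgoing integral and in the specular integral. Collecting the coefficients of $\DDD_1\gamma_+g$ gives
$$
\phi-(1-\iota)\phi\circ\VV_x-\iota\,\phi\circ\VV_x=\phi-\phi\circ\VV_x ,
$$
which is the third line of the claim. Collecting the coefficients of $\DDD_1^\perp\gamma_+g$ gives
$$
\phi-(1-\iota)\phi\circ\VV_x=\iota\,\phi+(1-\iota)\bigl[\phi-\phi\circ\VV_x\bigr],
$$
which yields the first two lines. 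Both are straightforward algebraic regroupings.

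The only delicate point is justifying the change of variables and the integrability. I will assume $\phi$ has at most polynomial growth, or more generally that $\phi\,\MM^{1/2}$ is square integrable against $|n_x\cdot v|$, and that $\gamma_+g\in\partial\HH_+$. All integrals are then finite by Cauchy--Schwarz, and the splitting is legitimate. The identity holds for a.e.\ $x$, and for every $x$ when the traces are defined pointwise. I expect no genuine obstacle; the main care is checking that the $\iota\,\DDD_1$ contribution recombines with the specular contribution to give exactly $\phi-\phi\circ\VV_x$.
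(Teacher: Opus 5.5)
Your proposal is correct and is essentially the paper's argument. The paper writes $\DDD_\TTT\gamma_+g=\DDD_1\gamma_+g+(\MMM_\TTT-\MMM_1)\widetilde{\gamma_+g}$, carries the non-isothermal remainder along as the last line, and for the rest defers to the isothermal computation of its cited reference (Lemma 3.2 of MR4581432): the change of variables $v\mapsto\VV_xv$ followed by the decomposition $\gamma_+g=\DDD_1\gamma_+g+\DDD_1^\perp\gamma_+g$. Your explicit regrouping of the coefficients into $\phi-\phi\circ\VV_x$ and $\iota\phi+(1-\iota)(\phi-\phi\circ\VV_x)$ is correct.
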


\begin{proof}[Proof of Lemma~\ref{lem:BoundaryTerms}]
The proof follows exactly the computations of \cite[Lemma 3.2]{MR4581432}, by writing $\DDD_\TTT \gamma_+ g = \DDD_1 \gamma_+ g + \left( \MMM_\TTT-\MMM_1\right) \widetilde{\gamma_+g}$, and following the computations accordingly. Thus, we skip it.
\end{proof}

\subsubsection{Energy functional}\label{ssec:EnergyFunctional_Hypo_decay}
We devote this sub-subsection to construct a functional in order to control the energy component of the macroscopic part $\Pi g$. 
We define the operator 
\be\label{eq:Energy_Hypo_decay}
\EE[h] := \int_{\R^3} \frac{|v|^2 - 3}{\sqrt{6}} \, h(v) \, \dv,
\ee
so that $\EE = \EE[g]$. 
We define $u[\EE]$ as the unique variational solution to the Poisson equation \eqref{eq:PoissonEquation} associated with $\xi = \EE \in L^2_x ({\Omega})$ and $\alpha = \iota$, i.e. the boundary condition \ref{item:P1} holds, thus the well-posedness of $u[\EE]$ is given by Theorem~\ref{theo:PoissonRegularity}.

We now introduce the vector $p = (p_i)_{1 \le i \le 3}$ defined by 
$$
p_i(v): = v_i \, \frac{(|v|^2 - 5)}{\sqrt{6}},
$$
and the associated moment functional  $M_p[g] = ( M_{p_i} [g])_{1 \le i \le 3}$ given by 
\begin{equation}\label{eq:def-Mp_Hypo_decay}
M_{p_i}[g] = \int_{\R^3} v_i \, \frac{(|v|^2 - 5)}{\sqrt{6}} \, g \, \dv.
\end{equation}

We first have the following lemma.

\begin{lem}\label{lem:ineqHypoEnergy_Hypo_decay}
It follows that 
    \be\label{eq:EnergyLLControl_Hypo_decay}
    \EE[\LL g] = -  \Div_x \mu - \sqrt{2\over 3} \Div_x M_p \quad \text{ and } \quad  M_p [g] = M_p [g^\perp].
    \ee
    Moreover, we have that
    \begin{multline}\label{eq:H1Energy_Hypo_decay}
    \lVert u[\EE[\LL g]]\rVert_{H^1 (\Omega)} \lesssim   \lVert \mu \rVert_{L^2(\Omega)} +   \lVert f^\perp\rVert_{ \HH} +   \left\lVert \sqrt{\iota  (2-\iota)}\DDD_1^\perp \gamma_+ g \right\rVert_{\partial\HH_+} \\ +   (\theta_0)^2 \left\lvv   \sqrt{\iota (2-\iota)}   \left( \widetilde{ \gamma_+ g} \right) \right\rvv_{L^2(\partial \Omega)}.
    \end{multline}
\end{lem}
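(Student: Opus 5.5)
The plan is to argue exactly as in the isothermal proof of \cite[Lemma 3.3]{MR4581432} and to isolate the single new boundary contribution produced by $\DDD_\TTT \neq \DDD_1$.

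\textbf{Step 1: the algebraic identities.} For the first identity in \eqref{eq:EnergyLLControl_Hypo_decay}, apply $\EE[\cdot]$ to $\LL g = -v\cdot\grad_x g + \CCC g$.
\begin{itemize}
\item By \eqref{eq:ConservationLawsCCC} with $\varphi = 1, \lv v\rv^2$, we get $\EE[\CCC g] = 0$.
\item For the transport part, $\EE[v\cdot \grad_x g] = \Div_x \int v \frac{\lv v\rv^2-3}{\sqrt 6} g\, \dv$.
\item Splitting $v(\lv v\rv^2-3) = v(\lv v\rv^2-5) + 2v$ writes this flux as a combination of $M_p$ and $\mu$, which is the claimed formula.
\end{itemize}
For the second identity, insert the decomposition \eqref{eq:DecompositionMicroMacro_Hypo_decay}. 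Parity kills the $\varrho$- and $\EE$-components, and $\int v_i v_j (\lv v\rv^2-5)\MM = 0$ kills the $\mu$-component, so $M_p[g] = M_p[g^\perp]$.

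\textbf{Step 2: weak formulation and interior terms.} Write $u = u[\EE[\LL g]]$ and test its variational formulation \eqref{eq:PoissonVariationalSln} (case \ref{item:P1}) against $u$ itself. Since the source is a divergence, an integration by parts gives
\begin{equation*}
\int_\Omega \lv \grad u\rv^2 + \int_{\partial\Omega}\frac{\iota}{2-\iota}u^2 = \int_\Omega \Bigl(\mu + c\,M_p[g^\perp]\Bigr)\cdot \grad u - \int_{\partial\Omega} \Bigl(\int_{\R^3}\phi(v)\,\gamma g\,(n_x\cdot v)\,\dv\Bigr) u \,\d\sigma_x ,
\end{equation*}
where $\phi(v) = v$-free combination $a\lv v\rv^2 + b$ coming from the flux in Step 1.

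The interior term is handled as follows.
\begin{itemize}
\item By Cauchy--Schwarz it is bounded by $(\lvv\mu\rvv_{L^2} + \lvv g^\perp\rvv_\HH)\lvv u\rvv_{H^1}$.
\item Here we use $\lv M_p[g^\perp]\rv \lesssim \lvv g^\perp\rvv_{L^2_{\MM^{-1/2}}(\R^3)}$, since $p\MM^{1/2}\in L^2$.
\item The left-hand side controls $\lvv u\rvv_{H^1}^2$ by Theorem~\ref{theo:PoissonRegularity}.
\end{itemize}

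\textbf{Step 3: the boundary term.} Expand $\int \phi\,\gamma g\,(n_x\cdot v)$ using Lemma~\ref{lem:BoundaryTerms_Hypo_decay}. Since $\phi$ is radial, $\phi(v) = \phi(\VV_x v)$, so the two specular-difference lines vanish. Two terms remain.
\begin{itemize}
\item The term $\iota\int \phi\,\DDD_1^\perp\gamma_+ g\,(n_x\cdot v)_+$ is bounded by Cauchy--Schwarz in $\partial\HH_+$, as $\phi^2\MM(n\cdot v)_+$ is integrable. This gives at most $\lvv\sqrt{\iota(2-\iota)}\DDD_1^\perp\gamma_+g\rvv_{\partial\HH_+}\,\lvv (\iota/(2-\iota))^{1/2} u\rvv_{L^2(\partial\Omega)}$, up to constants, using $\iota \le \iota(2-\iota)\cdot\frac{1}{\iota_0}$-free bounds $\iota\lesssim \sqrt{\iota(2-\iota)}\sqrt{\iota/(2-\iota)}$.
\item The new term is $-\,\iota\,\widetilde{\gamma_+ g}\; c_\phi(\TTT)$, with $c_\phi(\TTT) := \int \phi\,[\MMM_\TTT - \MMM_1](n_x\cdot v)_-\,\dv$.
\end{itemize}
Everything then closes by Young's inequality after dividing by $\lvv u\rvv_{H^1}$, using the trace inequality $\lvv u\rvv_{L^2(\partial\Omega)}\lesssim \lvv u\rvv_{H^1}$.

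\textbf{Main obstacle.} The delicate point is showing $\lv c_\phi(\TTT)\rv \lesssim \theta^2$, in the same way the quantity $P(\TTT)$ in \eqref{eq:PolyHypo_decay} was quadratic. The normalization $\widetilde{\MMM_\TTT} = 1$ kills the constant part of $\phi$, but the $\lv v\rv^2$ part produces the flux moment $\int \lv v\rv^2\MMM_\TTT (n\cdot v)_- = 4\TTT$. That moment is linear in $\theta$, so a naive computation only yields $\theta_0$.

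What I would try first is to exploit the freedom in the test functional.
\begin{itemize}
\item By \eqref{eq:ConservationMassBdy}, $\int \gamma g\,(n\cdot v) = 0$, so any constant may be added to $\phi$. This alone does not change the linear order.
\item I would therefore compute the first-order Taylor coefficient $\partial_\TTT c_\phi|_{\TTT=1}$ with the exact $\phi$ given by the paper's normalization of $\EE$ and $M_p$ in \eqref{eq:EnergyLLControl_Hypo_decay}.
\item I would then check whether the coefficients $1$ and $\sqrt{2/3}$ are precisely those that make this derivative vanish. In that case a second-order Taylor remainder, as in \eqref{eq:BdyEstPTheta_Hypo_decay}, gives $\lv c_\phi\rv\lesssim\theta_0^2$, hence the factor $(\theta_0)^2$ in front of $\lvv\sqrt{\iota(2-\iota)}\,\widetilde{\gamma_+g}\rvv_{L^2(\partial\Omega)}$.
\end{itemize}
If this cancellation fails, the step to revisit is the choice of $\phi$ through the definition of the energy multiplier. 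The rest of the argument would be unchanged.
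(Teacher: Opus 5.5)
\textbf{Gap: the cancellation you rely on to get $(\theta_0)^2$ does not exist.}

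Your Steps 1--3 are the paper's argument: test the variational formulation with $u$, integrate by parts using $\mu\cdot n_x=0$, apply Lemma~\ref{lem:BoundaryTerms_Hypo_decay} with a radial $\phi$ so the specular differences vanish, then use Cauchy--Schwarz and Young. The problem is your ``main obstacle''.

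Take $\phi(v)=(\lvert v\rvert^2-5)/\sqrt6$. The constant part is annihilated because $\widetilde{\MMM_\TTT}=\widetilde{\MMM_1}=1$. Your own moment $\int_{\R^3}\lvert v\rvert^2\MMM_\TTT\,(n_x\cdot v)_-\,\dv=4\TTT$ then gives exactly
\begin{equation*}
c_\phi(\TTT)=\int_{\R^3}\frac{\lvert v\rvert^2-5}{\sqrt6}\,\bigl[\MMM_\TTT-\MMM_1\bigr](n_x\cdot v)_-\,\dv=\frac{4}{\sqrt6}\,(\TTT-1)=\frac{4}{\sqrt6}\,\theta(x).
\end{equation*}
Hence $\partial_\TTT c_\phi\vert_{\TTT=1}=4/\sqrt6\neq 0$. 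Adding constants to $\phi$ changes nothing. The coefficient of $\lvert v\rvert^2$ is fixed by the definitions of $\EE$ and $M_p$ in the lemma, so the fallback of ``revisiting the choice of $\phi$'' is not available either.

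The boundary flux $M_p[g]\cdot n_x$ therefore contains the term $-\frac{4}{\sqrt6}\,\iota\,\theta\,\widetilde{\gamma_+ g}$, which is linear in $\theta$. No Taylor argument along this route can put $(\theta_0)^2$ in front of the unsquared norm $\lVert\sqrt{\iota(2-\iota)}\,\widetilde{\gamma_+ g}\rVert_{L^2(\partial\Omega)}$. As written, your proposal leaves the decisive step conditional on an identity that is false.

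\textbf{How the paper closes the argument.} The paper never attempts a quadratic bound. It uses only the linear estimate $\lvert c_\phi\rvert\lesssim\theta_0\,\iota$ from \eqref{eq:PertTempBdy}. After Cauchy--Schwarz and Young, absorbing $\frac12\lVert\sqrt{\iota/(2-\iota)}\,u\rVert^2_{L^2(\partial\Omega)}$, it gets $C\theta_0^2\lVert\sqrt{\iota(2-\iota)}\,\widetilde{\gamma_+ g}\rVert^2_{L^2(\partial\Omega)}$ in the \emph{squared} energy identity \eqref{eq:Control_boundary_term_Energy_Hypo_decay}.

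Taking square roots gives \eqref{eq:H1Energy_Hypo_decay} with $\theta_0$, not $\theta_0^2$, in front of the last term. The $(\theta_0)^2$ in the display belongs to the squared form. The linear version is all that is needed downstream:
\begin{itemize}
\item in the bound on $E_2$ in Lemma~\ref{lem:EnergyCoercivity_Hypo_decay}, where Young turns $\theta_0\lVert\cdot\rVert\,\lVert g^\perp\rVert_{\HH}$ into $\theta_0^2\lVert\cdot\rVert^2+\lVert g^\perp\rVert^2_{\HH}$;
\item in Theorem~\ref{theo:Pert_hypo_Hypo_decay}, which only requires a factor $\theta_0$.
\end{itemize}
So drop the Taylor-cancellation step and close with $\lvert c_\phi\rvert\lesssim\theta_0\,\iota$.

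Incidentally, your splitting in Step 1 gives $\EE[\LL g]=-\Div_x M_p-\sqrt{2/3}\,\Div_x\mu$. This is what the paper's proof actually uses; the displayed identity has the two coefficients interchanged.
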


\begin{proof} 
We first note that, \eqref{eq:EnergyLLControl_Hypo_decay} follows by repeating exactly the arguments from \cite[Lemma 3.3]{MR4581432}. To prove \eqref{eq:H1Energy_Hypo_decay}, we set $u:= u[\EE[\LL g ]]$ and using Theorem \ref{theo:PoissonRegularity} we have that there is a constant $\eta>0$ for which it follows that
$$
\eta  \lvv u \rvv_{H^1_x(\Omega)}^2  \leq \lvv \grad_x u\rvv^2_{L^2_x(\Omega)} +\left \lvv \sqrt{\iota \over 2-\iota} \, u \right\rvv^2_{L^2_x(\partial \Omega)}.
$$
Using the variational formulation together with \eqref{eq:EnergyLLControl_Hypo_decay} we further have that
\begin{multline}
\lvv \grad_x u\rvv^2_{L^2_x(\Omega)} + \left\lvv \sqrt{\iota \over 2-\iota} \, u \right\rvv^2_{L^2_x(\partial \Omega)}  
 =- \int_{\Omega}\left(  \Div_x M_p [g] + \sqrt{2\over 3}\Div_x \mu\right) u\,  \dx \\
=  \int_{\Omega}\left(   M_p[g] + \sqrt{2\over 3} \, \mu\right) \grad_x u\,  \dx -  \int_{\partial\Omega}    M_p[g] \cdot n_x \, u\,   \d\sigma_x  \label{eq:Energy_variational_formulation_Hypo_decay}
\end{multline}
where we have used integration by parts and \eqref{eq:mcdotn=0} to obtain the second line. 

Employing now Lemma \ref{lem:BoundaryTerms_Hypo_decay} and noting that $\lv \VV_x v\rv = \lv v\rv$, we control the boundary integral as follows
$$
\begin{aligned}
 M_p[g] \cdot n_x  &= \int_{\R^3}  \frac{(|v|^2 - 5)}{\sqrt{6}}  \iota(x) \DDD_1^\perp \gamma_+ g (n_x\cdot v)_+\dv  \\
&\qquad \qquad  - \left( \widetilde{\gamma_+ g} \right) \left( \int_{\R^3}   \frac{(|v|^2 - 5)}{\sqrt{6}} \, \iota(x)  \, \left[ \MMM_\TTT(x,v) - \MMM_1(v)\right] \, (n_x\cdot v)_-\dv \right)  
\end{aligned}
$$
Using now \eqref{eq:PertTempBdy} we deduce that 
$$
\left\lv \int_{\R^3}   \frac{(|v|^2 - 5)}{\sqrt{6}} \, \iota(x)   \, \left[ \MMM_\TTT(x,v) - \MMM_1(v)\right] \, (n_x\cdot v)_-\dv \right\rv \lesssim \theta_0 \,  \iota(x) ,
$$
Therefore, using the Cauchy-Schwarz inequality, we further have that 
$$
\begin{aligned}
\left\lv \int_{\partial\Omega}    M_p[g] \cdot n_x \, u\,   \d\sigma_x \right\rv &\lesssim  \left\lVert \sqrt{\iota (2-\iota)}\DDD_1^\perp \gamma_+ g \right\rVert_{\partial\HH_+}  \left\lvv \sqrt{\iota\over 2-\iota }u\right\rvv_{L^2(\partial \Omega)}\\
&\qquad\qquad  +\theta_0 \, \left\lvv  \sqrt{\iota (2-\iota)}   \left( \widetilde{ \gamma_+ g} \right) \right\rvv_{L^2(\partial \Omega)}    \left\lvv \sqrt{\iota\over 2-\iota }u\right\rvv_{L^2(\partial \Omega)}.
\end{aligned}
$$
Furthermore, using the Young inequality we obtain that 
\begin{multline}\label{eq:Control_boundary_term_Energy_Hypo_decay}
\left\lv \int_{\partial{\Omega}}    M_p[g] \cdot n_x \, u\,   \d\sigma_x \right\rv \leq C  \left\lVert \sqrt{\iota (2-\iota)}\DDD_1^\perp \gamma_+ g \right\rVert_{\partial\HH_+}^2+ \frac 12 \left\lvv \sqrt{\iota\over 2-\iota }u\right\rvv_{L^2(\partial {\Omega})}^2 \\
+  C  \, (\theta_0)^2  \left\lvv  \sqrt{\iota (2-\iota)}  \left( \widetilde{ \gamma_+ g} \right) \right\rvv_{L^2(\partial {\Omega})}  ^2   ,
\end{multline}
for some constant $C>0$.
On the other hand, using that
\be\label{eq:estimMpHH_Hypo_decay}
\lv M_p [g]\rv \lesssim  \lvv g\rvv_{\HH_0},
\ee
and arguing exactly as in the proof of \cite[Lemma 3.3]{MR4581432} we have that 
\be\label{eq:Control_interior_term_Energy_Hypo_decay}
\left\lv \int_{\Omega}\left(   M_p[g] + \sqrt{2\over d} \, \mu\right) \grad_x u\,  \dx \right\rv \leq  C   \lvv g^\perp\rvv_{\HH}^2 + C\lvv \mu\rvv_{L^2({\Omega})}^2 + \frac {1}2 \lvv \grad_x u\rvv_{L^2({\Omega})}^2 ,
\ee
for some constant $C>0$.
We conclude the proof by putting \eqref{eq:Control_boundary_term_Energy_Hypo_decay} and \eqref{eq:Control_interior_term_Energy_Hypo_decay} into \eqref{eq:Energy_variational_formulation_Hypo_decay}. 
\end{proof}

Using the lemma above, and arguing in the spirit of \cite[Lemma 3.4]{MR4581432}, we obtain then the following estimate.

\begin{lem}\label{lem:EnergyCoercivity_Hypo_decay}
There are constants $\kappa_1, C_1>0$ such that 
    \begin{multline*}
  \langle -\grad_x u[\EE], M_p [\LL g]\rangle_{L^2({\Omega})} + \langle -\grad_x u[\EE[\LL g]], M_p [ g]\rangle_{L^2({\Omega})}\\
       \geq \kappa_1 \lVert\EE\rVert^2_{L^2({\Omega})} - C_1   \lVert \mu\rVert_{L^2({\Omega})} \lVert g^{\perp}\rVert_{ \HH} - C_1   \left\lVert \sqrt{\iota(2-\iota)} \DDD_1^\perp \gamma_+g\right\rVert^2_{\partial\HH_+} \\
       - C_1  \lVert g^{\perp}\rVert^2_{ \HH}  - C_1 (\theta_0)^2   \left\lVert  \sqrt{\iota(2-\iota)}  \, \left( \widetilde{\gamma_+ g}\right) \right\rVert^2_{L^2(\partial {\Omega})}  .
    \end{multline*}
\end{lem}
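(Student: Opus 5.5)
We follow the structure of \cite[Lemma 3.4]{MR4581432}, treating the two terms separately and isolating every place where the isothermal argument used $\DDD_1$ in place of $\DDD_\TTT$. Write $u := u[\EE]$. The second term is handled directly: by Cauchy--Schwarz, the bound $\lv M_p[g]\rv = \lv M_p[g^\perp]\rv \lesssim \lvv g^\perp\rvv_{\HH}$ from \eqref{eq:EnergyLLControl_Hypo_decay}, and the $H^1$ estimate \eqref{eq:H1Energy_Hypo_decay} of Lemma~\ref{lem:ineqHypoEnergy_Hypo_decay}, we get
\[
\bigl\lv \langle \grad_x u[\EE[\LL g]], M_p[g]\rangle_{L^2(\Omega)}\bigr\rv \lesssim \lvv g^\perp\rvv_{\HH}\Bigl( \lvv \mu\rvv_{L^2} + \lvv g^\perp\rvv_{\HH} + \bigl\lVert \sqrt{\iota(2-\iota)}\DDD_1^\perp\gamma_+ g\bigr\rVert_{\partial\HH_+} + \theta_0^2\bigl\lVert \sqrt{\iota(2-\iota)}\,\widetilde{\gamma_+g}\bigr\rVert_{L^2(\partial\Omega)}\Bigr).
\]
Young's inequality then produces exactly the error terms allowed in the statement. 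The constant $\theta_0^2$ survives squaring because $\theta_0<1$, so $\theta_0^4\le \theta_0^2$.

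For the first term, $\LL g = -v\cdot\grad_x g + \CCC g$. The collision part $M_p[\CCC g] = M_p[\CCC g^\perp]$ is bounded by $\lvv g^\perp\rvv_{\HH}$ using self-adjointness of $\CCC$ and \eqref{eq:ConditionBddCCCphi} applied to $\phi = p_i$. Since $\lvv u\rvv_{H^1}\lesssim \lvv \EE\rvv_{L^2}$ by Theorem~\ref{theo:PoissonRegularity}, this contributes at most $\eta\lvv\EE\rvv^2 + C_\eta \lvv g^\perp\rvv^2$. For the transport part we need $-\langle \grad_x u, M_p[-v\cdot\grad_x g]\rangle = \langle \grad_x u, \Div_x \int p\otimes v\, g\,\dv\rangle$. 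Substituting the decomposition \eqref{eq:DecompositionMicroMacro_Hypo_decay}, the moment $\int p_i v_j \Pi g$ equals $c\,\EE\,\delta_{ij}$ with $c=\sqrt{2/3}\cdot\frac{5}{\sqrt 6}\cdot\ldots>0$; the exact constant is as in \cite{MR4581432}. The remainder is $\int p_iv_j g^\perp$. Integrating by parts in $x$ gives
\[
-c\int_\Omega \EE\,\Delta u \;+\; (\text{interior terms in } \grad^2 u \cdot g^\perp) \;-\; (\text{boundary term}).
\]
Using $-\Delta u = \EE$, the first piece is $c\lvv\EE\rvv^2_{L^2}$. The interior $g^\perp$ term is bounded by $\lvv u\rvv_{H^2}\lvv g^\perp\rvv_{\HH}\lesssim \lvv\EE\rvv\,\lvv g^\perp\rvv$, again via Theorem~\ref{theo:PoissonRegularity}, and is absorbed by Young.

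The main obstacle is the boundary term
\[
\int_{\partial\Omega}\int_{\R^3} (p\cdot\grad_x u)\,\gamma g\,(n_x\cdot v)\,\dv\,\d\sigma_x .
\]
We treat it with Lemma~\ref{lem:BoundaryTerms_Hypo_decay} with $\phi = p\cdot\grad_x u$, splitting $\grad_x u$ into its normal and tangential parts. The contributions of $\DDD_1\gamma_+g$ and of the specular differences $\phi(v)-\phi(\VV_xv)$ are handled exactly as in \cite{MR4581432}. In that argument the Robin condition $(2-\iota)\partial_n u + \iota u = 0$ and the oddness properties of $p$ cancel the non-controllable $\DDD_1$ part. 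What remains is bounded by $\lVert\sqrt{\iota(2-\iota)}\DDD_1^\perp\gamma_+g\rVert_{\partial\HH_+}$ times a trace norm of $\grad_x u$, which is at most $\lvv u\rvv_{H^2}\lesssim\lvv\EE\rvv$.

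The genuinely new contribution is the last term of Lemma~\ref{lem:BoundaryTerms_Hypo_decay}:
\[
\iota\,\widetilde{\gamma_+g}\int_{\R^3}(p\cdot\grad_x u)\,[\MMM_\TTT-\MMM_1]\,(n_x\cdot v)_-\,\dv .
\]
By \eqref{eq:PertTempBdy} this is $\lesssim\theta_0\,\iota\,\lv\widetilde{\gamma_+g}\rv\,\lv\grad_x u\rv$. Integrating over $\partial\Omega$ and using the trace theorem $\lvv\grad_x u\rvv_{L^2(\partial\Omega)}\lesssim\lvv u\rvv_{H^2}\lesssim\lvv\EE\rvv$, Young's inequality gives $\eta\lvv\EE\rvv^2 + C\theta_0^2\lVert\sqrt{\iota(2-\iota)}\,\widetilde{\gamma_+g}\rVert^2$. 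Here we use $\iota\le 2\iota(2-\iota)$, since $\iota\le 1$. Under (H2) the trace estimate for $\grad_x u\in H^1$ on each face is still valid because $u\in H^2$. Finally, choosing $\eta$ small relative to $c$ yields $\kappa_1 = c/2$ and collects the remaining terms into $C_1$.
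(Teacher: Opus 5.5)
Your proposal is correct and follows essentially the paper's route. You bound the $M_p[g]$ term via \eqref{eq:H1Energy_Hypo_decay} and Young's inequality, split $M_p[\LL g]$ into collision and transport parts, and integrate the transport part by parts. The $\DDD_1$ and specular-difference boundary pieces are deferred to \cite[Lemma~3.4]{MR4581432}, and the new $(\MMM_\TTT-\MMM_1)$ piece is controlled with \eqref{eq:PertTempBdy} and the $H^2$ trace bound on $\grad_x u$.

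One correction to your gloss on that cited step. The $\DDD_1\gamma_+g$ contribution vanishes because $\int_{\R^3}(n_x\cdot v)_+^2(|v|^2-5)\MM\,\dv=0$, not because of the Robin condition or oddness of $p$; this is why the boundary lemma isolates $\MMM_\TTT-\MMM_1$ instead of working with $\DDD_\TTT$ directly. The Robin condition instead supplies the factor $\iota$ in the $(1-\iota)\DDD_1^\perp$ specular-difference term, which is bounded by a trace of $u$ rather than of $\grad_x u$.
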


\begin{proof}
We define
$$
E_1 := \la -\grad_x u[\EE], M_p[\LL g]\ra_{L^2({\Omega})} \quad \text{ and } \quad E_2 := \langle -\grad_x u[\EE[\LL g]], M_p[ g]\rangle_{L^2({\Omega})}.
$$
On the one hand, by using \eqref{eq:H1Energy_Hypo_decay}, \eqref{eq:estimMpHH_Hypo_decay}, and the Young inequality we have that
$$
\beal
\lvert E_2 \rvert &\lesssim  \lVert u[\EE[\LL g]] \rVert_{H^1({\Omega})} \lVert  g^\perp\rVert_{\HH} \lesssim  \lVert \mu \rVert_{L^2({\Omega})} \lVert  g^\perp\rVert_{\HH}  +   \lVert g^\perp\rVert_{ \HH}^2 \\
&\qquad\qquad\qquad \qquad \qquad \qquad \qquad   +   \left\lVert \sqrt{\iota  (2-\iota)}\DDD_1^\perp \gamma_+ g \right\rVert_{\partial\HH_+}^2 + \theta_0  \left\lvv  \sqrt{\iota (2-\iota)}   \left( \widetilde{ \gamma_+ g} \right) \right\rvv_{L^2(\partial {\Omega})}^2  
\eeal
$$
 On the other, from the fact that $M_p [\LL g]  = M_p [- v\cdot \grad_x g] + M_p [ \CC g^\perp]$ we deduce that $E_1 = E_{1, 1} + E_{1,2}$ with
$$
E_{1,1} =  \left\la \partial_{x_i} u[\EE] ,  \partial_{x_j} \int_{\R^3} p_i(v) v_j g\dv \right\ra_{L^2({\Omega})}  \quad \text{ and } \quad
E_{1,2} =  \left\la -\grad_x u[\EE] , \int_{\R^3} p(v)  \CC g^\perp \dv \right\ra_{L^2({\Omega})},
$$
and we have used the convention of the sum over repeated indices in the definition of $E_{1,1}$. 
We then note that, from the fact that $\CC$ is a self-adjoint operator in $\HH_0$, we have that
$$
\int_{\R^3} p(v) \, \CC g^\perp\, \dv = \la g^\perp, \CC(\MM p)\ra_{\HH_0} \lesssim \lvv g^\perp\rvv_{\HH_0},
$$
where we have used the Cauchy-Schwarz inequality and \eqref{eq:ConditionBddCCCphi} to obtain the last inequality. Using this estimate we further deduce that
$$
\lvert E_{1,2} \rvert \lesssim  \lVert \grad_x u[\EE]\rVert_{L^2({\Omega})} \lVert g^\perp\rVert_{\HH} \lesssim \lVert \EE\rVert_{L^2({\Omega})}  \lVert g^\perp\rVert_{\HH} ,
$$
where we have used the regularity estimate from Theorem~\ref{theo:PoissonRegularity} to obtain the second inequality.

Moreover, using integration by parts on $E_{1,1}$, we have that
\bean
E^1_1 &=& -\left\la \partial{x_i} \partial{x_j} u[\EE] , \int_{\R^3} p_i(v) v_j g \, \dv \right\ra_{L^2({\Omega})} +  \int_{\Sigma} \left( \grad_x u[\EE] \cdot p(v) \right) \, \gamma g \, (n_x\cdot v) \dv\d\sigma_x \\
&=:& E_{1,3} + E_{1,4},
\eean
and we compute each of them separately. 
Repeating exactly the arguments from \cite[Lemma 3.4]{MR4581432} we obtain that there is a constant $C>0$ such that
$$
E_{1,3} \geq \frac 12 \left( 1 + \frac 2 d\right) \lVert \EE\rVert^2_{L^2({\Omega})} - C \lVert g^\perp \rVert^2_{\HH}.
$$
On the other hand, we employ Lemma \ref{lem:BoundaryTerms} to expand the boundary term $E_{1,4}$ as follows
\bean
E_{1,4} &=& \int_{{\Sigma_+}} \grad_x u[\EE] \cdot p(v)  \, \iota(x) \DDD_1^\perp \gamma_+ g (n_x\cdot v)_+\dv \\
&& + \int_{{\Sigma_+}}  \grad_x u[\EE] \cdot  \left[ p(v) - p(\VV_x v) \right] (1-\iota(x)) \DDD_1^\perp \gamma_+ g (n_x\cdot v)_+\dv \\
&& + \int_{{\Sigma_+}} \grad_x u[\EE] \cdot  \left[ p(v) - p(\VV_x v) \right] \DDD_1 \gamma_+ g (n_x\cdot v)_+\dv \\
&& -  \int_{\partial {\Omega}}  (\widetilde{\gamma_+f}) \iota(x)  \grad_x u[\EE]  \left(  \int_{\R^3}   p( v)  \, \left[ \MMM_\TTT(x,v)-\MMM_1(v)\right]  \,(n_x\cdot v)_-\dv \right) \d\sigma_x \\
&=:& E_{1,5} + E_{1,6} + E_{1, 7} + E_{1,8} .
\eean
and we estimate each of these terms separately. To control the first three terms above we repeat exactly the computations of \cite[Lemma 3.4]{MR4581432} and we have that 
$$
\lv E_{1,5} + E_{1,6} + E_{1, 7} \rv \lesssim \lVert \EE  \rVert_{L^2({\Omega})} \lVert \iota \DDD_1^\perp \gamma_+ f  \rVert_{\partial {\HH}_+}.
$$
To estimate the remaining terms we use \eqref{eq:PertTempBdy}, and we have that
$$
\lv E_{1,8} \rv \lesssim  \theta_0 \, \lvv \EE \rvv_{L^2({\Omega})} \lvv \iota  (\widetilde{\gamma_+ f }) \rvv_{L^2(\partial {\Omega})},
$$
where we have used the Cauchy-Schwarz inequality, and the regularity estimate from Theorem~\ref{theo:PoissonRegularity}. 
 Using now the Young inequality we deduce that there is a constant $C>0$ such that 
$$
\beal
\lv E_{1,5} + E_{1,6} + E_{1, 7} + E_{1,8} \rv & \leq  \frac 14 \left( 1+ \frac 2d\right)  \lvv \EE \rvv_{L^2({\Omega})} ^2 + C \lVert \iota \DDD_1^\perp \gamma_+ g  \rVert_{\partial {\HH}_+}^2 \\
&\qquad  \qquad\qquad \qquad  + (\theta_0)^2  \, C\,  \left\lvv \iota  (\widetilde{\gamma_+ g }) \right\rvv_{L^2(\partial {\Omega})}^2 .
\eeal
$$
We conclude the proof by putting all the previous estimates together and by using the Young inequality together with the fact that $\iota \leq \sqrt{\iota/(2-\iota)}$. 
\end{proof}

\subsubsection{Momentum functional}\label{ssec:MomentumFunctional_Hypo_decay}
We devote this subsection to construct a functional in order to control the momentum component of the macroscopic part $\Pi g$.  

We denote 
\be\label{eq:Momentum_Hypo_decay}
\mu[h] := \int_{\R^3}  v h \, \dv , 
\ee
so that $\mu = \mu[g]$.
We also consider $U[\mu]$ the unique variational solution to the Lamé system of equations \eqref{eq:LameSystem} associated to $\Xi = \mu$, given by Theorem~\ref{thm:LameRegularity}. 

We consider the matrix $q_{ij} = (q_{ij})_{1 \le i , j \le 3}$ given by $q_{ij}(v) = v_i v_j - \delta_{ij}$,
and we define the associated moment functional $M_q[g] = (M_{q_{ij}} [g])_{1 \le i , j \le 3}$ as
\begin{equation}\label{eq:def-Mq_Hypo_decay}
M_{q_{ij}}[g] = \int_{\R^3} (v_i v_j - \delta_{ij}) g \, \dv.
\end{equation}

We first have the following result.
\begin{lem}\label{lem:ineqHypoMomentum_Hypo_decay}
There holds 
\be\label{eq:MomentumLLControl_Hypo_decay}
 \mu[\LL g] = - \grad_x \varrho -  \Div_x M_q[g] \quad \text{ and } \quad M_q[g] = \sqrt{2\over 3}\, \EE I_3  + M_q[g^\perp].
\ee
Furthermore, we have that 
\begin{multline}\label{eq:H1Momentum_Hypo_decay}
\lVert U[\mu[\LL g]]\rVert_{H^1({\Omega})} \lesssim  \lVert \varrho\rVert_{L^2({\Omega})} +  \lVert \EE\rVert_{L^2({\Omega})}  + \lVert g^\perp\rVert_{ {\HH}}  \\
 +  \left\lVert \sqrt{\iota(2-\iota)} \DDD_1^\perp \gamma_+ g \right \rVert_{\partial {\HH}_+} 
 +   (\theta_0)^2 \, \left\lvv   \sqrt{\iota (2-\iota)}   \left( \widetilde{ \gamma_+ g} \right) \right\rvv_{L^2(\partial {\Omega})}.
\end{multline}
\end{lem}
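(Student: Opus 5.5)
The identities in \eqref{eq:MomentumLLControl_Hypo_decay} are algebraic and follow exactly as in \cite[Lemma 3.5]{MR4581432}. Take the moment $\int v\,\cdot\,\dv$ of $\LL g = -v\cdot\grad_x g + \CCC g$. The collision part vanishes by \eqref{eq:ConservationLawsCCC}. The transport part gives $-\partial_{x_j}\int v_i v_j g\,\dv = -\partial_{x_i}\varrho - \Div_x M_q[g]$, after writing $v_iv_j = q_{ij}+\delta_{ij}$. For the second identity, insert the decomposition \eqref{eq:DecompositionMicroMacro_Hypo_decay} into $M_q$. The terms $\varrho\MM$ and $\mu\cdot v\MM$ contribute nothing, by parity and since $\int (v_iv_j-\delta_{ij})\MM=0$. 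The energy term contributes $\EE\int (v_iv_j-\delta_{ij})\frac{|v|^2-3}{\sqrt 6}\MM\,\dv = \sqrt{2/3}\,\EE\,\delta_{ij}$, a Gaussian moment computation.

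For \eqref{eq:H1Momentum_Hypo_decay}, I will mimic the proof of Lemma~\ref{lem:ineqHypoEnergy_Hypo_decay}. Set $U:=U[\mu[\LL g]]$. By Theorem~\ref{thm:LameRegularity},
$$\lvv U\rvv_{H^1}^2 \lesssim \lvv\nabla^s U\rvv_{L^2}^2 + \Big\lvv \sqrt{\iota/(2-\iota)}\,U\Big\rvv^2_{L^2(\partial\Omega)}.$$
The variational formulation \eqref{eq:WeakFormulationLame} with $V=U$ and $\Xi=-\grad_x\varrho-\Div_x M_q[g]$ expresses this quantity as $\int_\Omega(-\grad\varrho - \Div M_q)\cdot U$. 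Integrating by parts, I get interior terms $\int_\Omega \varrho\,\Div U + \int_\Omega M_q[g]:\grad U$. These are bounded by $(\lvv\varrho\rvv+\lvv\EE\rvv+\lvv g^\perp\rvv_\HH)\lvv \grad U\rvv$, using the second identity and $|M_q[g^\perp]|\lesssim \lvv g^\perp\rvv_{\HH_0}$. The boundary terms are $-\int_{\partial\Omega}\varrho\,U\cdot n - \int_{\partial\Omega} (M_q[g] n)\cdot U$. The first vanishes since $U\cdot n=0$ on $\partial\Omega$.

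The main obstacle is the second boundary term. I would expand $(M_q[g]n_x)\cdot U=\int (U\cdot v)\,\gamma g\,(n_x\cdot v)\,\dv$ with Lemma~\ref{lem:BoundaryTerms_Hypo_decay} applied to $\phi(v)=U(x)\cdot v$. Since $U\cdot n_x=0$, we have $\phi(\VV_x v)=\phi(v)$, so the second and third terms in the lemma vanish; this is the point that kills the uncontrolled $\DDD_1\gamma_+g$ contribution. What remains is $\int U\cdot v\,\iota\,\DDD_1^\perp\gamma_+g\,(n_x\cdot v)_+$. By Cauchy--Schwarz, and since $\iota\lesssim \sqrt{\iota(2-\iota)}\sqrt{\iota/(2-\iota)}$, this is bounded by $\lvv\sqrt{\iota(2-\iota)}\DDD_1^\perp\gamma_+g\rvv_{\partial\HH_+}\lvv\sqrt{\iota/(2-\iota)}U\rvv_{L^2(\partial\Omega)}$.

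The other surviving term is $\widetilde{\gamma_+g}\,\iota\int U\cdot v\,[\MMM_\TTT-\MMM_1](n_x\cdot v)_-\,\dv$. Since $\MMM_\TTT-\MMM_1$ is radial, the tangential part of this integral vanishes by symmetry ($v\mapsto$ reflection of the tangential components). So this term is actually zero, which is even better than the $(\theta_0)^2$ bound; alternatively, \eqref{eq:PertTempBdy} gives a $\theta_0$ bound. Finally, Young's inequality absorbs $\frac12\lvv\nabla^s U\rvv^2$ (through Korn, implicit in Theorem~\ref{thm:LameRegularity}, to pass from $\grad U$ to $\nabla^s U$) and $\frac12\lvv\sqrt{\iota/(2-\iota)}U\rvv^2$ into the left-hand side, which gives \eqref{eq:H1Momentum_Hypo_decay}.
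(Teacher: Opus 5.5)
Your proposal is correct and follows essentially the same route as the paper: Lamé coercivity from Theorem~\ref{thm:LameRegularity}, the variational formulation with $V=U$, integration by parts, and Lemma~\ref{lem:BoundaryTerms_Hypo_decay} applied to $\phi(v)=U\cdot v$ for the boundary term. The only difference is in the boundary terms. You note that $m_2,m_3$ vanish because $U\cdot n_x=0$, and that $m_4$ vanishes by the tangential reflection symmetry of the radial function $\MMM_\TTT-\MMM_1$, whereas the paper only bounds $m_4$ by $\theta_0$ times norms. Your observation is valid and makes the $(\theta_0)^2$ term superfluous; the paper's $\theta_0$-bound, after Young and taking square roots, would only give a coefficient $\theta_0$ rather than $(\theta_0)^2$.
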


\begin{proof}
We first note that the proof of \eqref{eq:MomentumLLControl_Hypo_decay} follows by exactly repeating the arguments from \cite[Lemma 3.5]{MR4581432}. 

We concentrate then on the proof of \eqref{eq:H1Momentum_Hypo_decay}. Let $U := U[\mu[\LL g]]$ be the unique variational solution to \eqref{eq:LameSystem} associated to $\Xi = \mu[\LL g]$, which is given by Theorem~\ref{thm:LameRegularity}. 
Furthermore, using the regularity result from Theorem~\ref{thm:LameRegularity} we have that 
\begin{equation}\label{eq:UmLf1_Hypo_decay}
\lambda \| U \|_{H^1_x({\Omega})}^2  
\le \displaystyle \| \nabla^s U \|_{L^2_x({\Omega})}^2 
+ \left\lvv\sqrt{\frac{\iota}{2-\iota}} \, U \right\rvv_{L^2_x(\partial{\Omega})}^2, 
\end{equation}
for some $\lambda >0$. Moreover, from the variational formulation \eqref{eq:WeakFormulationLame} together with \eqref{eq:MomentumLLControl_Hypo_decay}, we obtain
\begin{equation}\label{eq:UmLf2_Hypo_decay}
\begin{aligned}
& \lvv \nabla^s U \rvv_{L^2_x({\Omega})}^2 
+ \left\lvv \sqrt{\frac{\iota}{2-\iota}} \, U \right\rvv_{L^2_x(\partial{\Omega})}^2 
= -   \int_{{\Omega}} (\nabla_x \varrho + \nabla_x \cdot M_q[g] ) \cdot U \, \dx\\
&\quad 
= \int_{{\Omega}} \varrho I_3 : \nabla U \, \dx
+ \int_{{\Omega}} M_q[g] : \nabla U \, \dx  - \int_{\partial {\Omega}}  \varrho \, n(x) \cdot U \, \d\sigma_{\! x} 
-  \int_{\partial {\Omega}} M_q[g] n(x) \cdot U  \, \d\sigma_{\! x} \\
&\quad 
=  \int_{{\Omega}} \varrho I_3 : \nabla^s U \, \dx
+ \int_{{\Omega}} \left(  \sqrt{\frac{2}{d}} \EE I_3 + M_q[g^\perp] \right) : \nabla^s U \, \dx 
-  \int_{\partial {\Omega}}  M_q[g] \,  n(x) \cdot U \, \d\sigma_{\! x} ,
\end{aligned}
\end{equation}
where we have performed an integration by parts to obtain the second line, we have used the fact that $U \cdot n(x) = 0$ since $U \in \UUU$ and \eqref{eq:H1Momentum_Hypo_decay} in the last one.

We now handle the boundary term in the last equation. Using Lemma~\ref{lem:BoundaryTerms_Hypo_decay} we have that, for any $x \in \partial{\Omega}$, it follows that
$$
\begin{aligned}
\int_{\partial {\Omega}} M_q[g] n_x \cdot U \d\sigma_x 
&=   \int_{{\Sigma}}  v_i v_j g \, n_j(x) U_i   \, \dv\d\sigma_x
-  \int_{{\Sigma}}  g n_i(x)  U_i \, \dv\d\sigma_x = \int_{\R^3}  g  (v \cdot U)  (n_x\cdot v) \, \dv\d\sigma_x  \\
&= \iota(x) \int_{{\Sigma_+}}    \DDD_1^\perp \gamma_+ g (v \cdot U)   (n_x\cdot v)_+ \, \dv\d\sigma_x  +\int_{{\Sigma_+}}   ( v - \VV_x v ) \cdot U  \DDD_1 \gamma _+ g  \, (n_x\cdot v)_+ \, \dv\d\sigma_x \\
&\quad 
+ \int_{{\Sigma_+}}  ( v - \VV_x v ) \cdot U (1-\iota(x))  \DDD_1^\perp \gamma_+ g  \, (n_x\cdot v)_+ \, \dv\d\sigma_x   \\
&\quad 
-\int_{\partial {\Omega}} \iota(x) (\widetilde{\gamma_+g}) \left( \int_{\R^3}  (v\cdot U) \left[ \MMM_\TTT(x,v) - \MMM_1\right]  (n_x\cdot v)_- \dv \right) \, \dv\d\sigma_x \\
&\quad
=: m_1 + m_2 + m_3 + m_4 , 
\end{aligned}
$$
where we have used again that $U \cdot n(x) = 0$ on the first line. 
We first observe that repeating the exact same computations as the proof of \cite[Lemma 3.5]{MR4581432} we have that
\be\label{eq:control_mi_123}
\lv m_1 + m_2 + m_3\rv \lesssim \left\lvv  \sqrt{\iota\over 2-\iota} U  \right\rvv_{L^2(\partial{\Omega})} \left\lvv  \sqrt{\iota(2-\iota)}  \DDD_1^\perp \gamma_+ f \right\rvv_{\partial {\HH}_+}
\ee

To control the remaining terms we use \eqref{eq:Controlpsi} together with the Cauchy-Schwarz inequality and we deduce that 
\be\label{eq:control_m4}
\lv m_4 \rv \lesssim \theta_0 \, \left\lvv \sqrt{\iota\over 2-\iota} U \right\rvv_{L^2(\partial {\Omega})}  \left\lvv  \sqrt{\iota(2-\iota)} (\widetilde{\gamma_+g}) \right\rvv_{L^2(\partial {\Omega})} +  \theta_0 \,  \left\lvv \sqrt{\iota\over 2-\iota} U \right\rvv_{L^2(\partial {\Omega})} .
\ee
Putting together \eqref{eq:control_mi_123} and \eqref{eq:control_m4}, and using the Young inequality  we deduce that there is a constant $C>0$ such that 
\begin{multline*}
\left\lv \int_{\partial {\Omega}} M_q[g] (n_x \cdot U) \d\sigma_x  \right\rv \leq \frac 12 \left\lvv \sqrt{\iota\over 2-\iota} U \right\rvv_{L^2(\partial {\Omega})}^2  + C \left\lvv  \sqrt{\iota(2-\iota)}  \DDD_1^\perp \gamma_+ g \right\rvv_{\partial {\HH}_+}^2 \\
+  (\theta_0)^2 \, C \left\lvv  \sqrt{\iota(2-\iota)} (\widetilde{\gamma_+g}) \right\rvv_{L^2(\partial {\Omega})}^2  ,
\end{multline*}
We conclude the proof by repeating exactly the arguments from \cite[Lemma 3.5]{MR4581432} to control the interior terms of \eqref{eq:UmLf2_Hypo_decay} and putting everything together.
\end{proof}

Using this, and arguing in the spirit of \cite[Lemma 3.6]{MR4581432}, we further have the following lemma.
\begin{lem}\label{lem:MomentumCoercivity_Hypo_decay}
There are constants $\kappa_2,C_2>0$ such that
\begin{multline*}
 \langle -\grad_x U[\mu], M_q[\LL g]\rangle_{L^2({\Omega})} + \langle -\grad_x U[\mu[\LL g]], M_q[ g]\rangle_{L^2({\Omega})} \geq 
 	\kappa_2 \lVert \mu\rVert^2_{L^2({\Omega})}  \\
 	- C_2  \, \lVert g^{\perp}\rVert_{ {\HH}}\lVert \varrho \rVert_{L^2({\Omega})} 
	 - C_2 \,  \lVert\EE\rVert_{L^2({\Omega})}\lVert \varrho\rVert_{L^2({\Omega})} 
 	- C_2 \, \lVert\EE\rVert^2_{L^2({\Omega})}
         - C_2\, \lVert g^{\perp}\rVert^2_{ {\HH}} \\
         - C_2 \, \left\lVert \sqrt{\iota(2-\iota)} \DDD_1^\perp \gamma_+g\right\rVert^2_{\partial {\HH}_+} 
         - (\theta_0)^2 \, C_2  \left\lVert  \sqrt{\iota(2-\iota)} (\widetilde {\gamma_+ g}) \right\rVert^2_{L^2(\partial {\Omega})}  .
\end{multline*}
\end{lem}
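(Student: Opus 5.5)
We mirror the proof of Lemma~\ref{lem:EnergyCoercivity_Hypo_decay}, now following \cite[Lemma 3.6]{MR4581432}. Set
$$
D_1 := \langle -\grad_x U[\mu], M_q[\LL g]\rangle_{L^2(\Omega)}, \qquad D_2 := \langle -\grad_x U[\mu[\LL g]], M_q[g]\rangle_{L^2(\Omega)},
$$
and treat the two terms separately.

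\emph{The term $D_2$.} By \eqref{eq:MomentumLLControl_Hypo_decay} we have $M_q[g] = \sqrt{2/3}\,\EE I_3 + M_q[g^\perp]$, so $|M_q[g]| \lesssim |\EE| + \|g^\perp\|_{L^2_{\MM^{-1/2}}(\R^3)}$. Combining this with the $H^1$ bound \eqref{eq:H1Momentum_Hypo_decay} and the Cauchy--Schwarz inequality gives
$$
|D_2| \lesssim \left( \|\varrho\| + \|\EE\| + \|g^\perp\|_\HH + \bigl\|\sqrt{\iota(2-\iota)}\DDD_1^\perp\gamma_+g\bigr\|_{\partial\HH_+} + \theta_0^2 \bigl\|\sqrt{\iota(2-\iota)}\,\widetilde{\gamma_+g}\bigr\| \right)\left(\|\EE\| + \|g^\perp\|_\HH\right).
$$
Expanding the product and applying Young's inequality yields exactly the admissible error terms in the statement, namely $\|g^\perp\|\|\varrho\|$, $\|\EE\|\|\varrho\|$, $\|\EE\|^2$, $\|g^\perp\|^2$, the $\DDD_1^\perp$ trace term, and $\theta_0^2\|\widetilde{\gamma_+ g}\|^2$. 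Note that $\theta_0^4 \le \theta_0^2$ here.

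\emph{The term $D_1$.} Write $M_q[\LL g] = M_q[-v\cdot\grad_x g] + M_q[\CCC g^\perp]$.

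The collision part is bounded using self-adjointness and \eqref{eq:ConditionBddCCCphi}, which give $|M_q[\CCC g^\perp]| \lesssim \|g^\perp\|$. Together with Theorem~\ref{thm:LameRegularity}, this contributes at most $\|\mu\|\|g^\perp\|$, which Young's inequality absorbs as $\epsilon\|\mu\|^2 + C\|g^\perp\|^2$.

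For the transport part, integrate by parts in $x$. This produces an interior term
$$
-\int_\Omega \partial_{x_k}\partial_{x_j} U_i \int_{\R^3} q_{ij}\, v_k\, g \,\dv\,\dx.
$$
Since the third-order moments of the macroscopic part only see $\mu$, this term equals the isothermal expression of \cite[Lemma 3.6]{MR4581432}. Exactly as there, it is bounded below by $c\|\mu\|^2 - C\|g^\perp\|\|\mu\|$. The argument uses the Lamé equation $-\Div\nabla^s U = \mu$, the $H^2$ estimate, and $U\cdot n = 0$.

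The transport part also produces a boundary term
$$
\int_\Sigma \grad_x U[\mu] : q(v)\,\gamma g\,(n_x\cdot v).
$$
We expand it with Lemma~\ref{lem:BoundaryTerms_Hypo_decay}, taking $\phi = \grad_x U : q$.
\begin{itemize}
\item The three isothermal pieces are handled as in \cite{MR4581432}, using the reflection identity together with $U\cdot n = 0$ and the Lamé boundary condition. They give
$$
\lesssim \|\mu\|\,\bigl\|\sqrt{\iota(2-\iota)}\DDD_1^\perp\gamma_+g\bigr\|_{\partial\HH_+}.
$$
\item The new piece is
$$
-\int_{\partial\Omega} \iota\,\widetilde{\gamma_+ g}\int_{\R^3} \grad_x U : q\,\left[\MMM_\TTT - \MMM_1\right](n_x\cdot v)_- \,\dv\,\d\sigma_x.
$$
By \eqref{eq:PertTempBdy}, the inner velocity integral is $O(\theta_0)|\grad_x U|$. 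The trace theorem combined with the $H^2$ bound gives $\|\grad_x U\|_{L^2(\partial\Omega)} \lesssim \|\mu\|$. Hence this piece is $\lesssim \theta_0\|\mu\|\,\|\sqrt{\iota(2-\iota)}\,\widetilde{\gamma_+g}\|$, and Young's inequality turns it into $\epsilon\|\mu\|^2 + C\theta_0^2\|\cdot\|^2$.
\end{itemize}

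Collecting everything and choosing $\epsilon$ small absorbs all the $\|\mu\|^2$ contributions and yields $\kappa_2$.

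\emph{Main obstacle.} The delicate step is the isothermal boundary analysis: verifying that the specular and $\DDD_1$ parts cancel or are controlled. We rely entirely on \cite[Lemma 3.6]{MR4581432} there. In cylindrical domains (H2) this also needs the trace estimate of \cite{BE_iso}. The only genuinely new ingredient is the $\MMM_\TTT - \MMM_1$ term, which is straightforward once the bound \eqref{eq:PertTempBdy} is available.
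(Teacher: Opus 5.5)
Your proposal is correct and follows essentially the same route as the paper. The second cross term is handled by Cauchy--Schwarz and \eqref{eq:H1Momentum_Hypo_decay}, with the $\|\varrho\|$ cross products kept unexpanded and $\theta_0^4\le\theta_0^2$. The first is split into the collision part (bounded via self-adjointness), the interior term after integration by parts (identical to the isothermal case), and the boundary term expanded through Lemma~\ref{lem:BoundaryTerms_Hypo_decay}; there the only new piece, involving $\MMM_\TTT-\MMM_1$, is controlled by \eqref{eq:PertTempBdy}, the $H^2$ Lamé regularity with the trace theorem, and Young's inequality.
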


\begin{proof}
We define
$$
E_1 = \langle -\grad_x U[\mu], M_q[\LL g]\rangle_{L^2({\Omega})} \quad \text{ and } \quad E_2 = \langle -\grad_x U[\mu[\LL g]], M_q[ g]\rangle_{L^2({\Omega})}.
$$

Using the Cauchy-Schwarz inequality, \eqref{eq:H1Momentum_Hypo_decay}, and the fact that $\lvv M_q [g^\perp] \rvv_{L^2({\Omega})} \lesssim \lvv g^\perp\rvv_{\HH_0}$, we deduce that
\bean
\lvert E_2 \rvert &\lesssim& \lVert \grad^s_x U[\mu] \rVert_{L^2({\Omega})} \left( \lVert \EE \rVert_{L^2({\Omega})} + \lVert g^\perp \rVert_{\HH} \right) \\
&\lesssim &  \left(  \lVert \varrho \rVert_{L^2({\Omega})} +  \lVert \EE\rVert_{L^2({\Omega})}  + \lVert g^\perp\rVert_{ {\HH}} + \left\lVert \sqrt{\iota(2-\iota)} \DDD_1^\perp \gamma_+ g \right\rVert_{\partial {\HH}_+} \right.\\
&&  \qquad \qquad \qquad \qquad  \left. +  (\theta_0)^2 \, \left\lvv   \sqrt{\iota (2-\iota)}   \left( \widetilde{ \gamma_+ g} \right) \right\rvv_{L^2(\partial {\Omega})}   \right)   \left( \lVert \EE \rVert_{L^2({\Omega})} + \lVert g^\perp \rVert_{\HH} \right) .
\eean
On the other side, using the decomposition \eqref{eq:DecompositionMicroMacro_Hypo_decay}, we have that 
$$
M_q[\LL g ] =  M_q[-v\cdot \grad_x g] + M_q [ \CC g^\perp],
$$
thus $E_1 = E_{1,1} + E_{1,2}$ with 
$$
E_{1,1} :=  \left\la   (\grad^s_x U[\mu])_{ij} ,  \partial_{x_k} \int_{\R^3} q_{ij} (v) \, v_k  \, g \,\dv \right\ra   \quad \text{ and } \quad  E_{1, 2} :=  \left\la  - \grad^s_x U[\mu] ,  \int_{\R^3} q (v)  \CC g^\perp \,\dv \right\ra ,
$$
where we have used the convention of summation over repeated indices in the definition of $E_{1,1}$. 

Furthermore, using \eqref{eq:MicroCoercivity} we have that
$$
\int_{\R^3} q(v) \CC g^\perp\dv = \la g^\perp, \CC(\MM q)\ra_{\HH_0} \lesssim \lvv g^\perp \rvv_{\HH_0},
$$
thus
$$
\lvert E_{1,2} \rvert \lesssim   \lVert \grad^s_x U[\mu] \rVert_{L^2({\Omega})} \lVert g^\perp \rVert_{\HH} \lesssim   \lVert \mu \rVert_{L^2({\Omega})} \lVert g^\perp \rVert_{\HH}
$$
where we have used the regularity result given by Theorem~\ref{thm:LameRegularity} to obtain the second inequality.

\smallskip
To control $E_{1,1}$ we perform integration by parts so we obtain that $E_{1,1} = E_{1, 3} + E_{1, 4}$ where
$$
E_{1, 3} :=  \left \la -\partial_{x_j} (\grad^s_x U[\mu])_{ij},   \int_{\R^3} q_{ij} (v) \, v_k  \, g \,\dv \right\ra \quad \text{and}\quad   E_{1,4} :=  \int_{{\Sigma}} \grad^s_x U[\mu] :   q (v)\,  \gamma g \,  (n_x\cdot  v) . 
$$
By arguing exactly as in the proof of \cite[Lemma 3.6]{MR4581432} to control the interior term $E_{1,3}$, we have that 
$$
E_{1, 3}\geq  \lVert \mu \rVert^2_{L^2({\Omega})} - C \lVert f^\perp \rVert^2_{\HH},  
$$
for some constant $C>0$. 
Regarding the boundary term $E_{1,4}$, we use Lemma~\ref{lem:BoundaryTerms} and we have that
\bean
E_{1,4} &=&  \int_{{\Sigma_+}} \grad^s_x U[\mu] : q(v)  \, \iota(x) \DDD_1^\perp \gamma_+ g (n_x\cdot v)_+\dv \d\sigma_x\\
&& + \int_{{\Sigma_+}}  \grad^s_x U[\mu] :  \left[ q(v) - q(\VV_x v) \right] (1-\iota(x)) \DDD_1^\perp \gamma_+ g (n_x\cdot v)_+\dv \d\sigma_x\\
&& + \int_{{\Sigma_+}} \grad^s_x U[\mu] :  \left[ q(v) - q(\VV_x v) \right] \DDD_1 \gamma_+ g (n_x\cdot v)_+\dv \d\sigma_x \\
&& - \int_{\partial {\Omega}} \iota(x)  (\widetilde{\gamma_+ g}) (\grad^s_x U[\mu])_{ij} \left( \int_{\R^3}   q_{ij}(v)\,  \left[\MMM_\TTT(x,v) - \MMM_1(v)\right] (n_x\cdot v)_-\dv \right) \d\sigma_x \\
&=& E_{1,5} + E_{1,6} + E_{1, 7} + E_{1, 8} .
\eean
We note that, by arguing exactly as in the proof of \cite[Lemma 3.6]{MR4581432}, we have that 
$$
\lv E_{1,5} + E_{1,6} + E_{1, 7}\rv \lesssim  \lVert \mu \rVert_{L^2({\Omega})} \lVert \iota \DDD_1^\perp g  \rVert_{\partial {\HH}_+}.
$$
To control the remaining terms we use \eqref{eq:PertTempBdy}, and we observe that
$$
\left \lv \int_{\R^3} q_{ij}(v)  \left[\MMM_\TTT(x,v) - \MMM_1(v)\right]  \, (n_x \cdot v)_- \dv \right\rv \lesssim  \theta_0 \qquad \forall i,j= \llbracket 1,3 \rrbracket, 
$$
thus we further have, by using the Cauchy-Schwarz inequality, that
$$
\lv E_{1,8}  \rv \lesssim   \theta_0  \lvv\mu\rvv_{L^2_x(\partial{\Omega})}  \lvv   \iota (\widetilde{\gamma_+ g}) \rvv_{L^2(\partial {\Omega})} ,
$$
where we have used the regularity result from Theorem~\ref{thm:LameRegularity} to obtain the second inequality. 

We conclude by putting everything together and using the Young inequality together with the fact that $\iota \leq \sqrt{\iota/(2-\iota)}$. 
\end{proof}

\subsubsection{Mass functional}\label{ssec:MassFunctional_Hypo_decay}
We devote this subsection to construct a functional in order to control the mass component of the macroscopic part $\Pi g$. 

We define the operator 
\be\label{eq:Mass_Hypo_decay}
\varrho[h] := \int_{\R^3} h \, \dv , 
\ee
so that, in particular, $\varrho = \varrho[g]$. 

We also consider $u_{\rm N}[\varrho]$ the unique variational solution to the Poisson equation \eqref{eq:PoissonEquation} associated to $\xi = \varrho$ and complemented with the Neumann boundary conditions \ref{item:P2}, which is well-defined since $\lla \varrho \rra_{{\Omega}}=0$, and is given by Theorem~\ref{theo:PoissonRegularity}.

\begin{lem}\label{lem:ineqHypoMass_Hypo_decay}
There holds 
\be\label{eq:MassLLControl_Hypo_decay}
 \varrho[\LL g] = -   \Div_x \mu  ,
\ee
and 
\be\label{eq:H1mass_Hypo_decay}
 \lVert u_{\rm N}[\varrho[\LL g]]\rVert_{H^1({\Omega})} \lesssim   \lVert \mu\rVert_{L^2({\Omega})} .
 \ee 
\end{lem}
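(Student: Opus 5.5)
The identity \eqref{eq:MassLLControl_Hypo_decay} follows by integrating $\LL g = -v\cdot\grad_x g + \CCC g$ in velocity. By \eqref{eq:ConservationLawsCCC} with $\varphi=1$, the collision part vanishes: $\int_{\R^3}\CCC g\,\dv = 0$. The transport part gives $-\int_{\R^3} v\cdot\grad_x g\,\dv = -\Div_x\int_{\R^3} v g\,\dv = -\Div_x \mu$. So $\varrho[\LL g] = -\Div_x\mu$. This is exactly as in the isothermal case \cite{MR4581432}; the temperature $\TTT$ enters only through the boundary condition, and the boundary condition plays no role here.

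For \eqref{eq:H1mass_Hypo_decay}, I would set $u:=u_{\rm N}[\varrho[\LL g]]$. First I check that this is well defined under \ref{item:P2}, i.e. that the source has zero mean:
$$\int_\Omega \varrho[\LL g]\,\dx = -\int_{\partial\Omega}\mu\cdot n_x\,\d\sigma_x = 0$$
by \eqref{eq:mcdotn=0}. That identity comes from the boundary mass conservation \eqref{eq:ConservationMassBdy}, which holds for every $\TTT$ because $\widetilde{\MMM_\TTT}=1$. This is the one place where non-isothermality could have entered, and it does not.

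Next I take the variational formulation \eqref{eq:PoissonVariationalSln} with $\alpha\equiv 0$, test function $v=u\in V_2$, and integrate by parts. This gives
$$\lvv\grad_x u\rvv^2_{L^2(\Omega)} = -\int_\Omega \Div_x\mu\, u\,\dx = \int_\Omega \mu\cdot\grad_x u\,\dx - \int_{\partial\Omega}(\mu\cdot n_x)\,u\,\d\sigma_x.$$
The boundary integral vanishes again by \eqref{eq:mcdotn=0}. Cauchy--Schwarz then yields $\lvv\grad_x u\rvv_{L^2(\Omega)}\le\lvv\mu\rvv_{L^2(\Omega)}$.

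Finally, the $H^1$ coercivity bound in Theorem~\ref{theo:PoissonRegularity} (a Poincaré--Wirtinger inequality on $V_2$, since $u$ has zero mean) gives $\lvv u\rvv_{H^1(\Omega)}\lesssim\lvv\grad_x u\rvv_{L^2(\Omega)}\lesssim\lvv\mu\rvv_{L^2(\Omega)}$.

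No step is a real obstacle. The only point needing care is justifying the integration by parts and the meaning of $\mu\cdot n_x$ on $\partial\Omega$ for $g$ merely in $\HH$. Since the whole subsection is stated in the sense of a priori estimates, I would treat $g$ as smooth, or read $\Div_x\mu\in H^{-1}$ in the weak sense, which gives the same bound. Unlike the energy and momentum functionals, no $\theta_0$-dependent boundary term appears.
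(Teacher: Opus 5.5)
Your proposal is correct and follows essentially the same route as the paper. The collision term drops out by the conservation law with $\varphi=1$. Testing the Neumann variational formulation with $u$ itself, integrating by parts and using $\mu\cdot n_x=0$ on $\partial\Omega$ gives $\lVert\nabla_x u\rVert_{L^2(\Omega)}\le\lVert\mu\rVert_{L^2(\Omega)}$, and the $H^1$ bound of Theorem~\ref{theo:PoissonRegularity} then concludes. Your explicit check that $\varrho[\LL g]$ has zero mean, so that $u_{\rm N}$ is well defined under \ref{item:P2}, is a small point the paper leaves implicit.
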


\begin{proof}
We note that, from the very definition of $\LL$ and the decomposition \eqref{eq:DecompositionMicroMacro_Hypo_decay}, it follows that $\LL g = - v\cdot \nabla_x f +  \CC g^\perp$, hence we have that
$$
\varrho[\LL g] 
= \varrho[-  v \cdot \nabla_x g]  + \varrho[ \CC g^\perp]
= -  \nabla_x \cdot \int_{\R^3} v g \, \dv  
$$
since $\varrho[\CC g^\perp] = \la g^\perp, \CC 1\ra_{L^2({\Omega})} =0 $, and this yields \eqref{eq:MassLLControl_Hypo_decay}.

We consider now $u := u_{\mathrm{N}}[\varrho[\LL g]]$ as the unique variational solution to the Poisson \eqref{eq:PoissonEquation} with the Neumann boundary condition \ref{item:P2} associated to $\xi = \varrho[\LL g]$, which is given by Theorem~\ref{theo:PoissonRegularity}. 

From the variational formulation \eqref{eq:PoissonVariationalSln} with $v=u_N$ we have that
\bean
\| \nabla_x u \|_{L^2_x({\Omega})}^2 
& =& -   \int_{{\Omega}} (\nabla_x \cdot \mu) u \, \dx \\
&=&   \int_{{\Omega}}  \mu \cdot \nabla_x u \, \dx 
-   \int_{\partial{\Omega}}  \mu \cdot n(x) \, u \, \d\sigma_{\! x}
=   \int_{{\Omega}}  \mu \cdot \nabla_x u \, \dx  
\eean
where we have used integration by parts and the fact that $\mu(x) \cdot n(x) = 0$ in the last line. 
We conclude \eqref{eq:H1mass_Hypo_decay} by using the Cauchy-Schwarz inequality, the Young inequality and the regularity results from Theorem~\ref{theo:PoissonRegularity}. 
\end{proof}

Using this, and by arguing in the spirit of  \cite[Lemma 3.8]{MR4581432}, we have the following result. 
\begin{lem}\label{lem:MassCoercivity_Hypo_decay}
There are constants $\kappa_3,C_3>0$ such that
    \begin{multline*}
          \langle -\grad_x u_{\rm N}[\varrho], \mu[\LL g]\rangle_{L^2({\Omega})} + \langle -\grad_x u_{\rm N} [\varrho[\LL g]], \mu\rangle_{L^2({\Omega})}\\
        \quad \geq \kappa_3  \lVert \varrho\rVert^2_{L^2({\Omega})} 
        - C_3 \,  \lVert \mu\rVert^2_{L^2({\Omega})}
        - C_3 \,  \lVert\EE\rVert^2_{L^2({\Omega})} 
        - C_3 \,  \lVert g^{\perp}\rVert^2_{ {\HH}}  \\
	- C_3 \,  \left\lVert \sqrt{\iota(2-\iota)} \DDD_1^\perp \gamma_+ g\right\rVert^2_{\partial{\HH}_+} 
	- (\theta_0)^2 \, C_3 \left\lVert  \sqrt{\iota(2-\iota)} (\widetilde{\gamma_+ g}) \right\rVert^2_{L^2(\partial{\Omega})}    .
    \end{multline*}   
\end{lem}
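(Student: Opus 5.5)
We follow the structure of \cite[Lemma 3.8]{MR4581432}, as in Lemmas~\ref{lem:EnergyCoercivity_Hypo_decay} and \ref{lem:MomentumCoercivity_Hypo_decay}. Write $u := u_{\rm N}[\varrho]$ and
$$
E_1 := \la -\grad_x u, \mu[\LL g]\ra_{L^2(\Omega)}, \qquad E_2 := \la -\grad_x u_{\rm N}[\varrho[\LL g]], \mu\ra_{L^2(\Omega)}.
$$
The term $E_2$ is immediate. By Cauchy--Schwarz and \eqref{eq:H1mass_Hypo_decay}, $\lv E_2\rv \lesssim \lvv \mu\rvv_{L^2(\Omega)}^2$, which is absorbed in $-C_3\lvv\mu\rvv^2_{L^2(\Omega)}$. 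Note that Lemma~\ref{lem:ineqHypoMass_Hypo_decay} has no boundary contribution, because $\mu\cdot n_x=0$ by \eqref{eq:mcdotn=0}.

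For $E_1$ we use \eqref{eq:MomentumLLControl_Hypo_decay}, i.e. $\mu[\LL g] = -\grad_x\varrho - \Div_x M_q[g]$ with $M_q[g] = \sqrt{2/3}\,\EE I_3 + M_q[g^\perp]$. This splits $E_1$ into two pieces:
$$
E_1 = \la \grad_x u, \grad_x \varrho\ra + \la \grad_x u, \Div_x M_q[g]\ra =: E_{1,1}+E_{1,2}.
$$

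\emph{The term $E_{1,1}$.} Integrate by parts and use the Neumann condition $\partial_n u =0$ together with $-\Delta u = \varrho$. This gives $E_{1,1} = \lvv\varrho\rvv^2_{L^2(\Omega)}$ with no boundary term; this is the source of the coercivity.

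\emph{The term $E_{1,2}$.} Integrating by parts gives
$$
E_{1,2} = -\la \grad^2_x u, M_q[g]\ra + \int_{\partial\Omega} \grad_x u\cdot M_q[g] n_x \, \d\sigma_x .
$$
The interior part is bounded, via the $H^2$ estimate of Theorem~\ref{theo:PoissonRegularity}, by $\lvv\varrho\rvv_{L^2}(\lvv\EE\rvv_{L^2}+\lvv g^\perp\rvv_{\HH})$.

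The boundary part is the main obstacle. It contains the full trace, so we expand it with Lemma~\ref{lem:BoundaryTerms_Hypo_decay}, applied with $\phi(v) = (\grad_x u\cdot v)\, v$, componentwise as in the momentum lemma:
$$
M_q[g]n_x\cdot\grad_x u = \int_{\R^3} \gamma g\,(\grad_x u\cdot v)(n_x\cdot v)\,\dv - \varrho\,\partial_n u ,
$$
and the last term vanishes since $\partial_n u=0$. Unlike in the Lamé case, $\grad_x u$ is not tangential; only its normal component vanishes. Since $\partial_n u = 0$, $\phi(v) - \phi(\VV_x v)$ reduces to terms in which $v\cdot n_x$ appears together with the tangential gradient. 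Against $\DDD_1\gamma_+ g$, which is even under $\VV_x$, these terms integrate to zero exactly as in \cite[Lemma 3.8]{MR4581432}. The $\DDD_1^\perp$ terms are bounded by $\lvv \grad_x u\rvv_{L^2(\partial\Omega)}\lvv\sqrt{\iota(2-\iota)}\DDD_1^\perp\gamma_+g\rvv_{\partial\HH_+}$, the $(1-\iota)$ part being handled as in the isothermal reference. The only genuinely new term is the one involving $\MMM_\TTT-\MMM_1$. By \eqref{eq:PertTempBdy}, its velocity integral is $O(\theta_0)$, so it is bounded by
$$
\theta_0\,\lvv\grad_x u\rvv_{L^2(\partial\Omega)}\,\lvv\iota\,\widetilde{\gamma_+g}\rvv_{L^2(\partial\Omega)}.
$$

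\emph{Conclusion.} The trace theorem and Theorem~\ref{theo:PoissonRegularity} give $\lvv\grad_x u\rvv_{L^2(\partial\Omega)}\lesssim\lvv u\rvv_{H^2}\lesssim\lvv\varrho\rvv_{L^2}$. Applying Young's inequality to absorb $\frac12\lvv\varrho\rvv^2$, and using $\iota\le\sqrt{\iota(2-\iota)}$, yields the claim with $\kappa_3=1/2$ and the $(\theta_0)^2$ prefactor. In the cylindrical case (H2) the same trace bound holds via the $H^2$ regularity from \cite{BE_iso}.
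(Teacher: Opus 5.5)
Your proposal is correct and is essentially the paper's proof. The paper also bounds $E_2$ by $\lvv\mu\rvv^2_{L^2(\Omega)}$ and integrates $E_1$ by parts into two pieces. The interior piece equals your $E_{1,1}=\lvv\varrho\rvv^2_{L^2(\Omega)}$ plus the interior half of your $E_{1,2}$, and is bounded below by $\frac12\lvv\varrho\rvv^2_{L^2(\Omega)}-C\lvv\EE\rvv^2_{L^2(\Omega)}-C\lvv g^\perp\rvv^2_{\HH}$. The boundary piece $\int_\Sigma(\grad_x u\cdot v)\,\gamma g\,(n_x\cdot v)$ is expanded with Lemma~\ref{lem:BoundaryTerms_Hypo_decay}, and the only new contribution, from $\MMM_\TTT-\MMM_1$, is bounded by $\theta_0\lvv\varrho\rvv_{L^2(\Omega)}\lvv\iota\,\widetilde{\gamma_+ g}\rvv_{L^2(\partial\Omega)}$.

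One correction to your boundary discussion. The test function is $\phi(v)=\grad_x u\cdot v$, not $(\grad_x u\cdot v)\,v$. Moreover, $\partial_n u=0$ means $\grad_x u$ \emph{is} tangential on $\partial\Omega$, exactly as $U$ in the Lam\'e case, so $\phi(\VV_x v)=\phi(v)$. Hence both specular-difference terms vanish identically, not by a parity argument. This is what is actually needed: the $\DDD_1$ term carries no small factor, and the $(1-\iota)$ term could not be controlled by $\sqrt{\iota(2-\iota)}\DDD_1^\perp\gamma_+g$ where $\iota=0$, e.g.\ on $\Lambda_3$.
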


\begin{proof}
We define
$$
E_1 := \la -\grad_x u_{\rm N}[\varrho] , \mu[\LL g]\ra_{L^2({\Omega})} \quad \text{ and }\quad E_2 := \la -\grad_x u_{\rm N}[\varrho[\LL g]], \mu\ra_{L^2({\Omega})}.
$$

On the one hand, we immediately compute by using the Cauchy-Schwarz inequality
$$
\lvert E_2\rvert  \leq  \left\lvv  \grad_x u_{\rm N}[\varrho[\LL g]] \right\rvv_{L^2({\Omega})} \lVert \mu\rVert_{L^2({\Omega})}  \lesssim  \lVert \mu\rVert^2_{L^2({\Omega})},
$$
where we have used \eqref{eq:H1mass_Hypo_decay} to obtain the second inequality above. On the other hand, to control the remaining term $E_2$, we use integration by parts and there yields $E_1 = E_{1,1} + E_{1,2}$ with
$$
E_{1,1} := - \left\la \partial_{x_i}\partial_{x_j} u_{\rm N}[\varrho], \int_{\R^3} v_iv_j g \dv  \right\ra_{L^2({\Omega})} \quad \text{and}  \quad
E_{1,2} :=  \int_{{\Sigma}} (\grad_x u_{\rm N}[\varrho] \cdot v) \, \gamma g \, (n_x\cdot v) \dv\d\sigma_x,
$$
where we have used the convention of sum over repeated indices in the definition of $E_{1,1}$. 

By arguing now exactly as in the proof of \cite[Theorem 3.8]{MR4581432}, we have that there is a constant $C>0$ such that
$$
E_{1,1}\geq \frac 12  \lVert \varrho \rVert^2_{L^2({\Omega})} -  C \lVert \EE \rVert^2_{L^2({\Omega})} - C  \lVert g^\perp \rVert^2_{{\HH}}. 
$$
Furthermore, by using Lemma \ref{lem:BoundaryTerms} in the boundary term $E_{1,2}$, we have that 
\bean
E_{1,2} &=& \int_{{\Sigma_+}} \grad_x u_{\rm N}[\varrho] \cdot v \, \iota(x) \, \DDD_1^\perp \gamma_+ g (n_x\cdot v)_+\dv \d\sigma_x\\
&& +  \int_{{\Sigma_+}}  \grad_x u_{\rm N}[\varrho] \cdot  \left[ v - \VV_x v \right] (1-\iota(x)) \DDD_1^\perp \gamma_+ g (n_x\cdot v)_+\dv\d\sigma_x \\
&& + \int_{{\Sigma_+}} \grad_x u_{\rm N}[\varrho] \cdot  \left[ v - \VV_x v \right] \DDD_1 \gamma_+ g (n_x\cdot v)_+\dv\d\sigma_x \\
&& -  \int_{{\Sigma_-}}   \grad_x u_{\rm N}[\varrho] \cdot  v\,  \iota(x) \, \left[\MMM_\TTT (x,v) - \MMM_1(v) \right] (\widetilde{\gamma_+ g}) \, (n_x\cdot v)_-\dv \d\sigma_x\\
&=:& E_{1,3} + E_{1,4} + E_{1,5}+ E_{1,6} .
\eean
By repeating exactly the computations of \cite[Theorem 3.8]{MR4581432} we have that
$$
\lv E_{1,3} + E_{1,4} + E_{1,5}\rv \lesssim  \lvv \varrho \rvv_{L^2(  {\Omega})} \lvv \iota \DDD_1^\perp \gamma_+ g \rvv_{\partial {\HH}_+}.
$$
Moreover, using \eqref{eq:PertTempBdy}, we further have that
$$
\lv E_{1,6} \rv  \lesssim \theta_0 \, \lvv \varrho\rvv_{L^2({\Omega})} \lvv  \iota (\widetilde{\gamma_+ g}) \rvv_{L^2({\Omega})} .
$$
where we have used the regularity gain from Theorem~\ref{theo:PoissonRegularity}. We conclude by using Young's inequality and the fact that $\iota \leq \sqrt{\iota/(2-\iota)}$.
\end{proof}

\subsubsection{Proof of Theorem \ref{theo:Pert_hypo_Hypo_decay}}\label{ssec:conclusion_Hypo_decay}
We define a scalar product on ${\HH}$ 
\be\label{eq:hypocNorm_Hypo_decay}
\begin{aligned}
\lla h_1, h_2\rra &:= \langle h_1,h_2 \rangle_{ {\HH}}\\
&\qquad +\eta_1  \langle-\grad_x u[\EE[h_1]], M_p[h_2]\rangle_{L^2({\Omega})} + \eta_1  \langle-\grad_x u[\EE[h_2]], M_p[h_1]\rangle_{ L^2({\Omega})}\\
&\qquad +\eta_2  \langle-\grad^s_x U[\mu[h_1]], M_q[h_2]\rangle_{L^2({\Omega})} + \eta_2 \langle-\grad^s_x U[\mu[h_2]], M_q[h_1]\rangle_{ L^2({\Omega})}\\
&\qquad +\eta_3  \langle-\grad_x u_{\rm N}[\varrho[h_1]], \mu[h_2]\rangle_{L^2({\Omega})} + \eta_3  \langle-\grad_x u_{\rm N}[\varrho[h_2]], \mu[h_1]\rangle_{ L^2({\Omega})},
\end{aligned}
\ee
for some parameters $0 \ll \eta_3 \ll \eta_2 \ll \eta_1 \ll 1$ to be chosen later, and where we recall that the functionals $M_p$ and $M_q$ are defined respectively in \eqref{eq:def-Mp_Hypo_decay} and \eqref{eq:def-Mq_Hypo_decay}; $u[\EE[h_j]]$ is the solution of the Poisson equation~\eqref{eq:PoissonEquation} with $\xi = \EE[h_j]$ and boundary condition \ref{item:P1}; $U[\mu[h_j]]$ is the solution to the elliptic system \eqref{eq:LameSystem} with data $\Xi = \mu[h_j]$; $u_{N}[\varrho[h_j]]$ is the solution to the Poisson equation with homogeneous Neumann boundary condition~\ref{item:P2} and with data $\xi = \varrho[g]$; for any $j=1,2$. 

We define next the norm associated to the above scalar product as
\be\label{def:HypoNorm_Hypo_decay}
\lvvv g \rvvv := \displaystyle \sqrt{\lla g, g\rra},
\ee
and we observe that by using the Cauchy-Schwarz inequality, the Young inequality, and the regularizing estimates from Theorem \ref{theo:PoissonRegularity} and \ref{thm:LameRegularity} together with \eqref{eq:NormDecompositionMicroMacro_Hypo_decay} and the fact that 
\be\label{eq:ControlMpMq_Hypo_decay}
\lvv M_p[g] \rvv_{L^2({\Omega})} + \lvv M_q[g] \rvv_{L^2({\Omega})} \lesssim \lvv g\rvv_{{\HH}} 
\ee
yield that there is a constant $c>0$, such that
$$
  \lvv g\rvv_{\HH}^2 \left( 1- 2c\, (\eta_1 + \eta_2 + \eta_3) \right)   \leq  \lvvv g \rvvv^2 \leq   \lvv g\rvv_{\HH}^2 \left( 1 + 2c\, (\eta_1 + \eta_2 + \eta_3) \right) . 
$$
Then by choosing $\eta_1, \eta_2, \eta_3 \in (0, (12c)^{-1})$ we have the equivalency of norms
\begin{equation}\label{eq:NormEquivalence_Hypo_decay}
    \lVert g\rVert_{ {\HH}}\lesssim \lvvv g\rvvv \lesssim \lVert g\rVert_{ {\HH}}.
\end{equation}

Let now $g$ satisfy the assumptions of Theorem~\ref{theo:Pert_hypo_Hypo_decay}. Recalling that we have denoted $\varrho=\varrho[g]$, $\mu=\mu[g]$ and $\EE = \EE[g]$, noting that $\sqrt{\iota(2-\iota)} \ge \iota$, and gathering Lemmas~\ref{lem:MicroCoercivity_Hypo_decay},~\ref{lem:EnergyCoercivity_Hypo_decay},~\ref{lem:MomentumCoercivity_Hypo_decay} and~\ref{lem:MassCoercivity_Hypo_decay}, one has 
$$
\begin{aligned}
\la \! \la - \LL g , g \ra \! \ra 
&\ge  \kappa_0  \lVert g^\perp \rVert^2_{\HH} + \frac {1}4 \left \lVert \sqrt{\iota (2-\iota)}\DDD_1^\perp \gamma_+ g \right\rVert^2_{\partial\HH_+}  -  \theta_0\, C \left\lVert   (\iota)^{1/2}   \left( \widetilde {\gamma_+ g}\right) \right\rVert^2_{L^2(\partial\Omega)}  \\
&\quad 
+\eta_1 \, \Bigg( \kappa_1 \lVert\EE\rVert^2_{L^2({\Omega})} - C  \lVert \mu\rVert_{L^2({\Omega})} \lVert g^{\perp}\rVert_{ \HH} - C   \left\lVert \sqrt{\iota(2-\iota)} \DDD_1^\perp \gamma_+g\right\rVert^2_{\partial\HH_+}
\\&\qquad \qquad \qquad \qquad 
 - C  \lVert g^{\perp}\rVert^2_{ \HH}  - (\theta_0)^2  \, C   \left\lVert  \sqrt{\iota(2-\iota)}  \, \left( \widetilde{\gamma_+ g}\right) \right\rVert^2_{L^2(\partial {\Omega})}    \Bigg)\\
&\quad 
+\eta_2 \, \Bigg( \kappa_2 \lVert \mu\rVert^2_{L^2({\Omega})} 
 	- C  \, \lVert g^{\perp}\rVert_{ {\HH}}\lVert \varrho \rVert_{L^2({\Omega})} 
	 - C \,  \lVert\EE\rVert_{L^2({\Omega})}\lVert \varrho\rVert_{L^2({\Omega})}  - C \, \lVert\EE\rVert^2_{L^2({\Omega})}
\\&\qquad \qquad
         - C\, \lVert g^{\perp}\rVert^2_{ {\HH}} 
         - C \,\left\lVert \sqrt{\iota(2-\iota)} \DDD_1^\perp \gamma_+g\right\rVert^2_{\partial {\HH}_+} 
         -  (\theta_0)^2 \, C \left\lVert  \sqrt{\iota(2-\iota)} (\widetilde {\gamma_+ g}) \right\rVert^2_{L^2(\partial {\Omega})}  \Bigg) \\
&\quad 
+\eta_3 \,  \Bigg( \kappa_3 \lVert \varrho\rVert^2_{L^2({\Omega})} 
        - C \,  \lVert \mu\rVert^2_{L^2({\Omega})}
        - C \, \lVert\EE\rVert^2_{L^2({\Omega})} 
        - C \,  \lVert g^{\perp}\rVert^2_{ {\HH}}  \\
&\qquad \qquad
- C \,  \left\lVert \sqrt{\iota(2-\iota)} \DDD_1^\perp \gamma_+g\right\rVert^2_{\partial{\HH}_+} 
	- (\theta_0)^2\, C \left\lVert  \sqrt{\iota(2-\iota)} (\widetilde{\gamma_+ g}) \right\rVert^2_{L^2(\partial{\Omega})}   \Bigg),
\end{aligned}
$$
for some constant $C>0$. 
Following the same ideas as in the proof of \cite[Theorem 1.1]{MR4581432} we obtain
$$
\begin{aligned}
\la \! \la - \LL g , g \ra \! \ra 
&\ge  \left(\frac{\kappa_0}{2} - \eta_1 C - \eta_2 C - \eta_3 C\right) \| g^\perp \|_{{\HH}}^2  +\left(\frac{\eta_1 \kappa_1}{2} - \eta_2 C - \eta_3 C \right)\| \EE \|_{L^2_x({\Omega})}^2  \\
&\quad 
+ \left( \eta_2 \kappa_2 - \eta_1^2 C - \eta_3 C \right)\| \mu \|_{L^2_x({\Omega})}^2  
+ \left( \eta_3 \kappa_3  - \eta_2^2 C - \frac{\eta_2^2}{\eta_1}C\right)\| \varrho \|_{L^2_x({\Omega})}^2 
\\
&\quad
+ \left(\frac12 - \eta_1 C - \eta_2 C - \eta_3 C  \right) \left\lvv \sqrt{\iota(2-\iota)} \DDD_1^\perp \gamma_+ g \right\rvv_{\partial {\HH}_+}^2  \\
&\quad 
 -C \left\lVert  \theta_0 \,  (\iota)^{1/2}  \left( \widetilde {\gamma_+ g}\right) \right\rVert^2_{L^2(\partial{\Omega})}  -C(\eta_1 + \eta_2 + \eta_3) \left\lVert  \theta_0 \,  \sqrt{\iota(2-\iota)}  \left( \widetilde {\gamma_+ g}\right) \right\rVert^2_{L^2(\partial{\Omega})}.
\end{aligned}
$$
We now use that $2-\iota \leq 2$, and we choose $\eta_1 := \eta$, $\eta_2 := \eta^{\frac{3}{2}}$, $\eta_3 := \eta^{\frac{7}{4}}$, so we deduce that
$$
\begin{aligned}
\la \! \la - \LL g , g \ra \! \ra 
&\ge \left(\frac{\kappa_0}{2} - \eta C \right) \| g^\perp \|_{{\HH}}^2  
+ \left(\frac12 - \eta C   \right) \left\lvv  \sqrt{\iota(2-\iota)} \DDD_1^\perp \gamma_+ g \right\rvv_{\partial {\HH}_+}^2 +\eta \left( \frac{\kappa_1}{2} - \eta^{\frac{1}{2}} C \right)\| \EE \|_{L^2_x({\Omega})}^2 \\
&\quad    
+ \eta^{\frac{3}{2}} \left(  \kappa_2 - \eta^{\frac{1}{4}} C  \right)\| \mu \|_{L^2_x({\Omega})}^2  
+ \eta^{\frac{7}{4}} \left(  \kappa_3  - \eta^{\frac{1}{4}} C\right)\| \varrho \|_{L^2_x({\Omega})}^2  
- 7 C \left\lVert  \theta_0 \,  (\iota)^{1/2}  \left( \widetilde {\gamma_+ g}\right) \right\rVert^2_{L^2(\partial{\Omega})} 
\end{aligned}
$$
Choosing then $0 < \eta \ll1$ small enough, we further get that
$$
\begin{aligned}
\la \! \la - \LL g , g \ra \! \ra 
&\ge \kappa \left( \| g^\perp \|_{{\HH}}^2 
+\| \varrho \|_{L^2_x({\Omega})}^2 
+\| \mu \|_{L^2_x({\Omega})}^2 
+\| \EE \|_{L^2_x({\Omega})}^2  \right)   \\
&\qquad \qquad  \qquad 
-\kappa' \left\lVert  \theta_0 \,  (\iota)^{1/2}  \left( \widetilde {\gamma_+ g}\right) \right\rVert^2_{L^2(\partial{\Omega})} + \kappa'' \left\lvv \sqrt{\iota(2-\iota)} \DDD_1^\perp \gamma_+ g \right\rvv_{\partial {\HH}_+}^2, 
\end{aligned}
$$
for some constants $\kappa,\kappa', \kappa'' >0$.
We conclude the proof of Theorem~\ref{theo:Pert_hypo_Hypo_decay} by using \eqref{eq:NormDecompositionMicroMacro_Hypo_decay}
and the norm equivalency \eqref{eq:NormEquivalence_Hypo_decay}. 
\qed

\subsection{Interior a priori estimates}\label{ssec:Hypo_Apriori_Hypo_decay}

This subsection is devoted to the obtention of a priori estimates of the solution of Equation~\eqref{eq:Hypo_decay} 

\begin{prop}\label{prop:L2_Apriori_Hypo_decay}
Assume that either Assumption \ref{item:H1} or Assumption \ref{item:H2} holds
There is $\kappa \in \R$ such that for every $g$ solution of Equation~\eqref{eq:Hypo_decay} there holds
\be\label{eq:A_priori_bound_HH_Hypo_decay}
\lvv g_t \rvv_{\HH} \lesssim e^{\kappa \,  t} \lvv g_0\rvv_{\HH},
\ee
for every $t\geq 0$.
\end{prop}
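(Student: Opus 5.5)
The plan is a plain energy estimate in $\HH$, closed by a Gronwall argument. The only non-standard ingredient is a control of the diffusive trace $\widetilde{\gamma_+ g}$ by interior quantities, obtained with a velocity multiplier. We argue at the level of a priori estimates, as in Subsection~\ref{ssec:Hypo_Perturbed_Hypo_decay}.

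\emph{Step 1 (energy identity).} We multiply Equation~\eqref{eq:Hypo_decay} by $g\MM^{-1}$ and use Lemma~\ref{lem:MicroCoercivity_Hypo_decay}. Dropping the nonnegative terms $\kappa_0\|g^\perp\|_\HH^2$ and $\frac14\|\sqrt{\iota(2-\iota)}\DDD_1^\perp\gamma_+g\|^2_{\partial\HH_+}$, this gives
$$
\frac{\d}{\dt}\|g_t\|_\HH^2 \le 2C\theta_0\,\big\|\iota^{1/2}\,\widetilde{\gamma_+ g_t}\big\|^2_{L^2(\partial\Omega)} .
$$
The trace $\widetilde{\gamma_+ g}$ is not controlled by $\|g\|_\HH$ at fixed time. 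The task is therefore to control its time integral.

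\emph{Step 2 (trace estimate by a multiplier).} By Cauchy--Schwarz,
$$
(\widetilde{\gamma_+ g})^2 \le \Big(\int_{\R^3} (\gamma_+ g)^2 (n_x\cdot v)^2\la v\ra^{-2}\MM^{-1}\dv\Big)\Big(\int_{\R^3}\la v\ra^2\MM\,\dv\Big),
$$
so it suffices to bound
$$
B(t):=\int_{\{\iota\neq0\}}\int_{\R^3}(\gamma g_t)^2(n_x\cdot v)^2\la v\ra^{-2}\MM^{-1}\,\dv\,\d\sigma_x .
$$

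Under \ref{item:H1}, we set $N(x):=-\grad\delta(x)\chi(x)$, with $\chi$ a cutoff near $\partial\Omega$, so that $N\in W^{2,\infty}$ and $N=n_x$ on $\partial\Omega$. We multiply the equation by $g\,\MM^{-1}(N(x)\cdot v)\la v\ra^{-1}$ and integrate over $(0,t)\times\OO$. Green's formula turns the transport term into $\frac12\int_0^t\!\int_\Sigma(\gamma g)^2(n_x\cdot v)^2\la v\ra^{-1}\MM^{-1}$, which dominates $\frac12\int_0^t B$. The remaining terms are bounded as follows.
\begin{enumerate}
\item The time-derivative term is at most $\|g_t\|_\HH^2+\|g_0\|_\HH^2$, since $|N\cdot v|\la v\ra^{-1}\le C$.
\item The term $\int g^2\,v\cdot\grad_x(N\cdot v)\la v\ra^{-1}\MM^{-1}$ is bounded by $\|g\|^2$ in the weighted space $L^2_{\la v\ra^{1/2}\MM^{-1/2}}(\OO)$, hence by $\|g\|^2_{L^2_{\nu^{1/2}\MM^{-1/2}}}$, using~\eqref{eq:Controlnu}.
\item In the collision term we write $\CCC=K-\nu$. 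The $\nu$-part gives $\int\nu g^2|v|\la v\ra^{-1}\MM^{-1}\lesssim\|g\|^2_{L^2_{\nu^{1/2}\MM^{-1/2}}}$. The $K$-part is bounded in $\HH$.
\end{enumerate}
This gives
$$
\int_0^tB\lesssim\|g_0\|_\HH^2+\|g_t\|_\HH^2+\int_0^t\|g_s\|^2_{L^2_{\nu^{1/2}\MM^{-1/2}}}\ds .
$$
To remove the $\nu$-weight, I would add a multiple of the energy inequality of Step~1, now keeping the coercive term. Using $\la-\CCC g,g\ra\gtrsim\|g^\perp\|^2_{L^2_{\nu^{1/2}\MM^{-1/2}}}$ together with $\|\Pi g\|_{L^2_{\nu^{1/2}\MM^{-1/2}}}\lesssim\|g\|_\HH$, we get $\int_0^t\|g\|^2_{L^2_{\nu^{1/2}\MM^{-1/2}}}\lesssim\|g_0\|^2_\HH+\int_0^t\|g\|^2_\HH+\theta_0\int_0^tB$.

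Under \ref{item:H2}, $\iota\neq0$ only on the bases $\Lambda_1\cup\Lambda_2$. There I would use the smooth multiplier $\frac{x_1}{L}v_1\la v\ra^{-1}$, whose value on $\Lambda_1\cup\Lambda_2$ is exactly $(n_x\cdot v)\la v\ra^{-1}$. On the lateral surface $\Lambda_3$ we have $n_x\cdot e_1=0$, so the boundary integral there equals $\int_{\Lambda_3}\frac{x_1}{L}v_1(n_x\cdot v)(\gamma g)^2\la v\ra^{-1}\MM^{-1}$. Specular reflection preserves $v_1$, $|v|$ and $(\gamma g)^2$ while flipping the sign of $n_x\cdot v$, so this integral vanishes. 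This avoids the degeneracy at the edges $\SSSS$.

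\emph{Step 3 (closing).} Integrating Step~1 and inserting Step~2 gives
$$
\|g_t\|_\HH^2\le\|g_0\|_\HH^2+C\theta_0\Big(\|g_0\|^2_\HH+\|g_t\|^2_\HH+\int_0^t\|g_s\|^2_\HH\ds\Big)+C\theta_0^2\int_0^tB .
$$
For $\theta_0<\theta_\star$ small, the terms $C\theta_0\|g_t\|^2_\HH$ and $C\theta_0^2\int_0^tB$ are absorbed into the left-hand side. Gronwall's lemma then yields~\eqref{eq:A_priori_bound_HH_Hypo_decay} with some $\kappa\in\R$.

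The main obstacle is Step~2. One must check that the multiplier's boundary term has the right sign on the whole of $\Sigma$: the term $(n_x\cdot v)^2\ge0$ counts both incoming and outgoing traces, which is fine. One must also check that the collision term is genuinely controlled once the $\nu$-weight is handled. If the absorption in that step turns out to be circular, I would instead use the mild (Duhamel) formulation, in the spirit of \cite{BE_iso}, to bound $\int_0^tB$ directly.
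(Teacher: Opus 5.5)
Your argument is sound, but it takes a different route from the paper. As you acknowledge, it also only works for $\theta_0$ below a universal threshold.

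\emph{The paper's route.} The paper runs a single energy estimate in $L^2_{\mu_0}$ with the weight \eqref{def:muA_Hypo_decay}, namely $\mu_A^2=\MM_\TTT^{-1}\chi_A+\MM^{-1}(1-\chi_A)$ and $\mu_0^2=(1+\tfrac12(n_x\cdot v)\la v\ra^{-2})\mu_A^2$. Since the weight equals $\MM_\TTT^{-1}$ on bounded velocities, diffuse reflection at temperature $\TTT$ is nearly non-expansive: the defect $\II_{A,1}\to 0$ as $A\to\infty$. The correction factor produces a good-sign trace term $\II_{A,2}$, bounded below, which absorbs that defect. So the boundary contribution is $\le 0$ for every $\theta_0<1/8$, and Gronwall closes with an $O(1)$ rate coming from the interior terms.

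\emph{Your route.} You keep the fixed weight $\MM^{-1}$ and accept the $O(\theta_0)$ leak of Lemma~\ref{lem:MicroCoercivity_Hypo_decay}. You then control that leak with a velocity-multiplier trace bound (the mechanism behind Propositions~\ref{prop:L2_AprioriBdy_Hypo_decay}--\ref{prop:L2_AprioriBdyLambda12_Hypo_decay}) plus an absorption. The coupled absorption between $\int_0^tB$ and $\int_0^t\|g\|^2_{L^2_{\nu^{1/2}\MM^{-1/2}}}$ is not circular, and the cancellation on $\Lambda_3$ is correct.

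\emph{What each buys.} Your version gives $\kappa=O(\theta_0)$, which vanishes in the isothermal limit. It also never differentiates $\TTT$. The paper's weight depends on $x$ through $\TTT$, so its interior term $v\cdot\nabla_x\mu_0^2$ involves $\nabla_x\TTT$. That tacitly requires a Lipschitz extension of $\TTT$ into $\Omega$, which $\vartheta\in L^\infty(\partial\Omega)$ does not provide. The price of your version is the smallness of $\theta_0$, which the proposition does not assume (only $\theta_0<1/8$). This matters in the paper, because the same weight is reused in Step~2 of Theorem~\ref{theo:ExistenceL2_Hypo_decay} to make $\RRR_\TTT$ a contraction in $L^2(\d\xi^1_{\mu^2})$ for all $\theta_0<1/8$. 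Your estimate does not provide that.

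\emph{Minor points.} If you use $\la v\ra^{-2}$ instead of $\la v\ra^{-1}$ in the multiplier, the $\nu$-weighted detour disappears: both $v\cdot\nabla_x(N\cdot v)\la v\ra^{-2}$ and $\nu|v|\la v\ra^{-2}$ are then bounded. Also, the term $C\theta_0^2\int_0^tB$ in Step~3 must be absorbed inside the Step~2 inequality for $\int_0^tB$, not into $\|g_t\|^2_\HH$.
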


\begin{rem}
The proof follows the same lines as that of \cite[Proposition 6.4]{BE_iso}, and we adapt it in order to consider non-isothermal diffusive boundary condition. 
\end{rem}

\begin{proof}[Proof of Proposition~\ref{prop:L2_Apriori_Hypo_decay}]
We first observe that \cite[Theorem 7.2.4]{MR1307620} implies that
\be\label{eq:Cercignani_CK_Hypo_decay}
\lVert K h \rVert_{{\HH}} \leq C_K \lVert  h \rVert_{{\HH}} ,
\ee
for some constant $C_K>0$.

We divide then the proof into two different cases: first we obtain \eqref{eq:A_priori_bound_HH_Hypo_decay} for smooth domains, and afterwards we repeat and adapt those computations for the cylindrical setting.

\medskip\noindent
\emph{Case 1. (Smooth domains---Assumption~\ref{item:H1})}
We define the cutoff function $\chi_R(v) := \chi(|v|/R)$ for every $R>0$, with $\chi \in C^2(\R_+)$, $\mathbf{1}_{[0,1]} \le \chi \le \mathbf{1}_{[0,2]}$, and $\chi^c_R : 1 - \chi_R$, and inspired by the weight functions introduced in \cite{CMKFP24}, and subsequently used in \cite{CM_Landau_domain, CGMM24}, we define the modified weight function
\be\label{def:muA_Hypo_decay}
 \mu_A^2(x,v) := \MM_{\TTT}^{-1} \chi_A + \MM^{-1} (1-\chi_A)
 \quad \text{ and } \quad   \mu_0^2 (x,v) :=   \left( 1 + \frac{ 1  }{ 2}  \frac{ n_x \cdot   v}{\la v \ra^2} \right) \mu_A^2,
\ee
where with $A > 1$ to be chosen later
. Moreover, it is worth emphasizing that
\be\label{eq:mu_mu0_Hypo_decay}
 c_A^{-1} \MM^{-1/2} \le \frac12\mu_A \le  \mu_0  \le \frac32 \mu_A \le c_A \MM^{-1/2}, 
\ee
for some $c_A  \in (0,\infty)$. 
Arguing then at a formal level, we have that if $g$ is a solution of Equation~\eqref{eq:Hypo_decay} it follows that
\be\label{eq:Well_posedness_HH_1_Hypo_decay}
\begin{aligned}
\frac 12 \frac d{dt} \int_{\OO} g_t^2\mu_0^2  &= \int_{\OO} g_t \left( - v\cdot \grad_x g_t + K g_t -\nu g_t  \right) \mu_0^2 \\
&=  - \frac {1}2 \int_{\Sigma} \gamma g_t^2 \,  (n_x\cdot v) \mu_0^2 + \frac {1}2 \int_{\OO} g_t^2 (v\cdot \grad_x \mu_0^2)  
- \int_{\OO} \nu g_t^2 \mu_0^2 + \int_{\OO} g_t (K g_t) \mu_0^2,
\end{aligned}
\ee
where we have used integration by parts, and we define 
\bean
I_1 &:=& - \frac {1}2 \int_{\Sigma} \gamma g_t^2 \,  (n_x\cdot v) \mu_0^2,  \\
I_2 &:=& \frac {1}2 \int_{\OO} g_t^2 (v\cdot \grad_x \mu_0^2)  - \int_{\OO} \nu g_t^2 \mu_0^2 +{1} \int_{\OO} g_t (Kg_t) \mu_0^2.
\eean
We divide now the proof for the Case 1 into three steps.

\medskip\noindent
\emph{Step 1. (Control of the interior terms)} We first observe that
$$
\lvv \mu_0^2 \MM \rvv_{L^\infty(\OO)} \lesssim 1,
$$ 
and using this, \eqref{eq:Cercignani_CK_Hypo_decay}, and the Cauchy-Schwarz inequality we have that 
$$
\beal
-\int_{\OO} \nu g_t^2 \mu_0^2 + \int_{\OO} g_t (Kg_t) \mu_0^2 &\leq  -\nu_0 \lvv g_t\rvv_{L^2_{\mu_0}(\OO)}^2 +C_K  \lvv g_t\rvv_{L^2_{\mu_0}(\OO)} \lvv g_t\rvv_{\HH} \,  \lvv \mu_0\, \MM^{1/2}\rvv_{L^\infty(\OO)} \\
&\lesssim \lvv g_t\rvv_{L^2_{\mu_0}(\OO)}^2 ,
\eeal
$$
where we have used \eqref{eq:mu_mu0_Hypo_decay} to obtain the last line. We then compute
$$
\frac 12 \int_{\OO} g_t^2 (v\cdot \grad_x \mu_0^2)  = \frac 12 \left \lvv \left( v\cdot \grad_x \mu_0^2 \right) \mu_0^{-2} \right\rvv_{L^\infty( \Omega)} \,   \lvv g_t\rvv_{L^2_{\mu_0}(\OO)}^2 ,
$$
and we note that
$$
\left( v\cdot \grad_x \mu_0^2 \right) \mu_0^{-2}  = \frac 12 \left( {v\cdot \grad_x (n_x \cdot v) \over \la v\ra^2} \right) \mu_A^2 \mu_0^{-2}  + \left( 1 + \frac{ 1  }{ 2}  \frac{ n_x \cdot   v}{\la v \ra^2} \right) \chi_A (v) (v\cdot \grad_x \MM_{\TTT} )\MM_{\TTT} ^{-2}  \mu_0^{-2} .
$$
This last expression, together with \eqref{eq:mu_mu0_Hypo_decay}, the compact support of the cutoff function $\chi_A$, and the regularity assumptions on the normal vector $n_x$, imply that 
$$
\frac 12 \int_{\OO} g_t^2 (v\cdot \grad_x \mu_0^2)  \lesssim  \lvv g_t\rvv_{L^2_{\mu_0}(\OO)}^2 .
$$
Altogether, we conclude this first step by gathering the above estimates and noting that we have obtained 
$$
I_2 \lesssim  \lvv g_t\rvv_{L^2_{\mu_0}(\OO)}^2,
$$
for every $A\geq 1$.

\medskip\noindent
\emph{Step 2. (Control of the boundary terms)} We observe that 
$$
I_1 = - \frac {1}2 \int_{\Sigma} \gamma g_t^2 \,  (n_x\cdot v) \mu_A^2  - \frac {1}2 \int_{\Sigma} \gamma g_t^2 \,   {(n_x \cdot v)^2\over \la v\ra^{2} }  \mu_A^2 =: I_{1,1} + I_{1,2},
$$
and we estimate each of the above integrals separately. 
On the one hand, using the boundary conditions of Equation~\eqref{eq:LPBE}, we have that
$$\beal
I_{1,1} &=  - \frac {1}2 \int_{\Sigma_+} \gamma_+ g_t^2 \,  \mu_A^2\, (n_x\cdot v)_+  +  \frac {1}2 \int_{\Sigma_-}  (\RRR_\TTT\gamma_+ g_t ) ^2 \, \mu_A^2 \,  (n_x\cdot v)_- \\
 &\leq - \frac {1}2 \int_{\Sigma_+} \gamma_+ g_t^2 \, \mu_A^2 \, (n_x\cdot v)_+  + \frac {1}2  \int_{\Sigma_-}  (1-\iota) (\SSS\gamma_+ g_t) ^2 \,  \mu_A^2\, (n_x\cdot v)_-  \\
 &\quad \qquad + \frac {1}2   \int_{\Sigma_-}  \iota \left(\MMM_{\TTT} \widetilde{\gamma_+ g_t} \right) ^2 \,  \mu_A^2\, (n_x\cdot v)_-  \\
 &\leq  -\frac {1}2 \int_{\Sigma_+} \iota (\gamma_+ g_t)^2 \,\mu_A^2\,   (n_x\cdot v)_+ + \frac {1}2 \int_{\partial \Omega} \iota \left( \widetilde{\gamma_+ g_t}\right)^2 \int_{\R^3} \MMM_{\TTT}^2 \mu_A^2 (n_x \cdot v)_+ ,
\eeal$$
where we have used successively used a convexity inequality with the function $x\mapsto \lv x\rv^2$ and the change of variables $v\mapsto \VV_x v$ together with the fact that $ \lv \VV_x v \rv = \lv v\rv $ and $|n(x) \cdot \VV_x v| = |n(x) \cdot v|$. 

Applying now the Cauchy-Schwarz inequality we have that
$$\beal
\left( \widetilde{\gamma_+ g}\right)^2 = \left( \int_{\R^3} \gamma_+ g \, (n_x \cdot v)_+ \right) ^2 \leq \left( \int_{\R^3} (\gamma_+ g)^2 \mu_A^2 (n_x \cdot v)_+  \right)  \left( \int_{\R^3} \mu_A^{-2} (n_x\cdot v)_+ \right).
\eeal $$
Using this and the Young inequality in the above expression of $I_{1,1}$ we deduce that 
$$
I_{1,1} \leq -\frac {1}2 \int_{\partial \Omega} \iota \left( \widetilde{\gamma_+ g}\right)^2  \II_{A,1}(x)  ,
$$
where we have defined
\be\label{def:IIA_1_Hypo_decay}
\II_{A, 1} (x) := \left( \int_{\R^3} \mu_A^{-2} (n_x\cdot v)_+ \right)^{-1} -  \int_{\R^3} \MMM_\TTT^2 \mu_A^2 (n_x \cdot v)_+,
\ee
and we note that as $A\to +\infty$ there holds
\be\label{eq:LimIIA1_Hypo_decay}
\II_{1, A}(x) \to \frac 12 \left[ \left( \widetilde{\MMM_\TTT} \right)^{-1} + \widetilde{\MMM_\TTT} \right]= \frac 12  \left(1-1\right) =0,
\ee
uniformly in $x$.
On the other hand, we have that 
$$\beal
I_{1,2}& = - \frac {1}2 \int_{\Sigma_+} (\gamma_+ g_t)^2 \,   {(n_x \cdot v)_+^2\over \la v\ra^{2} }  \mu_A^2 - \frac {1}2 \int_{\Sigma_-} (\gamma_- g_t)^2 \,   {(n_x \cdot v)_-^2\over \la v\ra^{2} }  \mu_A^2  \\
& \leq - \frac {1}2 \int_{\Sigma_+} (\gamma_+ g_t)^2 \,   {(n_x \cdot v)_+^2\over \la v\ra^{2} }  \mu_A^2 \leq  -\frac {1}2 \int_{\partial \Omega} \iota \left( \widetilde{\gamma_+ g} \right)^2 \II_{A, 2} (x),
\eeal$$
where we have used the Cauchy-Schwarz inequality and the fact that $\iota (x) \in [0,1]$ for every $x\in \partial \Omega$ to obtain the last inequality. We have also set
\be\label{def:IIA2_Hypo_decay}
\II_{A, 2}(x) := \left( \int_{\R^3} \la v\ra^2 \mu_A^{-2}\right)^{-1} \underset{A\to +\infty}{\longrightarrow} \int_{\R^3} \la v\ra^2 \MMM_\TTT   \in (0, +\infty),
\ee
uniformly for $x\in \partial \Omega$, and we note that the bounds on the last limit are due to our condition \eqref{eq:Condition_T_theta} on $\TTT$.

Gathering the above estimates, and choosing $A > 1$ large enough such that $\II_{A,1}(x) + \II_{A, 2} (x) \geq 0$ uniformly for $x\in \partial \Omega$, we deduce that
$$
I_1 \leq  -\frac {1}2 \int_{\partial \Omega} \iota \left( \widetilde{\gamma_+ g} \right)^2 \left[ \II_{A,1}(x) + \II_{A, 2} (x)  \right]  \leq 0 .
$$

\medskip\noindent 
\emph{Step 3. (Conclussion)} By putting together the estimates for $I_1$ and $I_2$ obtained during Steps 1 and 2, into \eqref{eq:Well_posedness_HH_1_Hypo_decay}, we have that
$$
\frac 12 \frac d{dt} \int_{\OO} g_t^2\mu_0^2  \lesssim \lvv g_t\rvv_{L^2_{\mu_0}(\OO)}^2 .
$$
We conclude the proof for Case 1 (under Assumption~\ref{item:RH1}) by using the Grönwall lemma and \eqref{eq:mu_mu0_Hypo_decay}.

\medskip\noindent
\emph{Case 2. (Cylindrical domains---Assumption~\ref{item:H2})}
We write any element $x\in \R^3$ by its components as $x = (x_1, x_2, x_3)$, and we consider the vector field $n_1 : \R^3\to \R^3$ defined by
\be\label{def:n1_Hypo_decay}
n_1(x_1, x_2, x_3) := {(x_1, 0, 0) \over L },
\ee
We note that, by its very definition, $n_1$ is a smooth vector field and that $n_1(x) = n(x)$ for every $x\in \Lambda_1\cup \Lambda_2$. Moreover, there also holds $\lv n_1 (x) \rv \leq 1$ for every $x\in \bar\Omega$, and
\be\label{eq:N1normalOrtogonality_Hypo_decay}
n_x \cdot n_1(x) = 0 \qquad \forall x\in \Lambda_3.
\ee
For every $A> 1$, we define then the modified weight function
\be\label{def:mu1_Hypo_decay}
\mu_1^2 (x,v) :=   \left( 1 + \frac{ 1  }{ 2}  \frac{ n_1(x) \cdot   v}{\la v \ra^2} \right) \mu_A^2,
\ee
where we recall that $\mu_A$ has been defined in \eqref{def:muA_Hypo_decay}. Moreover, we observe that there holds
\be\label{eq:mu_mu1_Hypo_decay}
 c_A^{-1} \MM^{-1/2} \le \frac12\mu_A \le  \mu_1  \le \frac32 \mu_A \le c_A \MM^{-1/2},
\ee
for every $A> 1$.
We then have that 
\be\label{eq:Well_posedness_HH_2_Hypo_decay}
\begin{aligned}
\frac 12 \frac d{dt} \int_{\OO} g_t^2\mu_1^2  &= \int_{\OO} g_t \left( - v\cdot \grad_x g_t + Kg_t - \nu g_t  \right) \mu_1^2 \\
&=  - \frac {1}2 \int_{\Sigma} \gamma g_t^2 \,  (n_x\cdot v) \mu_1^2 + \frac {1}2 \int_{\OO} g_t^2 (v\cdot \grad_x \mu_1^2) 
  -\int_{\OO} \nu g_t^2 \mu_1^2 + \int_{\OO} g_t (Kg_t) \mu_1^2  ,
\end{aligned}
\ee
where we have used integration by parts, and we have defined 
\bean
I_1 &:=& - \frac {1}2 \int_{\Sigma} \gamma g_t^2 \,  (n_x\cdot v) \mu_1^2,  \\
I_2 &:=& \frac {1}2 \int_{\OO} g_t^2 (v\cdot \grad_x \mu_1^2)  -  \int_{\OO} \nu g_t^2 \mu_1^2 +  \int_{\OO} g_t (Kg_t) \mu_1^2,
\eean
and we compute each of the above terms separately. 
On the one hand, by repeating exactly the same arguments as in the Step 1 of the Case 1, we deduce that 
$$
I_2 \lesssim \lvv g_t\rvv_{L^2_{\mu_1}(\OO)}^2,
$$
for every $A> 1$.
On the other, to control the boundary term we have that 
$$
I_1 = - \frac {1}2 \int_{\Sigma} \gamma g_t^2 \,  (n_x\cdot v) \mu_A^2  - \frac {1}2 \int_{\Sigma} \gamma g_t^2 \,   {(n_1(x) \cdot v) (n_x \cdot v)\over \la v\ra^{2} }  \mu_A^2 =: I_{1,1} + I_{1,2},
$$
and we estimate each of the above integrals separately. 
First, using the boundary conditions of Equation~\eqref{eq:Hypo_decay} with the choice of $\iota $ given by Assumption~\ref{item:H2}, we have that
$$\beal
I_{1,1} &=  - \frac {1}2 \int_{\Sigma_+} \gamma_+ g_t^2 \,  \mu_A^2\, (n_x\cdot v)_+  +  \frac {1}2 \int_{\Sigma_-}  (\RRR_\TTT\gamma_+ g_t) ^2 \, \mu_A^2 \,  (n_x\cdot v)_- \\
 &= - \frac {1}2 \int_{\Lambda_3}\int_{\R^3} \gamma_+ g_t^2 \, \mu_A^2\,  (n_x\cdot v)_+   + \frac {1}2  \int_{\Lambda_3}\int_{\R^3}   (\SSS\gamma_+ g_t) ^2 \,  \mu_A^2\, (n_x\cdot v)_-  \\
 &\quad \qquad  - \frac 12 \int_{\Lambda_1\cup\Lambda_2}\int_{\R^3} \gamma_+ g_t^2 \,  \mu_A^2 \, (n_x\cdot v)_+   + \frac {1}2   \int_{\Lambda_1\cup\Lambda_2}   \left(\MMM_\TTT \widetilde{\gamma_+ g_t} \right) ^2 \,  \mu_A^2\, (n_x\cdot v)_-  \\
 &=   - \frac {1}2 \int_{\Lambda_1\cup\Lambda_2} \int_{\R^3} (\gamma_+ g_t)^2 \, \mu_A^2\,  (n_x\cdot v)_+  + \frac {1}2 \int_{\Lambda_1\cup\Lambda_2} \left( \widetilde{\gamma_+ g}\right)^2 \int_{\R^3} \MMM_\TTT^2 \mu_A^2 (n_x \cdot v)_+ 
\eeal$$
where we have used successively used the change of variables $v\mapsto \VV_x v$ together with the fact that $ \lv \VV_x v \rv = \lv v\rv $ and $|n(x) \cdot \VV_x v| = |n(x) \cdot v|$.

Using the Cauchy-Schwarz inequality and the Young inequality exactly as in the control of $I_{1,1}$ in the Step 2 of the Case 1 we deduce that there is a constant $C>0$ such that 
$$
I_{1,1} \leq -\frac {1}2 \int_{\Lambda_1\cup \Lambda_2} \left( \widetilde{\gamma_+ g}\right)^2  \II_{A,1}(x) ,
$$
where we recall that $\II_{A,1}$ is defined in \eqref{def:IIA_1_Hypo_decay} and it satisfies $\II_{A,1} \to 0$ as $A\to \infty$, uniformly in $x$ (as computed in \eqref{eq:LimIIA1_Hypo_decay}).

Secondly, to control $I_{1,2}$ we compute
$$
I_{1,2} = - \frac {1}2 \int_{\Lambda_1 \cup \Lambda_2} \int_{\R^3} \gamma g_t^2 \,   {(n_x \cdot v)^2\over \la v\ra^{2} }  \mu_A^2  - \frac {1}2 \int_{\Lambda_3 } \int_{\R^3} \gamma g_t^2 \,   {(n_1(x) \cdot v) (n_x \cdot v)\over \la v\ra^{2} }  \mu_A^2.
$$
We observe now that, using the boundary condition of Equation~\eqref{eq:Hypo_decay} with the choice of $\iota$ given by Assumption~\ref{item:H2}, we have that
$$\beal
- \frac {1}2 \int_{\Lambda_3 } \int_{\R^3} (\gamma g_t)^2 \,   {(n_1(x) \cdot v) (n_x \cdot v)\over \la v\ra^{2} }  \mu_A^2 &= - \frac {1}2 \int_{\Lambda_3 } \int_{\R^3} (\gamma_+ g_t)^2 \,   {(n_1(x) \cdot v) (n_x \cdot v)_+\over \la v\ra^{2} }  \mu_A^2  \\
&\quad +  \frac {1}2 \int_{\Lambda_3 } \int_{\R^3} (\SSS \gamma_+ g_t)^2 \,   {(n_1(x) \cdot v) (n_x \cdot v)_-\over \la v\ra^{2} }  \mu_A^2.
\eeal$$
Using then the change of variables $v\mapsto \VV_x v$ together with the fact that $ \lv \VV_x v \rv = \lv v\rv $, $|n(x) \cdot \VV_x v| = |n(x) \cdot v|$ and $n_1(x) \cdot \VV_x v = n_1(x) \cdot v$ due to \eqref{eq:N1normalOrtogonality_Hypo_decay}, we deduce that 
$$
- \frac {1}2 \int_{\Lambda_3 } \int_{\R^3} (\gamma g_t)^2 \,   {(n_1(x) \cdot v) (n_x \cdot v)\over \la v\ra^{2} }  \mu_A^2= 0. 
$$
Thus, using the Cauchy-Schwarz inequality as in the Step 2 of Case 1, we immediately deduce that 
$$
I_{1,2} \leq - \frac {1}2 \int_{\Lambda_1 \cup \Lambda_2} \int_{\R^3} (\gamma_+ g_t)^2 \,   {(n_x \cdot v)_+^2\over \la v\ra^{2} }  \mu_A^2 \leq  -\frac {1}2 \int_{\Lambda_1\cup \Lambda_2} \left( \widetilde{\gamma_+ g} \right)^2 \II_{A, 2} (x),
$$
where $\II_{A,2}$ is defined in \eqref{def:IIA2_Hypo_decay}. 

Gathering the above estimates, and choosing $A> 1$ large enough such that $\II_{A,1}(x) + \II_{A, 2}  \geq 0$ uniformly for $x\in \partial \Omega$, we deduce that
$$
I_1 \leq  -\frac {1}2 \int_{\partial \Omega} \left( \widetilde{\gamma_+ g} \right)^2 \left[ \II_{A,1}(x) + \II_{A, 2} (x)\right] \leq 0,
$$
We conclude the proof for Case 2 (under Assumption~\ref{item:H2}) by putting the estimates for $I_1$ and $I_2$ into \eqref{eq:Well_posedness_HH_2_Hypo_decay}, using the Grönwall lemma and \eqref{eq:mu_mu0_Hypo_decay} as in the conclusion for the Case~1. 
\end{proof}

\subsection{Boundary a priori estimates}\label{ssec:Hypo_AprioriBdy_Hypo_decay}
In this subsection we provide a priori estimates for the trace of the solutions of Equation~\eqref{eq:Hypo_decay}. 
We define the set of singular points in the cylinder as
\be\label{def:SSSS_Hypo_decay}
\SSSS:=(\overline{\Lambda_1} \cap \overline{\Lambda_3}) \cup (\overline{\Lambda_2} \cap \overline{\Lambda_3}) = \left\{(x_1, x_2, x_3)\in \R^3,\, x_1 =\pm L, \, (x_2)^2+(x_3)^2 =  \RRRR^2 \right\}.
\ee

We define $\delta_\SSSS$ as a smooth $C^2$ function coinciding, in a small neighborhood of $\SSSS$, with the distance function to this compact set. We note that regularity of $\delta_\SSSS$ is justified since $\SSSS$ is the disjoint union of compact sub-manifolds of $\R^3$ making the distance function to such set a smooth function in a small neighborhood around it, see, for instance, \cite{MR749908, MR737190, MR614221}. 

We then define the distance--like function
\be\label{def:zetaSSSS_Hypo_decay}
\zeta_\SSSS(x) := \left\{\begin{array}{cc}
1 & \text{ if Assumption~\ref{item:H1} holds}\\
\\
\displaystyle {(\delta_\SSSS (x))^2 \over 1+ (\delta_\SSSS (x)) ^2} & \text{ if Assumption~\ref{item:H2} holds},
\end{array}\right.
\ee
and we note that due to the discussion above $\zeta_\SSSS \in C^1 (\R^3)$. Following now exactly the same ideas as in the proof of \cite[Proposition 6.5]{BE_iso}, we have then the following a priori estimate for the trace of the solutions of Equation~\eqref{eq:LPBE}.

 \begin{prop}\label{prop:L2_AprioriBdy_Hypo_decay}
Assume that either Assumption \ref{item:H1} or Assumption \ref{item:H2} holds
There is $\kappa \in \R$ such that for every $g$ solution of Equation~\eqref{eq:Hypo_decay} there holds
\be\label{eq:A_priori_bound_Bdy_Hypo_decay}
\int_0^t \int_{\Sigma}  (\gamma g_s)^2 \, (n_x\cdot v)^2 \, \zeta_{\SSSS}(x) \, \la v\ra^{-2} \, \MM^{-1} \,  \dv\d\sigma_x\ds  \lesssim e^{\kappa \,  t} \lvv g_0\rvv_{\HH}^2 ,
\ee
for every $t\geq 0$.
\end{prop}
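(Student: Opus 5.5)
The plan is a multiplier (Green-formula) argument. I will test the equation against a weight whose transport derivative produces the boundary quantity in \eqref{eq:A_priori_bound_Bdy_Hypo_decay}. Then I will control every interior contribution by Proposition~\ref{prop:L2_Apriori_Hypo_decay}.

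I introduce the vector field $N(x) := \zeta_\SSSS(x)\, \widetilde n(x)$. Under Assumption~\ref{item:H1} I take $\widetilde n = -\grad\delta$, which is $C^1$ up to the boundary and equals $n_x$ on $\partial\Omega$. Under Assumption~\ref{item:H2} I take $\widetilde n$ to be a smooth extension of the outward normal away from the edge set $\SSSS$. A natural choice is a combination of $n_1$ from \eqref{def:n1_Hypo_decay} and the radial field $(0,x_2,x_3)/\RRRR$, glued by a partition of unity subordinate to neighborhoods of $\Lambda_1\cup\Lambda_2$ and $\Lambda_3$. The factor $\zeta_\SSSS\in C^1$ vanishes quadratically at $\SSSS$ and kills the singularity there, so $N\in W^{1,\infty}(\Omega)$ and $N\cdot n_x=\zeta_\SSSS$ on $\partial\Omega\setminus\SSSS$.

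With the test weight $\beta(x,v) := (N(x)\cdot v)\,\la v\ra^{-2}\MM^{-1}$, I will compute
$$
\frac{d}{dt}\int_\OO g_t^2\,\beta = -\int_\Sigma (\gamma g_t)^2\,\beta\,(n_x\cdot v) + \int_\OO g_t^2\, v\cdot\grad_x\beta + 2\int_\OO g_t\,(\CCC g_t)\,\beta .
$$
On the boundary, $\beta\,(n_x\cdot v) = \zeta_\SSSS\,(n_x\cdot v)^2\la v\ra^{-2}\MM^{-1}$, which is exactly the integrand in \eqref{eq:A_priori_bound_Bdy_Hypo_decay}, and it has the good sign. Integrating in time and rearranging gives
$$
\int_0^t\!\!\int_\Sigma (\gamma g_s)^2 (n_x\cdot v)^2\zeta_\SSSS\la v\ra^{-2}\MM^{-1} \le \Big|\int_\OO g_0^2\beta\Big| + \Big|\int_\OO g_t^2\beta\Big| + \int_0^t \Big(\|v\cdot\grad_x\beta\,\MM\|_{L^\infty} \|g_s\|_\HH^2 + 2\Big|\int_\OO g_s\,\CCC g_s\,\beta\Big|\Big)\,\ds .
$$
Since $|\beta|\lesssim \MM^{-1}$ and $|v\cdot\grad_x\beta|\lesssim \|\grad N\|_\infty\MM^{-1}$, the first three terms are bounded by $\|g_0\|_\HH^2$, $\|g_t\|_\HH^2$ and $\int_0^t\|g_s\|_\HH^2$. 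The collision term splits as $\CCC = K-\nu$. The $K$ part is bounded by $C_K\|g_s\|_\HH^2$ using \eqref{eq:Cercignani_CK_Hypo_decay}. The $\nu$ part gives $\int \nu\,|N\cdot v|\la v\ra^{-2} g^2\MM^{-1}\lesssim\|g\|_\HH^2$, because $\nu|v|\la v\ra^{-2}\lesssim 1$ by \eqref{eq:Controlnu}; this is precisely why the factor $\la v\ra^{-2}$ is present. Inserting Proposition~\ref{prop:L2_Apriori_Hypo_decay} then bounds everything by $(1+t)e^{2\kappa t}\|g_0\|_\HH^2\lesssim e^{\kappa' t}\|g_0\|_\HH^2$.

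Notably, the boundary condition, and with it the temperature $\TTT$, never enters this argument. This is why it can follow \cite[Proposition 6.5]{BE_iso} essentially verbatim; the non-isothermal effect is already absorbed in Proposition~\ref{prop:L2_Apriori_Hypo_decay}.

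I expect the main obstacle to be the construction of $N$ in the cylinder. The field must be Lipschitz up to $\bar\Omega$ and satisfy $N\cdot n_x=\zeta_\SSSS$ on each face, even though $n$ jumps across $\SSSS$. I will take $N = \zeta_\SSSS\,(\chi_1 n_1 + \chi_3 n_3)$, where $n_3 = (0,x_2,x_3)/\RRRR$ and $\chi_1+\chi_3=1$. The cutoff $\chi_1$ equals $1$ near the bases away from $\SSSS$, and $\chi_3$ equals $1$ near the lateral surface away from $\SSSS$. Near $\SSSS$ both normals appear with the wrong weight, so there I must check that the error $\zeta_\SSSS(\chi_1 n_1+\chi_3 n_3)\cdot n_x - \zeta_\SSSS$ can be absorbed. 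The fallback is to only require $N\cdot n_x\ge c\,\zeta_\SSSS$ on $\partial\Omega$. That suffices for the bound, since $n_1\cdot n_x=1$ on the bases and $n_3\cdot n_x=1$ on $\Lambda_3$. It also keeps $N\cdot n_x\ge0$, because $n_1\cdot n_x = 0$ on $\Lambda_3$ by \eqref{eq:N1normalOrtogonality_Hypo_decay} and $n_3\cdot n_x=0$ on the bases. With this choice the gradient bound $\grad N\in L^\infty$ holds, since $\zeta_\SSSS\in C^1$ and the cutoffs are smooth.
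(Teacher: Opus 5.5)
Your multiplier argument is the right one; it is what the paper's appeal to \cite[Proposition 6.5]{BE_iso} amounts to. Under Assumption~\ref{item:H1}, with $N=-\grad\delta$, your proof is complete. The gap is in the cylinder.

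On $\Sigma$ your weight produces $(N\cdot v)(n_x\cdot v)\la v\ra^{-2}\MM^{-1}$. This equals $\zeta_\SSSS(n_x\cdot v)^2\la v\ra^{-2}\MM^{-1}$ only if $N=\zeta_\SSSS n_x$ \emph{as a vector} on $\partial\Omega\setminus\SSSS$. A tangential part $N_\tau$ contributes $(N_\tau\cdot v)(n_x\cdot v)$, which has no sign. It also cannot be absorbed, since that would require control of the trace weighted by tangential velocities. So your fallback condition $N\cdot n_x\ge c\,\zeta_\SSSS$ is not sufficient.

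Your smooth partition $\chi_1+\chi_3=1$ produces exactly such terms near $\SSSS$. On the bases $N$ contains $\zeta_\SSSS\chi_3 n_3$, which is tangential with $|n_3|\approx 1$. On $\Lambda_3$ it contains $\zeta_\SSSS\chi_1 n_1$, also tangential. The latter is harmless, but only thanks to the specular condition: after $v\mapsto\VV_x v$ the incoming and outgoing parts cancel, as in the proof of Proposition~\ref{prop:L2_AprioriBdyLambda12_Hypo_decay}. So your claim that the boundary condition never enters is false for this choice.

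On the diffusive bases nothing cancels. The incoming part vanishes by radial symmetry of $\MMM_\TTT$, but the outgoing part $\int_{n_x\cdot v>0}(\gamma_+ g)^2 (n_3\cdot v)(n_x\cdot v)\la v\ra^{-2}\MM^{-1}\dv$ is arbitrary. A symptom of the problem: with smooth cutoffs $N$ would be Lipschitz even without the factor $\zeta_\SSSS$, so your construction never uses $\zeta_\SSSS$ for its actual purpose.

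The missing point is that the extension must be exactly normal on each face all the way to the edge. It is therefore discontinuous at $\SSSS$, and $\zeta_\SSSS$ is what compensates for this. Near each edge circle, let $\chi_1,\chi_3$ depend only on the angle around the edge in the half-plane spanned by $x_1$ and $(x_2^2+x_3^2)^{1/2}$. Take $\chi_3\equiv 0$ in a sector adjacent to the base and $\chi_1\equiv 0$ in a sector adjacent to $\Lambda_3$. Then:
\begin{itemize}
\item $N=\zeta_\SSSS(\chi_1 n_1+\chi_3 n_3)=\zeta_\SSSS n_x$ on $\partial\Omega\setminus\SSSS$;
\item $|\grad\chi_i|\lesssim \delta_\SSSS^{-1}$;
\item $|\grad N|\lesssim |\grad\zeta_\SSSS|+\zeta_\SSSS(1+\delta_\SSSS^{-1})\lesssim 1$, because $\zeta_\SSSS\lesssim\delta_\SSSS^2$.
\end{itemize}
With this $N$ the rest of your proof goes through verbatim, and the boundary condition indeed plays no role.

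Alternatively, use the specular condition on $\Lambda_3$, so that exact normality is only needed on the bases. The polynomial field $N=n_1+(1-x_1^2/L^2)\,n_3$ equals $n_x$ on $\Lambda_1\cup\Lambda_2$. Its tangential part $(x_1/L,0,0)$ on $\Lambda_3$ is cancelled by specular reflection. Its normal part there, $1-x_1^2/L^2\ge (L-|x_1|)/L$, dominates $\zeta_\SSSS$.
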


\begin{rem}\label{rem:L2_AprioriBdy_Hypo_decay}
As observed on \cite[Remark 6.6]{BE_iso}, on cylindrical domains (under Assumption~\ref{item:H2}) the estimates for the boundary are more degenerate than for smooth domains. The additional term $\zeta_{\SSSS}$, making at all possible the control of the trace, serves to---in a sense---smooth out the normal vector when approaching the singular set $\SSSS$.  
\end{rem}

\begin{proof}[Proof of Proposition~\ref{prop:L2_AprioriBdy_Hypo_decay}]
As noted above, the proof is nothing but the exact repetition of the ideas of \cite[Proposition 6.5]{BE_iso}, using Proposition~\ref{prop:L2_Apriori_Hypo_decay}, thus we skip it. 
\end{proof}

Furthermore, we now prove an additional a priori estimate for the trace in cylindrical domains.

 \begin{prop}\label{prop:L2_AprioriBdyLambda12_Hypo_decay}
Assume that Assumption \ref{item:H2} holds.
There is $\kappa \in \R$ such that for every $g$ solution of Equation~\eqref{eq:Hypo_decay} there holds
\be\label{eq:A_priori_bound_BdyLambda12_Hypo_decay}
\int_0^t \int_{\Lambda_1\cup \Lambda_2} \int_{\R^3} (\gamma g_s)^2 \, (n_x\cdot v)^2 \,  \la v\ra^{-2} \, \MM^{-1} \,  \dv\d\sigma_x\ds  \lesssim  e^{\kappa  t} \lvv g_0\rvv_{\HH}^2 , \qquad \forall t\geq 0.
\ee
\end{prop}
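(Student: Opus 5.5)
The plan is a multiplier argument: test the equation against a velocity-weighted function built from the smooth vector field $n_1$ of \eqref{def:n1_Hypo_decay}, which agrees with the outward normal on $\Lambda_1\cup\Lambda_2$ and is tangent to $\Lambda_3$. Since $n_1$ is smooth up to the singular set $\SSSS$, no cutoff $\zeta_\SSSS$ is needed. The hope is that the boundary flux produced by this multiplier yields the desired trace quantity on the bases and vanishes on the lateral surface.

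Concretely, let $\beta(x,v) := \frac{n_1(x)\cdot v}{\la v\ra^{2}}\,\MM^{-1}$ (or $\mu_A^2$ in place of $\MM^{-1}$, to match Proposition~\ref{prop:L2_Apriori_Hypo_decay}). For a solution $g$ of \eqref{eq:Hypo_decay}, integrate $\frac{d}{dt}\int_\OO g^2\beta$ on $(0,t)$. Using $\partial_t g + v\cdot\grad_x g = \CCC g$ and integrating by parts in $x$ gives
$$
\int_0^t\!\int_\Sigma (\gamma g_s)^2 (n_x\cdot v)\beta
= \int_\OO g_0^2\beta - \int_\OO g_t^2\beta + \int_0^t\!\int_\OO g_s^2\, v\cdot\grad_x\beta + 2\int_0^t\!\int_\OO g_s\,\CCC g_s\,\beta .
$$

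The interior terms are handled as follows. We have $|\beta|\lesssim \MM^{-1}$, and $|v\cdot\grad_x\beta| \le |v|^2\la v\ra^{-2}L^{-1}\MM^{-1}\lesssim \MM^{-1}$. For the collision term, $|\beta|\,\nu\lesssim \la v\ra^{-1}\nu\,\MM^{-1}\lesssim \MM^{-1}$, and $K$ is bounded on $\HH$ by \eqref{eq:Cercignani_CK_Hypo_decay}; the extra factor $\la v\ra^{-1}$ in $\beta$ is exactly what absorbs the growth of $\nu$. Hence every right-hand side term is bounded by $\sup_{s\le t}\lvv g_s\rvv_\HH^2 + \int_0^t\lvv g_s\rvv_\HH^2\,ds$. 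Proposition~\ref{prop:L2_Apriori_Hypo_decay} then bounds this by $e^{\kappa t}\lvv g_0\rvv_\HH^2$, up to a polynomial factor in $t$ that can be absorbed by enlarging $\kappa$.

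On the boundary, split $\Sigma$ over $\Lambda_3$ and $\Lambda_1\cup\Lambda_2$. On $\Lambda_3$ the boundary condition is purely specular, $\gamma_-g=\SSS\gamma_+g$. By \eqref{eq:N1normalOrtogonality_Hypo_decay} we have $n_1\cdot\VV_xv=n_1\cdot v$, and also $|\VV_x v|=|v|$ and $n_x\cdot\VV_xv=-n_x\cdot v$. The change of variables $v\mapsto\VV_xv$ therefore shows the $\Sigma_+$ and $\Sigma_-$ contributions cancel, so this part is zero, exactly as in Case 2 of Proposition~\ref{prop:L2_Apriori_Hypo_decay}. On $\Lambda_1\cup\Lambda_2$ we have $n_1=n_x$, so the integrand is exactly $(\gamma g)^2(n_x\cdot v)^2\la v\ra^{-2}\MM^{-1}\ge0$, which is the quantity in \eqref{eq:A_priori_bound_BdyLambda12_Hypo_decay}. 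The endpoints $\overline{\Lambda_1}\cap\overline{\Lambda_3}$ form a set of boundary measure zero and can be ignored, provided the traces are justified, e.g. by the usual approximation or renormalization from \cite{BE_iso}.

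The main obstacle I expect is rigor rather than the algebra. The Green formula must be justified for $g$ only in $\HH$ with a trace in a weighted space; I would do this with a regularized solution or Ukai-type trace theory, as in \cite[Proposition 6.5]{BE_iso}, and pass to the limit with Fatou on the nonnegative boundary term. A secondary point is that the weight $\MM^{-1}$ differs from the $\mu_A$ weight used in the Grönwall bound. This is harmless because the two are comparable by \eqref{eq:mu_mu1_Hypo_decay}, and no integrability of $\MMM_\TTT^2\MM^{-1}$ is needed here, since the diffusive reflection never enters: only the outgoing and incoming traces appear symmetrically in a positive quantity.
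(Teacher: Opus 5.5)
Your proposal is correct and follows essentially the same route as the paper. The paper also uses the multiplier $\MM^{-1}\la v\ra^{-2}(n_1(x)\cdot v)$ and integrates the transport term by parts. It likewise lets the $\Lambda_3$ contribution cancel via specular reflection and $n_1(x)\cdot\VV_x v=n_1(x)\cdot v$, bounds the interior and collision terms by $\lvv g_t\rvv_{\HH}^2$, and closes with Proposition~\ref{prop:L2_Apriori_Hypo_decay}.
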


\begin{proof}
We recall the smooth vector field $n_1:\R^3 \to \R^3$ defined by \eqref{def:n1_Hypo_decay} and satisfying that $n_1(x) = n(x)$ for every $x\in \Lambda_1\cup \Lambda_2$, $\lv n_1 (x) \rv \leq 1$ for every $x\in \bar\Omega$, and the orthogonality property \eqref{eq:N1normalOrtogonality_Hypo_decay}.
We then compute 
\be\label{eq:BdyApriori2}
\begin{aligned}
\frac 12 \frac \d{\dt} \int_{\OO} (g_t)^2  \MM^{-1} \la v\ra^{-2} (n_1(x) \cdot v)  &= \int_{\OO} g_t \left( -v\cdot \grad_x g_t + \CC g_t  \right) \MM^{-1} \la v\ra^{-2} (n_1 (x) \cdot v)\\
& \leq - \frac {1}2 \int_{\OO} v\cdot \grad_x (g_t^2) \MM^{-1} (n_1(x)  \cdot v) \la v\ra^{-2}  + C \,  \lvv g_t\rvv_{\HH}^2,
\end{aligned}
\ee
for some constant $C>0$, and we note that we have used integration by parts, the fact  that $\lv \la v\ra^{-2} (n_1(x)\cdot v) \rv\leq 1$, the Cauchy-Schwarz inequality, and \eqref{eq:Cercignani_CK_Hypo_decay} to obtain the last line. 
Using again integration by parts we now have that 
$$
\begin{aligned}
& -\int_{\OO} v\cdot \grad_x (g_t^2) \MM^{-1} (n_1(x) \cdot v) \la v\ra^{-2} \dv\dx \\
=& - \int_{\Sigma} g_t^2 \MM^{-1} (n_1(x) \cdot v) (n_x\cdot v)  \la v\ra^{-2} \dv\d\sigma_x + \int_{\OO} g_t^2 \MM^{-1} \, (v\cdot \grad_x (n_1(x)  \cdot v))\,  \la v\ra^{-2}\dv\dx  \\
\leq&  - \int_{\Sigma} g_t^2 \MM^{-1} (n_1(x) \cdot v) (n_x \cdot v) \la v\ra^{-2}\dv\d\sigma_x  + C \lvv g_t \rvv_{\HH}^2,
\end{aligned}
$$
for some constant $C>0$, and we note that we have used the regularity of the vector field $n_1$ to deduce the last line

We denote the above boundary integral as $I_0$ and we observe that
$$
I_0 = - \int_{\Lambda_1 \cup \Lambda_2} \int_{\R^3} \gamma g_t^2 \,   {(n_x \cdot v)^2\over \la v\ra^{2} }  \MM^{-1}  -  \int_{\Lambda_3 } \int_{\R^3} \gamma g_t^2 \,   {(n_1(x) \cdot v) (n_x \cdot v)\over \la v\ra^{2} }  \MM^{-1} =: I_1+ I_2.
$$
Using then the boundary conditions of Equation~\eqref{eq:Hypo_decay} with the choice of $\iota $ given by Assumption~\ref{item:H2}, we have that
$$
\beal
I_2 &= -  \int_{\Lambda_3 } \int_{\R^3} \gamma_+ g_t^2 \,   {(n_1(x) \cdot v) (n_x \cdot v)_+\over \la v\ra^{2} }  \MM^{-1}  + \int_{\Lambda_3 } \int_{\R^3} (\SSS \gamma_+ g_t)^2 \,   {(n_1(x) \cdot v) (n_x \cdot v)_-\over \la v\ra^{2} }  \MM^{-1}\\
&=-  \int_{\Lambda_3 } \int_{\R^3} \gamma_+ g_t^2 \,   {(n_1(x) \cdot v) (n_x \cdot v)_+\over \la v\ra^{2} }  \MM^{-1}  +  \int_{\Lambda_3 } \int_{\R^3} \gamma_+ g_t^2 \,   {(n_1(x) \cdot v) (n_x \cdot v)_+\over \la v\ra^{2} }  \MM^{-1} =0
\eeal$$
where we have used the change of variables $v\mapsto \VV_x v$ together with the fact that $ \lv \VV_x v \rv = \lv v\rv $, $|n(x) \cdot \VV_x v| = |n(x) \cdot v|$ and $n_1(x) \cdot \VV_x v = n_1(x) \cdot v$ due to \eqref{eq:N1normalOrtogonality_Hypo_decay} to deduce that the last line vanishes.

Gathering the above estimates and putting everything together into \eqref{eq:BdyApriori2} we have that
$$
\frac 12\frac \d{\dt} \int_{\OO} (g_t)^2 \MM^{-1} \la v\ra^{-2} (n_1(x) \cdot v)  \leq I_1 + C \lvv g_t\rvv_{\HH}^2,
$$
for some constant $C>0$. 
Integrating then in time we deduce that
$$
\int_0^t \int_{\Lambda_1\cup \Lambda_2} \int_{\R^3} (\gamma f_s)^2 \MM^{-1} (n_x \cdot v)^2 \la v\ra^{-2}\dv\d\sigma_x \ds \lesssim \lvv g_0 \lvv_{\HH}^2 + \lvv g_t\rvv_\HH^2 + \int_0^t \lvv g_s \rvv_{\HH}^2 \ds, 
$$
and we conclude by using \eqref{eq:A_priori_bound_HH_Hypo_decay}. 
\end{proof}

\subsection{Well-posedness}\label{ssec:Hypo_L2WellPosedness_Hypo_decay}
We introduce the boundary measures
$$\d \xi^1_\omega:=\omega (n_x\cdot v)\dv\d\sigma_x ,\quad, \d\xi_1^1 = \d\xi^1_{\MM^{-1}} \quad  \text{ and } \quad \d\xi_2:= \MM^{-1}  (n_x\cdot v)^2  \, \zeta_{\SSSS} (x) \, \la v\ra^{-2} \, \dv\d\sigma_x,
$$
where $\d\sigma_x$ stand for the Lebesgue measure on the boundary set $\partial \Omega$.
Secondly, we consider the space of renormalizing  functions $C_{pw, *}^1(\R)$ as the space of $C^1$ piecewise functions with finite limits at $\pm\infty$, and such that $s\mapsto \la s\ra\beta'(s)$ is bounded in $\R$.

We devote then the rest of this subsection to prove the following well-posedness result. 

\begin{theo}\label{theo:ExistenceL2_Hypo_decay}
Assume that either Assumption \ref{item:H1} or Assumption \ref{item:H2} holds and let $g_0 \in  {\HH}$. There is $g\in C(\R_+, {\HH})$ with an associated trace function $\gamma g\in L^2 (\Gamma;\,   \d\xi_2 \dt )$, 
 unique global solution to Equation~\eqref{eq:Hypo_decay} in the following weak sense: for any $\varphi \in \DD_{\SSSS}(\bar \UU)$ there holds 
\begin{multline}\label{eq:renormalizedFormulation_L2SSSS_Hypo_decay}
\int_{\OO} g(t,\cdot) \, \varphi(t,\cdot) \, \dx\dv-\int_0^t \int_{\OO^\eps} (Kg) \varphi  + g \left( \partial_t \varphi  - v\cdot \grad_x \varphi -\nu \, \varphi \right)   \dx\dv\ds \\
= \int_{\OO} g_0(\cdot ) \varphi(0,\cdot ) \dx\dv
+ \int_0^t \int_{\Sigma} \gamma g\,  \varphi\,  (n_x\cdot v) \d\sigma_x \dv ,
\end{multline}
for every $t\geq 0$, and where we have defined the set of test functions
\be\label{def:DD_SSSS}
\DD_{\SSSS} (\bar \UU)  := \left\{ \varphi \in C^\infty_c (\bar \UU); \, \text{ such that } \varphi = \zeta_{\SSSS} \phi, \text{ for some } \phi \in C^\infty_c (\bar \UU) \right\},
\ee
where we recall that $\zeta_{\SSSS}$ has been defined in \eqref{def:zetaSSSS_Hypo_decay}.
\end{theo}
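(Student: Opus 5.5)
We prove the well-posedness of Equation~\eqref{eq:Hypo_decay} by the linear kinetic theory of \cite[Section~6]{BE_iso}, the same scheme used there in the isothermal case; the a priori bounds of Propositions~\ref{prop:L2_Apriori_Hypo_decay}, \ref{prop:L2_AprioriBdy_Hypo_decay} and \ref{prop:L2_AprioriBdyLambda12_Hypo_decay} replace the isothermal ones. The argument has three steps.

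\emph{Step 1 (penalized boundary operator).} We first solve a problem with a strictly contractive boundary operator. For $\alpha\in(0,1)$, replace the reflection operator $\RRR_\TTT$ by $\alpha\RRR_\TTT$. The problem then becomes an inflow problem with a contraction in the trace spaces associated with the weight $\mu_A^2$. Indeed, Step~2 of the proof of Proposition~\ref{prop:L2_Apriori_Hypo_decay}, for $A$ large, shows that $\RRR_\TTT$ is a contraction from $L^2(\Sigma_+;\mu_A^2\,\d\xi^1_1)$ into $L^2(\Sigma_-;\mu_A^2\,\d\xi^1_1)$, up to a nonnegative loss. Hence $\alpha\RRR_\TTT$ is a strict contraction. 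We then proceed as follows.
\begin{enumerate}
\item Treat $K$ as a bounded perturbation, using \eqref{eq:Cercignani_CK_Hypo_decay}.
\item Solve the pure transport--absorption problem $\partial_t g+v\cdot\grad_x g+\nu g=0$ with given inflow by the method of characteristics, or equivalently by the Lions--Perthame trace theory and renormalization with $\beta\in C^1_{pw,*}(\R)$.
\item Iterate $g_-^{k+1}=\alpha\RRR_\TTT\gamma_+g^k$ and close by Banach's fixed point theorem.
\end{enumerate}
This gives a unique solution $g^\alpha\in C(\R_+,\HH)$ with trace in $L^2(\Gamma_\pm;\d\xi^1_1\dt)$.

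\emph{Step 2 (uniform bounds and passage to the limit).} The computations of Propositions~\ref{prop:L2_Apriori_Hypo_decay} and \ref{prop:L2_AprioriBdy_Hypo_decay} go through for $\alpha\RRR_\TTT$, and the boundary contribution only improves because $\alpha\le1$. This yields, uniformly in $\alpha$,
\[
\|g^\alpha_t\|_{\HH}\lesssim e^{\kappa t}\|g_0\|_{\HH},\qquad \|\gamma g^\alpha\|_{L^2((0,t)\times\Sigma;\,\d\xi_2\ds)}\lesssim e^{\kappa t}\|g_0\|_{\HH}.
\]
Extract weakly convergent subsequences $g^\alpha\rightharpoonup g$ in $L^2_{loc}(\R_+,\HH)$ and $\gamma g^\alpha\rightharpoonup\gamma$ in $L^2(\Gamma;\d\xi_2\dt)$. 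Then pass to the limit in the weak formulation \eqref{eq:renormalizedFormulation_L2SSSS_Hypo_decay}. The test functions must lie in $\DD_\SSSS(\bar\UU)$ because the trace is only controlled with the weight $\zeta_\SSSS$. This step gives the main difficulty.

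\emph{The main obstacle} is identifying the boundary condition in the limit, because the diffusive part $\MMM_\TTT\widetilde{\gamma_+g}$ is nonlocal and the trace bound degenerates. Two cases arise.
\begin{enumerate}
\item Under \ref{item:H1}, the weight $(n_x\cdot v)^2\la v\ra^{-2}$ suffices to make $\widetilde{\gamma_+ g^\alpha}$ converge weakly in $L^2(\partial\Omega)$. We apply Cauchy--Schwarz against $\MMM_\TTT$, and $\iota\geq\iota_0$ supplies the needed extra control.
\item Under \ref{item:H2}, diffusion acts only on $\Lambda_1\cup\Lambda_2$. There we use the non-degenerate bound of Proposition~\ref{prop:L2_AprioriBdyLambda12_Hypo_decay}. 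On $\Lambda_3$, specular reflection is local and passes to the limit weakly.
\end{enumerate}
The identity $\gamma_-g=\RRR_\TTT\gamma_+g$ follows since $\RRR_\TTT$ is linear and weakly continuous between the relevant spaces. Time continuity $g\in C(\R_+,\HH)$ follows from the renormalized formulation, as in \cite[Section~6]{BE_iso}.

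\emph{Step 3 (uniqueness).} Let $g$ be the difference of two solutions; it solves the same problem with $g_0=0$. Apply the renormalized (Green) formula with the weight $\mu_0^2$ (respectively $\mu_1^2$ in the cylinder). The estimate of Proposition~\ref{prop:L2_Apriori_Hypo_decay} then holds for any weak solution with trace in $L^2(\d\xi_2)$, and it forces $g\equiv0$. Justifying the renormalization near the singular set $\SSSS$ requires a cutoff argument with $\zeta_\SSSS$, exactly as in \cite[Section~6]{BE_iso}.
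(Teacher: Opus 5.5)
Your proposal follows essentially the same route as the paper: the $\alpha\RRR_\TTT$ penalization solved by a Banach fixed point, $\alpha$-uniform bounds in $\HH$ and on the trace in $L^2(\d\xi_2)$, the limit $\alpha\nearrow 1$ against test functions in $\DD_\SSSS(\bar\UU)$, and uniqueness from the $L^2$ a priori estimate and linearity. Two minor points. First, the contraction of $\RRR_\TTT$ holds for traces weighted by the corrected $\mu_0^2$ (resp.\ $\mu_1^2$), as in the paper, and not by $\mu_A^2$ alone: with $\mu_A^2$, Cauchy--Schwarz forces the diffusive part to have norm strictly above $1$ wherever $\iota>0$ and $\TTT\neq 1$. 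Second, the (H1)/(H2) case split for the limiting boundary condition is unnecessary: since $\zeta_\SSSS$ depends only on $x$, $\RRR_\TTT$ is directly bounded on the $\zeta_\SSSS$-weighted trace spaces, so Proposition~\ref{prop:L2_AprioriBdyLambda12_Hypo_decay} is not needed (the paper uses the contractivity of $\RRR_\TTT$ in $L^1(\zeta_\SSSS\,|n_x\cdot v|\,\d v\,\d\sigma_x)$).
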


\begin{rem}\label{rem:ExistenceL2_Hypo_decay}
In particular, Theorem~\ref{theo:ExistenceL2_Hypo_decay} implies the existence of a strongly continuous semigroup $S_{\LL} :{\HH}\to {\HH}$ associated to the solutions of Equation~\eqref{eq:Hypo_decay}.
\end{rem}

\begin{rem}
It is worth noting that, in the case of cylindrical domains (i.e. under Assumption~\ref{item:H2}), the functional space where the trace function is well defined is more singular than in the case of smooth domains. This is reminiscent of our comment from Remark~\ref{rem:L2_AprioriBdy_Hypo_decay}.
\end{rem}

\begin{rem}
The problem of well-posedness for transport equations with non-local terms in bounded domains has been deeply addressed in the literature, see, for instance,  \cite{sanchez2024kreinrutmantheorem, Bardos70, Beal87, MR1022305, MR2072842, Mischler2000, Mischler2010, MR1132764}.
In particular, this proof follows very similar lines as the ideas presented during the proof of \cite[Theorem 6.11]{BE_iso}, and we adapt it in order to treat non-isothermal boundary conditions.
\end{rem}

\begin{proof}[Proof of Theorem~\ref{theo:ExistenceL2_Hypo_decay}]
\medskip\noindent
We divide the proof into three steps.

\medskip\noindent
\emph{Step 1. (Auxiliary problem with inflow boundary conditions)} We consider $\gggg\in L^2(\Gamma;\dt \d\xi_1^1)$, and we study the following evolution equation
\begin{equation}
	\left\{\begin{array}{rlll}
		 \partial_t g&=&  \LL g   &\text{ in }\UU\\
		\gamma_-g&=& \gggg  &\text{ on }\Gamma_{-}\\
		g_{t=0}&=&g_0 &\text{ in }\OO.
	\end{array}\right.\label{eq:TransportL2K_Inflow_Hypo_decay}
\end{equation}
A direct application of \cite[Proposition 8.16]{sanchez2024kreinrutmantheorem} yields the existence of $g\in C(\R_+, {\HH})$, with a trace $\gamma g\in L^2(\Gamma; \d\xi^1_1 \dt)$, unique renormalized solution to Equation~\eqref{eq:TransportL2K_Inflow_Hypo_decay}, i.e. for every test function $\varphi \in \DD(\bar \UU)$ there holds
\begin{multline}\label{eq:renormalizedFormulation_HH_Inflow_Hypo_decay}
\int_{\OO} \beta(g)(t,\cdot) \, \varphi(t,\cdot) -\int_0^t \int_{\OO}  \beta'(g)\varphi \, Kg  + \beta(g) \left( \partial_t \varphi  -  v\cdot \grad_x \varphi  \right) - \nu \beta'(g) g \\
 - \int_0^t \int_{\Sigma_+} \gamma_+ \beta(g)\,  \varphi\,  (n_x\cdot v)_+ 
= \int_{\OO} \beta(g_0)(\cdot ) \varphi(0,\cdot ) 
- \int_0^t \int_{\Sigma_-}  \beta(\gggg) \,  \varphi\,  (n_x\cdot v)_- ,
\end{multline}
for every $t\geq 0$, and every renormalizing function $\beta\in C_{pw, *}^1(\R)$.

\medskip\noindent
\emph{Step 2. (Banach fixed point for modified Maxwell boundary conditions)} We take an arbitrary $T>0$ to be fixed later, a constant $\alpha \in (0,1)$, and we consider a function $h\in C(\R_+, {\HH})$, with a trace $\gamma h\in L^2(\Gamma; \d\xi_1^1 \dt)$
. We study now the following kinetic equation
\be
	\left\{\begin{array}{rlll}
		 \partial_t g &=& \LL g  &\text{ in }\UU_T := (0,T]\times \OO\\
		\gamma_- g&=& \alpha \, \RRR_\TTT \gamma_+ h   &\text{ on }\Gamma_{T, -} := (0,T]\times \Sigma_-\\
		g_{t=0}&=&g_0 &\text{ in }\OO.
	\end{array}\right.\label{eq:TransportL2K_k_Hypo_decay}
\ee
We define the weight function
\be\label{def:mu_MM}
\mu :=\left\{\begin{array}{ll}
\mu_0 & \text{if Assumption~\ref{item:H1} holds}\\
\mu_1 & \text{if Assumption~\ref{item:H2} holds},
\end{array}\right.
\ee
where we recall that $\mu_0$ and $\mu_1$ are defined in \eqref{def:muA_Hypo_decay} and \eqref{def:mu1_Hypo_decay} respectively. Furthermore, \eqref{eq:mu_mu0_Hypo_decay} and \eqref{eq:mu_mu1_Hypo_decay} imply that there is a constant $c>0$ such that
\be\label{eq:MM_mu_Hypo_decay}
c^{-1} \, \MM^{-1}\leq \mu^2 \leq c\, \MM^{-1}.
\ee
We note then that the computations performed during the proof of Proposition~\ref{prop:L2_Apriori_Hypo_decay} imply that
$$
\lvv \RRR_\TTT \gamma_+ h \rvv_{L^2(\Sigma_-, \d\xi^1_{\mu^2})} \leq \lvv \gamma_+ h \rvv_{L^2(\Sigma_+, \d\xi^1_{\mu^2})} .
$$
This, together with \eqref{eq:MM_mu_Hypo_decay}, further implies that 
$$
\lvv \RRR_\TTT \gamma_+ h \rvv_{L^2(\Sigma_-, \d\xi^1_{1})} \lesssim \lvv \gamma_+ h \rvv_{L^2(\Sigma_+, \d\xi^1_{1})} .
$$
Therefore, Step 1 implies the existence of $g\in C(\R_+, {\HH})$, with trace $\gamma g\in L^2(\Gamma; \d\xi_1^1\dt)$, unique renormalized solution of Equation~\eqref{eq:TransportL2K_k_Hypo_decay}, in the sense of \eqref{eq:renormalizedFormulation_HH_Inflow_Hypo_decay}. 

Furthermore, using the renormalized formulation \eqref{eq:renormalizedFormulation_HH_Inflow_Hypo_decay} with the test function $\varphi = \mu_0^2$ in the case of smooth domains, $\varphi = \mu_1^2$ in the case of cylindrical domains, and the renormalizing function $\beta(s) =  \beta_M(s) = M\wedge s^2$, for any $M>1$, we argue in a similar way as in the proof of Proposition~\ref{prop:L2_Apriori_Hypo_decay}, and using the integral version of the Grönwall lemma, and taking $M\to \infty$, we obtain that there is $\kappa \in \R$ such that
\begin{multline}\label{eq:Banach_energy_estimate_Hypo_decay}
\lvv g_t\rvv_{L^2_{\mu_0}(\OO)}^2 + \int_0^t e^{2\kappa   (t-s)}  \lvv \gamma_+ g_s\rvv^2_{L^2(\Sigma_+;  \d\xi^1_{\mu_0^2})}   \ds  \leq e^{2\kappa t} \lvv g_0\rvv_{L^2_{\mu_0}(\OO)}^2 \\
+ \alpha \, \int_0^t e^{2\kappa (t-s)}  \lvv \gamma_+ h_s \rvv^2_{L^2(\Sigma_+; \d\xi^1_{\mu_0^2} )}   \ds ,
\end{multline}
for every $t\in [0,T]$. This implies that the mapping $h \mapsto g$ is $\alpha$-Lipschitz for the norm defined by 
$$
\sup_{t \in [0,T]} \left\{ \| g_t \|^2_{L^2_{\mu_0}(\OO)}  e^{- 2\kappa  t }+  \int_0^t  \| \gamma_+ g_s \|^2_{L^2(\Sigma_+; \d\xi^1_{\mu_0^2})}  \, e^{- 2\kappa   s} \, \d s \right\}.
$$
The application of the Banach fixed point theorem, together with \eqref{eq:MM_mu_Hypo_decay} again, imply then the existence of $g\in C([0,T], {\HH})$, with a trace $\gamma g\in L^2([0,T]\times \Sigma; \d\xi^1_1 \dt)$, unique renormalized solution of the evolution equation
\beqn
	\left\{\begin{array}{rlll}
		 \partial_t g &=&  \LL  g &\text{ in }\UU_T\\
		\gamma_- g&=& \alpha \, \RRR_\TTT \gamma_+ g   &\text{ on }\Gamma_{T, -}\\
		g_{t=0}&=&g_0 &\text{ in }\OO, 
	\end{array}\right.
\eeqn
in the sense that for every $\varphi \in \DD(\bar \UU)$, and every $\beta \in C_{pw, *}^1$ we have
\begin{multline}\label{eq:renormalizedFormulation_HH_alpha_Hypo_decay}
\int_{\OO} \beta(g)(t,\cdot) \, \varphi(t,\cdot) -\int_0^t \int_{\OO}  \beta'(g)\varphi Kf  + \beta(g) \left( \partial_t \varphi  - v\cdot \grad_x \varphi  \right) -  \nu \beta'(g) g \\
 - \int_0^t \int_{\Sigma} \gamma_+ \beta(g)\,  \varphi\,  (n_x\cdot v)_+ 
= \int_{\OO} \beta(g_0)(\cdot ) \varphi(0,\cdot ) ,
\end{multline}
for every $t\in [0,T]$.

\medskip\noindent
\textit{Step 3.} For a sequence $\alpha_k \in (0,1)$ such that $\alpha_k \nearrow 1$, we consider the sequence $(g_k)$, obtained by using the Step~2, as the solution to the problem
\begin{equation}\label{eq:linear_gak_Hypo_decay}
\left\{
\begin{array}{rcll}
 \partial_t g_k &=& \LL  g_k   &\text{ in } \UU_T \\
 \gamma_{-} g_k   &=&  \alpha_k \, \RRR_\TTT \gamma_{+} g_k   &\text{ on }  \Sigma_{T, -} \\
 g_{k, t=0} &=& g_0   &\text{ in }   \OO.
\end{array}
\right.
\end{equation}
We consider again the renormalizing functions $\beta(s) =  \beta_M(s) = M\wedge s^2$, for any $M>1$, and the test function $\varphi = \mu^2$, where we recall that $\mu$ is defined in \eqref{def:mu_MM}. 
Using these choices in the renormalized formulation \eqref{eq:renormalizedFormulation_HH_alpha_Hypo_decay}, and arguing as in the proof of Proposition~\ref{prop:L2_Apriori_Hypo_decay}, passing to the limit as $M\to \infty$, using the integral version of the Grönwall lemma, and the equivalency \eqref{eq:MM_mu_Hypo_decay} between the weight functions, we deduce that $g_k$ satisfies 
\be\label{eq:linear_gak_bdd_Hypo_decay}
\lvv g_{kt} \rvv_{\HH}  \leq e^{\kappa t} \lvv g_0\rvv_{\HH}  , 
\ee
for any $t \in [0,T]$ and any $k \ge 1$.

Taking again $\beta (s) = \beta_M(s) = M\wedge s^2$, for any $M>1$, and $\varphi$ as the weight functions considered during the proof of Proposition~\ref{prop:L2_AprioriBdy_Hypo_decay}, we argue as before using the renormalized formulation and we deduce the conclusion of Proposition~\ref{prop:L2_AprioriBdy_Hypo_decay}. Namely, there holds
$$
\int_{\Gamma_T} (\gamma g_k)^2 \d\xi_2  \, \d t  
\lesssim  \| g_0 \|^2_{{\HH}} e^{\kappa  T} . 
$$

From the above estimates, we deduce that, up to the extraction of a subsequence, there exist $g \in L^2([0,T]; {\HH}) \cap L^\infty([0,T]; {\HH})$ and $\mathfrak{g}_\pm \in L^2(\Gamma_{T, \pm}; \d\xi_2 \dt)$ such that 
$$
g_k \wto g \hbox{ weakly in } \ L^2(0,T; {\HH})  \cap L^\infty(0,T; {\HH}), 
\quad
\gamma_\pm g_k \wto \mathfrak{g}_\pm \hbox{ weakly in } \  L^2(\Gamma_{T, \pm}; \d\xi_2 \d t). 
$$

Since $\langle v \rangle  \MM^{1/2} \in L^2(\R^3)$, 
 we have that $L^2(\Gamma_T;  \d\xi_2\dt) \subset L^1(\Gamma_T; \zeta_{\SSSS} (x) \, (n_x\cdot v)  \dv\d\sigma_x\dt)$. 
Moreover, from the very definition of the boundary condition of Equation~\eqref{eq:Hypo_decay}, we have that 
\be\label{eq:KolmogorovWPL12_Hypo_decay}
 \| \RRR \|_{L^1(\Sigma_+; \zeta_{\SSSS}(x)\,  (n_x\cdot v)  \dv\d\sigma_x) \to L^1(\Sigma_-; \zeta_{\SSSS}(x)\,  (n_x\cdot v)  \dv\d\sigma_x)} \le 1.
\ee
Altogether this implies that $\RRR(\gamma_+ g_{k})  \wto \RRR(\mathfrak{g}_+)$   weakly in $L^1(\Gamma_-; \zeta_{\SSSS} (x)\,  (n_x\cdot v)  \dv\d\sigma_x)$. 

Fur\-ther\-mo\-re, arguing as in the proof of \cite[Theorem~4.4]{Mischler2010} we deduce that $g$ admits a trace in the sense stablished by \cite[Lemma~6.8]{BE_iso}. Moreover, using \cite[Lemma~6.8--(T1)]{BE_iso} we have that $\gamma_\pm g_k \wto \gamma_\pm g$ weakly in $L^1 (\Gamma_{T, \pm}; \zeta_{\SSSS} \lv n_x \cdot v \rv \d\sigma_x \dv \dt)$. 
Using both convergences in the boundary condition $\gamma_- g_k = \RRR(\gamma_+ g_k)$, and the unicity of the limit, we obtain that $\gamma_- g = \RRR(\gamma_+ g)$ in the distributional sense.

We may thus pass to the limit in the weak formulation of Equation~\eqref{eq:linear_gak_Hypo_decay}, obtained from the Step 2, with a test function $\varphi \in \DD_\SSSS(\bar\UU)$ and we obtain that $g \in C([0,T]; {\HH})$ is a weak solution to Equation~\eqref{eq:Hypo_decay} in the sense of \eqref{eq:renormalizedFormulation_L2SSSS_Hypo_decay}. 
Moreover, passing to the limit in \eqref{eq:linear_gak_bdd_Hypo_decay}, we also have that Proposition~\ref{prop:L2_Apriori_Hypo_decay} holds. This and the linearity give the uniqueness of the solution to Equation~\eqref{eq:Hypo_decay}, and repeating this argument in the time intervals $[nT, (n+1)T]$ for every $n\in \N$ we conclude the existence and uniqueness of a global weak solution. 
\end{proof}

\subsection{Proof of Theorem~\ref{theo:Hypo_decay}} \label{ssec:Hypo_ProofMain_Hypo_decay}
The existence result is an immediate consequence of Theorem~\ref{theo:ExistenceL2_Hypo_decay}. We now consider $g_0\in \mathrm{Dom}(\LL)$, then the existence of a strongly continuous semigroup, given by Theorem~\ref{theo:ExistenceL2_Hypo_decay} and Remark~\ref{rem:ExistenceL2_Hypo_decay}, implies that we may apply Theorem~\ref{theo:Pert_hypo_Hypo_decay}, and together with the Grönwall lemma we deduce that there are constants $\kappa, C>0$ such that 
\beqn
\lvvv g_t\rvvv^2  \leq e^{-\kappa t} \lvvv g_0 \rvvv^2 \\
+  \theta_0\, C \int_0^t e^{-\kappa (t-s)} \lvv (\iota)^{1/2} (\widetilde{\gamma_+ g_s})\rvv_{L^2(\partial \Omega)}^2 \ds  ,
\eeqn
where we recall that the hypocoercivity norm $\lvvv \cdot \rvvv$ has been defined in \eqref{def:HypoNorm_Hypo_decay}.
Using then the norm equivalency \eqref{eq:NormEquivalence_Hypo_decay}, the above inequality yields
\beqn
\lvv g_t \rvv_\HH^2  \leq C' e^{-\kappa t} \lvv g_0 \rvv_\HH^2 \\
+  \theta_0\, C' \int_0^t e^{-\kappa (t-s)} \lvv (\iota)^{1/2} (\widetilde{\gamma_+ g_s})\rvv_{L^2(\partial \Omega)}^2 \ds  ,
\eeqn
for some constant $C'>0$.

Using now the Cauchy-Schwarz inequality together with Proposition~\ref{prop:L2_AprioriBdy_Hypo_decay} in the case of smooth domains we have that
\beqn
\int_{\partial \Omega} \iota (\widetilde{\gamma_+ g_s})^2 \d\sigma_x \lesssim \int_{\Sigma_+}  (\gamma_+ g_s)^2 \la v\ra^{-2} \MM^{-1} \, (n_x\cdot v)^2 \dv \d\sigma_x  \lesssim e^{\eta s} \lvv g_0 \rvv_\HH^2 , 
\eeqn
for some $\eta>0$.
Similarly, for the case of cylindrical domains, the use Proposition~\ref{prop:L2_AprioriBdyLambda12_Hypo_decay} yields
\beqn
\int_{\partial \Omega} \iota (\widetilde{\gamma_+ g_s})^2 \d\sigma_x = \int_{\Lambda_1\cup \Lambda_2}  (\widetilde{\gamma_+ g_s})^2 \d\sigma_x \lesssim \int_{\Lambda_1\cup \Lambda_2}  (\gamma_+ g_s)^2 \la v\ra^{-2} \MM^{-1} \, (n_x\cdot v)^2 \dv \d\sigma_x  \lesssim e^{\eta s} \lvv g_0 \rvv_\HH^2.
\eeqn
Putting the above estimates together we deduce that 
\beqn
\lvv g_t \rvv_\HH^2  \leq C' e^{-\kappa t} \lvv g_0 \rvv_\HH^2 \\
+  \theta_0\, {C' \over \eta} e^{\eta t} \lvv g_0 \rvv_\HH^2,
\eeqn
then a standard density argument implies the validity of the above inequality for any $g_0\in \HH$, and the associated family of solutions $g_t$, given by the well-posedness result of Theorem~\ref{theo:ExistenceL2_Hypo_decay}. 

\medskip
We now choose $T>0$ large enough such that $Ce^{-\kappa T/2} \leq 1/4$, and $\theta_\star >0 $ small enough such that 
$$
\theta_\star \, {C'\over \eta}  \, e^{\eta T} = \frac 14 e^{-\kappa T/2}.
$$
Then, for every $\theta_0\in (0,\theta_\star)$, it follows that
\be\label{eq:Hypo_decay_T}
\lvv g_T \rvv_\HH^2  \leq \frac 12 e^{-\kappa T/2} \lvv g_0 \rvv_\HH^2,
\ee
and 
\be\label{eq:Hypo_decay_t}
\lvv g_t \rvv_\HH^2  \lesssim e^{-\kappa t/2} \lvv g_0 \rvv_\HH^2 \qquad \forall t\in [0,T].
\ee
Take then $\tau >0$, there is $n\in \Z_+$ and $\tau_0 \in [0,T]$, such that $\tau = nT+ \tau_0$, thus the repeated use of \eqref{eq:Hypo_decay_T} together with \eqref{eq:Hypo_decay_t} imply the exponential decay \eqref{eq:HypoEquiv_Hypo_decay}.
\qed

\section{Weighted $L^2$ estimates}\label{sec:Hypo_Pert}

During this section, we extend the hypocoercivity results from Section~\ref{sec:Hypo_decay} to establish perturbed $L^2$ estimates on the solutions of Equation~\eqref{eq:LPBE}. 

We note that the main difference is the presence of the \emph{inflow}-type term $\psi$ at the boundary. However, since $\lla \psi\rra_{\Sigma_-} = 0$, we are able to repeat the computations from Section~\ref{sec:Hypo_decay} and, treating this term as a perturbation, we establish the following $L^2$ estimate.

\begin{theo}\label{theo:L2Decay_pert}
Assume that either Assumption \ref{item:H1} or Assumption \ref{item:H2} holds. 
There exists a unique global weak solution to Equation~\eqref{eq:LPBE}. Moreover, there are constructive constants $\kappa>0$ and $C\geq 1$ such that 
\be
    \lVert f_t \rVert_{ \HH}  \lesssim_C e^{-\kappa t}\lVert f_0\rVert_{ \HH} 
    +    \vartheta_0^{1/2} \eps^{11/2} \left( \int_0^t e^{- 2\kappa (t-s)} \left\lVert  (\iota)^{1/2} (\widetilde{\gamma_+ f_s} ) \right\rVert^2_{L^2(\partial \Omega)} \ds\right)^{1/2} +  \vartheta_0^{1/2} \eps^{11/2}  , \label{eq:Hypo}
\ee
for every $t\geq0$.
Furthermore, there is a norm $\lvvv \cdot \rvvv_\eps$ equivalent to the usual norm of $\HH$ uniformly in $\eps$, i.e. there is a constant $c>0$ independent of $\eps$ such that
\be\label{eq:ClassicHypoEquivalence}
c^{-1} \lvv f\rvv_{\HH} \leq \lvvv f\rvvv_\eps \leq c \lvv f\rvv_{\HH},
\ee
for which there holds
\begin{multline}
 \lvvv f_t \rvvv_\eps  \leq e^{-\kappa t}\lvvv f_0\rvvv_\eps  +  C^\star\,   \vartheta_0^{1/2} \eps^{11/2} \left( \int_0^t e^{-2\kappa (t-s)} \left\lVert  (\iota)^{1/2} (\widetilde{\gamma_+ f_s} ) \right\rVert^2_{L^2(\partial \Omega)} \ds\right)^{1/2}\\
 +  \vartheta_0^{1/2} \eps^{11/2}\, C^\star ,  \label{eq:HypoEquiv}
\end{multline}
for every $t\geq 0$, and some constant $C^\star>0$.
\end{theo}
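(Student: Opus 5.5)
The plan is to mimic Section~\ref{sec:Hypo_decay}, now with the rescaled operator $\LLL^\eps = -\eps^{-1} v\cdot\grad_x + \eps^{-2}\CCC$ and the inhomogeneous boundary term $\iota\psi$. Well-posedness comes first. Since $\iota\psi \in L^2(\Gamma_-;\d\xi^1_1\dt)$ by \eqref{eq:Controlpsi}, the inflow problem of Step~1 of Theorem~\ref{theo:ExistenceL2_Hypo_decay} applies with the datum $\alpha\RRR_\Theta\gamma_+h + \iota\psi$. The Banach fixed point of Step~2 is unchanged, because $\psi$ does not depend on $h$ and so does not affect the Lipschitz constant. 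In Step~3 the a priori bounds of Propositions~\ref{prop:L2_Apriori_Hypo_decay}, \ref{prop:L2_AprioriBdy_Hypo_decay} and \ref{prop:L2_AprioriBdyLambda12_Hypo_decay} pick up an additive forcing term $\int_0^t \|\iota\psi\|^2_{L^2(\Sigma_-;\d\xi^1_{\mu^2})}$, which is finite and linear in time. Passing to the limit $\alpha_k\nearrow 1$ then works as before, and uniqueness follows from linearity: the difference of two solutions solves \eqref{eq:Hypo_decay} with $\eps$-scaling.

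For the estimate, I define $\lvvv\cdot\rvvv_\eps$ by the scalar product \eqref{eq:hypocNorm_Hypo_decay} with the mixed terms multiplied by $\eps$, that is, with $\eta_i$ replaced by $\eps\eta_i$ (as in \cite{MR4581432,BE_iso}). This makes \eqref{eq:ClassicHypoEquivalence} uniform in $\eps$, since $\eps<1$. Applying Lemma~\ref{lem:MicroCoercivity_Hypo_decay} and Lemmas~\ref{lem:EnergyCoercivity_Hypo_decay}--\ref{lem:MassCoercivity_Hypo_decay} with this scaling, and carrying the extra term $\iota\psi$ in the boundary identities, yields
\[
\lla -\LLL^\eps f, f\rra_\eps \ge \kappa \lvvv f\rvvv_\eps^2 - C\eps^{-1}\theta_0\|\iota^{1/2}\widetilde{\gamma_+ f}\|^2_{L^2(\partial\Omega)} + \mathcal{R}_\psi(f).
\]
Here $\theta_0=\vartheta_0\eps^{12}$ for $\Theta = 1+\eps^{12}\vartheta$, and $\mathcal R_\psi$ collects the new boundary contributions. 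These are, in the $\HH$ part, $\eps^{-1}\int_{\Sigma_-}\big(2\RRR_\Theta\gamma_+ f\,\iota\psi + \iota^2\psi^2\big)\MM^{-1}|n_x\cdot v|$, and, in the macroscopic functionals, terms like $\int_{\Sigma_-}\grad u[\EE]\cdot p\,\iota\psi\,|n_x\cdot v|$ at order $\eps^{0}$.

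The main obstacle is controlling $\mathcal R_\psi$ without losing the dissipation. The cross term $\int \RRR_\Theta\gamma_+f\,\iota\psi$ contains $\int_{\Sigma_-} \DDD_1\gamma_+f\,\psi\,\MM^{-1}|n\cdot v|$. Because $\int\psi\,|n\cdot v|\,dv=0$, this vanishes at leading order, and what remains is $\widetilde{\gamma_+f}\int(\MMM_\Theta-\MMM_1)\psi\MM^{-1}$, of size $(\vartheta_0\eps^{12})^2|\widetilde{\gamma_+f}|$. The specular and $\DDD_1^\perp$ parts are absorbed by Young's inequality into $\frac18\|\sqrt{\iota(2-\iota)}\DDD_1^\perp\gamma_+f\|^2_{\partial\HH_+}$ (kept with the $\eps^{-1}$ factor), plus $\eps^{-1}\|\psi\|^2 \lesssim \eps^{-1}\vartheta_0^2\eps^{24}$. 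The macroscopic boundary terms are bounded through Theorems~\ref{theo:PoissonRegularity} and \ref{thm:LameRegularity} by $\|\Pi f\|\,\vartheta_0\eps^{12}$, and then by $\frac{\kappa}{4}\lvvv f\rvvv^2_\eps + C\vartheta_0^2\eps^{24}$. All $\psi$-contributions together are therefore at most $C\vartheta_0\eps^{11}\big(1+\|\iota^{1/2}\widetilde{\gamma_+f}\|^2\big)$, using $\vartheta_0\le 1$. The trace-term coefficient is likewise $\lesssim \vartheta_0\eps^{11}$.

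I then get the differential inequality
\[
\frac{d}{dt}\lvvv f_t\rvvv^2_\eps \le -2\kappa\lvvv f_t\rvvv_\eps^2 + C\vartheta_0\eps^{11}\|\iota^{1/2}\widetilde{\gamma_+ f_t}\|^2_{L^2(\partial\Omega)} + C\vartheta_0\eps^{11},
\]
justified for $f_0$ in the domain and extended by density. Grönwall's lemma gives $\lvvv f_t\rvvv_\eps^2 \le e^{-2\kappa t}\lvvv f_0\rvvv_\eps^2 + C\vartheta_0\eps^{11}\int_0^te^{-2\kappa(t-s)}\|\cdot\|^2 + C\vartheta_0\eps^{11}/(2\kappa)$. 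Taking square roots with $\sqrt{a+b+c}\le\sqrt a+\sqrt b+\sqrt c$ yields \eqref{eq:HypoEquiv}, and \eqref{eq:ClassicHypoEquivalence} then gives \eqref{eq:Hypo}.
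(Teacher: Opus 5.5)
Your proposal is correct and follows essentially the paper's route. You use the same $\eps$-weighted scalar product \eqref{eq:hypocNorm}, re-run the Section~\ref{sec:Hypo_decay} lemmas with the extra $\iota\psi$ boundary contributions to get Theorem~\ref{theo:Pert_hypo} with $\vartheta_0'=\eps^{11}\vartheta_0$, then apply Gr\"onwall and density, with well-posedness done as in Theorem~\ref{theo:ExistenceL2_Hypo_decay}. The only difference is local: you expand $(\RRR_\Theta\gamma_+f+\iota\psi)^2$ and use $\widetilde{\psi}=0$ to kill the leading cross term exactly, which is sharper than the paper's convexity grouping of $\psi$ with the diffusive part and its cruder bound on $\TT_{2,2}$, but both end with the same $\vartheta_0\eps^{11}$ coefficient.
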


\begin{rem}
In contrast with Theorem~\ref{theo:Hypo_decay}, Theorem~\ref{theo:L2Decay_pert} presents two main differences. On the one hand, we observe the presence of the last term of right-hand side \eqref{eq:Hypo} and \eqref{eq:HypoEquiv}, which does not depend on $f$, and it is reminiscent from the fact that Equation~\eqref{eq:LPBE} is not homogenous.

On the other, we have decided not to obtain an expression in the spirit of \eqref{eq:HypoEquiv_Hypo_decay}, since this would demand a bound on $\vartheta_0$, analog to the role of $\theta_\star$ on Theorem~\ref{theo:Hypo_decay}, to depend on $\eps$. Although on our final results (Theorems~\ref{theo:NESS} and \ref{theo:Main}) we indeed see a dependence on $\eps$ for the upper bounds of $\vartheta_0$, we would like to postpone the imposition of such a dependency until it is absolutely necessary, which could be meaningful to understand for future works on hydrodynamic limits.
\end{rem}

This section is structured similarly as Section~\ref{sec:Hypo_decay}, and the proof follow exactly the same ideas, with the main difference being the presence of the inflow-type term $\psi$ at the boundary.

\subsection{Perturbed hypocoercivity}\label{ssec:Hypo_Perturbed}
Throughout the rest of this subsection we consider $f\in \HH$ solution of Equation~\eqref{eq:LPBE} and the computations are done in the sense of a priori estimates. 

We observe first that, since $\lla \psi\rra_{\Sigma_-}=0$, then, repeating the computations performed in \eqref{eq:ConservationMassBdy} and using \eqref{eq:ConservationLawsCCC}, we have conservation of mass for solutions of Equation~\eqref{eq:LPBE}, i.e. for all $t\geq 0$ we have $\lla f_t \rra_\OO = \lla f_0\rra_\OO =0$.

\smallskip
Throughout the rest of this section we define $\vartheta' = \eps^{11} \vartheta$, $\vartheta_0' := \eps^{11} \vartheta_0$, and we devote the rest of this subsection to prove the following \emph{perturbed} $L^2$ estimate in the spirit of Theorem~\ref{theo:Pert_hypo_Hypo_decay}. 

\begin{theo}\label{theo:Pert_hypo}
Assume that either Assumption \ref{item:H1} or Assumption \ref{item:H2} holds. There exists a scalar product $\la\!\la \cdot , \cdot \ra \! \ra_\eps$ on the space $\HH$ so that the associated norm $\Nt \cdot \Nt_\eps$ is equivalent to the usual norm $\| \cdot \|_{\HH}$, and for which the linear operator $\LLL^\eps$ satisfies the following estimate: there are positive constants $\kappa, \kappa'  >0$ such that 
\begin{equation}\label{eq:pert_coercivity-LLL} 
\la \! \la - \LLL^\eps f , f \ra\!\ra_\eps \ge \kappa  \Nt f \Nt^2_\eps -  \kappa'  \vartheta_0' \left\lVert  (\iota)^{1/2} (\widetilde{\gamma_+ f} ) \right\rVert^2_{L^2(\partial \Omega)} -  \kappa' \vartheta_0'  ,
\end{equation}
for any $f \in \mathrm{Dom}(\LLL^\eps)$ satisfying the non-isothermal boundary condition of Equation~\eqref{eq:LPBE}.
\end{theo}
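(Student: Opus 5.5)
The plan is to repeat the construction of Theorem~\ref{theo:Pert_hypo_Hypo_decay} with the $\eps$-scaled operator $\LLL^\eps = -\eps^{-1} v\cdot\grad_x + \eps^{-2}\CCC$ and the inhomogeneous boundary datum $\iota\psi$. Following \cite{MR4581432, BE_iso}, the scalar product will be
\[
\lla h_1,h_2\rra_\eps := \la h_1,h_2\ra_\HH + \eps\,\eta_1\big(\la -\grad_x u[\EE[h_1]],M_p[h_2]\ra+\la -\grad_x u[\EE[h_2]],M_p[h_1]\ra\big) + \dots,
\]
where the momentum and mass cross terms carry $\eps\eta_2$ and $\eps\eta_3$. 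The factor $\eps\le 1$ keeps the norm equivalence \eqref{eq:NormEquivalence_Hypo_decay} uniform in $\eps$, which gives \eqref{eq:ClassicHypoEquivalence}. It also balances the scalings: each macroscopic functional applied to $\eps^{-1}v\cdot\grad_x$ produces the coercive terms at order $\eps\cdot\eps^{-1}=1$. The microscopic dissipation is $\eps^{-2}\kappa_0\|f^\perp\|^2_\HH$, and the cross terms $\eps\cdot\eps^{-2}\la\cdot,\CCC f^\perp\ra$ are absorbed by Young's inequality against it.

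\textbf{Step 1 (microscopic part).} Redo Lemma~\ref{lem:MicroCoercivity_Hypo_decay} with the boundary condition $\gamma_-f=\RRR_\Theta\gamma_+f+\iota\psi$. After the convexity step, squaring $(1-\iota)\SSS\gamma_+f+\iota(\DDD_\Theta\gamma_+f+\psi)$ gives two kinds of new terms. The first is the cross term $\iota\DDD_\Theta\gamma_+f\cdot\psi$. The second is $\iota\psi^2$, which is harmless: $\int\psi^2\MM^{-1}|n_x\cdot v|\lesssim (\vartheta_0\eps^{12})^2$ by \eqref{eq:Controlpsi}. For the cross term, pair $\widetilde{\gamma_+f}$ with $\int \MMM_\Theta\psi\MM^{-1}(n_x\cdot v)_-$, which is $O(\eps^{12}\vartheta_0)$, and apply Young:
\[
\eps^{-1}\vartheta_0\eps^{12}|\widetilde{\gamma_+f}| \le \vartheta_0\eps^{11}\big(|\widetilde{\gamma_+f}|^2+1\big).
\]
The $\eps^{-1}$ comes from the transport scaling. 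This is exactly how $\vartheta_0'=\eps^{11}\vartheta_0$ appears. The term $\TT_{2,1}$ is treated as before, with $P(\Theta)\lesssim(\eps^{12}\vartheta_0)^2$. Because $\lla\psi\rra_{\Sigma_-}=0$, mass conservation along the boundary still holds, so $\mu\cdot n=0$ and $\lla\varrho\rra_\Omega=0$. This is needed for the Neumann problem and for the integrations by parts.

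\textbf{Step 2 (macroscopic functionals).} Revisit Lemmas~\ref{lem:BoundaryTerms_Hypo_decay}--\ref{lem:MassCoercivity_Hypo_decay}. The boundary identity of Lemma~\ref{lem:BoundaryTerms_Hypo_decay} gains one extra term, $\int\phi\,\iota\psi\,(n_x\cdot v)_-$. For $\phi\in\{1,v,p,q\}$, \eqref{eq:Controlpsi} bounds it by $\eps^{12}\vartheta_0$ uniformly in $x$. Multiplied by the trace of $\grad u$ or $\grad^s U$, which the $H^2$ estimates of Theorems~\ref{theo:PoissonRegularity} and~\ref{thm:LameRegularity} control by $\|\EE\|$, $\|\mu\|$ or $\|\varrho\|$, these terms are bounded after Young by a small multiple of the coercive quantity plus $C\vartheta_0\eps^{11}$. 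The same applies inside the $H^1$ bounds of $u[\EE[\LLL^\eps f]]$, $U[\mu[\LLL^\eps f]]$ and $u_{\rm N}[\varrho[\LLL^\eps f]]$, which are now computed with the $\eps^{-1}$ transport prefactor. The $\theta_0$-terms become $\eps^{12}\vartheta_0\le\vartheta_0'$.

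\textbf{Step 3 (conclusion and main obstacle).} Sum the pieces with $\eta_1=\eta$, $\eta_2=\eta^{3/2}$, $\eta_3=\eta^{7/4}$ as in Subsection~\ref{ssec:conclusion_Hypo_decay}, then use $\eps^{-2}\ge1$ and the norm equivalence to obtain \eqref{eq:pert_coercivity-LLL}. I expect the main obstacle to be tracking the powers of $\eps$ consistently. Every term produced by $\psi$ or $\MMM_\Theta-\MMM_1$ must come with at least $\eps^{11}\vartheta_0$ after losing the worst possible factor. That factor is $\eps^{-1}$ from the boundary flux of the transport term, combined with the $\eps$ weights on the cross terms. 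In particular, the cross term in Step 1 must be split so that no power $\vartheta_0^{1/2}$ times an $f$-dependent quantity remains uncontrolled. If necessary, the $\theta_0$ coefficients will be degraded to $\vartheta_0'$ using $\eps^{12}\le\eps^{11}$ and $\vartheta_0^2\le\vartheta_0$.
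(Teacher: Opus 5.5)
Your proposal is correct and follows essentially the same route as the paper. The paper also uses the $\eps$-weighted scalar product \eqref{eq:hypocNorm}, and it splits the $\psi$ cross term in the microscopic estimate by Young into $\vartheta_0'\bigl(\|\iota^{1/2}\widetilde{\gamma_+ f}\|^2_{L^2(\partial\Omega)}+1\bigr)$. It controls the extra $\psi$ boundary terms in the energy, momentum and mass functionals through \eqref{eq:Controlpsi} and the elliptic estimates, and concludes with $\eta_1=\eta$, $\eta_2=\eta^{3/2}$, $\eta_3=\eta^{7/4}$. Your $\eps$ bookkeeping also matches the paper's: the worst loss is $\eps^{-1}$ in the microscopic boundary flux, and there is no loss in the macroscopic cross terms.
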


The rest of this subsection is devoted to constructing the scalar product $\lla \cdot, \cdot\rra_\eps$ generating the norm $\lvvv \cdot \rvvv_\eps$, and it follows in the same spirit and structure as Subsection~\ref{ssec:Hypo_Perturbed_Hypo_decay}.

\subsubsection{Microscopic coercivity for the full linearized operator $\LLL^\eps$} \label{ssec:Microscopic_part}

We now extend the results from Lemma~\ref{lem:MicroCoercivity_Hypo_decay} to problems presenting the boundary conditions exhibited in Equation~\eqref{eq:LPBE}.

\begin{lem}\label{lem:MicroCoercivity}
There is a constant $C>0$ such that
$$
\la -\LLL^\eps f, f \ra_{\HH} \geq \kappa_0 \eps^{-2} \lVert f^\perp \rVert^2_{\HH} + \frac {\eps^{-1}}4 \left \lVert \sqrt{\iota (2-\iota)}\DDD_1^\perp \gamma_+ f \right\rVert^2_{\partial\HH_+}  -  \vartheta_0'\, C \left\lVert   (\iota)^{1/2}   \left( \widetilde {\gamma_+ f}\right) \right\rVert^2_{L^2(\partial\Omega)} -  \vartheta_0' \, C ,
$$
where we recall that $\DDD_1 h := \MMM_1 \widetilde h$, $\DDD_1^\perp := Id - \DDD_1$, and $\partial \HH_+ = L^2(\Sigma_+,\, \MM^{-1} (v) \, \lvert n(x)\cdot v\rvert\,  \, \dv \d\sigma_x)$. 
\end{lem}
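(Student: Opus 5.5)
We follow the proof of Lemma~\ref{lem:MicroCoercivity_Hypo_decay}, keeping track of the scaling in $\eps$ and of the inflow term $\iota\psi$. Since $\LLL^\eps = -\eps^{-1} v\cdot\grad_x + \eps^{-2}\CCC$, Stokes' theorem and \eqref{eq:MicroCoercivity} give
$$
\la \LLL^\eps f, f\ra_\HH \le -\frac{\eps^{-1}}{2}\int_\Sigma (\gamma f)^2 \MM^{-1}(n_x\cdot v) - \kappa_0\eps^{-2}\|f^\perp\|_\HH^2 .
$$
So it suffices to bound the boundary term $\TT := -\int_\Sigma(\gamma f)^2\MM^{-1}(n_x\cdot v)$ from above by $-\frac12\|\sqrt{\iota(2-\iota)}\DDD_1^\perp\gamma_+f\|^2_{\partial\HH_+}$, plus $\eps$ times the two error terms. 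The $\eps^{-1}$ prefactor is then absorbed because $\vartheta_0' = \eps^{11}\vartheta_0$, while the actual boundary fluctuation is of size $\eps^{12}\vartheta_0$.

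To handle the boundary condition $\gamma_-f = (1-\iota)\SSS\gamma_+f + \iota\DDD_\Theta\gamma_+f + \iota\psi$, we first set $\gamma_-f = A + \iota\psi$, where $A$ is the Maxwell part. We expand $(A+\iota\psi)^2 = A^2 + 2\iota A\psi + \iota^2\psi^2$. The term $A^2$ is treated exactly as in Lemma~\ref{lem:MicroCoercivity_Hypo_decay}, with $\theta_0$ replaced by $\eps^{12}\vartheta_0$. Using the convexity inequality, the change of variables $v\mapsto\VV_x v$, the decomposition \eqref{eq:HypoBoundDecomposition_Hypo_decay}, and the bound $P(\Theta)\lesssim (\eps^{12}\vartheta_0)^2$, this gives a contribution
$$
-\tfrac12\|\sqrt{\iota(2-\iota)}\DDD_1^\perp\gamma_+f\|^2_{\partial\HH_+} + C\eps^{12}\vartheta_0\|\iota^{1/2}\widetilde{\gamma_+f}\|^2_{L^2(\partial\Omega)} .
$$
The pure inflow term is handled by \eqref{eq:Controlpsi}: since $\MM^{-1}\la v\ra^4 e^{-|v|^2/\Theta^*}$ is integrable when $\Theta^*<9/8<2$, we get $\int_{\Sigma_-}\iota^2\psi^2\MM^{-1}|n_x\cdot v| \lesssim \vartheta_0^2\eps^{24}$.

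The main obstacle is the cross term $2\int_{\Sigma_-}\iota A\psi\MM^{-1}(n_x\cdot v)_-$. We split $A$ as $(1-\iota)\SSS\gamma_+f + \iota\MMM_1\widetilde{\gamma_+f} + \iota(\MMM_\Theta-\MMM_1)\widetilde{\gamma_+f}$.
\begin{itemize}
\item For the specular piece, Cauchy--Schwarz gives $\|\psi\|\,\|(1-\iota)\gamma_+ f\|_{\partial\HH_+}$. However, $\gamma_+f$ is not otherwise controlled. We therefore decompose $\gamma_+f = \DDD_1\gamma_+f + \DDD_1^\perp\gamma_+f$.
\item The $\DDD_1^\perp$ part is absorbed by Young's inequality into a fraction of the good term. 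This uses $\iota\ge\iota_0$ under \ref{item:H1}, or $\iota\in\{0,1\}$ under \ref{item:H2}, in which case $\iota(1-\iota)=0$ and the specular cross term vanishes identically. The remainder is $\lesssim\|\psi\|^2\lesssim\vartheta_0^2\eps^{24}$.
\item The $\DDD_1$ part, together with the diffusive piece, yields terms of the form $\widetilde{\gamma_+f}\cdot\iota\int\MMM_\Theta\psi\MM^{-1}|n_x\cdot v|$. These are bounded by Young's inequality:
$$
\Big|\widetilde{\gamma_+f}\,\iota\int\MMM_\Theta\psi\MM^{-1}|n_x\cdot v|\Big| \lesssim \eps^{12}\vartheta_0\,\iota|\widetilde{\gamma_+f}| \le \eps^{12}\vartheta_0\big(\iota(\widetilde{\gamma_+f})^2 + 1\big).
$$
\end{itemize}

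Collecting the estimates, we obtain
$$
\TT \le -\tfrac12\|\sqrt{\iota(2-\iota)}\DDD_1^\perp\gamma_+f\|^2_{\partial\HH_+} + C\eps^{12}\vartheta_0\big(\|\iota^{1/2}\widetilde{\gamma_+f}\|^2_{L^2(\partial\Omega)} + 1\big),
$$
where the $\frac14$ in the statement leaves room for the absorbed $\DDD_1^\perp$ part of the cross term. Multiplying by $\frac{\eps^{-1}}{2}$ turns $\eps^{12}\vartheta_0$ into $\vartheta_0'/2$, and this yields the lemma.
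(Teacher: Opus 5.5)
Your proof is correct, and it follows the same skeleton as the paper: Stokes plus \eqref{eq:MicroCoercivity}, the reflection $v\mapsto\VV_x v$, convexity, the splitting $\gamma_+f=\DDD_1\gamma_+f+\DDD_1^\perp\gamma_+f$, the bound $P(\Theta)\lesssim(\eps^{12}\vartheta_0)^2$, and \eqref{eq:Controlpsi} with Young for the inflow.

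The one real difference is where you put $\psi$ in the convexity step. The paper writes $\gamma_-f=(1-\iota)\SSS\gamma_+f+\iota\,(\DDD_\Theta\gamma_+f+\psi)$ and applies convexity to this two-term combination. That gives $(1-\iota)(\SSS\gamma_+f)^2+\iota(\DDD_\Theta\gamma_+f+\psi)^2$. Expanding the second square produces only $\iota\psi^2$ and $2\iota\psi\MMM_\Theta\widetilde{\gamma_+f}$. So the specular--inflow cross term you single out as the main obstacle never arises, and nothing has to be absorbed into the dissipation. Expanding $(A+\iota\psi)^2$ first, as you do, also works; it just costs the extra decomposition of that cross term.

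Two remarks on that extra step.
\begin{itemize}
\item The $\DDD_1$ piece of the specular cross term is actually zero. Indeed $\MMM_1\MM^{-1}=\sqrt{2\pi}$ and $\widetilde\psi=0$ (this is $\lla\psi\rra_{\Sigma_-}=0$).
\item The $\DDD_1^\perp$ piece can be absorbed for every $\iota\in[0,1]$, without using $\iota_0$. Use $2\iota(1-\iota)\lvert ab\rvert\le\frac12\iota^2a^2+2(1-\iota)^2b^2$ and absorb into the slack $\iota-\frac12\iota(2-\iota)=\frac12\iota^2$ of the term $-\int_{\Sigma_+}\iota(\DDD_1^\perp\gamma_+f)^2\MM^{-1}(n_x\cdot v)_+$.
\end{itemize}

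That slack, not the $\frac14$, is where the room is. The $\frac14$ is exactly $\frac12\cdot\frac12$ coming from the main term and leaves no margin. If you absorbed into $\frac12\|\sqrt{\iota(2-\iota)}\DDD_1^\perp\gamma_+f\|^2_{\partial\HH_+}$ itself, you would end with a coefficient strictly below $\frac{\eps^{-1}}{4}$. That is harmless downstream, but it is not the stated constant.
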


\begin{proof}
Using the Stokes theorem we compute
\beqn
 \la \LLL^\eps f, f \ra_{\HH}  = -\frac {\eps^{-1}}{2}  \int_{\OO} v\cdot \grad _x (f^2)  \MM^{-1} +   \eps^{-2}  \la \CCC f, f\ra_{\HH}  \leq  -\frac {\eps^{-1}}{2}  \int_{\Sigma} (\gamma f)^2 \MM^{-1} (n_x\cdot v)  - \kappa_0 \eps^{-2}  \|  f^\perp \|_{\HH}^2,
\eeqn
where we have used the microscopic coercivity property \eqref{eq:MicroCoercivity} to obtain the last inequality. 
It is left then to control the first term of the above inequality.

We observe then that the boundary condition of Equation~\eqref{eq:LPBE} implies that 
\bean
\TT := -\int_{{\Sigma}} (\gamma f)^2 \MM^{-1} (n_x\cdot v) &=&- \int_{{\Sigma_+}} (\gamma_+ f)^2 \MM^{-1} (n_x\cdot v)_+ + \int_{{\Sigma_-}} (\gamma_- f)^2 \MM^{-1} (n_x\cdot v) _- \\
&=& - \int_{{\Sigma_+}} (\gamma_+ f)^2 \MM^{-1} (n_x\cdot v)_+ \\
&&+ \int_{{\Sigma_-}}  \left( (1-\iota) \SSS\gamma_+ f +\iota \DDD_\Theta \gamma _+ f + \iota \psi  \right)^2 \MM^{-1} (n_x\cdot v) _-. 
\eean
Applying the change of variables $v\mapsto \VV_x v $ and using that $\DDD_\Theta \gamma_+ f (x,\VV_x v) = \DDD_\Theta \gamma_+ f(x,v)$, $\psi (x,\VV_xv) = \psi (x,v)$, and $|n(x) \cdot \VV_x v| = |n(x) \cdot v|$,  we have that
\bean
\TT &=&-\int_{{\Sigma_+}} (\gamma_+ f)^2 \MM^{-1} (n_x\cdot v)_+ + \int_{{\Sigma_+}}  \left( (1-\iota) \gamma_+ f +\iota \DDD_\Theta \gamma _+ f + \iota \psi \right)^2 \MM^{-1} (n_x\cdot v) _+ \\
&\leq & \int_{{\Sigma_+}} (\gamma_+ f)^2 \MM^{-1} (n_x\cdot v)_+ + \int_{{\Sigma_+}}  \left( (1-\iota) (\gamma_+ f)^2 +\iota (\DDD_\Theta \gamma _+ f + \psi)^2  \right) \MM^{-1} (n_x\cdot v) _+ \\
&\leq & \int_{{\Sigma_+}} -\iota (\gamma_+ f)^2 \MM^{-1} (n_x\cdot v)_+ + \int_{{\Sigma_+}} \iota (\DDD_\Theta \gamma _+ f + \psi)^2  \MM^{-1} (n_x\cdot v) _+ =: \TT_1 + \TT_2,
\eean
where we have used a convexity inequality for the function $x\mapsto x^2$ to obtain the second inequality.  
Using the decomposition from \eqref{eq:HypoBoundDecomposition_Hypo_decay}, and arguing as in the proof of Lemma~\ref{lem:MicroCoercivity_Hypo_decay} we have that
\be\label{eq:Boundary_Estimate_TT1}
\TT_1= -\int_{{\Sigma_+} } \iota \left( \DDD_1 \gamma_+ f \right)^2 \MM^{-1} (n_x\cdot v)_+ -\int_{{\Sigma_+} } \iota \left( \DDD_1^\perp \gamma_+ f \right)^2 \MM^{-1} (n_x\cdot v)_+ .
\ee
Furthermore, expanding the term $\TT_2$ and using the very definition of the diffusive boundary condition, we also observe that
$$
\begin{aligned}
\TT_2 &=  \int_{\Sigma_+} \iota (\DDD_1 \gamma _+ f)^2  \MM^{-1} (n_x\cdot v) _+  + \int_{\Sigma_+} \iota \left( \MMM_{\Theta}^2 - \MMM_1^2 \right) (\widetilde{\gamma _+ f})^2  \MM^{-1} (n_x\cdot v) _+ \\
& \quad \quad + 2\int_{\Sigma_+} \iota \psi \MMM_{\Theta} (\widetilde{\gamma_+f}) \MM^{-1} (n_x\cdot v)_+ + \int_{\Sigma_+} \psi^2 \MM^{-1} (n_x\cdot v)_+ =:\TT_{2,0} +  \TT_{2,1} + \TT_{2,2} + \TT_{2,3},
\end{aligned} 
$$
and we control each of the terms independently. 
We recall the function $P$ defined in \eqref{eq:PolyHypo_decay}, and we observe that
$$
 \int_{\R^3}  \left( \MMM_{\Theta}^2 - \MMM_1^2 \right) \MM^{-1} (n_x \cdot v)_+ \dv =  \sqrt{2\pi}  \left( {1\over ({\Theta})^2 (2-{\Theta})^2  - 1} \right) =P(\Theta) .
$$
Moreover, using the decomposition ${\Theta} = 1+ \eps \vartheta'$ we have that
\beqn\label{eq:BdyEstPTheta}
P({\Theta}) = \sqrt{2\pi}  \left( {1\over (1+\eps\vartheta')^2  (1-\eps\vartheta')^2}  - 1\right)  \lesssim    1-  (1-(\eps\vartheta')^2)^2  \lesssim (\eps\vartheta')^2 \leq \eps^{2} \vartheta_0'^2,
\eeqn
where we have used that $\lvv \vartheta'\rvv_{L^\infty(\partial {\Omega})}\leq \vartheta_0'$ with $\vartheta_0' \in (0,1/8)$.
Therefore, we deduce that 
$$
\TT_{2,1} \lesssim  \int_{\partial\Omega} \iota \,  \vartheta_0'^2 \,  \eps^{2}\,  (\widetilde{\gamma _+ f})^2.
$$
On the other hand, using \eqref{eq:Controlpsi} we deduce that 
$$
\TT_{2,2} \lesssim \int_{\partial \Omega} \iota \, (\widetilde{\gamma_+ f}) \, \vartheta_0' \, \eps \lesssim   \vartheta_0' \, \eps + \eps \, \vartheta_0'  \int_{\partial \Omega} (\iota)^2 \,  (\widetilde{\gamma_+ f})^2 ,
$$
where we have used the Young inequality to obtain the second inequality. Furthermore, using again \eqref{eq:Controlpsi} we also have that
$$
\TT_{2,3} \lesssim \vartheta_0' \eps.
$$
Altogether we have obtained that there is a constant $C>0$such that 
\be\label{eq:Boundary_Estimate_TT2}
\begin{aligned}
\TT_2 &\leq   \int_{{\Sigma_+}} \iota (\DDD_1 \gamma _+ f)^2  \MM^{-1} (n_x\cdot v) _+  + C \, \vartheta_0' \eps \left\lVert  (\iota)^{1/2}   \left( \widetilde {\gamma_+ f}\right) \right\rVert^2_{L^2(\partial{\Omega})} + C\,  \vartheta_0' \, \eps .
\end{aligned}
\ee
Finally, recalling that $\TT \leq \TT_1 + \TT_2$, and using \eqref{eq:Boundary_Estimate_TT1} and \eqref{eq:Boundary_Estimate_TT2}, we have that
\bean
\TT &\leq&  -\int_{{\Sigma_+} } \iota \left( \DDD_1^\perp \gamma_+ f \right)^2 \MM^{-1} (n_x\cdot v)_+ + C \, \vartheta_0' \, \eps \left\lVert  (\iota)^{1/2}   \left( \widetilde {\gamma_+ f}\right) \right\rVert^2_{L^2(\partial{\Omega})} + C\,  \vartheta_0' \, \eps \\
&\leq &  - \frac 12 \int_{{\Sigma_+} } \iota (2-\iota) \left( \DDD_1^\perp \gamma_+ f \right)^2 \MM^{-1} (n_x\cdot v)_+ + C \, \vartheta_0' \, \eps \left\lVert  (\iota)^{1/2}   \left( \widetilde {\gamma_+ f}\right) \right\rVert^2_{L^2(\partial{\Omega})} + C\,  \vartheta_0' \, \eps,
\eean
where we have employed the inequality $\iota \geq \iota(2-\iota)/2$ to obtain the second line. This concludes the proof.
\end{proof}

\subsubsection{Boundary terms}\label{ssec:BoundaryTerms}
We now have the following lemma, in the spirit of Lemma~\ref{lem:BoundaryTerms_Hypo_decay}, for the solutions of Equation~\eqref{eq:LPBE}.  

\begin{lem} \label{lem:BoundaryTerms}
Consider a function $\phi:\R^3\to \R$. For any $x\in \partial {\Omega}$ there holds
$$
\begin{aligned}
\int_{\R^3} \phi (v) \, \gamma f \, (n_x\cdot v)\dv &= \int_{\R^3} \phi(v) \, \iota(x) \, \DDD_1^\perp \gamma_+ f \,  (n_x\cdot v)_+\dv  \\
&+ \int_{\R^3} \left[ \phi(v) - \phi(\VV_x v) \right] (1-\iota (x) ) \, \DDD_1^\perp \gamma_+ f \,  (n_x\cdot v)_+\dv \\
& + \int_{\R^3}  \left[ \phi(v) - \phi(\VV_x v) \right] \DDD_1 \gamma_+ f \,  (n_x\cdot v)_+\dv \\
& - \left( \widetilde{\gamma_+ f} \right) \,  \sqrt{2\pi} \, \iota \left( \int_{\R^3}  \phi( v)   \, \psi(x,v) \, (n_x\cdot v)_- \dv \right)  - \iota \int_{\R^3}  \phi(v)   \psi \, (n(x) \cdot v)_-\, \dv.
\end{aligned}
$$
\end{lem}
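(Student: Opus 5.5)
The identity is proved pointwise in $x\in\partial\Omega$ by splitting the velocity integral into outgoing and incoming parts and then rewriting the incoming part through the boundary condition of Equation~\eqref{eq:LPBE}, exactly as in Lemma~\ref{lem:BoundaryTerms_Hypo_decay} (and \cite[Lemma 3.2]{MR4581432}). The one new feature is the inflow term $\iota\psi$. Write
\[
\int_{\R^3}\phi\,\gamma f\,(n_x\cdot v)\dv=\int_{\R^3}\phi\,\gamma_+ f\,(n_x\cdot v)_+\dv-\int_{\R^3}\phi\,\gamma_- f\,(n_x\cdot v)_-\dv .
\]
Substitute $\gamma_- f=(1-\iota)\SSS\gamma_+f+\iota\DDD_\Theta\gamma_+f+\iota\psi$ into the second integral. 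Then decompose $\DDD_\Theta\gamma_+ f=\DDD_1\gamma_+f+(\MMM_\Theta-\MMM_1)\widetilde{\gamma_+ f}$.

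For the specular part, use the change of variables $v\mapsto\VV_x v$, together with $|n_x\cdot\VV_xv|=|n_x\cdot v|$. This turns $\int\phi\,\SSS\gamma_+f\,(n_x\cdot v)_-$ into $\int\phi(\VV_xv)\gamma_+f\,(n_x\cdot v)_+$. For the isothermal diffusive part $\DDD_1\gamma_+f=\MMM_1\widetilde{\gamma_+f}$, which is invariant under $\VV_x$, do the same thing. Next split the outgoing integral $\int\phi\,\gamma_+f\,(n_x\cdot v)_+$ as $\gamma_+f=\DDD_1\gamma_+f+\DDD_1^\perp\gamma_+f$ and regroup. The $\DDD_1^\perp$ pieces give $\iota\int\phi\,\DDD_1^\perp\gamma_+f(n_x\cdot v)_+$ plus $(1-\iota)\int[\phi-\phi\circ\VV_x]\DDD_1^\perp\gamma_+f(n_x\cdot v)_+$. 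The $\DDD_1$ pieces combine, with coefficient $(1-\iota)+\iota=1$, into $\int[\phi-\phi\circ\VV_x]\DDD_1\gamma_+f(n_x\cdot v)_+$. Here I use that $\int\phi\,\DDD_1\gamma_+ f(n_x\cdot v)_-=\int\phi(\VV_x v)\DDD_1\gamma_+f(n_x\cdot v)_+$. These are exactly the first three lines of the claimed identity, with the same bookkeeping as the isothermal lemma.

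Two terms remain, and both come from incoming contributions. The temperature-mismatch term is $-\iota\,\widetilde{\gamma_+f}\int\phi(\MMM_\Theta-\MMM_1)(n_x\cdot v)_-\dv$. The inflow term is $-\iota\int\phi\,\psi\,(n_x\cdot v)_-\dv$, which is the last term of the statement and appears directly. The only genuine step is to express $\MMM_\Theta-\MMM_1$ through $\psi$. From \eqref{def:MaxwellianDist}, $\MMM_\Theta=\sqrt{2\pi/\Theta}\,\MM_\Theta=\sqrt{2\pi}\,\Theta^{-1/2}\MM_\Theta$ and $\MMM_1=\sqrt{2\pi}\,\MM$. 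Hence
\[
\MMM_\Theta-\MMM_1=\sqrt{2\pi}\,\big(\Theta^{-1/2}\MM_\Theta-\MM\big)=\sqrt{2\pi}\,\psi ,
\]
which yields the fourth line with the factor $\sqrt{2\pi}$.

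I expect the main obstacle to be only this bookkeeping: checking that the normalisations of $\MMM_\Theta$ and $\psi$ match, and tracking signs between $(n_x\cdot v)_-$ and $(n_x\cdot v)$. No analytic estimate is needed, because the identity is purely algebraic at each boundary point once the integrals converge. Convergence can be justified, for instance, when $\phi$ has polynomial growth and $\gamma f$ lies in the natural trace space.
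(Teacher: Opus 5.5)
Your proposal is correct and is essentially the paper's argument. You split $\int\phi\,\gamma f\,(n_x\cdot v)\,\dv$ into outgoing and incoming parts and insert $\gamma_-f=\RRR_\Theta\gamma_+f+\iota\psi$ with $\DDD_\Theta\gamma_+f=\DDD_1\gamma_+f+(\MMM_\Theta-\MMM_1)\widetilde{\gamma_+f}$. You then regroup via $\gamma_+f=\DDD_1\gamma_+f+\DDD_1^\perp\gamma_+f$ and $v\mapsto\VV_xv$ as in the isothermal lemma, and conclude with $\MMM_\Theta-\MMM_1=\sqrt{2\pi}\,\psi$, which is precisely the identity the paper invokes, and your coefficient bookkeeping checks out.
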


\begin{proof}[Proof of Lemma~\ref{lem:BoundaryTerms}]
The proof follows the main ideas and computations of \cite[Lemma 3.2]{MR4581432}, by taking into account the arguments from the proof of Lemma~\ref{lem:BoundaryTerms_Hypo_decay}, and the presence of the additional term $\psi$. 
Moreover, in contrast with Lemma~\ref{lem:BoundaryTerms_Hypo_decay}, we have rewritten the right-hand side using the fact that, from its very definition, it follows that $\MMM_\Theta-\MMM_1 = \sqrt{2\pi} \, \psi $.
\end{proof}

\subsubsection{Energy, momentum and mass functionals} 
We devote this subsection to construct the functionals in order to control the energy, momentum and mass components of the macroscopic part $\Pi f$. 
We recall the energy, momentum and mass operators defined in \eqref{eq:Energy_Hypo_decay}, \eqref{eq:Momentum_Hypo_decay}, and \eqref{eq:Mass_Hypo_decay} respectively. Abusing notation, we define throughout this subsection $\EE := \EE[f]$, $\mu := \mu[f]$, and $\varrho := \varrho[f]$. 
We also recall the definition of the functionals $M_p$ and $M_q$ given by \eqref{eq:def-Mp_Hypo_decay} and \eqref{eq:def-Mq_Hypo_decay}, respectively. 

Moreover, we define $u[\EE]$ as the solution of the Poisson equation~\eqref{eq:PoissonEquation} with $\xi = \EE$ and boundary condition \ref{item:P1}; $U[\mu]$ is the solution to the elliptic system \eqref{eq:LameSystem} with data $\Xi = \mu$; $u_{N}[\varrho]$ is the solution to the Poisson equation with homogeneous Neumann boundary condition~\ref{item:P2} and with data $\xi = \varrho$. We also note that the existence of the previous objects is given by Theorems~\ref{theo:PoissonRegularity} and \ref{thm:LameRegularity}, see, for instance, Sub-subsections \ref{ssec:EnergyFunctional_Hypo_decay}, \ref{ssec:MomentumFunctional_Hypo_decay} and \ref{ssec:MassFunctional_Hypo_decay}.

\smallskip
Arguing then as in Section~\ref{sec:Hypo_decay}, we obtain the following lemmas.
\begin{lem}\label{lem:EnergyCoercivity}
There are constants $\kappa_1, C_1>0$ such that 
    \begin{multline*}
  \langle -\grad_x u[\EE], M_p [\LLL^\eps f]\rangle_{L^2({\Omega})} + \langle -\grad_x u[\EE[\LLL^\eps f]], M_p [ f]\rangle_{L^2({\Omega})}\\
       \geq \kappa_1 \eps^{-1} \lVert\EE\rVert^2_{L^2({\Omega})} - C_1 \eps^{-1}  \lVert \mu\rVert_{L^2({\Omega})} \lVert f^{\perp}\rVert_{ \HH} - C_1 \eps^{-1}   \left\lVert \sqrt{\iota(2-\iota)} \DDD_1^\perp \gamma_+f\right\rVert^2_{\partial\HH_+} \\
       - C_1 \eps^{-3}  \lVert f^{\perp}\rVert^2_{ \HH}  - C_1 \vartheta_0'^2  \eps  \left\lVert  \sqrt{\iota(2-\iota)}  \, \left( \widetilde{\gamma_+ f}\right) \right\rVert^2_{L^2(\partial {\Omega})}  - C_1\vartheta_0'^2 \eps  .
    \end{multline*}
\end{lem}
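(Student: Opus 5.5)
The plan is to follow the proof of Lemma~\ref{lem:EnergyCoercivity_Hypo_decay} line by line, with two changes. First, every term carries the $\eps$-scaling of $\LLL^\eps = -\eps^{-1} v\cdot\grad_x + \eps^{-2}\CCC$. Second, the boundary identity of Lemma~\ref{lem:BoundaryTerms_Hypo_decay} is replaced by Lemma~\ref{lem:BoundaryTerms}, which carries the two extra $\psi$-terms. As before, write $E_1 := \la -\grad_x u[\EE], M_p[\LLL^\eps f]\ra_{L^2(\Omega)}$ and $E_2 := \la -\grad_x u[\EE[\LLL^\eps f]], M_p[f]\ra_{L^2(\Omega)}$.

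\emph{The term $E_2$.} Since $\EE[\CCC f]=0$, the identity \eqref{eq:EnergyLLControl_Hypo_decay} gives $\EE[\LLL^\eps f] = -\eps^{-1}\bigl(\Div_x\mu + \sqrt{2/3}\,\Div_x M_p[f]\bigr)$. I then redo the variational estimate of Lemma~\ref{lem:ineqHypoEnergy_Hypo_decay} with the boundary flux $M_p[f]\cdot n_x$ expanded by Lemma~\ref{lem:BoundaryTerms}. The new contributions are of size $\eps\vartheta_0'\,\lvv \iota\,\widetilde{\gamma_+ f}\rvv_{L^2(\partial\Omega)}$ and $\eps\vartheta_0'$, by \eqref{eq:Controlpsi} with $\eps^{12}\vartheta_0=\eps\vartheta_0'$. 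This yields
\[
\lvv u[\EE[\LLL^\eps f]]\rvv_{H^1}\lesssim \eps^{-1}\Bigl(\lvv\mu\rvv+\lvv f^\perp\rvv_\HH+\bigl\lvv\sqrt{\iota(2-\iota)}\DDD_1^\perp\gamma_+f\bigr\rvv_{\partial\HH_+}\Bigr)+\vartheta_0'\lvv\iota\,\widetilde{\gamma_+f}\rvv+\vartheta_0'.
\]
Multiplying by $\lvv M_p[f]\rvv = \lvv M_p[f^\perp]\rvv \lesssim \lvv f^\perp\rvv_\HH$ and applying Young's inequality with weights $\eps^{-1}$ and $\eps$ gives the terms $\eps^{-1}\lvv\mu\rvv\lvv f^\perp\rvv$, $\eps^{-1}\lvv \DDD_1^\perp\gamma_+ f\rvv^2$ and $\eps^{-1}\lvv f^\perp\rvv^2\le\eps^{-3}\lvv f^\perp\rvv^2$. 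The $\vartheta_0'$ pieces are absorbed as $\eps^{-3}\lvv f^\perp\rvv^2 + \eps^3\vartheta_0'^2(\cdots)$, which is dominated by $\eps\vartheta_0'^2(\cdots)$.

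\emph{The term $E_1$.} Split $M_p[\LLL^\eps f]=\eps^{-1}M_p[-v\cdot\grad_x f]+\eps^{-2}M_p[\CCC f^\perp]$. The collision part is bounded by $\eps^{-2}\lvv\EE\rvv\lvv f^\perp\rvv_\HH \le \frac{\kappa_1}{8}\eps^{-1}\lvv\EE\rvv^2 + C\eps^{-3}\lvv f^\perp\rvv^2$; this is exactly where the $\eps^{-3}$ in the statement comes from. For the transport part, integrate by parts. The interior term gives $\eps^{-1}\bigl(\frac12(1+\frac23)\lvv\EE\rvv^2 - C\lvv f^\perp\rvv^2\bigr)$, as in \cite[Lemma 3.4]{MR4581432}. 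The boundary term $\eps^{-1}E_{1,4}$ is expanded with Lemma~\ref{lem:BoundaryTerms}. Its first three pieces are handled verbatim, giving $\eps^{-1}\lvv\EE\rvv\,\lvv\iota\DDD_1^\perp\gamma_+f\rvv$. The two $\psi$-pieces are bounded with \eqref{eq:Controlpsi} and the trace estimate $\lvv\grad u[\EE]\rvv_{L^2(\partial\Omega)}\lesssim\lvv u\rvv_{H^2}\lesssim\lvv\EE\rvv$ from Theorem~\ref{theo:PoissonRegularity}, giving
\[
\eps^{-1}\cdot\eps\vartheta_0'\,\lvv\EE\rvv\bigl(\lvv\iota\,\widetilde{\gamma_+f}\rvv_{L^2(\partial\Omega)}+1\bigr).
\]
Young's inequality in the form $\vartheta_0'\lvv\EE\rvv X\le\frac{\kappa_1}{8}\eps^{-1}\lvv\EE\rvv^2+C\eps\vartheta_0'^2X^2$ produces precisely the last two terms of the statement.

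\emph{Closing and main obstacle.} Collecting everything and using $\iota\le\sqrt{\iota/(2-\iota)}$ with $\kappa_1:=\frac14(1+\frac23)$ gives the lemma. The main obstacle is the $\eps$-bookkeeping. Each Young splitting must put the good factor $\eps^{-1}$ on $\lvv\EE\rvv^2$, so that only $\eps^{-3}\lvv f^\perp\rvv^2$ and $\eps\vartheta_0'^2$ remain. In particular one must check that the inhomogeneous $\psi$-term enters $E_{1,4}$ with the factor $\eps\vartheta_0'$ (from $\eps^{12}\vartheta_0$) rather than $\vartheta_0'$; otherwise the constant term would be $\eps^{-1}\vartheta_0'^2$ instead of $\eps\vartheta_0'^2$.
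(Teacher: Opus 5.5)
Your proposal is correct and follows the paper's own route. The paper's proof simply reruns Lemma~\ref{lem:EnergyCoercivity_Hypo_decay} with Lemma~\ref{lem:BoundaryTerms} in place of Lemma~\ref{lem:BoundaryTerms_Hypo_decay} and bounds the $\psi$-terms via \eqref{eq:Controlpsi}; your $\eps$-bookkeeping (the collision piece $\eps^{-2}\lVert\EE\rVert\lVert f^\perp\rVert_{\HH}$ producing $\eps^{-3}\lVert f^\perp\rVert_{\HH}^2$, and $\psi=O(\eps\vartheta_0')$ producing the $\eps\vartheta_0'^2$ terms) supplies exactly what the paper leaves implicit. One harmless slip inherited from \eqref{eq:EnergyLLControl_Hypo_decay} is that the coefficients are swapped: the correct identity is $\EE[\LLL^\eps f]=-\eps^{-1}\bigl(\Div_x M_p[f]+\sqrt{2/3}\,\Div_x\mu\bigr)$, and this changes nothing in your estimates.
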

\begin{proof} 
The proof follows that of Lemma~\ref{lem:EnergyCoercivity_Hypo_decay}, using instead Lemma~\ref{lem:BoundaryTerms}, and controlling the term involving $\psi$ using \eqref{eq:Controlpsi}.
\end{proof}

\begin{lem}\label{lem:MomentumCoercivity}
There are constants $\kappa_2,C_2>0$ such that
\begin{multline*}
 \langle -\grad_x U[\mu], M_q[\LLL^\eps f]\rangle_{L^2({\Omega})} + \langle -\grad_x U[\mu[\LLL^\eps f]], M_q[ f]\rangle_{L^2({\Omega})} \geq 
 	\eps^{-1}\kappa_2 \lVert \mu\rVert^2_{L^2({\Omega})}  \\
 	- C_2  \, \eps^{-1}\lVert f^{\perp}\rVert_{ {\HH}}\lVert \varrho \rVert_{L^2({\Omega})} 
	 - C_2 \, \eps^{-1} \lVert\EE\rVert_{L^2({\Omega})}\lVert \varrho\rVert_{L^2({\Omega})} 
 	- C_2 \, \eps^{-1} \lVert\EE\rVert^2_{L^2({\Omega})}
         - C_2\, \eps^{-3} \lVert f^{\perp}\rVert^2_{ {\HH}} \\
         - C_2 \, \eps^{-1} \left\lVert \sqrt{\iota(2-\iota)} \DDD_1^\perp \gamma_+f\right\rVert^2_{\partial {\HH}_+} 
         - C_2 \, \eps \vartheta_0'^2 \left\lVert  \sqrt{\iota(2-\iota)} (\widetilde {\gamma_+ f}) \right\rVert^2_{L^2(\partial {\Omega})}  + C_2\, \vartheta_0'^2 \eps.
\end{multline*}
\end{lem}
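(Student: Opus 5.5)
The plan is to repeat the proof of Lemma~\ref{lem:MomentumCoercivity_Hypo_decay} with $\LL$ replaced by $\LLL^\eps = -\eps^{-1} v\cdot\grad_x + \eps^{-2}\CCC$, keeping track of the powers of $\eps$. We use Lemma~\ref{lem:BoundaryTerms} in place of Lemma~\ref{lem:BoundaryTerms_Hypo_decay} for the boundary contributions.

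\textbf{The term $E_2$.} Set
$$
E_1 := \langle -\grad_x U[\mu], M_q[\LLL^\eps f]\rangle_{L^2(\Omega)}, \qquad E_2 := \langle -\grad_x U[\mu[\LLL^\eps f]], M_q[f]\rangle_{L^2(\Omega)}.
$$
Since $\mu[\CCC f]=0$, we have $\mu[\LLL^\eps f] = -\eps^{-1}(\grad_x\varrho + \Div_x M_q[f])$. The first step is therefore to redo the $H^1$ bound \eqref{eq:H1Momentum_Hypo_decay} for $U[\mu[\LLL^\eps f]]$, which now carries an overall factor $\eps^{-1}$. The boundary integral $\int_{\partial\Omega} M_q[f]n_x\cdot U$ is expanded with Lemma~\ref{lem:BoundaryTerms}. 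This produces the analogues of $m_1,m_2,m_3$, which are controlled exactly as before. In addition there are two $\psi$-terms: one multiplied by $\widetilde{\gamma_+ f}$, and a pure source term $\iota\int (v\cdot U)\psi (n_x\cdot v)_-\,\dv$. By \eqref{eq:Controlpsi}, with $\vartheta_0\eps^{12} = \eps\vartheta_0'$, both are $\lesssim \eps\vartheta_0'\,\|\sqrt{\iota/(2-\iota)}\,U\|_{L^2(\partial\Omega)}$, times $\|\sqrt{\iota(2-\iota)}\,\widetilde{\gamma_+f}\|_{L^2(\partial\Omega)}$ in the first case. After absorption by Young's inequality, $\|U\|_{H^1}$ is bounded by
$$
\eps^{-1}\bigl(\|\varrho\|+\|\EE\|+\|f^\perp\|\bigr) + \eps^{-1}\|\sqrt{\iota(2-\iota)}\,\DDD_1^\perp\gamma_+f\| + \eps^{-1}\eps\vartheta_0'\bigl(\|\sqrt{\iota(2-\iota)}\,\widetilde{\gamma_+f}\| + 1\bigr).
$$
Pairing with $\|M_q[f]\|\lesssim\|\EE\|+\|f^\perp\|$ and applying Young's inequality gives all the $E_2$ contributions with the stated $\eps$-weights. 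The $\vartheta_0'$-terms are handled by splitting, e.g.\ $\vartheta_0'\|\EE\|\le \eps^{-1}\|\EE\|^2+\eps\vartheta_0'^2$, so they come out in the form $\eps\vartheta_0'^2(\cdot)$.

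\textbf{The term $E_1$.} Write $M_q[\LLL^\eps f] = \eps^{-1}M_q[-v\cdot\grad_x f] + \eps^{-2}M_q[\CCC f^\perp]$.
\begin{itemize}
\item The collision part is bounded by $\eps^{-2}\|\mu\|\,\|f^\perp\| \le \tfrac{\kappa_2}{4}\eps^{-1}\|\mu\|^2 + C\eps^{-3}\|f^\perp\|^2$. This is the origin of the $\eps^{-3}$ weight.
\item The transport part, after integration by parts, gives an interior term $\ge \eps^{-1}\|\mu\|^2 - C\eps^{-1}\|f^\perp\|^2$, as in \cite[Lemma 3.6]{MR4581432}.
\item It also gives a boundary term $\eps^{-1}\int_\Sigma \grad^s U[\mu]:q\,\gamma f\,(n_x\cdot v)$. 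We expand it with Lemma~\ref{lem:BoundaryTerms}. The three classical pieces are bounded by $\eps^{-1}\|\mu\|\,\|\iota\DDD_1^\perp\gamma_+f\|$. The $\psi$-pieces are bounded by $\eps^{-1}\eps\vartheta_0'\|\mu\|\bigl(\|\iota\widetilde{\gamma_+f}\|+1\bigr)$, using the trace of $\grad U[\mu]$, which is controlled by $\|U\|_{H^2}\lesssim\|\mu\|$ (Theorem~\ref{thm:LameRegularity}). Young's inequality then gives $\tfrac{\kappa_2}{4}\eps^{-1}\|\mu\|^2 + C\eps\vartheta_0'^2(\cdots)$.
\end{itemize}

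\textbf{Main obstacle.} The delicate point is the bookkeeping of the powers of $\eps$, so that every $\psi$-generated term is of size exactly $\eps\vartheta_0'^2$. Each time, one must pair the $\eps^{-1}$ from the transport term with the $\eps\vartheta_0'$ from \eqref{eq:Controlpsi} and absorb into $\eps^{-1}\|\mu\|^2$ or $\eps^{-1}\|\EE\|^2$. Also, the pure source term has no factor of $f$ in it, so it must be absorbed using $\|\mu\|$ alone. Finally, I expect the sign in front of the last constant term in the statement to actually be $-C_2\vartheta_0'^2\eps$, consistent with Lemma~\ref{lem:EnergyCoercivity}.
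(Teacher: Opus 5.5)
Your proposal is correct and follows the paper's route: repeat the proof of Lemma~\ref{lem:MomentumCoercivity_Hypo_decay} using Lemma~\ref{lem:BoundaryTerms}, and bound the $\psi$-terms with \eqref{eq:Controlpsi}. Your $\eps$-bookkeeping supplies what the paper leaves implicit: the $\eps^{-3}$ weight comes from the collision part of $E_1$, and the $\eps\vartheta_0'^2$ terms come from pairing $\eps^{-1}$ with $\eps\vartheta_0'$. You are also right that the last term should read $-C_2\vartheta_0'^2\eps$; this matches Lemmas~\ref{lem:EnergyCoercivity} and~\ref{lem:MassCoercivity} and the assembled estimate in the proof of Theorem~\ref{theo:Pert_hypo}, so the $+$ sign is a typo.
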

\begin{proof} 
The proof follows that of Lemma~\ref{lem:MomentumCoercivity_Hypo_decay}, using instead Lemma~\ref{lem:BoundaryTerms}, and controlling the term involving $\psi$ using \eqref{eq:Controlpsi}.
\end{proof}

\begin{lem}\label{lem:MassCoercivity}
There are constants $\kappa_3,C_3>0$ such that
    \begin{multline*}
          \langle -\grad_x u_{\rm N}[\varrho], \mu[\LLL^\eps f]\rangle_{L^2({\Omega})} + \langle -\grad_x u_{\rm N} [\varrho[\LLL^\eps f]], \mu\rangle_{L^2({\Omega})}\\
        \quad \geq \kappa_3 \eps^{-1} \lVert \varrho\rVert^2_{L^2({\Omega})} 
        - C_3 \, \eps^{-1} \lVert \mu\rVert^2_{L^2({\Omega})}
        - C_3 \,  \eps^{-1}\lVert\EE\rVert^2_{L^2({\Omega})} 
        - C_3 \, \eps^{-1} \lVert f^{\perp}\rVert^2_{ {\HH^\eps}}  \\
	- C_3 \,  \eps^{-1}\left\lVert \sqrt{\iota(2-\iota)} \DDD_1^\perp \gamma_+f\right\rVert^2_{\partial{\HH}_+} 
	- C_3 \, \eps \, \vartheta_0'^2 \left\lVert  \sqrt{\iota(2-\iota)} (\widetilde{\gamma_+ f}) \right\rVert^2_{L^2(\partial{\Omega})}  
	- C_3 \, \eps \, \vartheta_0'^2    .
    \end{multline*}   
\end{lem}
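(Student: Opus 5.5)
The plan is to follow the proof of Lemma~\ref{lem:MassCoercivity_Hypo_decay}, keeping track of the powers of $\eps$ coming from $\LLL^\eps = -\eps^{-1} v\cdot\grad_x + \eps^{-2}\CCC$, and using Lemma~\ref{lem:BoundaryTerms} instead of Lemma~\ref{lem:BoundaryTerms_Hypo_decay}.

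\emph{Step 1: the identity for $\varrho[\LLL^\eps f]$.} Since $\varrho[\CCC f^\perp]=0$ by \eqref{eq:ConservationLawsCCC}, we get $\varrho[\LLL^\eps f] = -\eps^{-1}\Div_x \mu$. The zero-mass condition $\lla f_t\rra_\OO=0$ holds because $\lla\psi\rra_{\Sigma_-}=0$, so $u_{\rm N}[\varrho]$ and $u_{\rm N}[\varrho[\LLL^\eps f]]$ are well defined.

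\emph{Step 2: the term $E_2$.} The boundary flux identity $\mu\cdot n_x=0$ still holds, since \eqref{eq:ConservationMassBdy} only uses $\widetilde{\MMM_\Theta}=1$ and $\widetilde{\psi}=0$. Testing the variational formulation against $u=u_{\rm N}[\varrho[\LLL^\eps f]]$ then gives $\|\grad_x u\|_{L^2}^2=\eps^{-1}\int_\Omega \mu\cdot\grad_x u$. Hence $\|u\|_{H^1}\lesssim \eps^{-1}\|\mu\|_{L^2}$, and
$$
|E_2| = \left|\la -\grad_x u_{\rm N}[\varrho[\LLL^\eps f]],\mu\ra\right| \lesssim \eps^{-1}\|\mu\|_{L^2(\Omega)}^2 .
$$

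\emph{Step 3: the term $E_1$.} We have $\mu[\LLL^\eps f] = -\eps^{-1}\Div_x\int v\otimes v f\,\dv + \eps^{-2}\mu[\CCC f]$, and the last piece vanishes by momentum conservation. Integrating by parts, $E_1=\eps^{-1}(E_{1,1}+E_{1,2})$, with $E_{1,1}$ and $E_{1,2}$ exactly as in Lemma~\ref{lem:MassCoercivity_Hypo_decay}.

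For the interior term, we use $\int v_iv_j f\,\dv = (\varrho+\sqrt{2/3}\,\EE)\delta_{ij}+M_q[f^\perp]_{ij}$, the equation $-\Delta u_{\rm N}=\varrho$, and the $H^2$ bound from Theorem~\ref{theo:PoissonRegularity}. Young's inequality then gives
$$
E_{1,1}\ge \tfrac12\|\varrho\|^2_{L^2}-C\|\EE\|^2_{L^2}-C\|f^\perp\|^2_{\HH}.
$$
Multiplied by $\eps^{-1}$, this yields the stated $\eps^{-1}$ terms.

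For the boundary term $E_{1,2}$, Lemma~\ref{lem:BoundaryTerms} with $\phi(v)=\grad_x u_{\rm N}[\varrho]\cdot v$ produces:
\begin{itemize}
\item three terms in $\DDD_1^\perp\gamma_+ f$ and $(v-\VV_xv)$, handled as in \cite[Lemma 3.8]{MR4581432}, which are bounded by $\|\varrho\|_{L^2}\,\|\iota\DDD_1^\perp\gamma_+f\|_{\partial\HH_+}$ through the trace inequality $\|\grad u_{\rm N}\|_{L^2(\partial\Omega)}\lesssim\|u_{\rm N}\|_{H^2}$;
\item two new $\psi$-terms.
\end{itemize}

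\emph{Step 4: the $\psi$-terms (the only genuinely new point).} By \eqref{eq:Controlpsi}, $\int|v|\,|\psi|\,(n_x\cdot v)_-\,\dv\lesssim \vartheta_0\eps^{12}=\eps\,\vartheta_0'$. Using Cauchy--Schwarz on $\partial\Omega$ together with the trace/$H^2$ bound, the two terms are bounded by
$$
\eps\,\vartheta_0'\,\|\varrho\|_{L^2}\Big(\|\iota\,\widetilde{\gamma_+f}\|_{L^2(\partial\Omega)}+1\Big).
$$
After the prefactor $\eps^{-1}$, this becomes $\vartheta_0'\,\|\varrho\|(\cdots)$. Young's inequality in the form $ab\le \tfrac{\kappa}{4}\eps^{-1}a^2+C\eps b^2$ then produces $\tfrac{\kappa}{4}\eps^{-1}\|\varrho\|^2$ plus $C\eps\,\vartheta_0'^2\big(\|\sqrt{\iota(2-\iota)}\,\widetilde{\gamma_+f}\|^2_{L^2(\partial\Omega)}+1\big)$, where we use $\iota\le\sqrt{\iota(2-\iota)}$. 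The same Young step is applied to the $\DDD_1^\perp$ terms, which gives $\eps^{-1}\|\sqrt{\iota(2-\iota)}\DDD_1^\perp\gamma_+f\|^2_{\partial\HH_+}$. Absorbing all the small $\|\varrho\|^2$ pieces into $\tfrac12\eps^{-1}\|\varrho\|^2$ yields the lemma with $\kappa_3\approx 1/4$.

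I expect the main obstacle to be this bookkeeping of $\eps$-powers, in particular checking that the $\psi$-contribution indeed carries the factor $\eps\vartheta_0'^2$ after Young's inequality. Everything else is a verbatim repetition of the isothermal argument.
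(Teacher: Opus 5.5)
Your proposal is correct and follows the paper's route: you rerun the proof of Lemma~\ref{lem:MassCoercivity_Hypo_decay} with the $\eps$-scaling of $\LLL^\eps$, expand the boundary term with Lemma~\ref{lem:BoundaryTerms}, and bound the $\psi$-terms via \eqref{eq:Controlpsi}, and your Young step $ab\le \tfrac{\kappa}{4}\eps^{-1}a^2+C\eps b^2$ produces exactly the $\eps\,\vartheta_0'^2$ terms in the statement. As a side remark, since $\partial_n u_{\rm N}[\varrho]=0$ on $\partial\Omega$, the two terms containing $v-\VV_x v=2(n_x\cdot v)n_x$ vanish identically (this is why the $\DDD_1\gamma_+f$ contribution needs no control), and since $\psi$ is radially symmetric in $v$ the two $\psi$-terms vanish as well, so your bounds on them are valid but not sharp.
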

\begin{proof} 
The proof follows that of Lemma~\ref{lem:MassCoercivity_Hypo_decay}, using instead Lemma~\ref{lem:BoundaryTerms}, and controlling the term involving $\psi$ using \eqref{eq:Controlpsi}.
\end{proof}

\subsubsection{Proof of Theorem \ref{theo:Pert_hypo}}\label{ssec:conclusion}
We define a scalar product on ${\HH}$
\be\label{eq:hypocNorm}
\begin{aligned}
\lla h,g\rra_\eps &:= \langle h,g\rangle_{ {\HH}}\\
&\qquad +\eta_1 \eps \langle-\grad_x u[\EE[h]], M_p[g]\rangle_{L^2({\Omega})} + \eta_1 \eps \langle-\grad_x u[\EE[g]], M_p[h]\rangle_{ L^2({\Omega})}\\
&\qquad +\eta_2 \eps  \langle-\grad^s_x U[\mu[h]], M_q[g]\rangle_{L^2({\Omega})} + \eta_2 \eps \langle-\grad^s_x U[\mu[g]], M_q[h]\rangle_{ L^2({\Omega})}\\
&\qquad +\eta_3 \eps  \langle-\grad_x u_{\rm N}[\varrho[h]], \mu[g]\rangle_{L^2({\Omega})} + \eta_3\eps  \langle-\grad_x u_{\rm N}[\varrho[g]], \mu[h]\rangle_{ L^2({\Omega})},
\end{aligned}
\ee
for some parameters $0 \ll \eta_3 \ll \eta_2 \ll \eta_1 \ll 1$ to be chosen later, and where we recall that the moments $M_p$ and $M_q$ are defined respectively in \eqref{eq:def-Mp_Hypo_decay} and \eqref{eq:def-Mq_Hypo_decay}; $u[\EE[f]]$ is the solution of the Poisson equation~\eqref{eq:PoissonEquation} with $\xi = \EE[f]$ and boundary condition \ref{item:P1}; $U[\mu[f]]$ is the solution to the elliptic system \eqref{eq:LameSystem} with data $\Xi = \mu[f]$; $u_{N}[\varrho[f]]$ is the solution to the Poisson equation with homogeneous Neumann boundary condition~\ref{item:P2} and with data $\xi = \varrho[f]$, and similarly for the terms depending on $g$. 

We define next the norm associated to the above scalar product 
\be\label{def:HypoNorm}
\lvvv f \rvvv_\eps := \displaystyle \sqrt{\lla f,f\rra_\eps},
\ee
and, arguing as in the proof of Theorem~\ref{theo:Pert_hypo_Hypo_decay}, we deduce that there is a constant $c>0$, such that
$$
  \lvv f\rvv_{\HH}^2 \left( 1- 2c\, \eps(\eta_1 + \eta_2 + \eta_3) \right)   \leq  \lvvv f \rvvv_\eps^2 \leq   \lvv f\rvv_{\HH}^2 \left( 1 + 2c\, \eps(\eta_1 + \eta_2 + \eta_3) \right) . 
$$
By choosing then $\eta_1, \eta_2, \eta_3 \in (0, (12c)^{-1})$, and the fact that $\eps\leq 1$, there yields the equivalency of norms
\begin{equation}\label{eq:NormEquivalence}
    \lVert f\rVert_{ {\HH}}\lesssim \lvvv f\rvvv_\eps \lesssim \lVert f\rVert_{ {\HH}}.
\end{equation}

Let now $f$ satisfy the assumptions of Theorem~\ref{theo:Pert_hypo}. Recalling that, during this section, we denote $\varrho=\varrho[f]$, $\mu=\mu[f]$ and $\EE = \EE[f]$, noting that $\sqrt{\iota(2-\iota)} \ge \iota$, gathering Lemmas~\ref{lem:MicroCoercivity},~\ref{lem:EnergyCoercivity},~\ref{lem:MomentumCoercivity} and~\ref{lem:MassCoercivity}, and arguing as in the proof of Theorem~\ref{theo:Pert_hypo_Hypo_decay} one  has
$$
\begin{aligned}
\la \! \la - \LLL^\eps f , f \ra \! \ra_\eps 
&\ge  \left(\frac{\kappa_0}{2} - \eta_1 C - \eta_2 C - \eta_3 C\right) \| f^\perp \|_{{\HH}}^2  +\left(\frac{\eta_1 \kappa_1}{2} - \eta_2 C - \eta_3 C \right)\| \EE \|_{L^2_x({\Omega})}^2  \\
&\quad 
+ \left( \eta_2 \kappa_2 - \eta_1^2 C - \eta_3 C \right)\| \mu \|_{L^2_x({\Omega})}^2  
+ \left( \eta_3 \kappa_3  - \eta_2^2 C - \frac{\eta_2^2}{\eta_1}C\right)\| \varrho \|_{L^2_x({\Omega})}^2 
\\
&\quad
+ \left(\frac12 - \eta_1 C - \eta_2 C - \eta_3 C  \right) \left\lvv \sqrt{\iota(2-\iota)} \DDD_1^\perp \gamma_+ f \right\rvv_{\partial {\HH}_+}^2  - C (1+ \eta _1+ \eta_2 + \eta_3) \vartheta_0' \\
&\quad 
 -C \left\lVert  \vartheta_0' \,  (\iota)^{1/2}  \left( \widetilde {\gamma_+ f}\right) \right\rVert^2_{L^2(\partial{\Omega})}  -C(\eta_1 + \eta_2 + \eta_3) \left\lVert  \vartheta_0' \,  \sqrt{\iota(2-\iota)}  \left( \widetilde {\gamma_+ f}\right) \right\rVert^2_{L^2(\partial{\Omega})}.
\end{aligned}
$$
Arguing again as in the proof of Theorem~\ref{theo:Pert_hypo_Hypo_decay} we choose $\eta_1 := \eta$, $\eta_2 := \eta^{\frac{3}{2}}$, $\eta_3 := \eta^{\frac{7}{4}}$, with $0 < \eta \ll1$ small enough, and we deduce that
$$
\begin{aligned}
\la \! \la - \LLL^\eps f , f \ra \! \ra_\eps
&\ge \kappa \left( \| f^\perp \|_{{\HH}}^2 
+\| \varrho \|_{L^2_x({\Omega})}^2 
+\| \mu \|_{L^2_x({\Omega})}^2 
+\| \EE \|_{L^2_x({\Omega})}^2  \right)   \\
&\quad 
-\kappa' \left\lVert  \vartheta_0' \,  (\iota)^{1/2}  \left( \widetilde {\gamma_+ f}\right) \right\rVert^2_{L^2(\partial{\Omega})} - \kappa' \vartheta_0' + \kappa'' \left\lvv \sqrt{\iota(2-\iota)} \DDD_1^\perp f_{+} \right\rvv_{\partial {\HH}_+}^2, 
\end{aligned}
$$
for some constants $\kappa,\kappa', \kappa'' >0$.
We conclude the proof of Theorem~\ref{theo:Pert_hypo} by using \eqref{eq:NormDecompositionMicroMacro_Hypo_decay}
and the norm equivalency \eqref{eq:NormEquivalence}. 
\qed

\subsection{Interior a priori estimates}\label{ssec:Hypo_Apriori}

This subsection is devoted to the obtention of a priori estimates of the solution of Equation~\eqref{eq:LPBE} 

\begin{prop}\label{prop:L2_Apriori}
Assume that either Assumption \ref{item:H1} or Assumption \ref{item:H2} holds
There is $\kappa \in \R$ such that for every $f$ solution of Equation~\eqref{eq:LPBE} there holds
\be\label{eq:A_priori_bound_HH}
\lvv f_t \rvv_{\HH} \lesssim e^{\kappa \, \eps^{-2} \, t} \lvv f_0\rvv_{\HH} + \left(e^{\kappa \, \eps^{-2} \, t}-1\right) \, \eps^{23/2}\, \vartheta_0,
\ee
for every $t\geq 0$.
\end{prop}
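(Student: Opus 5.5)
The proof repeats the weighted energy argument of Proposition~\ref{prop:L2_Apriori_Hypo_decay} for the rescaled operator $\LLL^\eps$ and treats the inflow term $\iota\psi$ as a source. As there, I split into Case~1 (Assumption~\ref{item:H1}) with the weight $\mu_0$ from \eqref{def:muA_Hypo_decay}, and Case~2 (Assumption~\ref{item:H2}) with the weight $\mu_1$ from \eqref{def:mu1_Hypo_decay}. In both cases $\mu_A$ is built with the temperature $\Theta = 1+\eps^{12}\vartheta$ in place of $\TTT$. Writing $\mu$ for the chosen weight, a formal computation gives
\[
\frac12 \frac{\d}{\dt}\int_\OO f_t^2 \mu^2 = \eps^{-1} I_1 + \eps^{-1}\frac12\int_\OO f_t^2\, v\cdot\grad_x \mu^2 + \eps^{-2}\int_\OO f_t\,(Kf_t - \nu f_t)\,\mu^2 .
\]
Here $I_1 = -\frac12\int_\Sigma (\gamma f_t)^2 (n_x\cdot v)\mu^2$. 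The interior terms are handled exactly as in Step~1 of the isothermal proof, using \eqref{eq:Cercignani_CK_Hypo_decay} and the equivalence \eqref{eq:mu_mu0_Hypo_decay}. Since $\eps\le 1$, they are bounded by $C\eps^{-2}\|f_t\|^2_{L^2_\mu}$.

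For the boundary term I split $I_1 = I_{1,1}+I_{1,2}$ as before and substitute $\gamma_- f = \RRR_\Theta\gamma_+ f + \iota\psi$. For the diffusive part I use the elementary inequality $(a+b)^2 \le (1+\delta)a^2 + (1+\delta^{-1})b^2$ with a small $\delta$. The terms without $\psi$ are absorbed exactly as in Step~2. To do so, I first choose $A$ large so that $\II_{A,1}+\II_{A,2}\ge c_0>0$ uniformly in $x$; this is possible because $\II_{A,1}\to0$ and the limit of $\II_{A,2}$ is bounded below thanks to \eqref{eq:ConditionsTheta}. I then choose $\delta$ small enough that the extra $\delta\int\iota(\widetilde{\gamma_+ f})^2\int\MMM_\Theta^2\mu_A^2(n_x\cdot v)_+$ is dominated by $c_0/2$. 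The only new contribution is then
\[
C\delta^{-1}\int_{\Sigma_-} \iota\,\psi^2 \mu^2 |n_x\cdot v|.
\]
By \eqref{eq:Controlpsi}, $\psi^2\mu^2 \lesssim \vartheta_0^2\eps^{24}\la v\ra^4 e^{-|v|^2/\Theta^*}\MM^{-1}$, which is integrable because $\Theta^*<9/8<2$. Hence $\eps^{-1}I_1 \le C\eps^{-1}\eps^{24}\vartheta_0^2$. The specular parts, including the cancellation on $\Lambda_3$ in Case~2, are unchanged, since $\psi$ only enters where $\iota>0$.

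Setting $y(t) = \|f_t\|_{L^2_\mu}$, I arrive at
\[
\frac{\d}{\dt} y^2 \le 2\kappa\eps^{-2} y^2 + C\eps^{23}\vartheta_0^2 .
\]
Grönwall then gives $y(t)^2 \le e^{2\kappa\eps^{-2}t}y(0)^2 + C\eps^{25}\vartheta_0^2(e^{2\kappa\eps^{-2}t}-1)/(2\kappa)$. Taking square roots, using $\sqrt{e^{2a}-1}\le e^{a}-1+\sqrt{2a}$, or more simply absorbing into $e^{\kappa\eps^{-2}t}-1$ after enlarging $\kappa$, and using the norm equivalence \eqref{eq:mu_mu0_Hypo_decay}, yields \eqref{eq:A_priori_bound_HH}; the power $\eps^{23/2}$ is more than sufficient.

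I expect the main obstacle to be the cross term between $\DDD_\Theta\gamma_+ f$ and $\psi$ in the squared boundary condition. It must be absorbed by the negative boundary contribution without losing the sign obtained from the choice of $A$, and the $\delta$-splitting above is how I would handle it. A second, minor issue is making the $t\to0$ behaviour compatible with the factor $(e^{\kappa\eps^{-2}t}-1)$ rather than $\sqrt{t}$. If necessary, I would instead apply Grönwall directly to $y$, using $\frac{\d}{\dt}y \le \kappa\eps^{-2}y + C\eps^{23/2}\vartheta_0\, y^{-1}\cdot(\ldots)$, or bound the source term by $\eps^{-1}\eps^{24}\vartheta_0^2 \le \kappa\eps^{-2}(\eps^{23/2}\vartheta_0)^2$. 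That bound lets me compare $y$ with $e^{\kappa\eps^{-2}t}(y_0+\eps^{23/2}\vartheta_0)-\eps^{23/2}\vartheta_0$, which gives exactly the stated form.
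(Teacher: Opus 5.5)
Up to the differential inequality $\frac{\d}{\dt}\lvv f_t\rvv_{L^2_\mu}^2\le 2\kappa\eps^{-2}\lvv f_t\rvv_{L^2_\mu}^2+C\eps^{23}\vartheta_0^2$, your argument is the paper's. You use the same weights $\mu_0,\mu_1$ and the same splitting $I_1=I_{1,1}+I_{1,2}$. Your $(1+\delta)/(1+\delta^{-1})$ splitting of $\iota(\MMM_\Theta\widetilde{\gamma_+ f}+\psi)^2$ is the paper's Young inequality on the cross term, with the small parameter fixed after $A$.

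The gap is the final step. Grönwall gives $\lvv f_t\rvv_{\HH}\lesssim e^{\kappa\eps^{-2}t}\lvv f_0\rvv_{\HH}+\eps^{25/2}\vartheta_0\,(e^{2\kappa\eps^{-2}t}-1)^{1/2}$, and none of your ways of converting this into the factor $(e^{\kappa\eps^{-2}t}-1)$ works. Write $a=\kappa\eps^{-2}t$.
\begin{itemize}
\item Enlarging $\kappa$ cannot help: $(e^{2a}-1)^{1/2}\sim\sqrt{2a}$, while $e^{\kappa' a/\kappa}-1\sim(\kappa'/\kappa)a$ as $a\to0$.
\item The inequality $(e^{2a}-1)^{1/2}\le e^a-1+\sqrt{2a}$ is true, but it leaves a term of size $\eps^{23/2}\vartheta_0\sqrt{t}$, which is not of the stated form.
\item Set $b=\eps^{23/2}\vartheta_0$. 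The bound $C\eps^{23}\vartheta_0^2\le\kappa\eps^{-2}b^2$ only yields $(y^2+b^2)'\le 2\kappa\eps^{-2}(y^2+b^2)$, i.e. $y(t)\le e^{\kappa\eps^{-2}t}(y_0+b)$ with no $-b$. Comparing with $e^{\kappa\eps^{-2}t}(y_0+b)-b$ needs $y'\le\kappa\eps^{-2}(y+b)$. That follows from $2yy'\le 2\kappa\eps^{-2}y^2+\kappa\eps^{-2}b^2$ only where $y\ge b/2$, so it fails near $t=0$ when $y_0<b/2$.
\end{itemize}

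This cannot be repaired, because the displayed bound is false in a short initial layer. Take $f_0=0$ and $\vartheta\equiv\vartheta_0$. For $t\ll\eps^2$, before collisions act, the inflow $\iota\psi$ fills a boundary layer of width $\sim\eps^{-1}t$. Formally the energy identity gives $\frac{\d}{\dt}\lvv f_t\rvv_\HH^2\big|_{t=0^+}=\eps^{-1}\int_{\Sigma_-}\iota^2\psi^2\,|n_x\cdot v|\,\MM^{-1}\sim\eps^{23}\vartheta_0^2>0$. Hence $\lvv f_t\rvv_\HH\gtrsim\eps^{23/2}\vartheta_0\sqrt{t}$ for small $t$. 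By contrast, $(e^{\kappa\eps^{-2}t}-1)\eps^{23/2}\vartheta_0\approx\kappa\eps^{-2}t\,\eps^{23/2}\vartheta_0$ is much smaller once $t\ll\eps^4$.

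The paper's proof just invokes Grönwall at this point and has the same defect. What you and the paper actually prove is the square-root bound above, and hence also $\lvv f_t\rvv_\HH\lesssim e^{\kappa\eps^{-2}t}\big(\lvv f_0\rvv_\HH+\eps^{23/2}\vartheta_0\big)$. That is all that is used afterwards: Proposition~\ref{prop:L2_AprioriBdy} carries $e^{\kappa\eps^{-2}t}$ without the $-1$. You should conclude with that form rather than with the comparison argument.

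A secondary point you inherit from the paper: $\vartheta$ is only in $L^\infty(\partial\Omega)$. So a weight built on $\MM_\Theta^{-1}$ is not defined, let alone Lipschitz, for $x\in\Omega$, and the term $\frac{\eps^{-1}}{2}\int_\OO f_t^2\,v\cdot\grad_x\mu^2$ is not controlled as written. Using $\MM^{-1}$ instead of $\MM_\Theta^{-1}$ in $\mu_A$ avoids this. The corresponding $\II_{A,1}$ is then $-P(\Theta)=O(\eps^{24}\vartheta_0^2)$, which the positive $\II_{A,2}$ absorbs.
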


\begin{rem}
The proof follows the same lines as that of Proposition~\ref{prop:L2_Apriori_Hypo_decay}, and we adapt it in order to consider the presence of the inflow-type term $\psi$. 
\end{rem}

\begin{proof}[Proof of Proposition~\ref{prop:L2_Apriori}]
We divide the proof in two different cases: first we obtain \eqref{eq:A_priori_bound_HH} for smooth domains, and then we repeat and adapt those computations for the setting of the cylinder. 

\medskip\noindent
\emph{Case 1. (Smooth domains---Assumption~\ref{item:H1})}
We recall the modified weight function $\mu_0$ defined in \eqref{def:muA_Hypo_decay} satisfying \eqref{eq:mu_mu0_Hypo_decay}. Arguing then as in the Case 1 of the proof of Proposition~\ref{prop:L2_Apriori_Hypo_decay}, we have that if $f$ is a solution of Equation~\eqref{eq:LPBE} there holds
\be\label{eq:Well_posedness_HH_1}
\begin{aligned}
\frac 12 \frac d{dt} \int_{\OO} f_t^2\mu_0^2  
&
=  - \frac {\eps^{-1}}2 \int_{\Sigma} \gamma f_t^2 \,  (n_x\cdot v) \mu_0^2 + \frac {\eps^{-1}}2 \int_{\OO} f_t^2 (v\cdot \grad_x \mu_0^2)  
\\&\qquad \qquad \qquad \qquad 
- \eps^{-2}\int_{\OO} \nu f_t^2 \mu_0^2 + \eps^{-2}\int_{\OO} f_t (Kf_t) \mu_0^2 
=: I_1+I_2,
\end{aligned}
\ee
where we have used integration by parts, and we have defined 
\bean
I_1 &:=& - \frac {\eps^{-1}}2 \int_{\Sigma} \gamma f_t^2 \,  (n_x\cdot v) \mu_0^2,  \\
I_2 &:=& \frac {\eps^{-1}}2 \int_{\OO} f_t^2 (v\cdot \grad_x \mu_0^2)  -{\eps^{-2}} \int_{\OO} \nu f_t^2 \mu_0^2 +{\eps^{-2}} \int_{\OO} f_t (Kf_t) \mu_0^2.
\eean

\medskip
Arguing as in the proof of Step 1 of Case 1 of the proof of Proposition~\ref{prop:L2_Apriori_Hypo_decay}, we have that 
$$
I_2 \lesssim \eps^{-2}  \lvv f_t\rvv_{L^2_{\mu_0}(\OO)}^2,
$$
for every $A\geq 1$.

\medskip
For the boundary terms, we observe that 
$$
I_1 = - \frac {\eps^{-1}}2 \int_{\Sigma} \gamma f_t^2 \,  (n_x\cdot v) \mu_A^2  - \frac {\eps^{-1}}2 \int_{\Sigma} \gamma f_t^2 \,   {(n_x \cdot v)^2\over \la v\ra^{2} }  \mu_A^2 =: I_{1,1} + I_{1,2},
$$
and we estimate each of the above integrals separately. 
On the one hand, using the boundary conditions of Equation~\eqref{eq:LPBE}, we have that
$$\beal
I_{1,1} &=  - \frac {\eps^{-1}}2 \int_{\Sigma_+} \gamma_+ f_t^2 \,  \mu_A^2\, (n_x\cdot v)_+  +  \frac {\eps^{-1}}2 \int_{\Sigma_-}  (\RRR\gamma_+ f_t + \iota \psi) ^2 \, \mu_A^2 \,  (n_x\cdot v)_- \\
 &\leq - \frac {\eps^{-1}}2 \int_{\Sigma_+} \gamma_+ f_t^2 \, \mu_A^2 \, (n_x\cdot v)_+  + \frac {\eps^{-1}}2  \int_{\Sigma_-}  (1-\iota) (\SSS\gamma_+ f_t) ^2 \,  \mu_A^2\, (n_x\cdot v)_-  \\
 &\quad \qquad + \frac {\eps^{-1}}2   \int_{\Sigma_-}  \iota \left(\MMM_{\Theta} \widetilde{\gamma_+ f_t} + \psi \right) ^2 \,  \mu_A^2\, (n_x\cdot v)_-  \\
 &\leq  -\frac {\eps^{-1}}2 \int_{\Sigma_+} \iota (\gamma_+ f_t)^2 \,\mu_A^2\,   (n_x\cdot v)_+ + \frac {\eps^{-1}}2 \int_{\partial \Omega} \iota \left( \widetilde{\gamma_+ f}\right)^2 \int_{\R^3} \MMM_{\Theta}^2 \mu_A^2 (n_x \cdot v)_+ \\
 &\qquad \qquad  + {\eps^{-1}} \int_{\partial \Omega} \iota \left(\widetilde{\gamma_+ f_t}\right)  \int_{\R^3}  \MMM_\Theta  \, \psi \, \mu_A^2 \, (n_x\cdot v)_+ + \frac {\eps^{-1}}2 \int_{\Sigma_+} \iota \, \psi^2\, \mu_A^2 \, (n_x\cdot v)_+,
\eeal$$
where we have successively used a convexity inequality with the function $x\mapsto \lv x\rv^2$ and the change of variables $v\mapsto \VV_x v$ together with the fact that $ \lv \VV_x v \rv = \lv v\rv $ and $|n(x) \cdot \VV_x v| = |n(x) \cdot v|$. 

Applying now the Cauchy-Schwarz inequality we have that
$$\beal
\left( \widetilde{\gamma_+ f}\right)^2 = \left( \int_{\R^3} \gamma_+ f (n_x \cdot v)_+ \right) ^2 \leq \left( \int_{\R^3} (\gamma_+ f)^2 \mu_A^2 (n_x \cdot v)_+  \right)  \left( \int_{\R^3} \mu_A^{-2} (n_x\cdot v)_+ \right).
\eeal $$
Moreover, using the Young inequality we also have that 
$$
\left(\widetilde{\gamma_+ f_t}\right)  \int_{\R^3}  \MMM_\Theta  \, \psi \, \mu_A^2 \, (n_x\cdot v)_+ \leq \frac \alpha2 \left(\widetilde{\gamma_+ f_t}\right) ^2 + {1\over 2\alpha } \left(\int_{\R^3}  \MMM_\Theta  \, \psi \, \mu_A^2 \, (n_x\cdot v)_+  \right)^2,
$$
Using now \eqref{eq:Controlpsi} we further have that 
$$\beal
\left(  \int_{\R^3}  \MMM_\Theta  \, \psi \, \mu_A^2 \, (n_x\cdot v)_+\right)^2 + \int_{\Sigma_+} \iota \, \psi^2\, \mu_A^2 \, (n_x\cdot v)_+ \lesssim \vartheta_0^2 \,\eps^{24}.
\eeal $$
Applying then the Young inequality in the above expression of $I_{1,1}$, and using the above inequalities, we deduce that for any $\alpha >0$ we have that
$$
I_{1,1} \leq {C \, \eps^{23} \over 2\alpha } \, \vartheta_0^2-\frac {\eps^{-1}}2 \int_{\partial \Omega} \iota \left( \widetilde{\gamma_+ f}\right)^2  \left(\II_{A,1}(x) -  \alpha\right) ,
$$
for some constant $C>0$, and where we have defined
\be\label{def:IIA_1}
\II_{A, 1} (x) := \left( \int_{\R^3} \mu_A^{-2} (n_x\cdot v)_+ \right)^{-1} -  \int_{\R^3} \MMM_{\Theta}^2 \mu_A^2 (n_x \cdot v)_+,
\ee
and we note that as $A\to +\infty$ there holds
\be\label{eq:LimIIA1}
\II_{1, A}(x) \to \frac 12 \left[ \left( \widetilde{\MMM_{\Theta}} \right)^{-1} + \widetilde{\MMM_{\Theta}} \right]= \frac 12  \left(1-1\right) =0,
\ee
uniformly in $x$.
On the other hand, we have that 
$$\beal
I_{1,2}& = - \frac {\eps^{-1}}2 \int_{\Sigma_+} (\gamma_+ f_t)^2 \,   {(n_x \cdot v)_+^2\over \la v\ra^{2} }  \mu_A^2 - \frac {\eps^{-1}}2 \int_{\Sigma_-} (\gamma_- f_t)^2 \,   {(n_x \cdot v)_-^2\over \la v\ra^{2} }  \mu_A^2  \\
& \leq - \frac {\eps^{-1}}2 \int_{\Sigma_+} (\gamma_+ f_t)^2 \,   {(n_x \cdot v)_+^2\over \la v\ra^{2} }  \mu_A^2 \leq  -\frac {\eps^{-1}}2 \int_{\partial \Omega} \iota^{\eps} \left( \widetilde{\gamma_+ f} \right)^2 \II_{A, 2} (x),
\eeal$$
where we have used the Cauchy-Schwarz inequality and the fact that $\iota (x) \in [0,1]$ for every $x\in \partial \Omega$ to obtain the last inequality. We have also set
\be\label{def:IIA2}
\II_{A, 2}(x) := \left( \int_{\R^3} \la v\ra^2 \mu_A^{-2}\right)^{-1} \underset{A\to +\infty}{\longrightarrow} \int_{\R^3} \la v\ra^2 \MMM_{\Theta}   \in (0, +\infty),
\ee
uniformly for $x\in \partial \Omega$, and we note that the bounds on the last limit are due to our condition \eqref{eq:ConditionsTheta} on $\Theta$.

Gathering the above estimates, and choosing the parameters $A > 1$ large enough, and $\alpha >0$ small enough, such that $\II_{A,1}(x) + \II_{A, 2} (x) - \alpha  \geq 0$ uniformly for $x\in \partial \Omega$, we deduce that
$$
I_1 = I_{1,1} + I_{1,2} \leq {C \, \eps^{23} \over 2\alpha } \vartheta_0^2 -\frac {\eps^{-1}}2 \int_{\partial \Omega} \iota^{\eps} \left( \widetilde{\gamma_+ f} \right)^2 \left[ \II_{A,1}(x) + \II_{A, 2} (x)-\alpha  \right]  \leq C \, \eps^{23}\, \vartheta_0^2 ,
$$
for some constant $C>0$.

\medskip
By putting together the estimates for $I_1$ and $I_2$ obtained during Steps 1 and 2, into \eqref{eq:Well_posedness_HH_1}, we have that
$$
\frac 12 \frac d{dt} \int_{\OO} f_t^2\mu_0^2  \lesssim \eps^{-2} \lvv f_t\rvv_{L^2_{\mu_0}(\OO)}^2 + C\, \eps^{23} \, \vartheta_0^2 .
$$
We conclude the proof for Case 1 (under Assumption~\ref{item:RH1}) by using the Grönwall lemma and \eqref{eq:mu_mu0_Hypo_decay}.

\medskip\noindent
\emph{Case 2. (Cylindrical domains---Assumption~\ref{item:H2})}
The proof for cylindrical domains follows by using the arguments from the Case of the proof of Proposition~\ref{prop:L2_Apriori_Hypo_decay} together with the arguments to handle the presence of $\psi$ on the boundary conditions used during the Case 1 of this proof. 
\end{proof}

\subsection{Boundary a priori estimates}\label{ssec:Hypo_AprioriBdy}
In this subsection we provide a priori estimates for the trace of the solutions of Equation~\eqref{eq:LPBE}, in the spirit of the estimates given in Subsection~\ref{ssec:Hypo_AprioriBdy_Hypo_decay}.

 \begin{prop}\label{prop:L2_AprioriBdy}
Assume that either Assumption \ref{item:H1} or Assumption \ref{item:H2} holds
There is $\kappa \in \R$ such that for every $f$ solution of Equation~\eqref{eq:LPBE} there holds
\be\label{eq:A_priori_bound_Bdy}
\int_0^t \int_{\Sigma}  (\gamma f_s)^2 \, (n_x\cdot v)^2 \, \zeta_{\SSSS}(x) \, \la v\ra^{-2} \, \MM^{-1} \,  \dv\d\sigma_x\ds  \lesssim \eps \, e^{\kappa \, \eps^{-2} \, t} \lvv f_0\rvv_{\HH}^2 +   \vartheta_0^2 \eps^{24} e^{\kappa \, \eps^{-2} \, t},
\ee
for every $t\geq 0$.
\end{prop}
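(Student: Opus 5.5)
The plan is to reproduce the multiplier argument behind Proposition~\ref{prop:L2_AprioriBdy_Hypo_decay} (that is, \cite[Proposition 6.5]{BE_iso}), but for the $\eps$-scaled operator $\LLL^\eps$ and with the inflow term $\iota\psi$ carried along. We then close the resulting bound using the interior estimate of Proposition~\ref{prop:L2_Apriori} instead of Proposition~\ref{prop:L2_Apriori_Hypo_decay}.

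Concretely, we take a smooth vector field $N(x)$ that equals $\zeta_\SSSS(x)\, n_x$ on $\partial\Omega$. In the smooth case this is a $C^1$ extension of $n_x$ built from $\grad\delta$; in the cylinder it is the $\zeta_\SSSS$-damped extension used in \cite{BE_iso}. We then differentiate in time
$$
\frac12\int_\OO f_t^2\,\MM^{-1}\la v\ra^{-2}\,(N(x)\cdot v).
$$

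\emph{Interior terms.} The $\eps^{-2}\CCC$ contribution is bounded by $\eps^{-2}C\|f_t\|_\HH^2$, using $|\la v\ra^{-2}N\cdot v|\le 1$, \eqref{eq:Cercignani_CK_Hypo_decay} and $\nu\lesssim\la v\ra$. The transport term is integrated by parts. Its interior piece, $\eps^{-1}\int f^2\MM^{-1}\la v\ra^{-2}\, v\cdot\grad_x(N\cdot v)$, is $\lesssim\eps^{-1}\|f_t\|_\HH^2$ because $N\in C^1$.

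\emph{Boundary term.} What remains is
$$
-\frac{\eps^{-1}}{2}\int_\Sigma (\gamma f)^2\,\MM^{-1}\,\zeta_\SSSS\,(n_x\cdot v)^2\la v\ra^{-2}.
$$
It has a good sign and is exactly $\eps^{-1}$ times the quantity we want to control. No use of the boundary condition is needed here, so $\psi$ does not enter this step.

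Integrating in time over $[0,t]$ and multiplying by $\eps$ gives
$$
\int_0^t\!\!\int_\Sigma(\gamma f_s)^2\,\d\xi_2\,\ds\lesssim \eps\left(\|f_0\|_\HH^2+\|f_t\|_\HH^2\right)+\eps^{-1}\int_0^t\|f_s\|_\HH^2\,\ds .
$$
We now insert \eqref{eq:A_priori_bound_HH}, squared, which gives $\|f_s\|_\HH^2\lesssim e^{2\kappa\eps^{-2}s}\|f_0\|_\HH^2+e^{2\kappa\eps^{-2}s}\eps^{23}\vartheta_0^2$. The time integral then produces an extra factor $\eps^2/(2\kappa)$, so $\eps^{-1}\int_0^t e^{2\kappa\eps^{-2}s}\,\ds\lesssim\eps\, e^{2\kappa\eps^{-2}t}$. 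This yields the bound $\eps\, e^{\kappa'\eps^{-2}t}\|f_0\|_\HH^2$, plus an inhomogeneous term of size $\eps\cdot\eps^{23}\vartheta_0^2\,e^{\kappa'\eps^{-2}t}=\vartheta_0^2\eps^{24}e^{\kappa'\eps^{-2}t}$. Renaming $\kappa$ gives \eqref{eq:A_priori_bound_Bdy}. The $\eps\|f_t\|^2$ term is absorbed in the same way.

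\emph{Main obstacle.} The delicate point is the cylinder (Assumption~\ref{item:H2}). There one must check that the multiplier $N$, damped by $\zeta_\SSSS$ near the edge set $\SSSS$, is genuinely $C^1$ up to $\bar\Omega$, so that $v\cdot\grad_x(N\cdot v)$ is bounded by $\la v\ra^2$. One must also check that on each face $N\cdot v=\zeta_\SSSS\, n_x\cdot v$, which makes the boundary integrand exactly $\zeta_\SSSS(n_x\cdot v)^2$. I would follow the construction of \cite[Proposition 6.5]{BE_iso} verbatim, since the boundary condition plays no role at this stage.

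A secondary technical point is rigor. The computation is formal, and it is justified, as in Theorem~\ref{theo:ExistenceL2_Hypo_decay}, through the renormalized formulation with $\beta_M(s)=M\wedge s^2$ followed by the limit $M\to\infty$.
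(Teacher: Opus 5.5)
Your proposal is correct and follows essentially the paper's route. The paper simply defers to the multiplier argument of \cite[Proposition 6.5]{BE_iso} and inserts the interior bound of Proposition~\ref{prop:L2_Apriori}. Your $\eps$-bookkeeping (multiplying by $\eps$ and gaining $\eps^2/(2\kappa)$ from the time integral) reproduces exactly the factors $\eps$ and $\vartheta_0^2\eps^{24}$ in \eqref{eq:A_priori_bound_Bdy}.
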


\begin{proof}[Proof of Proposition~\ref{prop:L2_AprioriBdy}]
The proof is nothing but the exact repetition of the ideas of \cite[Proposition 6.5]{BE_iso}, using Proposition~\ref{prop:L2_Apriori}, thus we skip it. 
\end{proof}

Furthermore, we now prove an additional a priori estimate for the trace in the spirit of Proposition~\ref{prop:L2_AprioriBdyLambda12_Hypo_decay}.

 \begin{prop}\label{prop:L2_AprioriBdyLambda12}
Assume that Assumption \ref{item:H2} holds
There is $\kappa \in \R$ such that for every $f$ solution of Equation~\eqref{eq:LPBE} there holds
\be\label{eq:A_priori_bound_BdyLambda12}
\int_0^t \int_{\Lambda_1\cup \Lambda_2} \int_{\R^3} (\gamma f_s)^2 \, (n_x\cdot v)^2 \,  \la v\ra^{-2} \, \MM^{-1} \,  \dv\d\sigma_x\ds  \lesssim \eps \, e^{\kappa \eps^{-2} t} \lvv f_0\rvv_{\HH}^2 + \vartheta_0^2 \eps^{24} e^{\kappa \, \eps^{-2} \, t},
\ee
for every $t\geq 0$.
\end{prop}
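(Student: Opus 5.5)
The plan is to repeat the proof of Proposition~\ref{prop:L2_AprioriBdyLambda12_Hypo_decay}, now carrying the $\eps$-scaling of $\LLL^\eps$ and the inflow term $\iota\psi$. We use the weight $\MM^{-1}\la v\ra^{-2}(n_1(x)\cdot v)$, with $n_1$ the smooth field from \eqref{def:n1_Hypo_decay}, and compute along a solution $f$ of Equation~\eqref{eq:LPBE}
$$
\frac12\frac{\d}{\dt}\int_\OO f_t^2\,\MM^{-1}\la v\ra^{-2}(n_1\cdot v) = -\frac{\eps^{-1}}2\int_\OO v\cdot\grad_x(f_t^2)\,\MM^{-1}\la v\ra^{-2}(n_1\cdot v) + \eps^{-2}\int_\OO f_t\,(\CCC f_t)\,\MM^{-1}\la v\ra^{-2}(n_1\cdot v).
$$
Since $\lv\la v\ra^{-2}n_1\cdot v\rv\le1$, the bound $\nu\lesssim\la v\ra$, and \eqref{eq:Cercignani_CK_Hypo_decay}, the collision term is at most $C\eps^{-2}\lvv f_t\rvv_\HH^2$. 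Integrating the transport term by parts gives an interior term $\eps^{-1}\int f_t^2\MM^{-1}\la v\ra^{-2}\,v\cdot\grad_x(n_1\cdot v)\lesssim\eps^{-1}\lvv f_t\rvv_\HH^2$, plus the boundary term $-\eps^{-1}\int_\Sigma(\gamma f_t)^2\MM^{-1}\la v\ra^{-2}(n_1\cdot v)(n_x\cdot v)$.

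We split the boundary term into its contributions from $\Lambda_1\cup\Lambda_2$ and from $\Lambda_3$. On the bases $n_1=n_x$, so that part equals $-\eps^{-1}\int_{\Lambda_1\cup\Lambda_2}\int(\gamma f_t)^2(n_x\cdot v)^2\la v\ra^{-2}\MM^{-1}$, which is exactly the good term we want to keep. On $\Lambda_3$ Assumption~\ref{item:H2} gives $\iota=0$, so the inflow term $\iota\psi$ vanishes there and the boundary condition is purely specular. The change of variables $v\mapsto\VV_xv$, together with \eqref{eq:N1normalOrtogonality_Hypo_decay} (which gives $n_1\cdot\VV_xv=n_1\cdot v$), makes the $\Lambda_3$ contribution cancel exactly as in Proposition~\ref{prop:L2_AprioriBdyLambda12_Hypo_decay}. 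This is the key point: $\psi$ never enters the boundary computation.

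Integrating in time, multiplying by $\eps$, and using $\lv\int f^2\MM^{-1}\la v\ra^{-2}(n_1\cdot v)\rv\le\lvv f\rvv_\HH^2$, we get
$$
\int_0^t\int_{\Lambda_1\cup\Lambda_2}\int_{\R^3}(\gamma f_s)^2(n_x\cdot v)^2\la v\ra^{-2}\MM^{-1} \lesssim \eps\left(\lvv f_0\rvv_\HH^2+\lvv f_t\rvv_\HH^2\right)+\eps^{-1}\int_0^t\lvv f_s\rvv_\HH^2\,\ds.
$$
We then insert Proposition~\ref{prop:L2_Apriori}, which gives $\lvv f_s\rvv_\HH^2\lesssim e^{2\kappa\eps^{-2}s}\lvv f_0\rvv_\HH^2+e^{2\kappa\eps^{-2}s}\eps^{23}\vartheta_0^2$. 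Time integration produces a factor $\eps^2/(2\kappa)$, so $\eps^{-1}\int_0^t\lvv f_s\rvv_\HH^2\lesssim\eps\, e^{2\kappa\eps^{-2}t}\left(\lvv f_0\rvv_\HH^2+\eps^{23}\vartheta_0^2\right)$. Relabelling $\kappa$, this gives $\eps\, e^{\kappa\eps^{-2}t}\lvv f_0\rvv_\HH^2+\vartheta_0^2\eps^{24}e^{\kappa\eps^{-2}t}$, which is \eqref{eq:A_priori_bound_BdyLambda12}.

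The main obstacle is the $\eps$ bookkeeping. We need the trace term to carry $\eps^{-1}$ while the collision term carries $\eps^{-2}$, so that the factor $\eps^2$ gained from integrating the exponential recovers the claimed powers $\eps$ and $\eps^{24}$. We must also check carefully that the $\Lambda_3$ cancellation survives unchanged, since there $\iota=0$. Everything should be justified at the level of a priori estimates, as in the earlier propositions.
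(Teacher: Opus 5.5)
Your proposal is correct and follows the same route as the paper. The paper only says that the proof is an exact repetition of Proposition~\ref{prop:L2_AprioriBdyLambda12_Hypo_decay}: weight $\MM^{-1}\la v\ra^{-2}(n_1(x)\cdot v)$, exact cancellation on $\Lambda_3$ via specular reflection and $n_1\cdot n_x=0$, then time integration combined with Proposition~\ref{prop:L2_Apriori}. Your explicit $\eps$-bookkeeping and your observation that $\iota=0$ on $\Lambda_3$ keeps $\psi$ out of the boundary computation are precisely the details the paper leaves implicit.
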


\begin{proof}
The proof is an exact repetition of the proof of Proposition~\ref{prop:L2_AprioriBdyLambda12_Hypo_decay} thus we skip it.
\end{proof}


\subsection{Well-posedness}\label{ssec:Hypo_L2WellPosedness}

We now present a well-posedness result, analog to that from Theorem~\ref{theo:ExistenceL2_Hypo_decay}. 

\begin{theo}\label{theo:ExistenceL2}
Assume that either Assumption \ref{item:H1} or Assumption \ref{item:H2} holds and let $f_0 \in  {\HH}$. There is $f\in C(\R_+, {\HH})$ with an associated trace function $\gamma f\in L^2 (\Gamma;\,   \d\xi_2 \dt )$, 
 unique global solution to Equation~\eqref{eq:LPBE} in the following weak sense: for any $\varphi \in \DD_{\SSSS}(\bar \UU)$ there holds 
\begin{multline}\label{eq:renormalizedFormulation_L2SSSS}
\int_{\OO} f(t,\cdot) \, \varphi(t,\cdot) \dx\dv-\int_0^t \int_{\OO^\eps} \eps^{-2} (Kf) \varphi  + f \left( \partial_t \varphi  - \eps^{-1} v\cdot \grad_x \varphi - \eps^{-2} \nu \, \varphi \right) \dx\dv\ds  \\
= \int_{\OO} f_0(\cdot ) \varphi(0,\cdot ) \dx\dv
+ \eps^{-1}  \int_0^t \int_{\Sigma} \gamma g\,  \varphi\,  (n_x\cdot v)  \d\sigma_x\dv\ds,
\end{multline}
for every $t\geq 0$, and where we recall that $\DD_{\SSSS} (\bar \UU)$ has been defined in \eqref{def:DD_SSSS}.
\end{theo}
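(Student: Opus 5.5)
The plan is to mirror the three-step scheme of the proof of Theorem~\ref{theo:ExistenceL2_Hypo_decay}. The only new ingredients are the scaling factors $\eps^{-1},\eps^{-2}$, which are harmless for fixed $\eps$, and the inhomogeneous inflow term $\iota\psi$ in the boundary condition. That term is a fixed source in $L^2(\Sigma_-;\d\xi^1_1)$ by \eqref{eq:Controlpsi}, since $\psi^2\MM^{-1}\lesssim \vartheta_0^2\eps^{24}\la v\ra^4 e^{-|v|^2(1/\Theta^*-1/2)}$ is integrable. Throughout, $\eps\in(0,1)$ is fixed.

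\emph{Step 1 (inflow problem).} For a given $\gggg\in L^2(\Gamma;\dt\,\d\xi^1_1)$, apply \cite[Proposition 8.16]{sanchez2024kreinrutmantheorem} to $\partial_t f=\LLL^\eps f$ with $\gamma_-f=\gggg$. The operator $\LLL^\eps$ is again a transport part plus a bounded $K$ part minus a multiplication by $\nu$, only rescaled by constants, so that result applies verbatim. It gives a unique renormalized solution $f\in C(\R_+,\HH)$ with trace $\gamma f\in L^2(\Gamma;\d\xi^1_1\dt)$.

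\emph{Step 2 (contraction for damped Maxwell data).} For $\alpha\in(0,1)$ and a given $h$, solve the problem with $\gamma_-f=\alpha\RRR_\Theta\gamma_+h+\iota\psi$. Test the renormalized formulation with $\varphi=\mu^2$ from \eqref{def:mu_MM} and $\beta_M(s)=M\wedge s^2$, then let $M\to\infty$.

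The difference $f^1-f^2$ of two such solutions satisfies the homogeneous problem with data $\alpha\RRR_\Theta\gamma_+(h^1-h^2)$, because the $\psi$ contribution cancels. Hence the estimate \eqref{eq:Banach_energy_estimate_Hypo_decay}, with $\kappa$ replaced by $\kappa\eps^{-2}$, shows that $h\mapsto f$ is $\alpha$-Lipschitz in the weighted norm $\sup_t\{e^{-2\kappa\eps^{-2}t}\|f_t\|^2_{L^2_\mu}+\int_0^t e^{-2\kappa\eps^{-2}s}\|\gamma_+f_s\|^2\,\ds\}$. Banach's fixed point theorem then yields a solution $f_\alpha$ on $[0,T]$.

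\emph{Step 3 ($\alpha_k\nearrow1$).} This step needs bounds uniform in $k$. I would repeat the proof of Proposition~\ref{prop:L2_Apriori} at the level of the renormalized formulation with boundary factor $\alpha_k\le1$. The $\psi$ cross terms are handled by the same Young inequality with parameter $\alpha$ as there, which produces the inhomogeneous bound $\|f_{k,t}\|_\HH\lesssim e^{\kappa\eps^{-2}t}\|f_0\|_\HH+(e^{\kappa\eps^{-2}t}-1)\eps^{23/2}\vartheta_0$ uniformly in $k$. Similarly, Propositions~\ref{prop:L2_AprioriBdy} and~\ref{prop:L2_AprioriBdyLambda12} give uniform bounds on $\gamma f_k$ in $L^2(\Gamma_T;\d\xi_2\dt)$.

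Extract weak limits $f_k\wto f$ and $\gamma_\pm f_k\wto\mathfrak f_\pm$. By \cite[Lemma 6.8]{BE_iso} and the argument of \cite[Theorem 4.4]{Mischler2010}, $f$ has a trace and $\gamma_\pm f_k\wto\gamma_\pm f$ in $L^1(\zeta_\SSSS|n_x\cdot v|)$. The bound \eqref{eq:KolmogorovWPL12_Hypo_decay} gives $\RRR_\Theta\gamma_+f_k\wto\RRR_\Theta\gamma_+f$, and $\iota\psi$ is fixed, so the limit satisfies $\gamma_-f=\RRR_\Theta\gamma_+f+\iota\psi$. Passing to the limit in the weak formulation with $\varphi\in\DD_\SSSS(\bar\UU)$ gives \eqref{eq:renormalizedFormulation_L2SSSS}.

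Uniqueness follows by linearity. The difference of two solutions solves the homogeneous problem of Theorem~\ref{theo:ExistenceL2_Hypo_decay} rescaled in $\eps$, whose solutions satisfy Proposition~\ref{prop:L2_Apriori_Hypo_decay}, hence it vanishes. Iterating on the intervals $[nT,(n+1)T]$ gives a global solution.

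The main obstacle I expect is Step 3 in the cylinder. There the trace control is degenerate near $\SSSS$, and one must check that the nonhomogeneous bounds survive the renormalization limit $M\to\infty$ uniformly in $\alpha_k$. I would handle this by using the weight $\mu_1$ together with the orthogonality \eqref{eq:N1normalOrtogonality_Hypo_decay}, exactly as in Case~2 of Proposition~\ref{prop:L2_Apriori_Hypo_decay}.
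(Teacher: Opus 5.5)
Your proposal is correct and is essentially the paper's proof. The paper simply reruns the three steps of the proof of Theorem~\ref{theo:ExistenceL2_Hypo_decay}, taking $\iota\psi$ into account in Step~2 (it cancels in differences, so $h\mapsto f$ stays $\alpha$-Lipschitz). It then uses the inhomogeneous estimates of Section~\ref{sec:Hypo_Pert} (Propositions~\ref{prop:L2_Apriori} and~\ref{prop:L2_AprioriBdy}) for bounds uniform in $\alpha_k$, and concludes with weak limits, the trace lemma, linearity for uniqueness and iteration over $[nT,(n+1)T]$, exactly as you do.
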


\begin{rem}\label{rem:ExistenceL2}
In particular, Theorem~\ref{theo:ExistenceL2} implies the existence of a strongly continuous semigroup $S_{\LLL^\eps} :{\HH}\to {\HH}$ associated to the solutions of Equation~\eqref{eq:LPBE}.
\end{rem}

\begin{proof}
The proof follows exactly the arguments from the proof of Theorem~\ref{theo:ExistenceL2_Hypo_decay}, by further taking into account the presence of the inflow-type term $\psi$ at the boundary during the Step~2 of the proof Theorem~\ref{theo:ExistenceL2_Hypo_decay}. We note though that the ideas and arguments remain in the same spirit using instead the estimates obtained during this section, we thus skip it. 
\end{proof}

\subsection{Proof of Theorem~\ref{theo:L2Decay_pert}} \label{ssec:Hypo_ProofMain}
The proof follows exactly the same density argument used on the proof of Theorem~\ref{theo:Hypo_decay} using  Theorem~\ref{theo:Pert_hypo} together with the trace estimates given by Proposition~\ref{prop:L2_AprioriBdy} for smooth domains, and Proposition~\ref{prop:L2_AprioriBdyLambda12} for cylindrical domains; and the well-posedness result Theorem~\ref{theo:ExistenceL2}. We thus only sketch it, to explicit the dependencies on $\eps$.

Consider $f_0\in \HH$, and $(f_0^n)_{n\in \N }\subset \mathrm{Dom}(\LLL^\eps)$ such that $f_0^n \to f_0$ strongly in $\HH$ as $n\to \infty$. The existence of a strongly continuous semigroup, given by Theorem~\ref{theo:ExistenceL2} and Remark~\ref{rem:ExistenceL2}, implies the existence of a family $(f_t^n)_{t\geq0}\subset \mathrm{Dom}(\LLL^\eps)$ solving Equation~\eqref{eq:LPBE} with initial data $f_0^n$ for every $n\in \N$. Moreover, we may apply Theorem~\ref{theo:Pert_hypo}, and together with the Grönwall lemma we have that there are constants $\kappa, C>0$ such that
\be\label{eq:HypoEquiv_Dom}
\lvvv f_t^n\rvvv_\eps^2  \leq e^{-\kappa t} \lvvv f_0^n \rvvv_\eps^2 \\
+  \vartheta_0'\, C \int_0^t e^{-\kappa (t-s)} \lvv (\iota)^{1/2} (\widetilde{\gamma_+ f_s^n})\rvv_{L^2(\partial \Omega)}^2 \ds  + C\vartheta_0',
\ee
where we recall that the hypocoercivity norm $\lvvv \cdot \rvvv_\eps$ has been defined in \eqref{def:HypoNorm}.
Using then the norm equivalency \eqref{eq:NormEquivalence}, the above inequality yields
\be\label{eq:Hypo_Dom}
\lvv f_t^n \rvv_\HH^2  \leq C e^{-\kappa t} \lvv f_0^n \rvv_\HH^2 \\
+  \vartheta_0' \, C \int_0^t e^{-\kappa (t-s)} \lvv (\iota)^{1/2} (\widetilde{\gamma_+ f_s^n})\rvv_{L^2(\partial \Omega)}^2 \ds + C\vartheta_0' \qquad \forall t\geq 0,
\ee
for some $C'>0$. 
Using now the Cauchy-Schwarz inequality together with Proposition~\ref{prop:L2_AprioriBdy} in the case of smooth domains and Proposition~\ref{prop:L2_AprioriBdyLambda12_Hypo_decay} for cylindrical domains we have that
\be\label{eq:TraceDom}
\int_{\partial \Omega} \iota (\widetilde{\gamma_+ f_s^n})^2 \d\sigma_x \lesssim \int_{\Sigma_+}  (\gamma_+ f_s^n)^2 \la v\ra^{-2} \MM^{-1} \, (n_x\cdot v)^2 \dv \d\sigma_x  \lesssim e^{\eta \, \eps^{-2} \, s} \lvv f_0^n \rvv_\HH^2 , 
\ee
for some $\eta>0$.
Putting together the above estimates we obtain that 
\beqn
\lvv f_t^n \rvv_\HH^2  \leq C' e^{-\kappa t} \lvv f_0^n \rvv_\HH^2 \\
+  \vartheta_0'\, \eps^2 {C' \over \eta}  e^{\eta  \, \eps^{-2} \, t} \lvv f_0^n \rvv_\HH^2 + C'\vartheta_0'.
\eeqn
Let $f\in \HH$ be the solution of Equation~\eqref{eq:LPBE} with initial data $f_0$. Given $T>0$, the above estimate, the fact that strongly in $\HH$ as $n\to \infty$, and the linearity of the problem implies that implies that $f^n\to f$ strongly in $L^\infty((0,T), \HH)$ as $n\to \infty$.

Using the fact that $T>0$ is arbitrary, the above convergence, the trace estimate \eqref{eq:TraceDom}, and Lebesgue's dominated convergence theorem we recover \eqref{eq:Hypo} and \eqref{eq:HypoEquiv} from \eqref{eq:Hypo_Dom} and \eqref{eq:HypoEquiv_Dom} respectively.  
\qed


\section{A priori weighted $L^\infty$ estimates}\label{sec:AprioriLinfty}
We devote this section to  establish the following $L^\infty$ bound on the solutions of Equation~\eqref{eq:PLPBE}. 

\begin{prop}\label{prop:LinftyEstPert} 
Assume that either Assumption \ref{item:H1} or Assumption \ref{item:H2} holds, let $\omega$ be an admissible weight function, let $G:\UU\to \R$ satisfy $\lla G_t\rra_{\OO}=0$ for every $t\geq 0$, and let $f$ be a solution of Equation~\eqref{eq:PLPBE}. There are constructive constants $\eps_{1}, \vartheta_1, C_0, \theta >0$ such that for every $\eps\in (0,\eps_{1})$ and every $\vartheta_0 \in (0, \vartheta_1)$ there holds 
\be
\lVert  f_t \rVert_{L^{\infty}_{\omega}(\bar \OO)} \leq  \varpi_\eps e^{-\theta t} \left(   \lVert f_0\rVert_{L^{\infty}_{\omega}(\OO)} +  \underset{s\in[0,t]}{\sup}\left[ e^{\theta s} \lVert G_s \rVert_{L^{\infty}_{\omega\nu^{-1}}(\OO)}\right]\right)  + C_0 \vartheta_0 ,
\label{eq:LinftyEstPertG}
\ee
for every $t\geq 0$, and some $\varpi_\eps>0$ such that $\varpi_\eps  \to \infty$ as $\eps \to 0$. 
\end{prop}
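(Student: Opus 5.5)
We follow the $L^2$--$L^\infty$ strategy of \cite{BE_iso}, using Theorem~\ref{theo:L2Decay_pert} as the $L^2$ input. The plan has four steps: split $f$ by linearity, bound the inflow-driven part, bound the homogeneous-boundary part by duality, and recombine.

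\textbf{Step 1: splitting.} Write $f = f^1 + f^2$, where $f^1$ solves Equation~\eqref{eq:PLPBE} with the source $\eps^{-2}G$, the initial datum $f_0$ and the homogeneous Maxwell condition $\gamma_- f^1 = \RRR_\Theta\gamma_+ f^1$. The remainder $f^2$ solves \eqref{eq:LPBE} with zero initial datum and the inflow term $\iota\psi$. Both pieces have zero mass: for $f^1$ because $\lla G_t\rra_\OO = 0$ and the boundary flux vanishes by \eqref{eq:ConservationMassBdy}; for $f^2$ because $\lla\psi\rra_{\Sigma_-} = 0$.

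\textbf{Step 2: the inflow part $f^2$.} We write $\omega f^2$ along backward characteristics, with each reflection at the boundary modelled by the iterated Maxwell operator $\RRR_\Theta$. At each bounce the Duhamel formula generates three kinds of terms. The first is the damping $e^{-\eps^{-2}\nu s}$. The second is the nonlocal term $\eps^{-2}K$, which we iterate once; the doubly iterated kernel is then cut into a compact velocity piece plus a small remainder. The third is the inflow source $\iota\psi$, which \eqref{eq:Controlpsi} bounds by $\vartheta_0\eps^{12}\la v\ra^2 e^{-|v|^2/(2\Theta^*)}$. This is finite in $L^\infty_\omega$ precisely because $\zeta < 1/(2(1+\vartheta_0))$.

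We need a uniform bound on the diffusive reflection, $\|\omega\MMM_\Theta\|_{L^1(\la v\ra)}$ times $\widetilde{\omega^{-1}}$, and this uses \eqref{eq:ConditionsTheta} together with the admissibility window for $\zeta$. Under Assumption~\ref{item:H1} we use $\iota \ge \iota_0$ so that a stochastic cycle of $k$ bounces carries a geometric factor $(1-\iota_0)^k$. Under Assumption~\ref{item:H2} we use the specular-on-$\Lambda_3$ / diffusive-on-bases structure, exactly as in \cite{BE_iso}, to show that trajectories with too many bounces have small measure.

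The compact piece of the $K$-iteration is converted, via the change of variables $v' \mapsto y = x - \eps^{-1}(t-s)v'$, into an $L^2$ norm of $f^2$. It is therefore bounded by the $L^2$ estimate \eqref{eq:Hypo} with $f_0 = 0$. The trace term appearing in \eqref{eq:Hypo} is closed using Propositions~\ref{prop:L2_AprioriBdy}--\ref{prop:L2_AprioriBdyLambda12}. The Jacobian of the change of variables costs a power of $\eps$, roughly $\eps^{-3}$ from $dy$ and $\eps^{-4}$ from the two $\eps^{-2}K$ factors, together with $\eps^{-2}$ per time integral. These losses are absorbed by the $\eps^{11/2}$ in \eqref{eq:Hypo} and the $\eps^{12}$ in $\psi$. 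This absorption is the reason $\qqqq = 12$ makes the constant in front of $\vartheta_0$ uniform in $\eps$, giving $\|f^2_t\|_{L^\infty_\omega} \le C_0\vartheta_0$.

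\textbf{Step 3: the homogeneous part $f^1$.} For $f^1$ we do not reprove decay from scratch. The plan is to import \cite[Section~4]{BE_iso}, namely the $L^\infty$ estimate for the isothermal problem with source $G$ and exponential weight $e^{\theta s}$. We treat the difference $(\MMM_\Theta - \MMM_1)\widetilde{\gamma_+ f^1}$ as a boundary perturbation of size $\vartheta_0\eps^{12}$ by \eqref{eq:PertTempBdy}. The required $L^2$ decay for $f^1$ is supplied by Theorem~\ref{theo:L2Decay_pert} through the norm \eqref{eq:HypoEquiv}, applied on finite windows $[nT,(n+1)T]$ as in the proof of Theorem~\ref{theo:Hypo_decay}. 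The source $G$ is handled by a Duhamel formula in $L^2$, using $\|G\|_{L^2} \lesssim \|G\|_{L^\infty_{\omega\nu^{-1}}}$.

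This route produces the prefactor $\varpi_\eps$, which is polynomial or exponential in $\eps^{-1}$ through the windows and Jacobians. It also imposes smallness of $\vartheta_0$ and $\eps$: we choose $\vartheta_1$ so that the perturbation $\vartheta_0\eps^{12}\varpi_\eps$-type terms are at most $1/2$ and can be absorbed.

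\textbf{Main obstacle.} The step I expect to be hardest is the treatment of the many-bounce region under Assumption~\ref{item:H2}, combined with non-isothermal diffusion. Each diffusive bounce renormalises the weight by $\int\omega\MMM_\Theta\,\omega^{-1}$-type factors that are no longer exactly $1$. We need their product over $k$ bounces to remain $\le C^k$ with the small-measure estimate still winning. My first attempt would be to take $\zeta$ inside the stated window so that these factors are $1 + O(\vartheta_0)$, and then choose $k \sim \log(1/\eps)$ bounces.

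Finally, combining Steps 2 and 3 by the triangle inequality gives \eqref{eq:LinftyEstPertG}.
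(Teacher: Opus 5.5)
\textbf{Gap: how you close the boundary term.} The term in question is
$\vartheta_0^{1/2}\eps^{11/2}\bigl(\int_0^t e^{-2\kappa(t-s)}\lVert \iota^{1/2}\widetilde{\gamma_+ f_s}\rVert^2_{L^2(\partial\Omega)}\,\mathrm{d}s\bigr)^{1/2}$ in \eqref{eq:Hypo}.

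In Step~2 you bound it with Propositions~\ref{prop:L2_AprioriBdy}--\ref{prop:L2_AprioriBdyLambda12}. In Step~3 you get $L^2$ decay of $f^1$ from the window argument of Theorem~\ref{theo:Hypo_decay}, which rests on the same trace bounds. But \eqref{eq:A_priori_bound_Bdy} and \eqref{eq:A_priori_bound_BdyLambda12} grow like $e^{\kappa\eps^{-2}t}$.

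Used on all of $[0,t]$, they give no time-uniform control. So neither $\sup_t\lVert f^2_t\rVert_{L^\infty_\omega}\le C_0\vartheta_0$ nor decay of $f^1$ follows.

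Used on windows of length $T$, with $T$ independent of $\eps$ because the rate in \eqref{eq:Hypo} is $e^{-\kappa t}$, the argument of Theorem~\ref{theo:Hypo_decay} needs roughly $\vartheta_0\,\eps^{13}e^{c\,\eps^{-2}T}\ll e^{-\kappa T}$. That means $\vartheta_0$ must be exponentially small in $\eps^{-2}$. The statement requires $\vartheta_1$ independent of $\eps$ (it is quantified before $\eps$), so this route cannot prove it. The paper explicitly says it avoids this exponential dependence.

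\textbf{Missing idea: control the trace by the quantity you are estimating.} The norm $\lVert f_t\rVert_{L^\infty_\omega(\bar\OO)}$ in \eqref{def:NormBarOOeps} contains $\lVert\gamma f_t\rVert_{L^\infty_\omega(\Sigma)}$. Moreover, $\lVert\iota^{1/2}\widetilde{\gamma_+ f_s}\rVert_{L^2(\partial\Omega)}\lesssim\lVert\gamma f_s\rVert_{L^\infty_\omega(\Sigma)}$; after rescaling this is \eqref{eq:ControlBoundaryInfty}, at only a polynomial cost $\eps^{-3}$.

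The paper works on the full equation without splitting:
\begin{enumerate}
\item It inserts this bound in Theorem~\ref{theo:L2Decay_pert_rescaled}, giving \eqref{eq:HypoPerturbed}.
\item It feeds that into the finite-window $L^2$--$L^\infty$ estimate of Proposition~\ref{prop:LinftyEst_rescaled}. There the trace contribution is $C_1C_2\,\vartheta_0^{1/2}\eps^{5}T^{1/2}\sup_{s}e^{\kappa\eps^2 s}\lVert h^\eps_s\rVert_{L^\infty_\omega(\bar\OO^\eps)}$. It is absorbed as soon as $\vartheta_0<(2C_1C_2T^{1/2})^{-2}$, a threshold independent of $\eps$.
\item Long-time decay comes from iterating, over windows of rescaled length $T$, the coupled quantity $Z=Y+\beta X$. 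Here $X$ is the weighted $L^\infty$ norm, $Y$ the hypocoercive norm, $\beta\sim\eps^2$, and the contraction factor is $\delta=\frac12(1+e^{-\theta\eps^2T})$.
\item The geometric sums produce $\varpi_\eps$. They also keep the inflow contribution $\eps^4\vartheta_0^{1/2}$ bounded, because $\eps^4/\bigl((e^{\theta\eps^2T}-1)(1-e^{-\theta\eps^2T})\bigr)\lesssim 1$.
\end{enumerate}
That compensation, not the Jacobian count in your Step~2, is where the power of $\eps$ in $\Theta$ is used.

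With this closure your splitting $f=f^1+f^2$ is harmless but unnecessary. The many-bounce issue you flag is not the crux either: the finite-time step is taken from \cite{BE_iso}, with $\psi$ handled by \eqref{eq:Controlpsi}.
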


\begin{rem}
Here and during the sequel (at least when working at the level of a priori estimates) we use the notation
\be\label{def:NormBarOOeps}
\lVert  f_t \rVert_{L^{\infty}_{\omega}(\bar \OO)} := \lVert  f_t \rVert_{L^{\infty}_{\omega}(\OO)} + \lVert \gamma f_t \rVert_{L^{\infty}_{\omega}(\Sigma)}, 
\ee
for any admissible weight function $\omega$, and we recall that $L^{\infty}_{\omega}(\Sigma) :=L^{\infty}(\Sigma, \omega(v) \, \dv\d\sigma_x )$.
\end{rem}

\begin{rem}
The computations leading to the proof of Proposition~\ref{prop:LinftyEstPert} use the stretching method developed in \cite{GuoZhou2024} (see also the previous version \cite{GuoZhou18}), and subsequently applied and generalized in \cite{BE_iso} to cylindrical domains. 
\end{rem}

This section is structured as follows: First we perform an useful change of variables and, in this rescaled framework, we repeat the computations from \cite{BE_iso} to provide a $L^2-L^\infty$ type of control. 
Using this and the results from Section~\ref{sec:Hypo_Pert} we prove then Theorem~\ref{prop:LinftyEstPert} following the ideas used during the proof of \cite[Proposition 3.1]{BE_iso}.

\subsection{Transformation of the problem} \label{ssec:LinRes}
We take $t= \eps^2 \tau$, $x=\eps y $, and we introduce the stretched domains $\Omega^{\varepsilon}:=\{\varepsilon^{-1} x, \, x\in\Omega\}$, $\OO^\eps= \Omega^\eps \times \R^3$, and $\UU^\eps := \R_+ \times \OO^\eps$. 

Moreover, we observe that, since for any $y\in \partial \Omega^\eps$ we have that $x = \eps y\in \partial \Omega$, we may define $\delta^\eps:\R^3 \to \R$, $\delta^\eps(y):= \delta(x)$, and we note that the following holds almost everywhere
\be\label{eq:Equality_normal_eps}
n(x) = -{\grad_x \delta (x)\over \lv \grad_x \delta (x)\rv } = -{\grad_x \delta (\eps y)\over \lv \grad_x \delta (\eps y)\rv } = -{\grad_y [\delta (\eps y)]\over \lv \grad_y [\delta (\eps y)] \rv } = -{\grad_y \delta^\eps (y)\over \lv \grad \delta^\eps (y)\rv } =: n(y) = n_y,
\ee
which is nothing but saying that the normal vector on a rescaled point of the boundary set $\partial\Omega^\eps$ coincides with the one of the corresponding point on the original boundary set $\partial\Omega$. 
We define then $\Sigma^\eps:= \partial \Omega^\eps\times \R^3$ and we define accordingly the sets 
$$
\Sigma^\eps_\pm:= \{(y,v)\in \Sigma^\eps, \, \pm n_y \cdot v >0\}, \quad \Sigma_0^\eps:= \{(y,v)\in \Sigma^\eps, \,  n_y \cdot v =0\},
$$
and $\Gamma_\pm^\eps:= (0,\infty) \times \Sigma^\eps_\pm$. 

\smallskip
Let then $f$ be a solution of Equation~\eqref{eq:PLPBE}, we introduce the functions
$$ 
h^\eps (\tau, y, v) :=  f(\eps^2 \tau , \eps y ,v)  = f(t,x,v) , 
$$ 
so that $h^\eps$ satisfies the eqaution
\be
	\left\{\begin{array}{rlll}
		\partial_{\tau} h^\eps &=& -v\cdot \grad_y h^\eps -\nu h^\eps + K h^\eps +   G^\eps &\text{ in }\UU^\eps\\
		\gamma_- h^\eps &=& (1-\iota^\eps) \SSS \gamma_+h^\eps + \iota^\eps  \DDD_{\Theta^\eps} \gamma_+h^\eps  + \iota^\eps \psi^\eps&\text{ on }\Gamma_{-}^\eps\\
		 h^\eps_{\tau=0}(y,v)&=&   f_0 (\eps y, v)  &\text{ in }\OO^\eps,
	\end{array}\right.\label{eq:RPLPBE}
\ee
where we have defined $G^\eps$, $\iota^\eps$, $\Theta^\eps$, and $\psi^\eps$ as follows
$$ 
\phi^\eps = \phi^\eps (\tau, y, v) := \phi(\eps^2 \tau , \eps y ,v)  = \phi(t,x,v).
$$ 
Finally, we translate the geometrical assumptions \ref{item:H1} and \ref{item:H2} into the rescaled setting:
\begin{enumerate}[leftmargin=*]
\item[(RH1)]\label{item:RH1} $\Omega^\eps\subset\R^3$ is an open $C^2$ domain, and $\delta^\eps \in C^{2}(\R^3, \R) \cap W^{3, \infty}(\R^3, \R)$. Moreover, $\iota^\eps\in C(\partial\Omega)$ and such that for every $y\in \partial \Omega^\eps$, $\iota^\eps(y) \in [\iota_0, 1]$ with $\iota_0 \in (0,1]$.

\item[(RH2)]\label{item:RH2} $\Omega^\eps =  (-L^\eps, L^\eps) \times \Omega_0^\eps$, with $L^\eps:= \eps^{-1} L$ and $\Omega_0^\eps:= \eps^{-1} \Omega_0$, i.e. is the 2-dimensional ball of radius $\eps^{-1}\RRRR$ centered at the origin. We also define 
\begin{equation*}
    \Lambda_1^\eps:= \{-L^\eps\}\times\Omega_0^\eps,\quad
    \Lambda_2^\eps:= \{L^\eps\}\times\Omega_0^\eps, \quad
    \Lambda_3^\eps:= (-L^\eps,L^\eps)\times \partial \Omega_0^\eps ,\label{eq:cylinderDefinition}
\end{equation*}
and $\Lambda^\eps:= \Lambda_1^\eps\cup \Lambda_2^\eps\cup \Lambda_3^\eps$. Moreover, we take $\iota= \Ind_{\Lambda_1^\eps \cup \Lambda_2^\eps}$, and we define the rescaled singular set at the boundary
\be\label{def:SSSS_eps}
\begin{aligned}
\SSSS^\eps&:=(\overline{\Lambda_1^\eps} \cap \overline{\Lambda^\eps_3}) \cup (\overline{\Lambda_2^\eps} \cap \overline{\Lambda^\eps_3}).
\end{aligned}
\ee
\end{enumerate}

We devote the rest of this section to prove an equivalent version of Proposition~\ref{prop:LinftyEstPert} for this new framework.
\begin{prop}\label{prop:LinftyEstPertRescaled} 
Assume that either Assumption \ref{item:RH1} or Assumption \ref{item:RH2} holds, let $\omega$ be an admissible weight function, let $G^\eps :\UU^\eps\to \R$ satisfying $\lla G_\tau^\eps \rra_{\OO}=0$ for every $\tau\geq 0$, and let $h^\eps$ be a solution of Equation~\eqref{eq:RPLPBE}. There are constructive constants $\eps_{1}, \vartheta_1, \theta >0$ such that for every $\eps\in (0,\eps_{1})$ and every $\vartheta_0 \in (0, \vartheta_1)$ there holds 
\beqn
\lVert  h_\tau^\eps \rVert_{L^{\infty}_{\omega}(\bar \OO^\eps)} \leq  \varpi_\eps e^{-\theta \eps^{2} \tau } \left(   \lVert h_0^\eps \rVert_{L^{\infty}_{\omega}(\OO^\eps)} +  \underset{s\in[0,\tau]}{\sup}\left[ e^{\theta \eps^2 s} \lVert G_s^\eps \rVert_{L^{\infty}_{\omega\nu^{-1}}(\OO^\eps )}\right]\right)  + C_0 \vartheta_0 ,
\label{eq:LinftyEstPert}
\eeqn
for every $t\geq 0$, and some universal constant $C_0 >0$, and $C_\eps>0$ such that $C_\eps  \to \infty$ as $\eps \to 0$. 
\end{prop}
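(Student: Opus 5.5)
We follow the strategy of \cite[Proposition 3.1]{BE_iso}: split the rescaled solution into a part controlled by an $L^\infty$--$L^\infty$ estimate on a finite time interval, and a remainder controlled in $L^2$ and then upgraded to $L^\infty$.

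\textbf{Step 1 (mild formulation).} We write $h^\eps$ along the generalized stochastic characteristics associated with $-v\cdot\grad_y-\nu$ and the Maxwell reflection $\RRR_{\Theta^\eps}$. Following the stretching method of \cite{GuoZhou2024} (and the cylindrical adaptation of \cite{BE_iso}, which uses that the bases $\Lambda^\eps_{1,2}$ carry the diffusive part), iterating the Duhamel formula twice in $K$ gives
\[
\lvv h^\eps_\tau\rvv_{L^\infty_\omega} \lesssim e^{-\nu_0\tau/2}\lvv h_0^\eps\rvv_{L^\infty_\omega} + \sup_{s\le\tau}\lvv G^\eps_s\rvv_{L^\infty_{\omega\nu^{-1}}} + \text{(boundary source from } \iota^\eps\psi^\eps) + \text{small}\cdot\sup_{s\le \tau}\lvv h^\eps_s\rvv_{L^\infty_\omega} + C_T\int_0^\tau \lvv h^\eps_s\rvv_{L^2_y(\Omega^\eps)\HH_v}\ds .
\]
The new ingredient compared with \cite{BE_iso} is that the diffusive kernel now involves $\MMM_{\Theta^\eps}$ rather than $\MMM_1$. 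Since $\omega\MMM_\Theta\in L^\infty$ for admissible $\omega$ and $\Theta\in[\Theta_*,\Theta^*]$, the weighted norm of the reflection operator stays below $1+O(\vartheta_0)$, so the argument of \cite{BE_iso} still applies once $\vartheta_0<\vartheta_1$. The inflow term $\psi$ is bounded by \eqref{eq:Controlpsi}, giving a contribution $\lesssim \vartheta_0\eps^{12}$, uniformly along each characteristic.

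\textbf{Step 2 (the $L^2$ term).} We undo the scaling: $\lvv h^\eps_s\rvv_{L^2(\OO^\eps)}=\eps^{-3/2}\lvv f_{\eps^2 s}\rvv_{\HH}$ up to weights, using $\omega^{-1}\lesssim \MM^{1/2}$. We then apply Theorem~\ref{theo:L2Decay_pert}. The trace integral appearing there is bounded by the diffusive-boundary trace estimates of Propositions~\ref{prop:L2_AprioriBdy} and~\ref{prop:L2_AprioriBdyLambda12} on the finite window. Equivalently, we bound $\lvv(\iota)^{1/2}\widetilde{\gamma_+ f}\rvv_{L^2}$ by $\lvv\gamma f\rvv_{L^\infty_\omega}$, which is absorbed with a factor $\vartheta_0^{1/2}\eps^{11/2}$.

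The $G$ forcing is handled as in \cite{BE_iso} through Duhamel in $L^2$, using $\lla G\rra=0$; this works because the decay of Theorem~\ref{theo:L2Decay_pert} concerns zero-mass data. Every $\vartheta_0$-term then carries a power at least $\eps^{11/2}\cdot\eps^{-3/2}\cdot\eps^{-?}$. Here is where $\qqqq=12$ is used: the loss of $\eps$ powers from rescaling and from the $L^2\to L^\infty$ step is at most $\eps^{-11/2}$, so these terms become $C_0\vartheta_0$, uniformly in $\eps$.

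\textbf{Step 3 (closing).} We fix $T_0$ (in $\tau$) so that $e^{-\nu_0T_0/2}\varpi\le \frac12 e^{-\theta\eps^2 T_0}$, with $\theta$ smaller than the $L^2$ rate $\kappa$. Step 2 turns the $L^2$ integral into $C_\eps e^{-\kappa\eps^2\tau}\bigl(\lvv h_0^\eps\rvv_{L^\infty_\omega}+\text{(G term)}\bigr)+C_0\vartheta_0$. We then iterate on intervals $[nT_0,(n+1)T_0]$, as in \cite[Proposition 3.1]{BE_iso}, which produces $e^{-\theta\eps^2\tau}$ with a constant $\varpi_\eps$ that blows up as $\eps\to0$. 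The constant term does not accumulate, because it is already uniform in time in \eqref{eq:Hypo}.

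\textbf{Main obstacle.} The hard step is the $L^2\to L^\infty$ bootstrap for the non-isothermal diffusive reflection in Step 1. The stochastic-cycle measure now changes at every bounce, through the weights $\MMM_{\Theta^\eps(y_k)}$, so two properties must be re-proved: the smallness of the set of trajectories with many bounces, and the $L^\infty_\omega$ contractivity of the reflection. Both follow from $\omega\MMM_{\Theta}\le \omega\MMM_1(1+O(\vartheta_0))$ together with \eqref{eq:PertTempBdy}. I would first check this bound uniformly for $\zeta$ in the admissible range, and then rerun the cycle counting of \cite{BE_iso} with $(1+C\vartheta_0)^k$ factors, choosing $\vartheta_1$ so that these factors stay bounded for the number of bounces $k\lesssim T_0$.
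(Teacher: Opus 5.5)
The gap is in the closing step, which is where this proposition is nontrivial. Your ingredients are the right ones: the finite-window $L^2$--$L^\infty$ estimate of \cite{BE_iso} with the inflow handled by \eqref{eq:Controlpsi}, and the rescaled hypocoercive estimate with its boundary term bounded by the $L^\infty_\omega$ norm of the trace. But the way they are combined is asserted rather than proved, and Step~3 as written is circular.

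Because the boundary is non-isothermal, the $L^2$ bound \eqref{eq:HypoRescaled} is not closed. Its right-hand side contains $\vartheta_0^{1/2}\eps^{13/2}(\int_0^\tau e^{-2\kappa\eps^2(\tau-s)}\|(\iota)^{1/2}\widetilde{\gamma_+h^\eps_s}\|^2_{L^2(\partial\Omega^\eps)}\,\ds)^{1/2}$. This can only be bounded by the $L^\infty_\omega$ norm of $\gamma h^\eps$ on $[0,\tau]$, and that norm is itself controlled by the $L^2$ norm through Proposition~\ref{prop:LinftyEst_rescaled}. So Step~2 cannot ``turn the $L^2$ integral into $C_\eps e^{-\kappa\eps^2\tau}(\cdots)+C_0\vartheta_0$'' before the two estimates are coupled. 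Iterating ``as in \cite[Proposition 3.1]{BE_iso}'' does not address this, since there the $L^2$ decay is autonomous.

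The paper couples the two estimates through $Z_\tau=Y_\tau+\beta X_\tau$. Here $Y$ is the exponentially weighted hypocoercive norm, $X$ the exponentially weighted $L^\infty_\omega(\bar\OO^\eps)$ norm, and $\beta=(e^{\theta\eps^2T}-1)/(2C_T)$. One period then contracts by $\delta=\frac12(1+e^{-\theta\eps^2T})<1$, provided $C_TC_2T^{1/2}\vartheta_0^{1/2}\eps^5\le 1-e^{-\theta\eps^2T}\sim\eps^2$, which is an $\eps$-uniform condition.

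A global bootstrap would also close it. Set $\mathcal X_\tau:=\sup_{s\le\tau}e^{\theta\eps^2 s}\|h^\eps_s\|_{L^\infty_\omega(\bar\OO^\eps)}$ with $\theta<\kappa$.
\begin{itemize}
\item The trace term at time $s\le\tau$ is then at most $C\vartheta_0^{1/2}\eps^4e^{-\theta\eps^2 s}\mathcal X_\tau$: a factor $\eps^{-3/2}$ comes from \eqref{eq:ControlBoundaryInfty} and a factor $\eps^{-1}$ from the time integral.
\item Applying Proposition~\ref{prop:LinftyEst_rescaled} on consecutive windows of length $T$ gives $\mathcal X_\tau\le C_T\eps^{-3/2}\|h^\eps_0\|_{L^\infty_\omega}+(\text{terms in }G)+C_T\vartheta_0^{1/2}\eps^4\mathcal X_\tau+C_T\vartheta_0^{1/2}\eps^4e^{\theta\eps^2\tau}$.
\item The $\mathcal X_\tau$ term on the right is absorbed with $\vartheta_1$ independent of $\eps$.
\end{itemize}
Only in this version is your remark that the constant term ``does not accumulate'' true. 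In a period-by-period iteration like the paper's, the constant is summed over periods and divided by $\beta$, at a cost of $\eps^{-4}$. Either way, this is where the bookkeeping you left as ``$\eps^{-?}$'' has to be done, since it decides whether $\vartheta_1$ and $C_0$ are uniform in $\eps$.

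Three smaller points would also fail as written.
\begin{itemize}
\item The trace bounds of Propositions~\ref{prop:L2_AprioriBdy} and~\ref{prop:L2_AprioriBdyLambda12} are not an equivalent substitute for the $L^\infty$ bound. They grow like $e^{\kappa\eps^{-2}t}$, so they cannot be inserted into the global-in-time \eqref{eq:Hypo} beyond a window of length $O(\eps^2)$ in $t$ without forcing $\vartheta_0$ to be exponentially small in $\eps^{-2}$, which is exactly what the paper avoids.
\item The forcing $G$ cannot be treated by Duhamel against Theorem~\ref{theo:L2Decay_pert}. That theorem concerns the affine problem with inflow $\psi$, and its bound contains the trace term, so it is not a decay estimate for a linear semigroup. $G$ has to enter the energy inequality directly via Young's inequality, as in Theorem~\ref{theo:L2Decay_pert_rescaled}.
\item Read pointwise, $\omega\MMM_\Theta\le\omega\MMM_1(1+O(\vartheta_0))$ is false when $\Theta>1$, because $\MMM_\Theta/\MMM_1=\Theta^{-2}e^{|v|^2(1-\Theta^{-1})/2}$ is unbounded. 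What holds is the additive bound $\|\omega(\MMM_\Theta-\MMM_1)\|_{L^\infty}\lesssim\eps^{12}\vartheta_0$, from \eqref{eq:PertTempBdy} and $\zeta<1/(2\Theta^*)$. That bound suffices to treat $\iota(\MMM_\Theta-\MMM_1)\widetilde{\gamma_+h^\eps}$ as a small boundary source along the cycles.
\end{itemize}
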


\subsection{$L^2$ estimate in the rescaled framework}
Following the ideas developed during Section~\ref{sec:Hypo_Pert} we obtain the following $L^2$ estimate in the spirit of Theorem~\ref{theo:L2Decay_pert}.
\begin{theo}\label{theo:L2Decay_pert_rescaled}
Assume that either Assumption \ref{item:RH1} or Assumption \ref{item:RH2} holds, and consider a function $G^\eps:\UU^\eps\to \R$ satisfying $\lla G^\eps_\tau\rra_{\OO}=0$ for every $\tau\geq 0$ . There are constructive constants $\kappa>0$ and $C\geq 1$ such that for any $h^\eps$ solution of Equation~\eqref{eq:RPLPBE} there holds
\begin{multline}
    \lVert h^\eps_\tau \rVert_{ \HH^\eps}  \lesssim_C e^{-\kappa \eps^2 \tau }\lVert h_0^\eps \rVert_{ \HH^\eps} 
    +     \vartheta_0^{1/2}   \eps^{13/2} \left( \int_0^\tau e^{-2\kappa \eps^2  (\tau-s)} \left\lVert  (\iota)^{1/2} (\widetilde{\gamma_+ h^\eps_s} ) \right\rVert^2_{L^2(\partial \Omega^\eps)}  \ds \right)^{1/2} \\
    +\eps^{-1} \left( \int_0^\tau e^{-2\kappa \eps^2 (\tau-s)} \lvv G^\eps_s\rvv_{\HH^\eps}^2 \ds \right)^{1/2} +  \eps^4 \vartheta_0^{1/2}    , \label{eq:HypoRescaled}
\end{multline}
for every $\tau\geq 0$. Furthermore, there is a norm $\lvvv \cdot \rvvv_\eps$ equivalent to the usual norm of $\HH^\eps$ uniformly in $\eps$, i.e. there is a constant $c>0$ independent of $\eps$ such that
\be\label{eq:ClassicHypoEquivalenceRescaled}
c^{-1} \lvv h \rvv_{\HH^\eps} \leq \lvvv h\rvvv_\eps \leq c \lvv h \rvv_{\HH^\eps},
\ee
for which there holds
\begin{multline}
 \lvvv h^\eps_\tau  \rvvv_\eps \leq e^{-\kappa t}\lvvv h^\eps_0 \rvvv_\eps
+  C^\star\,   \vartheta_0^{1/2} \eps^{13/2} \left(  \int_0^\tau e^{-2\kappa \eps^2  (\tau-s)} \left\lVert  (\iota)^{1/2} (\widetilde{\gamma_+ h^\eps_s} ) \right\rVert^2_{L^2(\partial \Omega^\eps )} \ds \right)^{1/2} \\
    +\eps^{-1} C^\star\left( \int_0^\tau e^{-2\kappa \eps^2 (\tau-s)} \lvv G^\eps_s\rvv_{\HH^\eps}^2 \ds \right)^{1/2}  + \vartheta_0^{1/2}  \, \eps^4 \, C^\star ,  \label{eq:HypoEquivRescaled}
\end{multline}
for every $\tau\geq 0$, and some constant $C^\star>0$.
\end{theo}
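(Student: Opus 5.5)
The plan is to deduce Theorem~\ref{theo:L2Decay_pert_rescaled} from Theorem~\ref{theo:L2Decay_pert} by undoing the change of variables $t=\eps^2\tau$, $x=\eps y$, keeping track of two extra features: the source term $G^\eps$, and the powers of $\eps$ introduced by rescaling the $L^2$ norms. First I would define $f(t,x,v):=h^\eps(\eps^{-2}t,\eps^{-1}x,v)$. Then $f$ solves Equation~\eqref{eq:PLPBE} with source $\eps^{-2}G$, where $G(t,x,v)=G^\eps(\eps^{-2}t,\eps^{-1}x,v)$, and with the boundary data of \eqref{eq:LPBE}. Since $\lla G_t\rra_\OO=0$, mass is still conserved, so the hypocoercive machinery applies.

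Next I would redo the proof of Theorem~\ref{theo:L2Decay_pert} with the source term present. The functional $\lla\cdot,\cdot\rra_\eps$ of \eqref{eq:hypocNorm} and Theorem~\ref{theo:Pert_hypo} are unchanged. The only new term in $\frac{\d}{\dt}\lvvv f\rvvv_\eps^2$ is $2\eps^{-2}\lla G,f\rra_\eps$. I would bound it with Cauchy--Schwarz and the norm equivalence \eqref{eq:NormEquivalence}: it is at most $\frac{\kappa}{2}\lvvv f\rvvv_\eps^2+C\eps^{-4}\lVert G\rVert_\HH^2$. This uses the continuity of $M_p,M_q,\varrho,\mu,\EE$ on $\HH$ from \eqref{eq:ControlMpMq_Hypo_decay} and the elliptic estimates. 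One could be sharper by using that the macroscopic part of $\CCC$ vanishes, but a crude bound should be enough here. Grönwall then gives the $L^2$ inequality \eqref{eq:Hypo} with the additional term $\eps^{-2}\big(\int_0^t e^{-2\kappa(t-s)}\lVert G_s\rVert_\HH^2\ds\big)^{1/2}$.

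Then I rescale the norms. The relations $\lVert f_t\rVert_{\HH}=\eps^{3/2}\lVert h^\eps_\tau\rVert_{\HH^\eps}$ and $\lVert\iota^{1/2}\widetilde{\gamma_+f}\rVert^2_{L^2(\partial\Omega)}=\eps^{2}\lVert\iota^{1/2}\widetilde{\gamma_+h^\eps}\rVert^2_{L^2(\partial\Omega^\eps)}$ hold, and $\dt=\eps^2\d\tau$. Dividing the whole inequality by $\eps^{3/2}$ produces the following exponents.
\begin{itemize}
\item The boundary term becomes $\vartheta_0^{1/2}\eps^{11/2}\cdot\eps\cdot\eps\cdot\eps^{-3/2}\,(\cdots)^{1/2}$.
\item The source term becomes $\eps^{-2}\cdot\eps\cdot\eps^{3/2}\cdot\eps^{-3/2}=\eps^{-1}$, which matches \eqref{eq:HypoRescaled}.
\item The constant term becomes $\vartheta_0^{1/2}\eps^{11/2-3/2}=\vartheta_0^{1/2}\eps^4$, which also matches.
\end{itemize}
The boundary power gives $\eps^{7}$, better than the stated $\eps^{13/2}$, so the claimed bound follows since $\eps\le1$. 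Finally, defining $\lvvv h\rvvv_\eps:=\eps^{-3/2}\lvvv f\rvvv_\eps$ gives the equivalence \eqref{eq:ClassicHypoEquivalenceRescaled} with the same constant, and \eqref{eq:HypoEquivRescaled} follows in the same way. The exponent $e^{-\kappa t}$ there should be read as $e^{-\kappa\eps^2\tau}$.

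The main obstacle is justifying the source-term estimate rigorously at the weak-solution level. This requires well-posedness of \eqref{eq:PLPBE} with $G\in L^2$, obtained by adapting Theorem~\ref{theo:ExistenceL2} and a Duhamel/density argument. The trace bounds of Propositions~\ref{prop:L2_AprioriBdy} and~\ref{prop:L2_AprioriBdyLambda12} must also acquire an $\eps^{-4}\int\lVert G\rVert^2$ contribution, so that the approximation argument of Subsection~\ref{ssec:Hypo_ProofMain} still passes to the limit. I would handle this by repeating the renormalized computations with the added source term, which the Grönwall argument absorbs.
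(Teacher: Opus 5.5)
\textbf{There is a genuine gap, in the exponent of the boundary term.}

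Your strategy is essentially what the paper's one-line proof does: rerun the proof of Theorem~\ref{theo:L2Decay_pert} with the extra term $2\eps^{-2}\lla G,f\rra_\eps$ absorbed by Young, then conjugate by $t=\eps^2\tau$, $x=\eps y$. Most of your bookkeeping is right: $\lVert f_t\rVert_\HH=\eps^{3/2}\lVert h^\eps_\tau\rVert_{\HH^\eps}$, $\d\sigma_x=\eps^2\d\sigma_y$, $\dt=\eps^2\d\tau$, and the source and constant terms come out as $\eps^{-1}$ and $\eps^4\vartheta_0^{1/2}$.

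The error is in the boundary term: $\frac{11}{2}+1+1-\frac32=6$, not $7$. Rescaling \eqref{eq:Hypo} therefore puts $\vartheta_0^{1/2}\eps^{6}$ in front of the trace integral. Since $\eps^6\ge\eps^{13/2}$ for $\eps\le1$, your final step ("follows since $\eps\le1$") runs the wrong way. As written, the argument does not prove \eqref{eq:HypoRescaled}. Separately, you tacitly need $\lla h^\eps_0\rra_{\OO^\eps}=0$: mass is conserved, so without it no decay can hold. The paper assumes this implicitly too.

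This cannot be repaired by bookkeeping.
\begin{itemize}
\item Working directly in the stretched variables gives the same result. The microscopic estimate produces $-C\vartheta_0\eps^{12}\lVert\iota^{1/2}\widetilde{\gamma_+h^\eps}\rVert^2_{L^2(\partial\Omega^\eps)}$ against a decay rate $\eps^2$, hence again $\vartheta_0^{1/2}\eps^6$.
\item Rebalancing Young's inequality in $\TT_{2,2}$ with a parameter $a>0$ only trades the boundary coefficient against the constant one. After rescaling these are $\vartheta_0^{1/2}\eps^6a^{1/2}$ and $\vartheta_0^{1/2}\eps^4a^{-1/2}$, whose product is fixed at $\vartheta_0\eps^{10}$. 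The pair $(\eps^{13/2},\eps^4)$ would need $\vartheta_0\eps^{21/2}$.
\end{itemize}
The exponent $13/2$ in the statement corresponds to scaling the boundary measure by $\eps^3$ instead of $\eps^2$; compare the $\eps^{-3}$ in \eqref{eq:ControlBoundaryInfty}. So the paper's own computations support only $\eps^6$.

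To genuinely reach $13/2$ you need an extra idea. One option is the cancellation $\int\psi\,\MMM_\Theta\MM^{-1}(n_x\cdot v)_+\,\dv=(2\pi)^{-1/2}P(\Theta)=O(\eps^{24}\vartheta_0^2)$, which makes the cross term $\TT_{2,2}$ second order in $\eps^{12}\vartheta_0$. Otherwise, state the result with $\eps^6$. That is harmless downstream: with the correct area factor $\eps^{-2}$ in \eqref{eq:ControlBoundaryInfty}, $\eps^6\cdot\eps^{-1}$ gives back exactly the $\eps^5$ used in \eqref{eq:HypoPerturbed}.
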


\begin{proof}
The proof follows the exact same ideas and computations as those leading to the proof of Theorem~\ref{theo:L2Decay_pert}, and using the Young inequality to control the terms involving $G^\eps$, we thus skip it.
\end{proof}

\subsection{Auxiliary problem in finite time and backwards trajectories}\label{ssec:Trajectories}
Here and below, during this section, we consider an arbitrary $T>0$, we take $G^\eps$ satisfying $\lla G_\tau^\eps \rra_{\OO^\eps}=0$ for every $\tau\geq 0$, and we study the following evolution equation
\be
	\left\{\begin{array}{rlll}
		\partial_{\eps} h^\eps &=&  -v\cdot \grad_y h^\eps -\nu h^\eps + K h^\eps +  G^\eps &\text{ in }\UU^\eps_T:= (0,T]\times \OO^\eps\\
		\gamma_- h^\eps &=& (1-\iota^\eps) \SSS \gamma_+h^\eps + \iota^\eps  \DDD_{\Theta^\eps} \gamma_+h^\eps  + \iota^\eps \psi^\eps&\text{ on } \Gamma_{-, T}^\eps:= (0,T] \times \Sigma_-^\eps\\
		 h^\eps_{\tau=0}(y,v)&=& h_0^\eps (y,v) :=  f_0 (\eps y, v)  &\text{ in }\OO^\eps.
	\end{array}\right.\label{eq:RPLPBE_T}
\ee	 
We observe now that the characteristics of Equation~\eqref{eq:RPLPBE_T} are given by 
\be
 Y (s; \tau,y,v):= y - v(\tau - s)  \quad \text{ and } \quad V(s; \tau,y,v):= v. \label{eq:carachteristics}
 \ee
Therefore, for any fixed particle with coordinates in $ \UU^\eps$, we can characterize the coordinates of the last collision against the boundary of $\Omega^\eps$. Indeed, let $(\tau_0,y_0,v_0) \in \UU^\eps$ be the coordinates of a particle, we define the time of collision along this trajectory ($\tau_b$), the time of life of the particle prior to such collision ($\tau_1$), and the position $(y_1)$ and velocity ($v_1$) at the boundary during this collision, as follows
\begin{equation}\label{eq:BackwardsTrajectory}
\begin{array}{rcl}
    \tau_b(y_0,v_0) &=& \inf\{s >0;\ Y(-s, 0, y, v)\notin\Omega^\eps\},\\
    \tau_1(\tau_0,y_0,v_0)&=&\tau_0-\tau_b(y_0,v_0),\\
    y_1(\tau_0, y_0,v_0)&=& Y(\tau_1; \tau, y, v) = y -v(t\tau - \tau_1),\\
    v_1(\tau_0, y_0,v_0)&=&\left\{\begin{array}{cl}
         \VV_{y_1}(v_0)& \text{ during specular reflection,} \\
         v_0^*& \text{ during diffuse reflection,}
    \end{array}\right.
\end{array}
\end{equation}
where $v_0^*$ stands for an independent variable.
Furthermore, we define the singular set along these trajectories as
\be\label{def:Sx}
 S_{x}:=\{v\in\R^3;\ \exists \, \tau \in [0,T], \, n(Y(\tau_b(y,v),\tau,y,v))\cdot v=0\}.
\ee

Following then the ideas from \cite[Proposition 3.13]{BE_iso} for smooth domains, and \cite[Proposition 4.11]{BE_iso} for cylindrical domains, we have the following result.

\begin{prop}\label{prop:LinftyEst_rescaled} 
Assume that either Assumption \ref{item:RH1} or Assumption \ref{item:RH2} holds, let $\omega$ be an admissible weight function, and let $h^\eps$ be a solution of Equation~\eqref{eq:RPLPBE_T}. There is $\eps_2 = \eps_2 (T)>0$ such that for every $\eps\in (0,\eps_2)$ there holds 
\begin{multline*}
\lVert  h^\eps_\tau  \rVert_{L^{\infty}_{\omega}(\bar\OO^\eps)} \lesssim_C \la T\ra^p e^{-\nu_0 t} \lVert h^\eps _0\rVert_{L^{\infty}_{\omega}(\OO^\eps)} 
+ T \la T\ra^{p-1} e^{-\nu_0  t }\underset{s\in[0, \tau]}{\sup}\left[ e^{\nu_0 s}\lVert  h^\eps_s \rVert_{\HH^\eps} \right]  \\
+  \la T\ra^p  e^{-\nu_2t}\underset{s\in [0,t]}{\sup}\left[e^{\nu_2 s}\lVert G^\eps_s\rVert_{L^{\infty}_{\omega\nu^{-1}}(\OO^\eps)}\right] + \eps^{12} \, \vartheta_0 .
\end{multline*}
for every $\tau\in [0, T]$, every $\nu_2\in (0,\nu_0)$, some universal constants $C, p>1$, and where we have defined $\HH^\eps := L^2_{\MM^{-1/2}}(\OO^\eps)$. 
\end{prop}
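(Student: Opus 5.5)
We follow the $L^2$--$L^\infty$ scheme of \cite[Proposition 3.13]{BE_iso} for smooth domains and \cite[Proposition 4.11]{BE_iso} for cylinders. The only new ingredients are the non-isothermal diffusive kernel $\MMM_{\Theta^\eps}$ and the inflow source $\iota^\eps\psi^\eps$.

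\emph{Step 1: weight and Duhamel representation.} Set $\tilde h := \omega h^\eps$ and write Duhamel's formula along the characteristics \eqref{eq:carachteristics}, back to $\tau_1$ in \eqref{eq:BackwardsTrajectory}. The gain is
\[
\tilde h(\tau,y,v) = \Ind_{\tau_1\le 0}\, e^{-\nu \tau}\tilde h_0 + \int_{\max(\tau_1,0)}^{\tau} e^{-\nu(\tau-s)}\,\omega\bigl(Kh^\eps + G^\eps\bigr)(s,Y(s),v)\,\ds + \Ind_{\tau_1>0}\, e^{-\nu(\tau-\tau_1)}\,\omega\,\gamma_-h^\eps(\tau_1,y_1,v).
\]
We then expand the boundary value with \eqref{eq:RPLPBE_T} and iterate the reflections, which produces the usual multiple-bounce tree. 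Specular bounces keep $\lv v\rv$, so they contribute nothing new beyond the stretching/$C^2$ geometry treated in \cite{BE_iso}.

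\emph{Step 2: the diffusive factor.} At a diffusive bounce we replace $\MMM_1$ by $\MMM_{\Theta^\eps}$ in the kernel $\omega(v)\MMM_\Theta(v)\,\omega^{-1}(v^*)(n\cdot v^*)_+\,\dv^*$. Admissibility of $\omega$, i.e. $\zeta<1/(2\Theta^*)$ and $2\zeta>1/(2\Theta_*)$, makes both $\omega\MMM_\Theta$ and $\omega^{-1}$ integrable, uniformly in $x$ by \eqref{eq:ConditionsTheta}. The probabilistic estimates of \cite{BE_iso} on the measure of trajectories with many bounces within time $T$ therefore hold with $\Theta$-independent constants, since they only use $\widetilde{\MMM_\Theta}=1$ and Gaussian tails. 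The source term $\iota\psi$ gives, at each bounce, a contribution bounded through \eqref{eq:Controlpsi} by $\lesssim \vartheta_0\eps^{12}\lvv \omega\la v\ra^2 e^{-\lv v\rv^2/(2\Theta^*)}\rvv_\infty$. This is finite, again by admissibility. Summing over the at most $k(T)$ relevant bounces, each weighted by a sub-probability, we expect a total of $\lesssim \eps^{12}\vartheta_0$, which is the last term of the claim. The deep-bounce remainder is made small by choosing the number of bounces as a function of $T$.

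\emph{Step 3: the $K$ term.} We iterate Duhamel once more inside $Kh^\eps$ to produce the double integral $\iint k(v,v')k(v',v'')h^\eps(s',Y',v'')$. Cut off large velocities, small time gaps $s-s'<\delta$, and the singular kernel part; these pieces are absorbed into $o(1)\sup e^{\nu_0 s}\lvv\tilde h_s\rvv_\infty$. The remaining bounded part is handled by the change of variables $v'\mapsto Y(s';s,Y(s),v')$. Its Jacobian is $\gtrsim\delta^3$, but it is only legitimate while the backward path stays in the domain, which requires handling the nonconvex $C^2$ boundary and the singular set $S_x$ in \eqref{def:Sx}. This is exactly where the stretching and the smallness $\eps<\eps_2(T)$ enter: they make $\Omega^\eps$ locally almost flat on the scale $T$, as in \cite{BE_iso}. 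The result is $T\la T\ra^{p-1}e^{-\nu_0\tau}\sup e^{\nu_0 s}\lvv h_s^\eps\rvv_{\HH^\eps}$. The boundary inflow also reappears inside these iterations; it is once more controlled by $\eps^{12}\vartheta_0$.

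\emph{Main obstacle.} The hardest step is this $L^2$-conversion change of variables in the cylinder case. There the singular edges $\SSSS^\eps$ and the mixed specular/diffusive bounces complicate the trajectory analysis. Rather than redo it, we will verify that the arguments of \cite[Section 4]{BE_iso} never use the isothermal structure beyond the two integrability facts above, and then invoke them verbatim. Once the $K$ term is converted, collecting the initial-data, $L^2$, $G^\eps$ (via $\lvv\omega\nu^{-1}\cdot\rvv$ and $\int e^{-\nu(\tau-s)}\nu\le 1$) and $\psi$ contributions gives the estimate, and the trace bound follows by taking $\tau$ limits along outgoing characteristics.
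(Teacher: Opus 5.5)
Your proposal follows the paper's proof, which consists of invoking \cite[Proposition 3.13]{BE_iso} for smooth domains and \cite[Proposition 4.11]{BE_iso} for cylinders, and controlling the extra inflow $\iota^\eps\psi^\eps$ through \eqref{eq:Controlpsi}. One small correction to your Step~2 and the accompanying remark on the $\psi$-sum: in the iterated diffusive tree the measure attached to $v_j$ is $\MMM_{\Theta^\eps(y_{j+1})}(v_j)\,(n_{y_j}\cdot v_j)_+\,\dv_j$, which mixes the temperatures at consecutive bounce points, so its mass is $1+O(\eps^{12}\vartheta_0)$ rather than exactly $1$ (this does not follow from $\widetilde{\MMM_\Theta}=1$ alone), and summing the $\psi$-contributions over $k(T)$ bounces a priori gives a $T$-dependent constant; both issues are harmless, by dominating with $\sup_{\Theta\in[\Theta_*,\Theta^*]}\MMM_\Theta\le(\Theta^*/\Theta_*)^2\MMM_{\Theta^*}$ over the bounded number $k(T)$ of bounces, and because $T$ is fixed once and for all when the estimate is used in the proof of Proposition~\ref{prop:LinftyEstPertRescaled}.
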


\begin{proof}
The proof follows exactly the arguments and computations from \cite[Proposition 3.13]{BE_iso} for smooth domains, and those of \cite[Proposition 4.11]{BE_iso} for cylindrical domains, together with the use of \eqref{eq:Controlpsi} to control the inflow term $\psi$ at the boundary, thus we skip it. 
\end{proof}

\subsection{Proof of Proposition~\ref{prop:LinftyEstPertRescaled}}
The proof uses the results from Theorem~\ref{theo:L2Decay_pert_rescaled} and Proposition~\ref{prop:LinftyEst_rescaled}, and it follows the ideas and computations from the proof of \cite[Proposition 3.1]{BE_iso}.

\medskip\noindent
\emph{Step 1. (Choice of parameters and a priori estimates)} We choose $T>0$ large enough such that 
$$
C_1 \la T\ra^p e^{-T\nu_0/2 } \leq \frac 12,
$$ 
where $C_1>0$ is given by Proposition~\ref{prop:LinftyEst_rescaled},
and we set $\eps_1^1 = \min(\eps_2(T), 1/2, \sqrt{\nu_0/\kappa})$, where $\eps_2(T)>0$ is given by Proposition~\ref{prop:LinftyEst_rescaled} and we recall that $\kappa>0$ is given by Theorem~\ref{theo:L2Decay_pert_rescaled}.

\smallskip
We recall the hypocoercivity norm $\lvvv \cdot \rvvv_\eps$ given by Theorem~\ref{theo:L2Decay_pert_rescaled}, the equivalency relation \eqref{eq:ClassicHypoEquivalenceRescaled}, and we note that there holds
\be\label{eq:ControlHHinfty} 
\lvv G^\eps\rvv_{\HH^\eps} \lesssim \left( \int_{\OO^\eps} \dx \right)^{1/2} \lvv G^\eps \rvv_{L^\infty_{\omega\nu^{-1}} (\OO^\eps)} \lesssim \eps^{-3/2} \lvv G^\eps \rvv_{L^\infty_{\omega\nu^{-1}} (\OO^\eps)} ,
\ee
and 
\be\label{eq:ControlBoundaryInfty}
\left\lVert  (\iota)^{1/2} (\widetilde{\gamma_+ h^\eps} ) \right\rVert^2_{L^2(\partial \Omega^\eps )} \lesssim \eps^{-3} \lvv h^\eps \rvv_{L^\infty_\omega( \bar \UU_\tau^\eps)}^2,
\ee
thus we deduce from \eqref{eq:HypoEquivRescaled} that there is a constant $C_2>0$ such that
\begin{multline}
 \lvvv h^\eps_\tau  \rvvv_\eps \leq e^{-\kappa \eps^2 \tau}\lvvv h^\eps_0 \rvvv_\eps
+  C_2\,   \vartheta_0^{1/2} \, \eps^5\, \tau^{1/2} e^{- \kappa \eps^2 \tau}\underset{s\in [0,\tau]}{\sup}\left[e^{ \kappa \eps^2 s}  \lvv h^\eps_s\rvv_{L^\infty(\bar \OO^\eps)}\right] \\
    +\eps^{-5/2} \tau^{1/2}\,   C_2\,  e^{- \kappa \eps^2 \tau}\underset{s\in [0,\tau]}{\sup}\left[e^{ \kappa \eps^2 s}\lVert G^\eps_s\rVert_{L^{\infty}_{\omega \nu^{-1}}(\OO^\eps)}\right]  + \eps^4 \, \vartheta_0^{1/2}   C_2 ,\label{eq:HypoPerturbed}
\end{multline}
where $\kappa>0$ is given by Theorem~\ref{theo:L2Decay_pert_rescaled}.

\smallskip
Putting then together the estimate given by Proposition~\ref{prop:LinftyEst_rescaled} and \eqref{eq:HypoPerturbed}, we have that for every $\eps \in (0,\eps_1^1)$, every $\nu_2\in (0,\nu_0)$, and $\kappa_0 \in (0,\kappa^\star)$ there holds 
$$\beal
\lVert  h^\eps_\tau \rVert_{L^{\infty}_{\omega}(\bar \OO^\eps)} \leq& C_1 C_2\,  \eps^5\,  \vartheta_0^{1/2} \, T^{1/2} e^{- \kappa \eps^2 \tau}\underset{s\in [0,\tau]}{\sup}\left[e^{ \kappa \eps^2 s}  \lvv h^\eps_s\rvv_{L^\infty_\omega(\bar \OO^\eps)}\right] 
+ C_1 \la T\ra^p e^{-\nu_0 \tau} \lVert h^\eps_0\rVert_{L^{\infty}_{\omega}( \OO^\eps)}    \\
&+ C_1 \la T\ra^p e^{-\kappa^\star \eps^2 \tau} \lvvv h^\eps_0 \rvvv_\eps +  C_1 \la T\ra^p  e^{-\nu_2 \tau} \underset{s\in [0,\tau]}{\sup}\left[e^{\nu_2 s}\lVert G^\eps_s\rVert_{L^{\infty}_{\omega\nu^{-1}}(\OO^\eps)}\right] \\
&+ C_1C_2 \, \la T\ra^{p+1/2} \, \eps^{-5/2} \, e^{- \kappa_0 \eps^2\tau}\underset{s\in [0,\tau]}{\sup}\left[e^{ \kappa_0 \eps^2 s}\lVert G^\eps_s\rVert_{L^{\infty}_{\omega\nu^{-1}}(\OO^\eps)}\right] + \vartheta_0^{1/2} \eps^4  (C_1+ C_2),
\eeal
$$
where we have used the fact that $\nu_0-\kappa\eps^2 \geq 0$ due to our choice of $\eps_1^1$.

We then set $\theta = \min(\nu_0, \kappa)/8$, $\nu_2= \eps^2\theta$ and $\vartheta_1^1 = (2C_1C_2 T^{1/2})^{-2}$, and we note that these choices are possible since $\eps\leq 1/2$ and $\theta\leq \nu_0/2$, thus $\eps^2\theta\in (0,\nu_0/2)$ and to the fact that $\theta \in (0, \kappa/2)$. 
Absorbing the small contributions on the above estimate we have that
 \begin{multline}\label{eq:LargeTimeTEstimateZeta1/4}
\lVert  h^\eps_T \rVert_{L^{\infty}_{\omega}(\bar \OO^\eps)} \leq  e^{-2 \theta \eps^2 T} \lVert h^\eps_0\rVert_{L^{\infty}_{\omega}(\OO^\eps)} + C_T e^{-2\theta \eps^2 T} \lvvv h^\eps_0\rvvv_\eps  \\
+\eps^{-5/2}  C_T e^{-\theta \eps^2 T} \underset{s\in [0,T]}{\sup}\left[e^{\theta\eps^2 s}\lVert G^\eps_s\rVert_{L^{\infty}_{\omega\nu^{-1}}(\OO^\eps)}\right]  + \vartheta_0^{1/2}  \eps^4 \, C_3,
\end{multline}
and 
\begin{multline}\label{eq:SmallTime_t_EstimateZeta1/4}
\lVert  h^\eps_\tau  \rVert_{L^{\infty}_{\omega}(\bar\OO^\eps)} \leq  C_T  e^{-2 \theta \eps^2 \tau}  \lVert h^\eps_0\rVert_{L^{\infty}_{\omega}(\OO^\eps)} + C_T e^{-2 \theta \eps^2 \tau}  \lvvv h^\eps_0\rvvv_\eps \\
+ \eps^{-5/2}  C_T e^{- \theta \eps^2 \tau} \underset{s\in [0,\tau]}{\sup}\left[e^{  \theta \eps^2 s}\lVert G^\eps_s\rVert_{L^{\infty}_{\omega\nu^{-1}}(\OO^\eps)}\right] + \vartheta_0^{1/2}  \eps^4 \, C_3,
\end{multline}
for some constants $C_T, C_3>0$ and where \eqref{eq:SmallTime_t_EstimateZeta1/4} holds for all $\tau\in [0,T]$. 
Moreover, we set $\vartheta_1^2>0$ such that 
$$
C_T C_2 T^{1/2} (\vartheta_1^2)^{1/2} \eps^5 \leq 1-e^{-\theta \eps^2 T},
$$
uniformly in $\eps \ll 1$. 
We thus deduce that, for every $\vartheta_0\in (0, \min(\vartheta_1^1, \vartheta^1_2))$, and every $\eps\in (0, \eps_1^1)$, combining \eqref{eq:LargeTimeTEstimateZeta1/4} and \eqref{eq:SmallTime_t_EstimateZeta1/4} with \eqref{eq:HypoPerturbed} we further have that
\be
 \lvvv h^\eps_T  \rvvv_\eps \leq e^{- 2\theta \eps^2 T}\lvvv h^\eps_0 \rvvv_\eps
    +\eps^{-5/2}  \tilde C_T \,  e^{- \theta \eps^2 T}\underset{s\in [0,T]}{\sup}\left[e^{ \theta \eps^2 s}\lVert G^\eps_s\rVert_{L^{\infty}_{\omega \nu^{-1}}(\OO^\eps)}\right]  + \eps^4 \, \vartheta_0^{1/2}   C_4 ,\label{eq:HypoPerturbed_Linfty1}
\ee
and 
\be
 \lvvv h^\eps_\tau  \rvvv_\eps \leq \tilde C_T e^{-2\theta \eps^2 \tau }\lvvv h^\eps_0 \rvvv_\eps
    +\eps^{-5/2}  \tilde C_T \,  e^{- \theta \eps^2 \tau}\underset{s\in [0,\tau]}{\sup}\left[e^{ \theta \eps^2 s}\lVert G^\eps_s\rVert_{L^{\infty}_{\omega \nu^{-1}}(\OO^\eps)}\right]  + \eps^4 \, \vartheta_0^{1/2}   C_4 ,\label{eq:HypoPerturbed_Linfty2}
\ee
fo some constants $\tilde C_T>0$ and $C_4>0$, and where \eqref{eq:HypoPerturbed_Linfty2} holds for all $\tau\in [0,T]$.

\medskip\noindent
\emph{Step 2. (Decay estimate)} We now set 
$$
X_\tau := e^{\theta \eps^2 \tau} \lVert  h^\eps_\tau \rVert_{L^{\infty}_{\omega}(\bar \OO^\eps)} , \quad  Y_\tau:=e^{\theta \eps^2 \tau}   \lvvv  h^\eps_\tau \rvvv_\eps,  \quad  \text{ and } \quad \Phi_{\tau_0, \tau_1}:= \underset{s\in [\tau_0,\tau_1]}{\sup}\left[e^{ \theta \eps^2 s}\lVert G^\eps_s\rVert_{L^{\infty}_{\omega\nu^{-1}}(\OO^\eps)}\right].
$$
Translating \eqref{eq:LargeTimeTEstimateZeta1/4} and \eqref{eq:HypoPerturbed_Linfty1} into this new notations we have that
\be\label{eq:Est_XT}
X_T \leq  e^{- \theta \eps^2 T} X_0 + C_T e^{- \theta \eps^2 T} Y_0 +  \eps^{-5/2}  C_T  \Phi_{0,T} + \eps^4 \, \vartheta_0^{1/2} C_3 e^{\theta\eps^2 T},
\ee
and
\be\label{eq:Est_YT}
Y_T \leq e^{- \theta \eps^2 T}  Y_0+ \tilde C_T \, \eps^{-5/2}  T\,  \Phi_{0,T} + \eps^4 \, \vartheta_0^{1/2} C_4 e^{\theta\eps^2 T},
\ee
respectively. Similarly, \eqref{eq:SmallTime_t_EstimateZeta1/4} and \eqref{eq:HypoPerturbed_Linfty2} translates into
\be\label{eq:Est_Xtau}
X_\tau \leq  C_T e^{- \theta \eps^2 \tau} X_0 + C_T e^{- \theta \eps^2 \tau} Y_0 +  \eps^{-5/2}  C_T  \Phi_{0,\tau} + \eps^4\, \vartheta_0^{1/2} C_3 e^{\theta\eps^2 T},
\ee
and
\be\label{eq:Est_Ytau}
Y_\tau \leq e^{- \theta \eps^2 \tau}  Y_0+ \tilde C_T \, \eps^{-5/2}  T\,  \Phi_{0,\tau} + \eps^4 \, \vartheta_0^{1/2} C_4 e^{\theta\eps^2 T},
\ee
respectively.
We now define $\eps_1 = \min(\eps_1^1, (\theta T)^{-1} \log 2)$ so that $e^{\theta\eps^2 T} -1\leq 1$, and we introduce a constant $\beta>0$ defined by 
\be\label{def:betaXY}
\beta =  {1\over 2C_T} \left( e^{\theta\eps^2 T} -1\right),
\ee
so that, due to our choice of $\eps_1$, there holds $\beta C_T\leq 1/2$ and simultaneously
\be\label{def:deltaXY}
\delta :=e^{- \theta \eps^2 T}(1+\beta C_T) =  \frac 12 \left( 1+ e^{-\theta\eps^2T} \right)  < 1, \quad \forall \eps >0.
\ee

Using then \eqref{eq:Est_XT} and \eqref{eq:Est_YT}, we have that
\bear
Z_T:= Y_T + \beta X_T &\leq& (1+ \beta C_T) e^{- \theta \eps^2 T} ( Y_0 + \beta X_0) + \eps^{-5/2} (\beta C_T + \tilde C_T) \Phi_{0,T} + \eps^4 \, \vartheta_0^{1/2} (\beta C_3+ C_4) e^{\theta\eps^2 T} \nonumber \\
&\leq& \delta Z_0 + \eps^{-5/2} \left( \frac 12 + C_2 T\right) \Phi_{0,T} + \eps^4 \, \vartheta_0^{1/2} (\beta C_3+ C_4) e^{\theta\eps^2 T}\label{eq:LargeTimeTEstimateXY},
\eear
where we have used the very definition of $\beta$ to deduce the second line. On the other hand, using \eqref{eq:Est_Xtau}, \eqref{eq:Est_Ytau}, and again our choice of $\beta$, in a similar way as above we further have that
\be\label{eq:SmallTime_t_EstimateXY}
Z_\tau  \leq \left(\frac 12 + \tilde C_T\right)  e^{- \theta \eps^2 \tau} Z_0 + \eps^{-5/2}  \left( \frac 12 + \tilde C_T\right)  \Phi_{0, \tau} + \eps^4 \, \vartheta_0^{1/2} (\beta C_3+ C_4) e^{\theta\eps^2 T} ,
\ee
for every $\tau \in [0,T]$.
Then for any $\bar \tau \in \R$ there is $n\in \N$ such that $\bar \tau\in [nT, (n+1)T)$ and iterating first \eqref{eq:LargeTimeTEstimateXY} we have
\bean
Z_{\bar \tau} &\leq& \delta^n  Z_{\bar \tau - nT} +  \left[ \eps^{-5/2}  \left( \frac 12 + \tilde C_T\right) \Phi_{\bar \tau-nT, \bar \tau}  + \eps^4 \, \vartheta_0^{1/2} (\beta C_3+ C_4) e^{\theta\eps^2 T} \right] \left( \sum_{k=0}^{n-1} \delta^k\right) \\
&\leq & \left( \frac 12 + \tilde C_T\right) \vartheta^n Z_{0} +  \eps^{-5/2}  \left( \frac 12 + \tilde C_T\right) \left( \sum_{k=0}^{n-1} \delta^k\right) \Phi_{0, \bar \tau}   +\eps^4 \, \vartheta_0^{1/2} (\beta C_3+ C_4) e^{\theta\eps^2 T} \left( \sum_{k=0}^{n-1} \delta^k\right) 
\eean
where we have used \eqref{eq:SmallTime_t_EstimateXY} on the second line. 
Using then the above estimate we deduce that there is a constant $\tilde C >0$, independent of $\eps$, such that 
$$
\beta \, e^{\theta \eps^2 t} \lvv f_\tau \rvv_{L^{\infty}_{\omega}(\bar \OO^\eps)} \leq Z_\tau \leq \tilde C Z_0 
+  {\eps^{-5/2}  \over 1- e^{-\theta \eps^2 T}}  \tilde C \underset{s\in [0,\tau ]}{\sup}\left[e^{  \theta \eps^2 s}\lVert G_s\rVert_{L^{\infty}_{\omega\nu^{-1}}(\OO^\eps)}\right] 
+ \tilde C {\eps^4 \over 1- e^{-\theta \eps^2 T} } \, \vartheta_0^{1/2}  ,
$$
for every $\tau >0$.
Finally, we observe that \eqref{eq:ClassicHypoEquivalence} and \eqref{eq:ControlHHinfty} imply together that
$$
Z_0 \lesssim \lvv f_0 \rvv_{L^{\infty}_{\omega}(\OO^\eps)} +  \lvv f_0\rvv_{\HH} \lesssim  \eps^{-3/2} \lvv f_0 \rvv_{L^{\infty}_{\omega}(\OO^\eps)},
$$
Altogether, using the very definition of $\beta$ we deduce that 
\begin{multline*}
 e^{\theta \eps^2 t} \lvv h^\eps_\tau \rvv_{L^{\infty}_{\omega}(\bar \OO^\eps)} \leq { \eps^{-3/2} C'\over e^{\theta\eps^2 T}-1} \lvv h^\eps_0 \rvv_{L^{\infty}_{\omega}(\OO^\eps)} +  {\eps^{-5/2} C' \over \left( e^{\theta\eps^2T} -1 \right) \left( 1-e^{-\theta \eps^2 T} \right) } \underset{s\in [0,t]}{\sup}\left[e^{  \theta \eps^2 s}\lVert G^\eps_s\rVert_{L^{\infty}_{\omega\nu^{-1}}(\OO^\eps)}\right] \\
 + C'{\eps^4 \over \left( e^{\theta\eps^2T} -1 \right) \left( 1-e^{-\theta \eps^2 T} \right) } \vartheta_0^{1/2},
\end{multline*}
for some constant $C'>0$ independent of $\eps$. We conclude by observing that 
$$
{\eps^4 \over \left( e^{\theta\eps^2T} -1 \right) \left( 1-e^{-\theta \eps^2 T} \right) } \lesssim 1,
$$
uniformly in $\eps$, and by taking 
\be\label{def:varpi_eps}
\varpi_\eps : =  {\eps^{-5/2} C' \over \left( e^{\theta\eps^2T} -1 \right) \left( 1-e^{-\theta \eps^2 T} \right) },
\ee
and noting that $\varpi_\eps \to \infty$ as $\eps\to 0$. 
\qed

\section{A priori weighted $L^\infty$ estimates for the steady problem}\label{sec:AprioriLinfty_SS}

In this section we establish a result analogue to Proposition~\ref{prop:LinftyEstPert} for the solutions of Equation~\eqref{eq:PLBE_SS}, steady problem associated to Equation~\eqref{eq:PLPBE}.

\begin{prop}\label{prop:LinftyEstimatePerturbedFiniteT_Summary_SS} 
Assume that either Assumption \ref{item:H1} or Assumption \ref{item:H2} holds, let $\omega$ be an admissible weight function, consider $G:\OO\to \R$ satisfying $\lla G\rra_{\OO}=0$, and let $\FF$ be a solution of Equation~\eqref{eq:PLBE_SS} satisfying $\lla \FF\rra_\OO = 0$. There are constructive constants $\eps_{1},\vartheta_1 >0$, given by Proposition~\ref{prop:LinftyEstPert}, such that for every $\eps\in (0,\eps_1)$ and every $\vartheta_0 \in (0, \vartheta_1)$ there holds 
\beqn
\lvv \FF \rvv_{L^\infty_\omega(\bar \OO)}  \leq  \varpi_\eps  \lVert G \rVert_{L^{\infty}_{\omega\nu^{-1}}(\OO)}  + C_0 \vartheta_0 ,
\label{eq:LinftyPerturbedFiniteTimeDecay_Summary_SS}
\eeqn
where $\varpi_\eps, C_0>0$ are given by Proposition~\ref{prop:LinftyEstPert}. 
\end{prop}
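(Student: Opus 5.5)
The natural approach is to view the steady solution $\FF$ as a time-independent solution of the evolution problem \eqref{eq:PLPBE} and apply Proposition~\ref{prop:LinftyEstPert}. Set $f_t := \FF$ for all $t\geq 0$ and $G_t := G$. Since $-\LLL^\eps \FF = \eps^{-2} G$, we get $\partial_t f = 0 = \LLL^\eps f + \eps^{-2}G$ in $\UU$. The boundary condition $\gamma_- f = \RRR_\Theta\gamma_+ f + \iota\psi$ holds for every $t$, and $f_0 = \FF$. Moreover $\lla G_t\rra_\OO = \lla G\rra_\OO = 0$, so all hypotheses of Proposition~\ref{prop:LinftyEstPert} hold with the same $\eps_1,\vartheta_1$.

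Proposition~\ref{prop:LinftyEstPert} then gives, for every $t\geq 0$,
\[
\lvv \FF\rvv_{L^\infty_\omega(\bar\OO)} \leq \varpi_\eps e^{-\theta t}\Big(\lvv \FF\rvv_{L^\infty_\omega(\OO)} + \sup_{s\in[0,t]} e^{\theta s}\lvv G\rvv_{L^\infty_{\omega\nu^{-1}}(\OO)}\Big) + C_0\vartheta_0 .
\]
The supremum equals $e^{\theta t}\lvv G\rvv_{L^\infty_{\omega\nu^{-1}}}$, so the prefactor $e^{-\theta t}$ cancels exactly on the $G$-term. The remaining term $\varpi_\eps e^{-\theta t}\lvv \FF\rvv_{L^\infty_\omega}$ tends to $0$ as $t\to\infty$, provided $\lvv \FF\rvv_{L^\infty_\omega(\OO)}<\infty$. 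Letting $t\to\infty$ gives the claimed bound with the same constants $\varpi_\eps, C_0$.

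The main obstacle is justification, namely that the a priori estimates behind Proposition~\ref{prop:LinftyEstPert} apply to the constant-in-time function. Two points need care.

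\emph{Finiteness of $\lvv\FF\rvv_{L^\infty_\omega}$.} This is implicitly assumed, since we work with solutions in $L^\infty_\omega$.

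\emph{The zero-mass condition.} The $L^2$ hypocoercivity step (Theorem~\ref{theo:L2Decay_pert_rescaled}) needs $\lla f_t\rra_\OO=0$. This holds because $\lla\FF\rra_\OO=0$ by assumption and is preserved trivially.

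If one prefers not to rely on the evolution framework, the alternative is to rerun the proof of Proposition~\ref{prop:LinftyEstPert} directly. One would combine the rescaled steady $L^2$ estimate, obtained from Theorem~\ref{theo:Pert_hypo} via $0=\lla -\LLL^\eps\FF-\eps^{-2}G,\FF\rra_\eps$, with the stationary $L^2$--$L^\infty$ bound of Proposition~\ref{prop:LinftyEst_rescaled} along backward trajectories. The trace contribution $\vartheta_0'\lvv\iota^{1/2}\widetilde{\gamma_+\FF}\rvv^2$ would be absorbed using \eqref{eq:ControlBoundaryInfty} and the smallness of $\vartheta_0$. The time-embedding route is shorter, so that is the one I would write.
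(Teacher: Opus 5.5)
Your proposal is correct and follows the paper's route: the paper also sets $g_t:=\FF$ for all $t\geq 0$ and views it as a time-independent solution of \eqref{eq:PLPBE} with initial datum $\FF$. The only difference is that the paper reruns the one-period estimates (at time $T$) from the proof of Proposition~\ref{prop:LinftyEstPert} and closes by absorption using $g_T=g_0$ in the combination $\lvvv\cdot\rvvv_\eps+\beta\lVert\cdot\rVert_{L^\infty_\omega}$. You instead invoke the final bound as a black box and let $t\to\infty$, which is slightly more direct and gives exactly the stated constants $\varpi_\eps, C_0$.
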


\begin{proof}
We define $g(t,x,v) := \FF(x,v)$ for every $t\geq 0$, and we note that, defined this way, let $G$ is a solution of Equation~\eqref{eq:PLPBE} with $g_0 (x, v)= \FF (x,v)$. 

We then procede with the change of variables performed in Subsection~\ref{ssec:LinRes}, and we define $g^\eps (\tau, y, v) := g(\eps^2 \tau, \eps y, v) = g(t,x,v)$, so that $g^\eps$ is a solution of Equation~\eqref{eq:RPLPBE}.

Repeating then the computations performed during the proof of the Proposition~\ref{prop:LinftyEstPertRescaled} we have have the existence of $T>0$ such that for every $\eps \in (0,\eps_1)$, and every $\vartheta_0 \in (0,  \vartheta_1)$, there holds
 \begin{multline}\label{eq:LargeTimeTEstimateZeta1/4}
\lVert  g^\eps_T \rVert_{L^{\infty}_{\omega}(\bar \OO^\eps)} \leq  e^{-2 \theta \eps^2 T} \lVert g^\eps_0\rVert_{L^{\infty}_{\omega}(\OO^\eps)} + C_T e^{-2\theta \eps^2 T} \lvvv g^\eps_0\rvvv_\eps  \\
+\eps^{-5/2}  C_T e^{-\theta \eps^2 T} \underset{s\in [0,T]}{\sup}\left[e^{\theta\eps^2 s}\lVert G^\eps\rVert_{L^{\infty}_{\omega\nu^{-1}}(\OO^\eps)}\right]  + \vartheta_0^{1/2}  \eps^4 \, C_0,
\end{multline}
and
\be
 \lvvv g^\eps_T  \rvvv_\eps \leq e^{- 2\theta \eps^2 T}\lvvv g^\eps_0 \rvvv_\eps
    +\eps^{-5/2}   \tilde C_T \,  e^{- \theta \eps^2 T}\underset{s\in [0,T]}{\sup}\left[e^{ \theta \eps^2 s}\lVert G^\eps\rVert_{L^{\infty}_{\omega \nu^{-1}}(\OO^\eps)}\right]  + \eps^4 \, \vartheta_0^{1/2}   C_1 ,\label{eq:HypoPerturbed_Linfty1}
\ee
for some contants $C_T, \tilde C_T, C_0, C_1 >0$, and we recall that the hypocoercivity norm $\lvvv \cdot \rvvv_\eps$ is given by Theorem~\ref{theo:L2Decay_pert_rescaled}. 
We then consider $\beta>0$, and $\delta\in (0,1)$, as introduced in \eqref{def:betaXY} and \eqref{def:deltaXY} respectively, and, repeating the computations leading to \eqref{eq:LargeTimeTEstimateXY}, we deduce that
\begin{multline*}
e^{\theta \eps^2 T}   \lvvv  g^\eps_T \rvvv_\eps +  \beta e^{\theta \eps^2 T} \lVert  g^\eps_T \rVert_{L^{\infty}_{\omega}(\bar \OO^\eps)}  
\leq \delta  \left( \lvvv  g_0 \rvvv_\eps +  \beta  \lVert  g_0 \rVert_{L^{\infty}_{\omega}(\bar \OO^\eps)}  \right) \\
\quad \qquad + \eps^{-5/2} \left( \frac 12 + C_2 T\right) \underset{s\in [0,T]}{\sup}\left[e^{ \theta \eps^2 s}\lVert G^\eps\rVert_{L^{\infty}_{\omega \nu^{-1}}(\OO^\eps)}\right] + \eps^4 \, \vartheta_0^{1/2} (\beta C_0+ C_1) e^{\theta\eps^2 T}.
\end{multline*} 
We conclude by absorbing the small contributions on the above inequality, arguing as in the conclusion of Proposition~\ref{prop:LinftyEstPertRescaled}, and coming back to our original variables.
\end{proof}

\section{On the well-posedness of linear kinetic equations in non-isothermal domains}\label{sec:WellPosedness}

In this section we develop a well-posedness theory for evolution and steady linear kinetic equations.

In Subsection~\ref{ssec:WellPosedness_Linfty_ss} we establish the well-posedness of the linear Equation~\eqref{eq:PLBE_SS}, as well as estimates for its solutions in the spirit of Proposition~\ref{prop:LinftyEstPert}.

\subsection{Well-posedness of linear kinetic equations in a weighted $L^\infty$ framework}\label{ssec:WellPosedness_Linfty}
  
In this section we prove the well-posedness of Equation~\eqref{eq:PLPBE} in a weighted $L^\infty$ framework following the main ideas from \cite[Theorem 6.2]{BE_iso}.

  \begin{theo}\label{theo:ExistenceL_infty_linear_transport}
Assume that either Assumption \ref{item:H1} or Assumption \ref{item:H2} holds, let $\omega$ be an admissible weight function, let $f_0 \in L^\infty_{\omega}(\OO)$, and let $G\in L^\infty_{\omega\nu^{-1}}(\UU)$. Also, recall that the parameters $\eps_1, \vartheta_1>0$ are given by Proposition~\ref{prop:LinftyEstPert}.

For every $\eps\in (0,\eps_{1})$ and every $\vartheta_0\in (0,\vartheta_1)$, there is $f\in L^\infty_{\omega} (\UU)$, with an associated trace function $\gamma f\in L^\infty_{\omega}(\Gamma)$, unique global weak solution to Equation~\eqref{eq:PLPBE}, i.e for any $\varphi \in \DD(\bar \UU^\eps)$ there holds 
\begin{multline}\label{eq:renormalizedFormulation_evolLinear}
\int_{\OO} f(t,\cdot) \, \varphi(t,\cdot) \, \dx\dv - \int_0^t \int_{\OO} \eps^{-2} (K f) \, \varphi  + f \left( \eps^{-1} v\cdot \grad_x \varphi - \eps^{-2}\nu \, \varphi \right)  \,  \dv\dx\ds  \\
+ \eps^{-1} \int_0^t \int_{\Sigma_+} \gamma_+ f\,  \varphi\,  (n_x\cdot v)_+ \, \dv\d\sigma_x\ds  - \eps^{-1} \int_0^t \int_{\Sigma_-} \RRR_\Theta \gamma_+ f\,  \varphi\,  (n_x\cdot v)_- \, \dv\d\sigma_x\ds  \\
= \int_{\OO} f_0(\cdot ) \varphi(0,\cdot ) \dx\dv + \eps^{-2} \int_0^t \int_{\OO} G\, \varphi \, \dv\dx +  \eps^{-1} \int_0^t \int_{\Sigma_-} \iota \, \psi \,   \varphi\,  (n_x\cdot v)_- \, \dv\d\sigma_x \ds.
\end{multline}
Furthermore, the results from Proposition~\ref{prop:LinftyEstPert} hold.
\end{theo}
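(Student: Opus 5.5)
We will follow the scheme of \cite[Theorem 6.2]{BE_iso}: build the solution by approximation in $L^2$, where Theorem~\ref{theo:ExistenceL2} applies, and then upgrade to $L^\infty_\omega$ using the a priori bound of Proposition~\ref{prop:LinftyEstPert}.

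\emph{Step 1 (approximating problems in $L^2$).} Since $\omega$ is admissible, $\omega^{-2}\MM^{-1}\in L^1$, so $L^\infty_\omega(\OO)\subset\HH$ and $L^\infty_{\omega\nu^{-1}}\subset L^2_{\MM^{-1/2}}$ locally in time. Moreover $\psi\in L^2(\Sigma_-;\d\xi^1_1)$ by \eqref{eq:Controlpsi}. We first treat a source term $G$ in the $L^2$ theory, extending Theorem~\ref{theo:ExistenceL2} by Duhamel's formula with the semigroup $S_{\LLL^\eps}$ of Remark~\ref{rem:ExistenceL2}. The alternative is to rerun the fixed point of Step 2 in the proof of Theorem~\ref{theo:ExistenceL2_Hypo_decay}, adding $\eps^{-2}G$ to the inflow problem \cite[Proposition 8.16]{sanchez2024kreinrutmantheorem}. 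Either way we obtain a unique solution $f\in C(\R_+,\HH)$ of \eqref{eq:PLPBE} with trace in $L^2(\Gamma;\d\xi_2\dt)$.

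To make Proposition~\ref{prop:LinftyEstPert} rigorously applicable, we regularize. We take truncated data $f_{0,k}$ and $G_k$, smooth and compactly supported in $v$, with $\lla G_{k,t}\rra_\OO=0$ (subtract $\lla G_{k,t}\rra_\OO\MM$). We also replace the reflection operator by $\alpha_k\RRR_\Theta$ with $\alpha_k\nearrow1$, as in Step 3 of the proof of Theorem~\ref{theo:ExistenceL2_Hypo_decay}. The mass constraint needs care here: for $\alpha_k<1$ mass is not conserved, and the $L^2$ hypocoercivity used inside Proposition~\ref{prop:LinftyEstPert} relies on zero mass.

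\emph{Step 2 (uniform $L^\infty_\omega$ bound).} For each approximating solution $f_k$ we justify the $L^2$--$L^\infty$ argument of Section~\ref{sec:AprioriLinfty}. This means the backward-trajectory (Duhamel along characteristics) representation behind Proposition~\ref{prop:LinftyEst_rescaled}, combined with Theorem~\ref{theo:L2Decay_pert_rescaled}. The difficulty is that Proposition~\ref{prop:LinftyEstPert} was derived formally, under the a priori assumption $f\in L^\infty_\omega$.

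To close it we will use the approach of \cite{BE_iso}. With $\alpha_k<1$ and bounded data, the iterated boundary operator is a strict contraction in $L^\infty_\omega(\Sigma_+)$ over a finite horizon, since $\omega\MMM_\Theta\in L^1$ and the normalization $\widetilde{\MMM_\Theta}=1$ give an operator norm $\le\alpha_k\max(1,c)$. Hence $f_k\in L^\infty_\omega$ on $[0,T]$ for each fixed $k$. Then Proposition~\ref{prop:LinftyEstPert} applies and yields a bound uniform in $k$. The zero-mass issue is handled by splitting $f_k=f_k-\lla f_{k,t}\rra\MM+\lla f_{k,t}\rra\MM$, with the defect vanishing as $\alpha_k\to1$. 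This uniform-in-$k$ justification is the main obstacle. If it fails, we fall back on an iteration in the number of bounces, as in \cite[Section 6]{BE_iso}, to obtain qualitative boundedness.

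\emph{Step 3 (passage to the limit and uniqueness).} From the uniform bound we extract weak-$*$ limits $f_k\rightharpoonup f$ in $L^\infty_\omega(\UU)$ and $\gamma f_k\rightharpoonup\gamma f$ in $L^\infty_\omega(\Gamma)$. Using the trace lemma \cite[Lemma 6.8]{BE_iso} as in Step 3 of the proof of Theorem~\ref{theo:ExistenceL2_Hypo_decay}, we identify the limit trace and pass to the limit in the boundary condition. The boundary condition is linear and continuous for weak-$*$ convergence, because $\omega^{-1}(n_x\cdot v)_+\in L^1$. This gives the weak formulation \eqref{eq:renormalizedFormulation_evolLinear}. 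Lower semicontinuity of the norms transfers the estimate of Proposition~\ref{prop:LinftyEstPert} to $f$.

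Uniqueness follows by linearity. The difference of two solutions lies in $L^\infty_\omega\subset\HH$, solves \eqref{eq:Hypo_decay}-type equations with $\psi=0$ and $G=0$, and starts from zero data, so it vanishes by the uniqueness part of Theorem~\ref{theo:ExistenceL2}.
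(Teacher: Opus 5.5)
Your architecture matches the paper's: solve with $\alpha_k\RRR_\Theta$ for $\alpha_k<1$, bound uniformly via Proposition~\ref{prop:LinftyEstPert}, then pass to the weak-$*$ limit with the trace lemma. However, the step meant to give $f_k\in L^\infty_\omega$ for fixed $k$ fails as written.

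On $L^\infty_\omega$ the diffusive part of the reflection has norm
\[
c=\lvv \omega\MMM_\Theta\rvv_{L^\infty_v}\int_{\R^3}\omega^{-1}(n_x\cdot v)_+\,\dv=(2\Theta\zeta)^{-2},
\]
attained at $\gamma_+f=\omega^{-1}$. Admissibility ($\zeta<1/(2\Theta)$) forces $c>1$; equivalently, $1=\widetilde{\MMM_\Theta}\le c$ with equality only for $\omega\propto\MMM_\Theta^{-1}$. So your factor $\alpha_k\max(1,c)$ exceeds $1$ whenever $\alpha_k>(2\Theta\zeta)^2$, i.e.\ for all large $k$. The finite horizon does not change the norm of a single reflection.

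The missing idea is the paper's change of weight. It runs a Picard iteration that lags both the nonlocal term and the reflection: $\eps^{-2}Kf^k$ in the bulk and $\gamma_-f^{k+1}=\alpha\RRR_\Theta\gamma_+f^k+\iota\psi$ on the boundary. It does this in the equivalent weight $\omega_A=\MMM_\Theta^{-1}\chi_A+(1-\chi_A)\omega$. Since $\omega_A\MMM_\Theta\le 1$ and $\int\omega_A^{-1}(n_x\cdot v)_+\dv\to\widetilde{\MMM_\Theta}=1$, the reflection has norm $\eta_A\to 1$. Hence $\alpha\eta_A<1$ for $A$ large. The lagged $K$ term costs only $tC_0$, which is made small on short intervals $[0,T_0]$ that are then concatenated.

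This construction also removes what you call the main obstacle. The Picard limit carries the Duhamel representation along backward characteristics, and that is exactly what lets one rerun the $L^2$--$L^\infty$ argument of Proposition~\ref{prop:LinftyEstPert} with constants independent of $\alpha$. The $L^2$ theory of Theorem~\ref{theo:ExistenceL2} is needed only to justify the hypocoercive part. So your $L^2$-first construction, the truncated data, and the bounce-counting fallback are all unnecessary.

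Your mass splitting is also not as clean as stated. Since $\alpha_k\RRR_\Theta\MM=\alpha_k(\MM+\iota\psi)\neq\MM$, the function $f_k-\lla f_{k,t}\rra_\OO\MM$ picks up a boundary source and a bulk source $-\tfrac{\d}{\dt}\lla f_{k,t}\rra_\OO\,\MM$ of nonzero mass. These are $O(1-\alpha_k)$ only on bounded time windows, and they are controlled through the very traces you are trying to bound. At best this closes by a finite-horizon bootstrap; the paper simply asserts uniformity in $\alpha$.

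Your uniqueness argument, through $L^\infty_\omega\subset\HH$ and the $L^2$ theory, is fine and more explicit than the paper's.
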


  \begin{proof}
  We split the proof into two steps. 

\medskip\noindent
\emph{Step 1.} We set $f^0=0$, $\alpha \in (0,1)$, and we consider the recurrent sequence of solutions given by the following evolution equation
\begin{equation}
	\left\{\begin{array}{llll}
		 \partial_t f^{k+1} &=&  - \eps^{-1} v\cdot \grad_x f^{k+1} - \eps^{-2} \nu f^{k+1} + \eps^{-2} K f^k + \eps^{-2} G  &\text{ in }\UU\\
		\gamma_-f^{k+1}&=& \alpha\,  \RRR_\Theta \gamma_+ f^k  +\iota\psi &\text{ on }\Gamma_{-}\\
		f^{k+1}_{t=0}&=& f_0 &\text{ in }\OO.
	\end{array}\right.\label{eq:TransportLinftyK}
\end{equation}
Indeed, if we assume that, for some $k$, $f^k\in L^\infty_{\omega}(\UU)$, then \cite[Lemma 3.6 - (K1)]{BE_iso} implies that $K f^k \in L^\infty_{\omega} (\UU)$. Moreover, we observe that is $\gamma f^k \in L^\infty_\omega(\Gamma)$, then a direct computation yields
$$
\lvv \RRR_\Theta \gamma_+ f^k + \iota\psi\rvv_{L^\infty_\omega( \Gamma_-)} \lesssim \lvv \gamma_+ f^k\rvv_{L^\infty_\omega(\Gamma_+)} + 1 <\infty.
$$
Therefore, Step 2 of the proof of \cite[Theorem 6.2]{BE_iso} implies the existence of $f^{k+1} \in L^\infty_{\omega}(\UU)$, with a trace $\gamma f^{k+1} \in L^\infty_{\omega} (\Gamma)$, unique weak solution of Equation~\eqref{eq:TransportLinftyK} in the distributional sense, i.e. there olds the weak formulation
\begin{multline*}
\int_{\OO} f^{k+1}_t \, \varphi(t, \cdot) - \int_0^t \int_{\OO} f^{k+1} \, \left( \eps^{-1}v\cdot \grad_x \varphi - \eps^{-2} \nu(v) \varphi \right)   
+ \int_0^t \int_{\Sigma_+} \eps^{-1} \gamma_+ f^{k+1} \, \varphi \,  (n_x \cdot v)_+  \\
= \int_{\OO} f_0 \, \varphi(0,\cdot) \dv\dx 
+ \int_0^t \int_{\Sigma_-} \eps^{-1}  \left(  \RRR_\Theta \gamma_+ f^k + \iota\psi\right) \, \varphi \, (n_x \cdot v)_-  
+ \int_0^t \int_{\OO} \eps^{-2} (Kf^k+G) \, \varphi  ,
\end{multline*}
for every $\varphi \in \DD(\bar \UU^\eps)$ and $t\in \R_+$. 
Furthermore, we have that $f^{k+1}$ is a mild solution of Equation~\eqref{eq:TransportLinftyK}, i.e. there holds the representation formula
\begin{multline*}
f^{k+1}(t,x,v) = e^{-\nu(v) t/\eps^2 } f_0(x-vt, v) \, \Ind_{t_1\leq 0} + \frac 1{\eps^2} \int_{\max(0, t_1)}^t  e^{-\nu(v) (t-s)/\eps^2} \, (Kf^k+G)(s,X^\eps_s ,v)\, \ds \\
+ e^{- \nu(v)(t-t_1)/\eps^2} \RRR_\Theta \gamma_+f^k(t_1, x_1, v) \, \Ind_{t_1>0} + e^{-\nu(v)(t-t_1)/\eps^2} \psi(x_1, v) \, \Ind_{t_1>0}, 
\end{multline*}
where $X_{s}^\eps = X^\eps_s( t, x, v):=x -\eps^{-1} v(t - s)$, and we have defined, in the spirit of \eqref{eq:BackwardsTrajectory}, $(t_1, x_1, v_1)$ as the coordinates of the last backwards collision starting from the point $(t,x,v)$, given by 
\begin{equation}\label{eq:BackwardsTrajectory_original}
\begin{array}{rcl}
    t_b(x,v) &=& \inf\{s >0;\ x- \eps^{-1}vs \notin\Omega^\eps\},\\[.1cm]
    t_1 = t_1(t,x,v)&=&t_0-t_b(x,v),\\[.1cm]
    x_1 = x_1(t, x,v)&=& X_{t_1}^\eps = x -\eps^{-1} v(t - t_1),\\[.1cm]
    v_1(t, x,v)&=&\left\{\begin{array}{cl}
         \VV_{x_1}(v)& \text{ during specular reflection,} \\
         v^*& \text{ during diffusive reflection,}
    \end{array}\right.
\end{array}
\end{equation}
where $v^*$ stands for an independent variable.

\medskip
We take now $A>0$ to be fixed later and, in the spirit of \eqref{def:muA_Hypo_decay}, we define the modified weight functions
$$
\omega_A = \omega_A (v):=  \MMM^{-1}_\Theta \chi_A(v) + (1-\chi_A(v)) \omega ,
$$
where $\chi_A(v):= \chi(|v|/A)$, for a function $\chi \in C^2(\R_+, \R_+)$, such that $\mathbf{1}_{[0,1]} \le \chi \le \mathbf{1}_{[0,2]}$. Moreover, we note that there is a constant $c_A >0$ such that 
\be\label{eq:equivalency_omega_omega_A}
c_A^{-1} \omega \leq \omega_A \leq c_A \omega.
\ee
Arguing then as in the Step 3 of the proof of \cite[Theorem 6.2]{BE_iso}, we deduce that 
\be\label{eq:LinftyTraceEstimate}
\lvv f^{k+1} - f^k\rvv_{L^\infty_{\omega_A}(\OO)} + \lvv \gamma f^{k+1} - \gamma f^k\rvv_{L^\infty_{\omega_A}(\OO)}  \leq \left( t\, C_0  + \alpha \eta_A\right)\lvv f^{k} - f^{k-1}\rvv_{L^\infty_{\omega_A}(\OO)} ,
\ee
for a constant $\eta_A \geq 1$, satisfying $\eta_A \to 1$ as $A\to \infty$.
We then choose $A>1$ large enough such that $\alpha\eta_A <1$, and $T_0>0$ small enough such that 
$$
t\, C_0  + \alpha \eta_A <1 \qquad \forall t\in [0,T_0].
$$
Therefore, $f^k$ and $\gamma f^k$ are Cauchy sequences in the the Banach spaces $L^\infty_{\omega_A}(\UU_{T_0})$ and $L^\infty_{\omega_A}(\Gamma_{T_0})$ respectively. Therefore, there are functions $f \in  L^\infty_{\omega_A}(\UU_{T_0})$ and $\ffff \in L^\infty_{\omega_A}(\Gamma_{T_0})$ such that, as $k\to \infty$, there holds 
\be\label{eq:ControlUniformK}
f^k \to f \, \text{ strongly in }  L^\infty_{\omega_A}(\UU_{T_0}) \quad \text{ and } \quad \gamma f^k \to \ffff \,  \text{ strongly in }  L^\infty_{\omega_A}(\Gamma_{T_0}). 
\ee

Arguing then again as during the Step 3 of the proof of \cite[Theorem 6.2]{BE_iso}, using \eqref{eq:ControlUniformK} and \eqref{eq:LinftyTraceEstimate}, we may apply \cite[Lemma~6.8--(T2)]{BE_iso} and we have the existence of $\gamma f$, satisfying $\gamma_\pm f = \ffff_\pm$ and such that $\gamma f^k\wto \gamma f$ weakly in $L^1(\Gamma_{T_0}; \, (n_x\cdot v) \d\sigma_x\dv\dt)$.

Altogether, using the above informations, \eqref{eq:KolmogorovWPL12_Hypo_decay}, and \eqref{eq:equivalency_omega_omega_A}, we may pass to the limit in the weak formulation associated with Equation~\eqref{eq:TransportLinftyK} and we deduce that $f$ solves the evolution equation
\begin{equation}
	\left\{\begin{array}{llll}
		 \partial_t f &=&  \LLL^\eps f + \eps^{-2} G  &\text{ in }\UU\\
		\gamma_-f&=& \alpha\,  \RRR_\Theta \gamma_+ f + \iota \psi  &\text{ on }\Gamma_{-}\\
		f _{t=0}&=& f_0 &\text{ in }\OO,
	\end{array}\right.\label{eq:TransportLinftyK_limit}
\end{equation}
in the time interval $[0, T_0]$. Repeating this argument in every time interval $[nT_0, (n+1) T_0]$ for $n\in \N$ yields the existence of a global weak solution $f$. Furthermore, from the representation formula for $f^k$, \cite[Lemma 3.6 - (K1)]{BE_iso}, and the previously established convergence we have that
\begin{multline}\label{eq:TransportRepresentationPsi}
f(t,x,v) = e^{-\nu(v) t/\eps^2 } f_0(x-vt, v) \, \Ind_{t_1\leq 0} + \frac 1{\eps^2} \int_{\max(0, t_1)}^t  e^{-\nu(v) (t-s)/\eps^2} \, Kf(s,X^\eps_s ,v)\, \ds \\
+\frac 1{\eps^2} \int_{\max(0, t_1)}^t  e^{-\nu(v) (t-s)/\eps^2} \, G(s,X^\eps_s ,v)\, \ds \, + \,  e^{- \nu(v)(t-t_1)/\eps^2}\left( \alpha  \RRR_\Theta \gamma_+f(t_1, x_1, v) + \psi(x_1, v)\right) \, \Ind_{t_1>0} , 
\end{multline}
for every $t\geq 0$ and for almost every $(x,v)\in \OO$. 

We can then use the well-posedness from the Step 3 of the proof of Theorem~\ref{theo:ExistenceL2} to rigorously justify the repetition of the arguments leading to the hypocoercivity result from Theorem~\ref{theo:L2Decay_pert} this time applied to Equation~\eqref{eq:TransportLinftyK_limit}. We may then repeat the computations leading to the conclusion of Proposition~\ref{prop:LinftyEstPert}, and we deduce that there is $\theta>0$, such that for every $\eps\in (0,\eps_{1})$, there holds the energy estimate
\be \label{eq:Linfty_ineq_0_Summary_Linfty}
\lvv f_t \rvv_{L^\infty_{\omega} (\OO)} + \lvv \gamma f_t \rvv_{L^\infty_{\omega} (\Sigma)} \leq \varpi_\eps e^{-\theta t} \left( \lvv f_0\rvv_{L^\infty_{\omega}(\OO)} + \underset{s\in [0,t]}{\sup}  \lvv G\rvv_{L^\infty_{\omega\nu^{-1}}(\UU)}  \right),
\ee
for every $t\geq 0$, and we recall that $\varpi_\eps$ is given by Proposition~\ref{prop:LinftyEstPert}. Finally, we note that, since \eqref{eq:Linfty_ineq_0_Summary_Linfty} comes from the computations of Proposition~\ref{prop:LinftyEstPert}, this is an estimate uniform in $\alpha$.

\medskip\noindent
\emph{Step 2.} We now take a sequence $\alpha_k \nearrow 1$, with $k\in \N$, and we consider the sequence $(f_k)_{k\in \N}$ solution to the modified Maxwell reflection boundary condition problem
\begin{equation}\label{eq:linear_gak_Linfty}
\left\{\begin{array}{rrll}
		 \partial_t f_k &=&  \LLL^\eps  f_k + \eps^{-2} G  &\text{ in }\UU\\
		\gamma_- f_k&=& \alpha_k\,  \RRR \gamma_+ f_k + \iota \psi &\text{ on }\Gamma_{-}\\
		 f_{k, t=0} &=& f_0 &\text{ in }\OO,
	\end{array}\right.
\end{equation}
given by the well-posedness result established in Step~1. 
Using the above results and arguing as during the Step 4 of the proof of \cite[Theorem 6.2]{BE_iso} we deduce that there is $f\in L^\infty_{\omega}(\UU) $ with an associated trace $\gamma f\in L^\infty_{\omega}(\Gamma)$ such that  $f_k \wto f$ weakly-$*$ in $L^\infty_{\omega}(\UU)$ and $\gamma f_k \wto \gamma f$ weakly in $L^1(\Gamma; \, (n_x\cdot v) \d\sigma_x\dv\dt)$. Furthermore, $f$ with its associated trace $\gamma f$, solves Equation~\eqref{eq:PLPBE} in the sense of \eqref{eq:renormalizedFormulation_evolLinear}, and passing to the limit in the representation formula for $f_k$ there yields this Duhamel-type formula for $f$. Finally, this implies that we can rigorously justify the computations leading to the proof of Proposition~\ref{prop:LinftyEstPert}, and this concludes the proof.
\end{proof}

 \subsection{Well-posedness of steady linear kinetic equations in a weighted $L^\infty$ framework}\label{ssec:WellPosedness_Linfty_ss}
 We consider the steady problem 
 \be
	\left\{\begin{array}{llll}
		- \LLL^\eps  \FF  &=& \eps^{-2} G&\text{ in }\OO\\
		\gamma_-  \FF&=&\RRR_\Theta \gamma_+ \FF + \iota\psi &\text{ on }\Sigma_{-},
	\end{array}\right.\label{eq:PLRBE_S}
\ee
and we devote this subsection to establish a well-posedness result for Equation~\eqref{eq:PLRBE_S}.

\begin{theo}\label{theo:ExistenceL_infty_linear_transport_ss}
Assume that either Assumption \ref{item:H1} or Assumption \ref{item:H2} holds, let $\omega$ be an admissible weight function, and let $G\in L^\infty_{\omega\nu^{-1}}(\OO)$ such that $\lla G\rra_\OO =0$. Also, recall that $ \lla \psi\rra_{\Sigma_-} = 0$ and the parameters $\eps_1, \vartheta_1>0$ are given by Proposition~\ref{prop:LinftyEstPert}.

For every $\eps\in (0,\eps_{1})$ and $\vartheta_0\in (0,\vartheta_1)$,
there is $ \FF \in L^\infty_{\omega} (\OO)$, with an associated trace function $\gamma \FF \in L^\infty_{\omega}(\Sigma)$, weak solution to Equation~\eqref{eq:PLRBE_S}, i.e. for any $\varphi \in \DD(\bar \OO)$ there holds 
\begin{multline}\label{eq:renormalizedFormulation_ss}
- \int_{\OO} \eps^{-2} (K \FF) \, \varphi  + \FF \left( \eps^{-1} v\cdot \grad_x \varphi - \eps^{-2}\nu \, \varphi \right)  \,  \dv\dx  + \eps^{-1} \int_{\Sigma_+} \gamma_+ \FF\,  \varphi\,  (n_x\cdot v)_+ \, \dv\d\sigma_x \\
 - \eps^{-1} \int_{\Sigma_-} \RRR_\Theta \gamma_+ \FF\,  \varphi\,  (n_x\cdot v)_- \, \dv\d\sigma_x  = \eps^{-2} \int_{\OO} G\, \varphi \, \dv\dx +  \eps^{-1} \int_{\Sigma_-} \iota \, \psi \,   \varphi\,  (n_x\cdot v)_- \, \dv\d\sigma_x .
\end{multline}
Furthermore, we have that $\lla \FF\rra_\OO = 0$, $\FF$ is the unique solution of Equation~\eqref{eq:PLRBE_S} in the class of $L^\infty_\omega(\OO)$ functions with zero mass, and there holds 
\be
\lvv \FF \rvv_{L^\infty_\omega(\bar \OO)}  \leq  \varpi_\eps  \lVert G \rVert_{L^{\infty}_{\omega\nu^{-1}}(\OO)}  + C_0 \vartheta_0 ,
\label{eq:LinftyPerturbedFiniteTimeDecay_Summary_SS}
\ee
where $\varpi_\eps, C_0>0$ are given by Proposition~\ref{prop:LinftyEstPert}. 
\end{theo}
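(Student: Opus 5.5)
We construct the steady solution as the long-time limit of the evolution problem given by Theorem~\ref{theo:ExistenceL_infty_linear_transport}, applied with the time-independent source $G$ and initial datum $f_0=0$. Since $\lla G\rra_\OO=0$ and $\lla\psi\rra_{\Sigma_-}=0$, mass is conserved and $\lla f_t\rra_\OO=0$ for all $t\ge0$. The crucial observation is that the difference of two solutions of Equation~\eqref{eq:PLPBE} with the same $G$ and $\psi$ solves the homogeneous problem, with $G=0$, $\psi=0$, zero mean, and the Maxwell condition $\RRR_\Theta$. For that homogeneous problem, repeating the proof of Proposition~\ref{prop:LinftyEstPert} with $\psi=0$ removes the $C_0\vartheta_0$ term. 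This gives
\[
\lvv S_t g_0\rvv_{L^\infty_\omega(\bar\OO)} \le \varpi_\eps e^{-\theta t}\lvv g_0\rvv_{L^\infty_\omega(\OO)},
\]
where $S_t$ denotes the homogeneous evolution.

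First, we show that $(f_t)$ is Cauchy in $L^\infty_\omega(\bar\OO)$ as $t\to\infty$. By time-invariance of the equation, $f_{t+s}-f_t$ is the homogeneous evolution over time $t$ of the datum $f_s-f_0=f_s$. Hence
\[
\lvv f_{t+s}-f_t\rvv \le \varpi_\eps e^{-\theta t}\lvv f_s\rvv \le \varpi_\eps e^{-\theta t}\bigl(\varpi_\eps\lvv G\rvv_{L^\infty_{\omega\nu^{-1}}}\theta^{-1}\text{-type bound}+C_0\vartheta_0\bigr).
\]
Here the uniform bound on $\lvv f_s\rvv$ comes from \eqref{eq:LinftyEstPertG}. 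Applied to a constant $G$, the term $\sup_s e^{\theta s}\lvv G\rvv$ multiplied by $e^{-\theta t}$ stays bounded by $\lvv G\rvv$. This yields a limit $\FF\in L^\infty_\omega(\OO)$, with traces $\gamma f_t\to\ffff$ in $L^\infty_\omega(\Sigma)$, and zero mass.

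Second, we pass to the limit in the weak formulation \eqref{eq:renormalizedFormulation_evolLinear}. We take test functions of the form $\varphi(t,x,v)=\chi(t)\phi(x,v)$ and consider time translates on $[T,T+1]$ as $T\to\infty$. The time-derivative term vanishes in the limit, and we obtain \eqref{eq:renormalizedFormulation_ss}. The trace identification $\ffff=\gamma\FF$ follows from \cite[Lemma~6.8]{BE_iso} as in the proof of Theorem~\ref{theo:ExistenceL_infty_linear_transport}, and the boundary relation passes to the limit by \eqref{eq:KolmogorovWPL12_Hypo_decay}. The bound \eqref{eq:LinftyPerturbedFiniteTimeDecay_Summary_SS} then follows from lower semicontinuity of the norm, since \eqref{eq:LinftyEstPertG} with $f_0=0$ gives $\lvv f_t\rvv\le\varpi_\eps\lvv G\rvv+C_0\vartheta_0$. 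Alternatively, it is Proposition~\ref{prop:LinftyEstimatePerturbedFiniteT_Summary_SS} applied directly to $\FF$.

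Third, uniqueness among zero-mass $L^\infty_\omega$ solutions: the difference $D$ of two such solutions is a zero-mass steady solution of the homogeneous problem. Viewed as a constant-in-time evolution, the decay estimate gives $\lvv D\rvv\le\varpi_\eps e^{-\theta t}\lvv D\rvv$ for all $t$, hence $D=0$. Viewing a steady weak solution as an evolution weak solution only requires multiplying by a time cutoff, as in the proof of Proposition~\ref{prop:LinftyEstimatePerturbedFiniteT_Summary_SS}.

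The main obstacle is the homogeneous decay estimate without the $C_0\vartheta_0$ remainder. It has to hold at the $L^\infty$ level for weak solutions in the sense of Theorem~\ref{theo:ExistenceL_infty_linear_transport}, uniformly in the parameters, which requires checking that every $\psi$-contribution in Sections~\ref{sec:Hypo_Pert}--\ref{sec:AprioriLinfty} is genuinely linear in $\psi$ and hence drops out. I would verify this first, relying on Theorem~\ref{theo:Hypo_decay}-type structure with the rescaled $\vartheta_0'$.
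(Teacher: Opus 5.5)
Your proof is correct, but it obtains existence differently from the paper.

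\textbf{The paper's route.} The paper never shows that $f_t$ converges. It starts from an arbitrary zero-mass $f_0$ and uses only the uniform-in-time bound \eqref{eq:Linfty_SS}, remainder $C_0\vartheta_0$ included. It then takes Ces\`aro means $F_k=t_k^{-1}\int_0^{t_k}f_s\,\ds$, which solve the steady problem with the perturbed source $\eps^{-2}G+(f_{t_k}-f_0)/t_k$. Finally it extracts weak-$\ast$ limits of $F_k$ and of their traces, and identifies the limiting trace via a trace theorem and a weak trace-convergence lemma.

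\textbf{Your route.} You use the remainder-free decay for the homogeneous problem (Maxwell condition $\RRR_\Theta$, no inflow, no source) to show that $(f_t)$ is Cauchy in $L^\infty_\omega(\bar\OO)$. Existence then comes from strong convergence, and passing to the limit in the weak formulation is elementary.

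\textbf{Comparison.} You need a stronger ingredient up front, but the paper relies on it anyway. The difference $D$ of two solutions solves the problem without inflow. Proposition~\ref{prop:LinftyEstimatePerturbedFiniteT_Summary_SS}, as stated with its $C_0\vartheta_0$ remainder, cannot give $D=0$. So the paper's uniqueness step in Step~3 implicitly uses exactly the homogeneous estimate you isolate. The same estimate is used again in the contraction argument of Section~\ref{sec:NESS} and in Proposition~\ref{prop:LinftyDecay_stabilitity_h}. In return, your construction exhibits $\FF$ directly as the exponential-in-time limit of the evolution, and it avoids weak compactness of traces.

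\textbf{Two remarks.}
\begin{itemize}
\item When you check the homogeneous estimate, only the genuine inflow terms $\iota\psi$ drop out. The contributions of $\MMM_\Theta-\MMM_1=\sqrt{2\pi}\,\psi$ coming from $\DDD_\Theta$ itself remain. They are quadratic in the trace and are handled exactly as in Section~\ref{sec:Hypo_decay}.
\item The factor $\theta^{-1}$ in your bound on $\lvv f_s\rvv$ is spurious. Estimate \eqref{eq:LinftyEstPertG} with $f_0=0$ and constant $G$ gives directly $\lvv f_s\rvv\le\varpi_\eps\lvv G\rvv_{L^\infty_{\omega\nu^{-1}}(\OO)}+C_0\vartheta_0$.
\end{itemize}
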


\begin{proof}
We split the proof into four steps.

\medskip\noindent
\emph{Step 1. (Preliminaries)}
Let us fix $f_0\in L^\infty_\omega(\OO)$ such that $\lla f_0\rra_\OO=0$, and we are interested in studying the asymptotic behavior of Equation~\eqref{eq:PLPBE} with this initial datum. 
We note that a direct application of Theorem~\ref{theo:ExistenceL_infty_linear_transport} gives the existence of $f \in L^\infty_\omega(\UU)$ with an associated trace function $\gamma f\in L^\infty_\omega(\Gamma)$, unique global weak solution to Equation~\eqref{eq:PLPBE}. 

Furthermore, Proposition~\ref{prop:LinftyEstPert} implies that there is $\theta>0$ such that
\be\label{eq:Linfty_SS}
\lVert  f_t \rVert_{L^{\infty}_{\omega}(\OO)} + \lVert \gamma  f_t \rVert_{L^{\infty}_{\omega}(\Sigma)} \leq \varpi_\eps \left( e^{-\theta t} \lvv f_0\rvv_{L^\infty_\omega(\OO)}+  \lVert G \rVert_{L^{\infty}_{\omega\nu^{-1}}(\OO)} \right)  + C_0 \vartheta_0 \qquad \forall t\geq 0,
\ee
for some constants $C_0, \varpi_\eps>0$.

\medskip\noindent
\emph{Step 2. (Existence of solutions)}
We take a sequence $(t_k)_{k\geq 0}$, with $t_0=1$ and $t_k \nearrow +\infty$, and we define the Cesàro means 
$$
F_k = F_k (x,v) := \frac 1{t_k}\int_0^{t_k} f_s (x,v) \,  \ds \qquad \text{ and } \qquad  \bar F_{k, \pm} = \bar F_{k, \pm} (x,v) := \frac 1{t_k}\int_0^{t_k} \gamma_\pm f_s (x,v) \, \ds ,
$$
which are well defined for every $k \in \N$ due to \eqref{eq:Linfty_SS}, and we note that, from the boundary conditions, we have that $\bar F_{k,-} = \RRR_\Theta \bar F_{k,+} + \iota \psi$. We observe now that \eqref{eq:Linfty_SS} futher implies that
\be\label{eq:Linfty_SS_k}
\lVert  F_k \rVert_{L^{\infty}_{\omega}(\OO)} + \lVert \bar F_{k, \pm} \rVert_{L^{\infty}_{\omega}(\Sigma_\pm)} \leq  \frac 1 {\theta t_k} \varpi_\eps \lvv f_0\rvv_{L^\infty_\omega(\OO)} + \varpi_\eps  \lVert G \rVert_{L^{\infty}_{\omega\nu^{-1}}(\OO)}  + C_0 \vartheta_0 ,
\ee
for every $k\in \N$.

\smallskip
Taking then the weak formulation given by Theorem~\ref{theo:ExistenceL_infty_linear_transport} at time $t_k$ with a test function $\varphi \in C^\infty_c( \OO)$, and dividing by $t_k$ there holds
\begin{multline}\label{eq:WeakFormulaCesaro}
- \int_{\OO} \eps^{-2} (K F_k) \, \varphi  + F_k \left( \eps^{-1} v\cdot \grad_x \varphi - \eps^{-2}\nu \, \varphi \right)  
+ \eps^{-1} \int_{\Sigma_+} \bar F_{k,+} \,  \varphi\,  (n_x\cdot v)_+   \\
- \eps^{-1} \int_{\Sigma_-} \RRR_\Theta \bar F_{k,+} \,  \varphi\,  (n_x\cdot v)_- 
=   \int_{\OO} G_k\, \varphi +  \eps^{-1} \int_{\Sigma_-} \iota \, \psi \,   \varphi\,  (n_x\cdot v)_- .
\end{multline}
where we have used the Fubinni theorem to obtain the above formula, and we have defined $G_k :=  \eps^{-2} G + (f_{t_k} - f_0)/ t_k$. We also note that \eqref{eq:Linfty_SS} gives that  
\be\label{eq:CesaroMeanRestConv}
G_k \to \eps^{-2} G  \qquad \text{ strongly in } L^\infty_\omega(\UU) \text{ as } k\to \infty.
\ee

The above formulation implies that $F_k$ solves the steady problem 
$$
- \LLL^\eps  F_k = G_k \qquad \text{ in } \DD'(\OO).
$$
Therefore, since $F_k\in L^\infty(\OO)$, \cite[Theorem~8.6]{sanchez2024kreinrutmantheorem} implies that $F_k$ admits a unique trace $\gamma F_k$, solving the Green formula 
\beqn
- \int_{\OO} \eps^{-2} (K F_k) \, \varphi  + F_k \left( \eps^{-1} v\cdot \grad_x \varphi - \eps^{-2}\nu \, \varphi \right) 
+ \eps^{-1} \int_{\Sigma} \gamma F_k \,  \varphi\,  (n_x\cdot v)_+ 
=   \int_{\OO} G_k\, \varphi  .
\eeqn
Altogether, from the unicity of the trace, the Green formula, and the above weak formulation, we have that $\bar F_{k\pm} = \gamma_\pm F_k$. Hence, we deduce that $\gamma_-  F_k = \RRR_\Theta \gamma_+ F_k + \iota\psi$ and $F_k$ is a weak solution of the following steady problem
 \be
	\left\{\begin{array}{rlll}
		- \LLL^\eps  F_k  &=&  G_k  &\text{ in }\OO\\
		\gamma_-  F_k&=&\RRR_\Theta \gamma_+ F_k + \iota\psi &\text{ on }\Sigma_{-}.
	\end{array}\right.\label{eq:PLBE_SS_Cesaro}
\ee

Our goal then is to prove that $F_k$ admits a limit as $k\to \infty$, and that such a limit solves Equation~\eqref{eq:PLBE_SS}. 

Indeed, we observe that \eqref{eq:Linfty_SS_k} implies that there is a subsequence $(k_n)_{n\geq 0}$ and functions $\FF \in L^\infty_\omega(\OO)$ and $\FF_b \in L^\infty_\omega( \Sigma)$ such that
$$
\beal
F_{{k_n}} (x, v) \wto \FF (x,v) &\quad \text{weakly-$\ast$ in $L^\infty_\omega(\OO)$ as $n\to \infty$, and}\\
\gamma F_{{k_n}} (x, v) \wto \FF_b (x,v) &\quad \text{weakly-$\ast$ in $L^\infty_\omega(\Sigma)$ as $n\to \infty$}.
\eeal
$$
Arguing now as in Steps 1 and 2 of the proof of Theorem~\ref{theo:ExistenceL_infty_linear_transport}, we deduce that $\FF$ admits a trace $\gamma \FF \in L^\infty(\Sigma)$, and there holds $\FF_b = \gamma \FF$. Furthermore, using \eqref{eq:Linfty_SS_k}, \cite[Lemma~6.8-(T2)]{BE_iso} further implies that $\gamma F_{{k_n}} (x, v) \wto \gamma \FF (x,v)$ weakly in $L^1(\Sigma, (n_x\cdot v) \d\sigma_x\dv)$. 

With these informations, and using \eqref{eq:KolmogorovWPL12_Hypo_decay}, we may pass to the limit in \eqref{eq:WeakFormulaCesaro}, and using \eqref{eq:CesaroMeanRestConv} we deduce that $\FF$, with its associated trace $\gamma \FF$, is a weak solution of Equation~\eqref{eq:PLBE_SS}.

\medskip\noindent
\emph{Step 3. (Qualitative properties and uniqueness)}
We recall that $\lla G\rra_\OO = \lla \psi\rra_{\Sigma_-} = 0$ implies that Equation~\eqref{eq:PLPBE} conserves mass (see Subsection~\ref{ssec:Hypo_Perturbed}). Therefore, since $\lla f_0\rra_\OO=0$, we have that the sequence $(f_t)_{t\geq 0}$ given by Step~1 satisfies $\lla f_t \rra_\OO = 0$ for every $t\geq0$. 

In particular, this implies that $\lla F_k\rra_\OO = 0$ for every $k\in \N$, hence we deduce that $\lla \FF \rra_\OO=0$.
Furthermore, we note that, since $\FF$ solves Equation~\eqref{eq:PLBE_SS} and $\lla \FF \rra_\OO=0$,  Proposition~\ref{prop:LinftyEstimatePerturbedFiniteT_Summary_SS} yields \eqref{eq:LinftyPerturbedFiniteTimeDecay_Summary_SS}. 

Finally, the linearity of the problem together with Proposition~\ref{prop:LinftyEstimatePerturbedFiniteT_Summary_SS} again imply the uniqueness of $\FF$ within the class of $L^\infty_\omega(\OO)$ functions with mass zero. 
\end{proof}

 \section{Existence and uniqueness of a non-equilibrium steady state}\label{sec:NESS}
During this section, we employ the a priori estimate established in Sections \ref{sec:Hypo_Pert}, \ref{sec:AprioriLinfty} and \ref{sec:AprioriLinfty_SS}, together with a fixed point argument, to prove the following well-posedness result, equivalent to Theorem~\ref{theo:NESS}.
 
 \begin{theo}\label{theo:NESS_L}
Assume that either Assumption \ref{item:H1} or Assumption \ref{item:H2} holds, and let $\omega$ be an admissible weight function.

There is a constant $\eps_\star > 0$ such that for every $\eps \in (0, \eps_\star)$, there is $\vartheta_\star = \vartheta_\star(\eps)>0$, satisfying $\vartheta_\star (\eps)\to 0$ as $\eps\to 0$, such that for every $\vartheta_0 \in (0, \vartheta_\star)$, there exists \hbox{$\FFF \in L^\infty_\omega(\UU)$}, unique non-equilibrium steady state with zero mass of Equation~\eqref{eq:PLBE_SS_QQ} in the distributional sense. 
Furthermore, there holds
\be\label{eq:BE_NESS_eps_L}
\lvv \FFF \rvv_{L^\infty_\omega(\OO)} \leq  \lambda(\eps),
\ee
for some $\lambda(\eps)>0$, satisfying 
\be\label{eq:varpi_eps_control}
\lambda(\eps) \leq \frac 1{4 \varpi_\eps C_\QQ},
\ee
where $\varpi_\eps$ is given by Proposition~\ref{prop:LinftyEstPert}, and we note that $C_\QQ>0$ is an explicit constant, given by Lemma~\ref{lem:NonlinearGaussianWeightEstimate}. In particular, we have that $\lambda(\eps)\to 0$ as $\eps\to 0$.
\end{theo}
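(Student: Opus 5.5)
We construct $\FFF$ as the fixed point of the map $\Phi$ defined by solving the linear steady problem with the nonlinear term frozen. Fix $\eps\in(0,\min(\eps_1,1))$ and consider the closed ball
$$
\mathcal B_\lambda := \{ \FF\in L^\infty_\omega(\OO);\ \lla \FF\rra_\OO = 0,\ \lvv \FF\rvv_{L^\infty_\omega(\OO)}\le \lambda\},
$$
with $\lambda=\lambda(\eps)>0$ chosen below. For $\FF\in\mathcal B_\lambda$ we set $G:=\QQ(\FF,\FF)$. By the conservation laws \eqref{eq:ConservationLaws} with $\varphi=1$ we have $\lla G\rra_\OO=0$. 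The nonlinear estimate of Lemma~\ref{lem:NonlinearGaussianWeightEstimate}, which we assume has the form
$$
\lvv \QQ(g,h)\rvv_{L^\infty_{\omega\nu^{-1}}}\le C_\QQ \lvv g\rvv_{L^\infty_\omega}\lvv h\rvv_{L^\infty_\omega},
$$
gives $G\in L^\infty_{\omega\nu^{-1}}(\OO)$. Theorem~\ref{theo:ExistenceL_infty_linear_transport_ss} then provides a unique zero-mass solution $\Phi(\FF)$ of \eqref{eq:PLRBE_S}, and \eqref{eq:LinftyPerturbedFiniteTimeDecay_Summary_SS} yields
$$
\lvv \Phi(\FF)\rvv_{L^\infty_\omega}\le \varpi_\eps C_\QQ \lambda^2 + C_0\vartheta_0 .
$$

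Next we choose the parameters. Take $\lambda(\eps):=(4\varpi_\eps C_\QQ)^{-1}$, which is exactly \eqref{eq:varpi_eps_control}; it tends to $0$ because $\varpi_\eps\to\infty$. Take $\vartheta_\star(\eps):=\min\big(\vartheta_1,\lambda(\eps)/(2C_0)\big)$, which also tends to $0$. With these choices, $\varpi_\eps C_\QQ\lambda^2=\lambda/4$ and $C_0\vartheta_0\le\lambda/2$, so $\Phi$ maps $\mathcal B_\lambda$ into itself.

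For the contraction, take $\FF_1,\FF_2\in\mathcal B_\lambda$. By linearity, $\Phi(\FF_1)-\Phi(\FF_2)$ solves \eqref{eq:PLRBE_S} with $\psi$ removed (the inflow term cancels) and source
$$
\QQ(\FF_1,\FF_1)-\QQ(\FF_2,\FF_2)=\QQ(\FF_1-\FF_2,\FF_1)+\QQ(\FF_2,\FF_1-\FF_2),
$$
which has zero mass. This difference also has zero mass. We then need the homogeneous version of \eqref{eq:LinftyPerturbedFiniteTimeDecay_Summary_SS}, i.e.\ the same bound without the $C_0\vartheta_0$ term. It follows from the same proof, since that term came solely from $\psi$; re-running Proposition~\ref{prop:LinftyEstimatePerturbedFiniteT_Summary_SS} with $\psi\equiv0$ removes it. This gives
$$
\lvv\Phi(\FF_1)-\Phi(\FF_2)\rvv_{L^\infty_\omega}\le 2\varpi_\eps C_\QQ\lambda\,\lvv \FF_1-\FF_2\rvv_{L^\infty_\omega}\le \tfrac12\lvv \FF_1-\FF_2\rvv_{L^\infty_\omega}.
$$
The ball $\mathcal B_\lambda$ is complete (closed in $L^\infty_\omega$, and the zero-mass condition is preserved under the limit because $\omega^{-1}\in L^1$), so Banach's fixed point theorem gives a unique $\FFF\in\mathcal B_\lambda$ with $\Phi(\FFF)=\FFF$. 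The weak formulation \eqref{eq:renormalizedFormulation_ss} with $G=\QQ(\FFF,\FFF)$ is exactly the distributional sense of \eqref{eq:PLBE_SS_QQ}, and \eqref{eq:BE_NESS_eps_L} holds.

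Uniqueness holds among zero-mass solutions whose norm is at most $\lambda(\eps)$, which is the perturbative class the statement intends. Any such solution is a fixed point of $\Phi$ in $\mathcal B_\lambda$, so it coincides with $\FFF$.

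The step we expect to be the main obstacle is the $\eps$-bookkeeping. The fixed-point radius is forced to be $\lambda\sim\varpi_\eps^{-1}$, while the inflow contributes a term $C_0\vartheta_0$ that must be smaller than this radius; this is what makes $\vartheta_\star$ depend on $\eps$, as Remark~\ref{rem:Def_qqqq} anticipates. We also need to check that the constants $\varpi_\eps$ and $C_0$ from Proposition~\ref{prop:LinftyEstPert} are independent of the source $G$ and of $\psi$, so that the homogeneous estimate used in the contraction step is legitimate.
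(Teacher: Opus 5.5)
Your proposal is correct and is essentially the paper's proof: a Banach fixed point for the frozen-source map $\FF\mapsto\Phi(\FF)$ on an $L^\infty_\omega$ ball of radius $\lambda(\eps)\le(4\varpi_\eps C_\QQ)^{-1}$. The steady estimate of Theorem~\ref{theo:ExistenceL_infty_linear_transport_ss} keeps the ball invariant, and its $\psi$-free version, which the paper also invokes, gives the contraction. The differences are only cosmetic: you build the zero-mass constraint into the ball, and you take $\vartheta_\star=\min(\vartheta_1,\lambda/(2C_0))$ instead of the paper's $\min(\vartheta_1,\lambda^2)$ with $\lambda=\min\big((\varpi_\eps C_\QQ+C_0)^{-1},(4\varpi_\eps C_\QQ)^{-1}\big)$. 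Your caveat that uniqueness is obtained only within that ball accurately describes what the contraction argument, yours and the paper's, actually proves.
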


%
%
%
%
%
%

\subsection{Estimates on the Boltzmann collision operator}\label{ssec:NonlinearQQEstimate}

We present the following classical control on the non-linear operator $\QQ$.

\begin{lem}\label{lem:NonlinearGaussianWeightEstimate}
Let $\omega$ be an admissible weight function, and let $g,h\in L^\infty_{\omega}(\OO)$. There holds
$$
\lVert \QQ(g, h) \rVert_{L^\infty_{\omega\nu^{-1}}(\OO)} \leq C_\QQ \lVert  g \rVert_{L^\infty_{\omega}(\OO)}  \lVert   h\rVert_{L^\infty_{\omega}(\OO)}.
$$
for some constant $C_\QQ = C_\QQ(\omega)>0$.
\end{lem}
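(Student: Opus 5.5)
The plan is to prove the bound pointwise in $(x,v)$. Since $\QQ$ acts only in the velocity variable, it suffices to show that for a.e.\ $x$
$$
\omega(v)\,\nu(v)^{-1}\,\lvert \QQ(g,h)(v)\rvert \le C_\QQ\,\lVert \omega g(x,\cdot)\rVert_{L^\infty_v}\,\lVert \omega h(x,\cdot)\rVert_{L^\infty_v},
$$
and then take the supremum over $x$.

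Split $\QQ$ into gain and loss parts. Each has two symmetric terms: $\QQ^+(g,h)$ collects $G(v_*')H(v')+H(v_*')G(v')$, and $\QQ^-(g,h)$ collects $G(v_*)H(v)+G(v)H(v_*)$. In the proof we write $\lvert g\rvert\le \lVert g\rVert_{L^\infty_\omega}\,\omega^{-1}$, and similarly for $h$.

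\textbf{Loss term.} The term $G(v)H(v_*)$ is bounded by
$$
\lVert g\rVert\,\lVert h\rVert\,\omega^{-1}(v)\int_{\R^3}\!\int_{\mathbb S^2}\lvert (v-v_*)\cdot\sigma\rvert\,\omega^{-1}(v_*)\,\d\sigma\,\dv_* .
$$
Since $\omega^{-1}=e^{-\zeta\lvert v\rvert^2}$ is integrable against $\lvert v-v_*\rvert$, the inner integral is $\lesssim\langle v\rangle\lesssim\nu(v)$ by \eqref{eq:Controlnu}. Multiplying by $\omega\nu^{-1}$ gives a constant. The symmetric term $G(v_*)H(v)$ is handled identically.

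\textbf{Gain term.} This is the main point, and it is where the Gaussian form of the weight is used. Energy conservation gives $\lvert v'\rvert^2+\lvert v_*'\rvert^2=\lvert v\rvert^2+\lvert v_*\rvert^2$, so
$$
\omega^{-1}(v')\,\omega^{-1}(v_*')=e^{-\zeta(\lvert v\rvert^2+\lvert v_*\rvert^2)}=\omega^{-1}(v)\,\omega^{-1}(v_*).
$$
Hence
$$
\omega(v)\,\lvert G(v_*')H(v')\rvert\le \lVert g\rVert\,\lVert h\rVert\,\omega^{-1}(v_*),
$$
and integrating against $\BB\,\d\sigma\,\dv_*$ again gives $\lesssim\langle v\rangle\lesssim\nu(v)$. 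The constant depends only on $\zeta$. The same bound holds for $H(v_*')G(v')$.

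Summing the four contributions yields the claim with $C_\QQ=C_\QQ(\zeta)$. I do not foresee a genuine obstacle here: the only subtlety is that the weight must be exactly multiplicative under collisions, which holds because $\omega$ is a Gaussian. Measurability in $x$ is automatic.
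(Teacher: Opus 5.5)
Your proof is correct and is the classical argument the paper relies on when it skips the proof and cites the literature. The loss terms are controlled by $\int_{\R^3} \lvert v-v_*\rvert\, \omega^{-1}(v_*)\, \mathrm{d}v_* \lesssim \la v \ra \lesssim \nu(v)$, and the gain terms by the exact identity $\omega^{-1}(v')\,\omega^{-1}(v_*') = \omega^{-1}(v)\,\omega^{-1}(v_*)$, which follows from energy conservation. One detail is worth making explicit: the a.e.\ bounds $\lvert g\rvert \le \lVert g\rVert_{L^\infty_\omega(\OO)}\,\omega^{-1}$ can be evaluated at $(v',v_*')$ because, for fixed $\sigma$, the map $(v,v_*)\mapsto(v',v_*')$ is a linear involution with unit Jacobian and therefore preserves null sets.
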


\begin{rem}\label{rem:NonlinearGaussianWeightEstimate}
This estimate is classical in the study of the Boltzmann equation, see, for instance, \cite{MR3779780, MR2679358, MR1307620, MR3740632}, thus we skip the proof.
\end{rem}

\subsection{Proof of Theorem~\ref{theo:NESS_L}} \label{ssec:ProofThmNESSRescaled} 
 We set $\eps_\star = \eps_1$, and we consider $\vartheta_0 \in (0, \vartheta_1)$, where we recall that $\eps_1, \vartheta_1>0$ are given by Proposition~\ref{prop:LinftyEstPert}. We consider $\lambda>0$ to be fixed later, and we define the Banach space
$$
\ZZ:= \left\{ g\in L^\infty_{\omega}(\OO),  \,  \lvv g\rvv_{L^\infty_\omega(\OO)}  \leq \lambda\right\},
$$
equipped with the strong topology in $L^\infty_{\omega}(\OO)$, which makes $\ZZ$ a bounded, convex, closed subset of $L^\infty_{\omega}(\OO)$.

\smallskip
We consider $g\in \ZZ$, and we study the linear problem
    \be
	\left\{\begin{array}{rlll}
		-\LLL^\eps \FF &=& \eps^{-2} \QQ(g,g) &\text{ in }\OO \\
		\gamma_-  \FF&=&\RRR \gamma_+ \FF + \iota \psi &\text{ on }\Sigma_{-}.
	\end{array}\right.\label{eq:PLRBEGauss_S}
\ee
We note that the fact that $g\in \ZZ$, together with Lemma~\ref{lem:NonlinearGaussianWeightEstimate}, imply that we may apply Theorem~\ref{theo:ExistenceL_infty_linear_transport_ss}, thus providing a unique solution with mass zero for Equation~\eqref{eq:PLRBEGauss_S} in the distributional sense. 
This motivates the introduction of the map $\Psi$ defined by $\Psi(g) = \FF$, where $\FF$ is the solution to Equation~\eqref{eq:PLRBEGauss_S} as defined above, and we note that Theorem~\ref{theo:ExistenceL_infty_linear_transport_ss} further implies that $\lla \FF\rra_\OO = 0$. 

\smallskip
Using then \eqref{eq:LinftyPerturbedFiniteTimeDecay_Summary_SS} we have that there are $C_0>0$ and $\varpi_\eps>0$, satisfying $\varpi_\eps \to \infty$ as $\eps\to 0$, such that for every $\eps\in (0,\eps_\star)$ there holds
\be\label{eq:ControlExistenceNormStrongW_S}
\lvv \FF \rvv_{L^\infty_\omega(\OO)} \leq \varpi_\eps \lvv \QQ(g,g)\rvv_{L^\infty_{\omega\nu^{-1}}(\OO)} + C_0\vartheta_0 
\leq \varpi_\eps C_\QQ  \lambda^2 + C_0\vartheta_0,
\ee
where we have used Lemma~\ref{lem:NonlinearGaussianWeightEstimate}, and the very fact that $g\in \ZZ$, to obtain the second inequality.

\smallskip
We then set $\vartheta_\star  = \min(\vartheta_2, \lambda^2)$, and we choose
$$
\lambda = \lambda(\eps) := \min\left( {1\over  \varpi_\eps C_\QQ+C_0} , {1\over 4\varpi_\eps C_\QQ } \right).
$$
We observe that $\lambda(\eps) \to0$ as $\eps\to 0$; and, moreover, this choice of $\lambda$ together with \eqref{eq:ControlExistenceNormStrongW_S} implies that $\FF\in \ZZ$, thus $\Psi:\ZZ \to \ZZ$.

\smallskip
We take then $g_1, g_2\in \ZZ$, we denote $\FF_i = \Psi(g_i)$ for $i=1,2$, and we set $\bar \FF = \FF_1-\FF_2$. We note that, defined in this way, $\bar\FF$ is the weak solution (in the sense of Theorem~\ref{theo:ExistenceL_infty_linear_transport_ss}) of the following steady equation
    \beqn
	\left\{\begin{array}{rlll}
	-\LLL^\eps \bar\FF &=& \eps^{-2} \QQ(g_1+g_2, g_1-g_2) &\text{ in }\OO \\
		\gamma_- \bar\FF&=&\RRR \gamma_+ \bar\FF &\text{ on }\Sigma_{-}.
	\end{array}\right.\label{eq:PLRBEGauss_psi_k}
\eeqn
Using then again \eqref{eq:LinftyPerturbedFiniteTimeDecay_Summary_SS} (see also \cite[Proposition 6.1]{BE_iso}) we have that 
$$
\beal
\lvv \FF\rvv_{L^\infty_\omega(\OO)} & \leq  \varpi_\eps  \lvv \QQ(g_1+g_2,g_1-g_2)\rvv_{L^\infty_{\omega\nu^{-1}} (\OO)} \leq  \varpi_\eps C_\QQ \lvv g_1+g_2 \rvv_{L^\infty_\omega(\OO)} \lvv g_1- g_2 \rvv_{L^\infty_\omega(\OO)} \\
&\leq 2\lambda \varpi_\eps C_\QQ  \lvv g_1- g_2 \rvv_{L^\infty_\omega(\OO)} ,
\eeal
$$
where we have successively used Lemma~\ref{lem:NonlinearGaussianWeightEstimate}, and the fact that $g_1, g_2\in \ZZ$, to obtain the last estimate. 
The above inequality and the choice of $\lambda$ then implies that $\Psi$ is a contraction in $\ZZ$, and we thus deduce the existence of a unique fixed point $\FFF\in \ZZ$ for this map. Furthermore, we deduce that, from the very definition of the map $\Psi$, $\FFF$ is a weak solution of the nonlinear steady Boltzmann equation \eqref{eq:PLBE_SS}, that $\lla \FFF\rra_\OO = 0$, and we note that \eqref{eq:BE_NESS_eps_L} comes from the fact that $\FFF\in \ZZ$, and the very definition of $\lambda$.
\qed

 \section{Asymptotic stability}\label{sec:Stability}

We consider a function $G:\UU\to \R$, and we define $h_0 := F_0 - \MM - \FFF$ where we recall that $F_0$ is given by \eqref{eq:BEIC} and $\FFF$ is given by Theorem~\ref{theo:NESS_L}. During this section we study the following evolution equation
 \be
	\left\{\begin{array}{llll}
		\partial_{t} h &=& \LLL^\eps h  + \eps^{-2} \QQ(h, \FFF) + \eps^{-2} \QQ(\FFF, h) + \eps^{-2} G  &\text{ in }\UU\\
		\gamma_- h&=&\RRR_\Theta \gamma_+h  &\text{ on }\Gamma_{-}\\
		 h_{t=0}&=& h_0  &\text{ in }\OO.
	\end{array}\right.\label{eq:SLPBE}
\ee
 
 \begin{rem}\label{rem:Eq_h_0}
We note that if we assume $f$ to be a solution of Equation~\eqref{eq:PBE}, then, at least at the level of a priori estimates, $h = f-\FFFF$ solves Equation~\eqref{eq:SLPBE} with $G= \QQ(h,h)$, hence the motivation of the study of this problem. 
\end{rem}

\begin{rem}\label{rem:ZeroMass_h0}
We note that, since $\lla F_0\rra_\OO=1$ by hypothesis, and $\lla \FFF\rra_\OO=0$ as established in Theorem~\ref{theo:NESS_L}, there holds $\lla h_0 \rra_\OO =0$.
\end{rem}

We devote then this section to prove the following result, equivalent to that of Theorem~\ref{theo:Main}.
 \begin{theo}\label{theo:Main_h}
Assume that either Assumption \ref{item:H1} or Assumption \ref{item:H2} holds, and let $\omega$ be an admissible weight function. 
Recall the non-equilibrium steady state $\FFF$, and the parameters $\eps_\star>0$, and $\vartheta_\star =\vartheta_\star(\eps)  >0$, are given by Theorem~\ref{theo:NESS_L}.

For every $\eps\in (0,\eps_\star)$ and every $\vartheta_0 \in (0,\vartheta_{\star})$, there is $\eta(\eps)>0$, satisfying $\eta(\eps)\to 0$ as $\eps\to 0$, such that for every for every $h_0\in L^\infty_\omega(\OO)$ satisfying
\beqn
\lvv  h_0\rvv_{L^\infty_\omega(\OO)} \leq ( \eta(\eps))^2,
\eeqn
there exists  $h\in  L^\infty_\omega(\UU)$, with an associated trace function $\gamma h \in L^\infty_\omega(\Gamma)$, unique solution to Equation~\eqref{eq:SLPBE} in the distributional sense, i.e. for every test function $\varphi \in \DD(\bar \UU)$ there holds 
\begin{multline}\label{eq:WeakFormulationGaussian_h}
\int_{\OO} h(t,\cdot) \, \varphi(t,\cdot)   -  \int_0^t \int_{\OO}  h \left( \partial_t \varphi  +  \eps^{-1} v\cdot \grad_x \varphi  -\eps^{-2} \nu\varphi \right) + \eps^{-2} (K h) \varphi  \\
+ \eps^{-1} \int_0^t \int_{\Sigma} \gamma h\,  \varphi\,  (n_x\cdot v)  
-\eps^{-2}  \int_0^t \int_{\OO}  \varphi \,  \left( \QQ(h, \FFF) +  \QQ(\FFF, h) + \QQ(h,h) \right)
= \int_{\OO} h_0\,  \varphi(0,\cdot ) ,
\end{multline}
for every $t\geq 0$.
Furthermore, there is a constructive constant $\theta>0$ such that 
\be
\lvv h_t \rvv_{L^\infty_\omega(\OO)}  \leq C \, \eta(\eps) e^{-\theta t}   \label{eq:BEdecayFinal_h} \qquad  \forall t\geq0.
\ee
for some constant $C>0$, independent of $\eps$. 
\end{theo}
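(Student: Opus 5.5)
We will argue by a Banach fixed point in a time-weighted $L^\infty_\omega$ space, in the spirit of \cite[Theorem 6.1]{BE_iso} and mirroring the proof of Theorem~\ref{theo:NESS_L}. The first step is a linear result for Equation~\eqref{eq:SLPBE} with a source $G$ of zero mass and homogeneous boundary condition (no $\psi$): we show existence, uniqueness of a weak solution, and the estimate
\be\label{eq:plan_lin_h}
\lvv h_t\rvv_{L^\infty_\omega(\bar\OO)} \leq 2\varpi_\eps e^{-\theta t}\Big(\lvv h_0\rvv_{L^\infty_\omega(\OO)} + \sup_{s\in[0,t]} e^{\theta s}\lvv G_s\rvv_{L^\infty_{\omega\nu^{-1}}(\OO)}\Big).
\ee
To get it, we treat the linear terms $\eps^{-2}(\QQ(h,\FFF)+\QQ(\FFF,h))$ as part of the source in Proposition~\ref{prop:LinftyEstPert} (with $\psi\equiv 0$, so the $C_0\vartheta_0$ term disappears, since it only comes from the inflow). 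By Lemma~\ref{lem:NonlinearGaussianWeightEstimate} and \eqref{eq:BE_NESS_eps_L}--\eqref{eq:varpi_eps_control},
$$\varpi_\eps\sup_s e^{\theta s}\lvv \QQ(h_s,\FFF)+\QQ(\FFF,h_s)\rvv_{L^\infty_{\omega\nu^{-1}}}\le 2\varpi_\eps C_\QQ\lambda(\eps)\sup_s e^{\theta s}\lvv h_s\rvv_{L^\infty_\omega}\le \tfrac12\sup_s e^{\theta s}\lvv h_s\rvv_{L^\infty_\omega}.$$
This contribution can therefore be absorbed, giving \eqref{eq:plan_lin_h}. Mass conservation holds because $\QQ$ terms have zero mass and the Maxwell operator $\RRR_\Theta$ conserves boundary flux \eqref{eq:ConservationMassBdy}, so the hypothesis $\lla G_t\rra_\OO=0$ of Proposition~\ref{prop:LinftyEstPert} is met (Remark~\ref{rem:ZeroMass_h0}).

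Existence for the linear problem is obtained by iterating Theorem~\ref{theo:ExistenceL_infty_linear_transport}: solve $\partial_t h^{k+1}=\LLL^\eps h^{k+1}+\eps^{-2}(\QQ(h^k,\FFF)+\QQ(\FFF,h^k)+G)$. The same absorption shows that the map $h^k\mapsto h^{k+1}$ is a contraction in the norm $\sup_t e^{\theta t}\lvv\cdot\rvv_{L^\infty_\omega}$. Uniqueness follows by linearity.

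For the nonlinear problem, we define
$$\ZZ_\eta:=\{h:\ \sup_t e^{\theta t}\lvv h_t\rvv_{L^\infty_\omega(\bar\OO)}\le \eta\}$$
and the map $\Phi(g)=h$, where $h$ solves the linear problem with $G=\QQ(g,g)$, which has zero mass. By \eqref{eq:plan_lin_h} and Lemma~\ref{lem:NonlinearGaussianWeightEstimate},
$$\sup_t e^{\theta t}\lvv h_t\rvv\le 2\varpi_\eps(\eta^2+C_\QQ\eta^2).$$
Here we use that $e^{\theta s}\lvv g_s\rvv^2\le e^{2\theta s}\lvv g_s\rvv^2\le\eta^2$. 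We then choose $\eta(\eps)\le \min(1,(8\varpi_\eps(1+C_\QQ))^{-1})$, so that $\Phi$ maps $\ZZ_\eta$ into itself. For the difference, $\QQ(g_1,g_1)-\QQ(g_2,g_2)=\QQ(g_1,g_1-g_2)+\QQ(g_1-g_2,g_2)$, which gives Lipschitz constant $4\varpi_\eps C_\QQ\eta<1$. The fixed point yields $h$, and \eqref{eq:BEdecayFinal_h} follows with $C=1$. Since $\varpi_\eps\to\infty$, we get $\eta(\eps)\to 0$.

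The main obstacle is the first step. Proposition~\ref{prop:LinftyEstPert} is proved with $\LLL^\eps$ alone, and we must justify that the linear perturbation by $\FFF$ does not spoil it. Working at the $L^\infty$ level with a time-weighted supremum as above avoids redoing the hypocoercivity. The only care needed is that $\theta$ (and $\varpi_\eps$) in the absorbed estimate are exactly those of Proposition~\ref{prop:LinftyEstPert}, and that the smallness $\lambda(\eps)\le(4\varpi_\eps C_\QQ)^{-1}$ from Theorem~\ref{theo:NESS_L} was designed precisely for this.
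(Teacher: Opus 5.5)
Your proposal is correct and follows essentially the paper's route. Your first step is exactly Proposition~\ref{prop:LinftyDecay_stabilitity_h}: treat $\QQ(h,\FFF)+\QQ(\FFF,h)$ as part of the source in Proposition~\ref{prop:LinftyEstPert}, with no inflow term and hence no $C_0\vartheta_0$, and absorb it using $\lambda(\eps)\le (4\varpi_\eps C_\QQ)^{-1}$. The nonlinear problem is then closed, as in \cite[Theorem 7.1]{BE_iso}, by a contraction in the ball $\sup_t e^{\theta t}\lVert h_t\rVert_{L^\infty_\omega(\OO)}\le\eta(\eps)$ using Lemma~\ref{lem:NonlinearGaussianWeightEstimate}; note that, as with any such contraction, the uniqueness you obtain holds within that small time-weighted ball.
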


 \subsection{Weighted $L^\infty$ decay estimate}
In this subsection we present the necessary estimate for the analysis of Equation~\eqref{eq:SLPBE}, which will in turn lead to the proof of Theorem~\ref{theo:Main}.

\begin{prop}\label{prop:LinftyDecay_stabilitity_h}
Assume that either Assumption \ref{item:H1} or Assumption \ref{item:H2} holds, let $\omega$ be an admissible weight function, let $G:\UU\to \R$ satisfy $\lla G_t\rra_{\OO}=0$ for every $t\geq 0$, and let $h$ be a solution of Equation~\eqref{eq:SLPBE}. 
For every $\eps\in (0,\eps_{\star})$ and every $\vartheta_0 \in (0, \vartheta_\star)$, where we recall that $\eps_\star>0$ and $\vartheta_\star =\vartheta_\star(\eps)  >0$ are given by Theorem~\ref{theo:NESS_L}, there holds
\be
\lVert  h_t \rVert_{L^{\infty}_{\omega}(\bar \OO)} \leq  2\varpi_\eps \, e^{-\theta t} \left(   \lVert h_0\rVert_{L^{\infty}_{\omega}(\OO)} +  \underset{s\in[0,t]}{\sup}\left[ e^{\theta s} \lVert G_s \rVert_{L^{\infty}_{\omega\nu^{-1}}(\OO)}\right]\right)   ,
\label{eq:LinftyEstPert_h}
\ee
for every $t\geq 0$, and some $\varpi_\eps>0$ satisfying $\varpi_\eps  \to \infty$ as $\eps \to 0$, given by Proposition~\ref{prop:LinftyEstPert}.
\end{prop}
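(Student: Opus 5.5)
The plan is to treat the two extra linear terms $\eps^{-2}\QQ(h,\FFF)+\eps^{-2}\QQ(\FFF,h)$ as part of the source term and apply Proposition~\ref{prop:LinftyEstPert}, with one adjustment: Equation~\eqref{eq:SLPBE} has the homogeneous boundary condition $\gamma_- h = \RRR_\Theta\gamma_+ h$, with no inflow $\iota\psi$. So in every step of Sections~\ref{sec:Hypo_Pert}--\ref{sec:AprioriLinfty} the contributions coming from $\psi$ vanish. In particular the constant term $C_0\vartheta_0$ disappears from \eqref{eq:LinftyEstPertG}. Concretely, I rerun the proof of Proposition~\ref{prop:LinftyEstPertRescaled} with $\psi\equiv 0$. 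Theorem~\ref{theo:L2Decay_pert_rescaled} then holds without the term $\eps^4\vartheta_0^{1/2}$, because that term only came from $\TT_{2,2},\TT_{2,3}$ and the $\psi$-parts of Lemmas~\ref{lem:EnergyCoercivity}--\ref{lem:MassCoercivity}. Likewise Proposition~\ref{prop:LinftyEst_rescaled} holds without the term $\eps^{12}\vartheta_0$. Iterating as in Step~2 of that proof gives, for any solution $h$ with zero-mass source $\widetilde G$,
\[
\lVert h_t\rVert_{L^\infty_\omega(\bar\OO)}\le \varpi_\eps e^{-\theta t}\Big(\lVert h_0\rVert_{L^\infty_\omega(\OO)}+\sup_{s\in[0,t]}e^{\theta s}\lVert \widetilde G_s\rVert_{L^\infty_{\omega\nu^{-1}}(\OO)}\Big).
\]
The boundary-temperature part $\vartheta_0'\lVert\iota^{1/2}\widetilde{\gamma_+h}\rVert^2$ is still present and is absorbed exactly as before; this is why $\vartheta_0<\vartheta_1$ is needed.

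Next I set $\widetilde G := \QQ(h,\FFF)+\QQ(\FFF,h)+G$. Its mass vanishes for every $t$, by the conservation law \eqref{eq:ConservationLaws} with $\varphi=1$ together with the assumption $\lla G_t\rra_\OO=0$. This means mass is conserved, $\lla h_t\rra_\OO=\lla h_0\rra_\OO=0$ (Remark~\ref{rem:ZeroMass_h0}), which the $L^2$ hypocoercivity step requires. By Lemma~\ref{lem:NonlinearGaussianWeightEstimate} and \eqref{eq:BE_NESS_eps_L},
\[
\lVert \QQ(h_s,\FFF)+\QQ(\FFF,h_s)\rVert_{L^\infty_{\omega\nu^{-1}}}\le 2C_\QQ\lambda(\eps)\lVert h_s\rVert_{L^\infty_\omega}\le \frac{1}{2\varpi_\eps}\lVert h_s\rVert_{L^\infty_\omega},
\]
where the last inequality uses \eqref{eq:varpi_eps_control}. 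Writing $X_t:=\sup_{s\le t}e^{\theta s}\lVert h_s\rVert_{L^\infty_\omega(\bar\OO)}$, the displayed estimate gives
\[
X_t\le \varpi_\eps\lVert h_0\rVert+\tfrac12 X_t+\varpi_\eps\sup_{s\le t} e^{\theta s}\lVert G_s\rVert,
\]
and absorbing $\tfrac12 X_t$ yields \eqref{eq:LinftyEstPert_h} with the factor $2\varpi_\eps$.

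The main obstacle is the first step. I have to check that every $\psi$-free constant really drops out. I also have to check that the absorption is legitimate, which needs $X_t<\infty$ a priori. I would get finiteness on finite time intervals from the well-posedness framework of Theorem~\ref{theo:ExistenceL_infty_linear_transport} applied with $\psi=0$, or from a Grönwall bound with rate depending on $\eps$. A subtler point is that the hypocoercivity argument in Section~\ref{sec:Hypo_Pert} treats $\eps^{-2}\widetilde G$ only through the Young inequality. Since $\widetilde G$ depends on $h$, I keep it as a source rather than building it into the operator. This is consistent with the estimate \eqref{eq:HypoEquivRescaled}, which holds for arbitrary zero-mass sources.
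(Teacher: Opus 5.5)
Your proposal is correct and is essentially the paper's own argument. The paper likewise sets $\widetilde G=\QQ(h,\FFF)+\QQ(\FFF,h)+G$, reruns the estimate of Proposition~\ref{prop:LinftyEstPert} (with no $C_0\vartheta_0$ term), bounds the linear collision terms by $\frac{1}{2\varpi_\eps}\lVert h\rVert_{L^\infty_\omega}$ via Lemma~\ref{lem:NonlinearGaussianWeightEstimate}, \eqref{eq:BE_NESS_eps_L} and \eqref{eq:varpi_eps_control}, and absorbs to get the factor $2\varpi_\eps$. Your remarks on why the $\vartheta_0$-constant drops out (the boundary condition is homogeneous), on the zero mass of $\widetilde G$, and on needing $X_t<\infty$ before absorbing fill in points the paper leaves implicit.
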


\begin{rem}
We note that Equation~\eqref{eq:SLPBE}, in contrast with Equation~\eqref{eq:PLPBE}, has homogenous boundary conditions. 
Therefore, modulo the terms $\eps^{-2} \QQ(h, \FFF) + \eps^{-2} \QQ(\FFF, h) + \eps^{-2} G$ and treating them as a perturbation, we can argue in the spirit of Theorem~\ref{theo:Hypo_decay} to obtain $L^2$ decay estimates. 
Combining this then with the $L^2-L^\infty$ techniques from Section~\ref{sec:AprioriLinfty} will allow us to obtain the $L^\infty$ decay estimates we aim to prove during this section. 
\end{rem}

\begin{proof}
We define $\widetilde G :=  \QQ(h, \FFF) + \QQ(\FFF, h) + G$, 
and we repeat the exact same computations as those leading to Proposition~\ref{prop:LinftyEstPert}. We note that the constants $\eps_1, \vartheta_1 >0$ given by Proposition~\ref{prop:LinftyEstPert} satisfy $\eps_\star <\eps_1$ and $\vartheta_\star <\vartheta_1$ due to the very definitions of $\eps_\star$ and $\vartheta_\star$. Therefore, we deduce that for every $\eps\in (0,\eps_{\star})$ and every $\vartheta_0 \in (0, \vartheta_\star)$ there holds
\beqn 
\lVert  h_t \rVert_{L^{\infty}_{\omega}(\bar \OO)} \leq  \varpi_\eps e^{-\theta t} \left(   \lVert h_0\rVert_{L^{\infty}_{\omega}(\OO)} +  \underset{s\in[0,t]}{\sup}\left[ e^{\theta s} \lVert \widetilde G_s \rVert_{L^{\infty}_{\omega\nu^{-1}}(\OO)}\right]\right)   ,
\label{eq:LinftyEstPert_h_tilde}
\eeqn
for every $t\geq 0$. 
Using then Lemma~\ref{lem:NonlinearGaussianWeightEstimate}, \eqref{eq:BE_NESS_eps_L}, and \eqref{eq:varpi_eps_control}, we deduce that 
$$
\lvv \widetilde G\rvv_{L^\infty_{\omega\nu^{-1}}(\OO)} = \lvv \QQ(h, \FFF) +  \QQ(\FFF, h) + G\rvv_{L^\infty_{\omega\nu^{-1}}(\OO)} \leq \frac 1{2\varpi_\eps} \lvv h \rvv_{L^\infty_\omega(\OO)} + \lvv  G\rvv_{L^\infty_{\omega\nu^{-1}}(\OO)}.
$$
We conclude by putting the above estimates together, and absorbing the small contributions.
\end{proof}

\subsection{Proof of Theorem~\ref{theo:Main_h}}
The proof follows exactly as that of \cite[Theorem 7.1]{BE_iso}, using the decay estimate from Proposition \ref{prop:LinftyDecay_stabilitity_h}, Lemma~\ref{lem:NonlinearGaussianWeightEstimate}, and the well-posedness result from Theorem~\ref{theo:ExistenceL_infty_linear_transport}, thus we skip it.
\qed

\bigskip

\textbf{Acknowledgements.} The author deeply thanks Kleber Carrapatoso and Stéphane Mischler for presenting the problem, for pointing important bibliographical references, for the discussions and their comments during the process of the paper. 

This project has received funding from the European Union’s Horizon 2020 research and innovation programme under the Marie Skłodowska-Curie grant agreement No 945332. 

\bigskip

The author has no conflicts of interest to declare that are relevant to the content of this article.
No datasets were generated or analyzed during the current study.

For the purpose of open access, the author has applied a Creative Commons Attribution (CC-BY) license to any Author Accepted Manuscript version arising from this submission.

\bigskip

\bigskip
\bigskip

\end{document}